\newcounter{item}[section]
\newcounter{kirshr}
\newcounter{kirsha}
\newcounter{kirshb}
\newenvironment{enumroman}{\setcounter{kirshr}{1}
\begin{list}{(\roman{kirshr})}{\usecounter{kirshr}} }{\end{list}}
\newenvironment{enumarab}{\setcounter{kirshb}{1}
\begin{list}{(\arabic{kirshb})}{\usecounter{kirshb}} }{\end{list}}
\newenvironment{athm}[1]{\vskip3mm\par\noindent
{\bf #1 }. \slshape }
{\upshape\par\vskip10pt minus3pt}
\newtheorem{theorem}{Theorem}[section]
\newtheorem{lemma}[theorem]{Lemma}
\newtheorem{corollary}[theorem]{Corollary}
\newenvironment{demo}[1]{\noindent{\bf #1.}\upshape\mdseries}
{\nopagebreak{\hfill\rule{2mm}{2mm}\nopagebreak}\par\normalfont}
\theoremstyle{definition}
\newtheorem{remark}[theorem]{Remark}
\newtheorem{example}[theorem]{Example}
\newtheorem{definition}[theorem]{Definition}
\newtheorem{proposition}[theorem]{Proposition}
\def\R{\mathbb{R}}
\def\Q{\mathbb{Q}}
\def\A{{\mathfrak{A}}}
\def\At{{\sf At}}
\def\B{{\mathfrak{B}}}
\def\C{{\mathfrak{C}}}
\def\Ca{{\mathfrak Ca}}
\def\CA{{\sf CA}}
\def\Cm{{\sf Cm}}
\def\CRCA{{\sf CRCA}}
\def\de{Dedekind-MacNeille}
\def\ef{Ehren\-feucht--Fra\"\i ss\'e}
\def\F{{\mathfrak{F}}}
\def\Fm{{\mathfrak{Fm}}}
\def\Fr{{\mathfrak{Fr}}}
\def\g{{\sf g}}
\def\Id{{\sf Id}}
\def\K{{\sf K}}
\def\Lf{{\sf Lf}}
\def\Mo{{\sf M}}
\def\N{{\mathbb{N}}}
\def\nodes{{\sf nodes}}
\def\Nr{{\mathfrak{Nr}}}
\def\Nrr{{\mathfrak{Nr}}}
\def\pa{$\forall$}
\def\pe{$\exists$}
\def\QEA{{\sf QEA}}
\def\Nr{{\sf{Nr}}}
\def\A{{\mathfrak{A}}}
\def\At{{\sf At}}
\def\B{{\mathfrak{B}}}
\def\Bb{{\mathfrak{Bb}}}
\def\C{{\mathfrak{C}}}
\def\Ca{{\mathfrak Ca}}
\def\CA{{\sf CA}}
\def\Cm{{\sf Cm}}
\def\CRCA{{\sf CRCA}}
\def\de{Dedekind-MacNeille}
\def\ef{Ehren\-feucht--Fra\"\i ss\'e}
\def\F{{\mathfrak{F}}}
\def\Fm{{\mathfrak{Fm}}}
\def\Fr{{\mathfrak{Fr}}}
\def\g{{\sf g}}
\def\Id{{\sf Id}}
\def\K{{\sf K}}
\def\Lf{{\sf Lf}}
\def\Mo{{\sf M}}
\def\N{{\mathbb{N}}}
\def\nodes{{\sf nodes}}
\def\Nr{{\mathfrak{Nr}}}
\def\Nrr{{\mathfrak{Nr}}}
\def\pa{$\forall$}
\def\pe{$\exists$}
\def\QEA{{\sf QEA}}
\def\r{{\sf r}}
\def\R{\mathfrak{R}}
\def\Ra{{\sf Ra}}
\def\RCA{{\sf RCA}}
\def\Rd{{\mathfrak{Rd}}}
\def\Rl{{\mathfrak Rl}}
\def\rng{{\sf rng}}
\def\s{{\sf s}}
\def\Sc{{\sf Sc}}
\def\Sg{{\mathfrak Sg}}
\def\Tm{{\mathfrak{Tm}}}
\def\tp{{\sf tp}}
\def\VT{{\sf VT}}
\def\w{{\sf w}}
\def\ws{winning strategy}
\def\y{{\sf y}}
\def\Z{{\mathbb{Z}}}
\def\D{{\mathfrak{D}}}
\def\Ra{{\sf Ra}}
\def\set#1{ \{#1\}}
\def\Mo{{\sf Mo}}
\def\restr #1{{\restriction_{#1}}}
\def\ws{winning strategy}
\def\d{{\sf d}}
 \def\CA{{\sf CA}}
\def\B{{\sf B}}
\def\G{{\sf G}}
\def\w{{\sf w}}
\def\y{{\sf y}}
\def\g{{\sf g}}
\def\r{{\sf r}}
\def\K{{\sf K}}
\def\pa{$\forall$}
\def\pe{$\exists$}
\def\Sn{{\mathfrak{Sn}}}
\def\ef{Ehren\-feucht--Fra\"\i ss\'e}
\def\tp{{\sf tp}}
\def\A{{\mathfrak{A}}}
\def\At{{\sf At}}
\def\B{{\mathfrak{B}}}
\def\Bb{{\mathfrak{Bb}}}
\def\C{{\mathfrak{C}}}
\def\Ca{{\mathfrak Ca}}
\def\CA{{\sf CA}}
\def\Cm{{\sf Cm}}
\def\CRCA{{\sf CRCA}}
\def\de{Dedekind-MacNeille}
\def\ef{Ehren\-feucht--Fra\"\i ss\'e}
\def\F{{\mathfrak{F}}}
\def\Fm{{\mathfrak{Fm}}}
\def\Fr{{\mathfrak{Fr}}}
\def\g{{\sf g}}
\def\Id{{\sf Id}}
\def\K{{\sf K}}
\def\Lf{{\sf Lf}}
\def\Mo{{\sf M}}
\def\N{{\mathbb{N}}}
\def\nodes{{\sf nodes}}
\def\Nr{{\mathfrak{Nr}}}
\def\Nrr{{\mathfrak{Nr}}}
\def\pa{$\forall$}
\def\pe{$\exists$}
\def\QEA{{\sf QEA}}
\def\r{{\sf r}}
\def\R{\mathfrak{R}}
\def\Ra{{\sf Ra}}
\def\RCA{{\sf RCA}}
\def\Rd{{\mathfrak{Rd}}}
\def\Rl{{\mathfrak Rl}}
\def\rng{{\sf rng}}
\def\s{{\sf s}}
\def\Sc{{\sf Sc}}
\def\Sg{{\mathfrak Sg}}
\def\Tm{{\mathfrak{Tm}}}
\def\tp{{\sf tp}}
\def\VT{{\sf VT}}
\def\w{{\sf w}}
\def\ws{winning strategy}
\def\y{{\sf y}}
\def\Z{{\mathbb{Z}}}
\def\D{{\mathfrak{D}}}
\def\Ra{{\sf Ra}}
\def\set#1{ \{#1\}}
\def\Mo{{\sf Mo}}
\def\restr #1{{\restriction_{#1}}}
\def\ws{winning strategy}
\def\d{{\sf d}}
 \def\CA{{\sf CA}}
\def\B{{\sf B}}
\def\G{{\sf G}}
\def\w{{\sf w}}
\def\y{{\sf y}}
\def\g{{\sf g}}
\def\r{{\sf r}}
\def\K{{\sf K}}
\def\pa{$\forall$}
\def\pe{$\exists$}
\def\Sn{{\mathfrak{Sn}}}
\def\ef{Ehren\-feucht--Fra\"\i ss\'e}
\def\tp{{\sf tp}}
\def\M{{\mathfrak{M}}}
\def\Ca{{\mathfrak{Ca}}}
\def\M{{\mathfrak{M}}}
\def\A{{\mathfrak{A}}}
\def\B{{\mathfrak{B}}}
\def\C{{\mathfrak{C}}}
\def\D{{\mathfrak{D}}}
\def\Ig{{\mathfrak{Ig}}}
\def\M{{\mathfrak{M}}}
\def\dom{{\sf dom}}
\def\rng{{\sf rng}}
\def\Rd{{\sf{Rd}}}
\def\At{{\sf At}}
\def\Ra{{\mathfrak{Ra}}}
\def\Tm{{\mathfrak{Tm}}}
\def\Cm{{\mathfrak{Cm}}}
\def\E{{\mathfrak{E}}}
\def\VT{{\sf VT}}
\def\ef{Ehren\-feucht--Fra\"\i ss\'e}
\def\Id{{\sf Id}}
\def\Rl{{\mathfrak{Rl}}}
\def\F{{\mathfrak{F}}}
\def\Sc{{\mathfrak{Sc}}}
\def\QEA{{\sf QEA}}
\def\RCA{{\sf RCA}}
\def\QEA{{\bf PEA}}
\def\R{{\sf R}}
\def\L{{\sf L}}
\def\t{{\sf t}}
\def\t{{\sf t}}
\def\si{{i_0, i_1, \cdots, i_k}}
\def\sj{{j_0, j_1, \cdots, j_k}}
\def\ttr{{(\t^{j_0}_{i_0}\cdots \t^{j_k}_{i_k})^{(\mathfrak{Rd}_{pt}\A)}}}
\def\U{\mathfrak{U}}
\def\map{{\sf map}}
\def\Df{{\sf Df}}
\def\Rd{{\mathfrak{Rd}}}
\def\s{{\sf s}}
\def\Sc{{\sf Sc}}
\def\nodes{{\sf nodes}}
\def\d{{\sf d}}
\def\si{{i_0, i_1, \cdots, i_k}}
\def\sj{{j_0, j_1, \cdots, j_k}}
\def\ttr{{(\t^{j_0}_{i_0}\cdots \t^{j_k}_{i_k})}}
\def\QEA{{\sf PEA}}
\def\cyl#1{{\sf c}_{#1}}
\def\cyl#1{{\sf c}_{#1}}
\def\diag#1#2{{\sf d}_{#1#2}}
\def\V{{\sf V}}
\def\de{Dedekind-MacNeille}
\def\Z{{\mathbb{Z}}}
\def\Lf{{\sf Lf}}
\def\Bb{{\sf Bb}}
\def\Cs{{\sf Cs}}
\def\Ra{{\sf Ra}}
\def\Mo{{\sf M}}
\def\QEA{{\sf QEA}}
\def\RA{{\sf RA}}
\def\CRCA{{\sf CRCA}}
\def\G{{\sf G}}
\def\Bb{\mathfrak{Bb}}
\def\Nrr{\mathfrak{{Nr}}}
\def\w{{\sf w}}
\def\g{{\sf g}}
\def\y{{\sf y}}
\def\r{{\sf r}}
\def\L{\mathfrak{L}}
\def\G{{\cal G}}
\def\R{\mathfrak{R}}
\def\Nr{{\sf Nr}}
\def\Q{\mathbb{Q}}
\def\C{{\mathfrak{C}}}
\def\Fm{{\mathfrak{Fm}}}
\def\At{{\sf At}}
\def\N{{\cal N}}
\def\Nr{{\mathfrak{Nr}}}
\def\Fr{{\mathfrak{Fr}}}
\def\Sg{{\mathfrak{Sg}}}
\def\Fm{{\mathfrak{Fm}}}
\def\A{{\mathfrak{A}}}
\def\B{{\mathfrak{B}}}
\def\C{{\mathfrak{C}}}
\def\D{{\mathfrak{D}}}
\def\M{{\mathfrak{M}}}
\def\Sn{{\mathfrak{Sn}}}
\def\CA{{\bf CA}}
\def\QEA{{\bf QEA}}
\def\Df{{\bf Df}}
\def\Lf{{\bf Lf}}
\def\K{{\bf K}}
\def\K{{\bf K}}
\def\RCA{{\bf RCA}}
\def\Rd{{\mathfrak{Rd}}}
\def\(R)RA{{\bf (R)RA}}
\def\RA{{\bf RA}}
\def\R{\mathbb{R}}
\def\Q{\mathbb{Q}}
\def\Sc{{\bf Sc}}
\def\Cs{{\bf Cs}}
 \def\CA{{\sf CA}}
\def\B{{\sf B}}
\def\G{{\sf G}}
\def\w{{\sf w}}
\def\y{{\sf y}}
\def\g{{\sf g}}
\def\r{{\sf r}}
\def\K{{\sf K}}
 \def\Cm{{\mathfrak{Cm}}}
\def\Nr{{\mathfrak{Nr}}}
\def\restr #1{{\restriction_{#1}}}
\def\cyl#1{{\sf c}_{#1}}
\def\diag#1#2{{\sf d}_{#1#2}}
\def\si{{i_0, i_1, \cdots, i_k}}
\def\sj{{j_0, j_1, \cdots, j_k}}
\def\ttr{{(\t^{j_0}_{i_0}\cdots \t^{j_k}_{i_k})}}
\def\t{{\sf t}}
\def\map{{\sf map}}
\def\Rl{\mathfrak{Rl}}
\def\Ra{{\mathfrak{Ra}}}
\def\Ca{{\mathfrak{Ca}}}
\def\set#1{\{#1\} }
\def\Ra{{\mathfrak{Ra}}}
\def\Nr{{\mathfrak{Nr}}}
\def\Tm{{\mathfrak{Tm}}}
\def\A{{\mathfrak{A}}}
\def\B{{\mathfrak{B}}}
\def\C{{\mathfrak{C}}}
\def\D{{\mathfrak{D}}}
\def\E{{\mathfrak{E}}}
\def\Bb{{\mathfrak{Bb}}}
\def\Zd{{\mathfrak{Zd}}}
\def\CA{{\bf CA}}
\def\RA{{\bf RA}}
\def\RCA{{\bf RCA}}
\def\G{{\bf G}}
\def\L{{\mathfrak{L}}}
\def\R{\cal{R}}
\def\TeCA{{\sf TeCA}}
\def\ws{winning strategy}
\def\d{{\sf d}}
\def \set#1{\{#1\} }
\def\Nr{{\mathfrak{Nr}}}
\def\Fr{{\mathfrak{Fr}}}
\def\Sg{{\mathfrak{Sg}}}
\def\Fm{{\mathfrak{Fm}}}
\def\Rd{{\mathfrak{Rd}}}
\def\Ig{{\mathfrak{Ig}}}
\def\CA{{\bf CA}}
\def\RCA{{\bf RCA}}
\def\K{{\bf K}}
\def\L{{\bf L}}
\def\QEA{{\bf QEA}}
\def\(R)RA{{\bf (R)RA}}
\def\RA{{\bf RA}}
\def\R{\mathbb{R}}
\def\Nr{{\mathfrak{Nr}}}
\def\Fr{{\mathfrak{Fr}}}
\def\Sg{{\mathfrak{Sg}}}
\def\Fm{{\mathfrak{Fm}}}
\def\Rd{{\mathfrak{Rd}}}
\def\Ig{{\mathfrak{Ig}}}
\def\CA{{\bf CA}}
\def\RCA{{\bf RCA}}
\def\K{{\bf K}}
\def\L{{\bf L}}
\def\QEA{{\bf QEA}}
\def\(R)RA{{\bf (R)RA}}
\def\RA{{\bf RA}}
\def\R{\mathbb{R}}
\def\QEA{{\bf QEA}}
\def\R{\mathbb{R}}
\def\N{\mathbb{N}}
\def\Q{\mathbb{Q}}
\def\R{\mathbb{R}}
\def\Q{\mathbb{Q}}
\def\Sc{{\bf Sc}}
\def\Cs{{\bf Cs}}
 \def\CA{{\sf CA}}
\def\M{{\mathfrak{M}}}
\def\RA{{\bf RA}}
\def\At{{\mathfrak{At}}}
\def\G{{\mathfrak{G}}}
\def\K{{\bf K}}
\def\QEA{{\bf QEA}}
\def\tp{{\sf tp}}
\def\cyl#1{{\sf c}_{#1}}
\def\diag#1#2{{\sf d}_{#1#2}}
\def\s{{\sf s}}
\def\pa{$\forall$}
\def\pe{$\exists$}
\def\LCA{{\sf LCA}}
\def\ef{Ehren\-feucht--Fra\"\i ss\'e}
\def\TeCA{{\sf TeCA}}
\def\nodes{{\sf nodes}}
\def\restr #1{{\restriction_{#1}}}
\def\A{{\mathfrak{A}}}
\def\B{{\mathfrak{B}}}
\def\C{{\mathfrak{C}}}
\def\D{{\mathfrak{D}}}
\def\P{{\mathfrak{P}}}
\def\Fm{{\mathfrak{Fm}}}
\def\Ra{{\mathfrak{Ra}}}
\def\Nr{{\mathfrak{Nr}}}
\def\F{{\mathfrak{F}}}
\def\CA{{\bf CA}}
\def\RCA{{\bf RCA}}
\def\set#1{ \{#1\}}
\def\Ca{{\mathfrak Ca}}
\def\pe{$\exists$}
\def\pa{$\forall$}
\def\Cm{{\mathfrak Cm}}
\def\Sg{{\mathfrak Sg}}
\def\At{{\sf At}}
\def\Id{{\sf Id}}
\def\rng{{\sf rng}}
\def\dom{{\sf dom}}
\def\Fl{{\mathfrak{Fl}}}
\def\w{{\sf w}}
\def\g{{\sf g}}
\def\y{{\sf y}}
\def\r{{\sf r}}
\def\Co{{\sf Co}}
\def\cyl#1{{\sf c}_{#1}}
\def\diag#1#2{{\sf d}_{#1#2}}
\def\ws{winning strategy}
\def\ef{Ehren\-feucht--Fra\"\i ss\'e}
\def\Rl{\mathfrak{Rl}}
\def\y{{\sf y}}
\def\g{{\sf g}}
\def\r{{\sf r}}
\def\w{{\sf w}}
\def\Sc{{\sf Sc}}
\def\TCA{{\sf TCA}}
\def\CA{{\sf CA}}
\def\TDc{{\sf TDc}}
\def\R{{\sf R}}
\def\L{{\mathfrak{L}}}
\def\Z{{\mathbb{Z}}}
\def\K{{\sf K}}
\def\de{Dedekind-MacNeille}
\def\LCA{{\sf LCA}}
\def\Nr{{\sf Nr}}
\def\Nrr{{\mathfrak{Nr}}}
\def\M{{\sf M}}
\def\RCA{{\sf RCA}}
\def\QEA{{\sf QEA}}
\def\RA{{\sf RA}}
\def\Sg{\mathfrak{Sg}}
\title{Reflecting space and time via Topological and Temporal cylindric algebras}
\author{Tarek Sayed Ahmed}
\date{}
\begin{document}
\maketitle
\begin{abstract}
Let $\alpha$ be an arbritary ordinal, and $2<n<\omega$. In \cite{3} accepted for publication in Quaestiones Mathematicae,  we studied using algebraic logic, 
interpolation, amalgamation  using $\alpha$ many variables for topological logic with $\alpha$ many variables briefly  $\sf TopL_{\alpha}$. 
This is a sequel to \cite{3}; the second  part on modal cylindric algebras, where we study algebraically other properties of $\sf TopL_{\alpha}$.
Modal cylindric algebras are cylindric algebras of infinite dimension expanded with unary modalities inheriting their semantics from a unimodal logic $\sf L$ such as $\sf K5$ or $\sf S4$.
Using the methodology of algebraic logic, we study topological (when $\sf L=S4$),  in symbols 
$\sf TCA_{\alpha}$.  
We study completeness and omitting types $\sf OTT$s for 
$\sf TopL_{\omega}$ and $\sf TenL_{\omega}$, by proving several representability results for locally finite such algebras. Furthermore, we study the notion of atom-canonicity  
for both  ${\sf TCA}_{n}$ and ${\sf TenL}_n$, a well known persistence property in modal logic, 
in connection to $\sf OTT$ for  ${\sf TopL}_n$ and ${\sf TeLCA}_n$, respectively.   We study representability,
omitting types, interpolation and complexity isssues (such as undecidability) for topological cylindric algebras.
In Part 2, we introduce temporal cyindric algebras 
and point out the way how to amalgamate algebras of space (topological algebars) and algebras of time (temporal algebras)  
forming topological-temporal cylindric algebras that lend themselves to  encompassing spacetime gemetries, 
in a purely algebraic fashon.
Having a geometric dimension literally, namely, that  of their cylindric reducts, the geometry of such algbras is conceptually distinct 
from the `standard tensor and manifolds' mathemtical model for general relativity tajen to be a part of algebraic geometry, raher than algebraic logic, which is the path 
introduced in the 2nd part of this paper. 
\end{abstract}

\section{Introduction and overview}
\subsection{Universal and algebraic logic}

One aim of universal logic is to determine the domain of validity of such and such metatheorem
(e.g. the completeness theorem, the Craig interpolation
theorem, or the Orey-Henkin omitting types theorem of first order logic) and to
give general formulations of metatheorems in broader, or even entirely other contexts.
This is also done in algebraic logic, by dealing with modifications and variants of first order logic resulting in a natural way
during the process of {\it algebraisation}, witness for example the omitting types theorem proved in  \cite{Sayed}.
This kind of investigation  is extremely potent  for applications and helps to make the distinction
between what is really essential to a particular logic and what is not.
During the 20th
century, numerous logics have been created, to mention only a few: intuitionistic logic, modal logic, topological logic, topological dynamic logic,
spatial logic,  dynamic logic,  tense logic,
temporal logic, many-valued logic, fuzzy logic, relevant logic,
para-consistent logic, non monotonic logic,
etc.
The rapid development of computer science, since the fifties of the 20th century initiated by work of giants like G\"odel, Church and Turing,
ultimately brought to
the front scene other logics as well, like  logics of programs
and lambda calculus (the last can be traced back to the work of Church).
Universal logic 
owes its birth  as a response to the
explosion of new logics
After a while  it became noticeable that certain patterns of
concepts kept being repeated albeit in different
logics. But then the time  was ripe to make in retrospect an inevitable abstraction
like as the case with the field of abstract model theory (Lindstrom's theorem is an example here).
Abstract algebraic logic on the other hand is  a major model theoretic trend of universal algebra that formalizes 
the intuitive notion of a logical system, including syntax, semantics, and the satisfaction relation between them.
and it developed to an  important foundational theory.
Universal logic addresses different logical systems simultaneously in essentially four ways.
Either abstracting common features, or building new bridges between them, or
constructing new logics from old ones, or, last but not least, combining logics.
In this paper we do all four things. 
We construct  new predicate logics that can be seen as 
modal expansions of 
first order logic given an algebraizable (in the standard Blok-Pigozzi sense) formalism. 
For such logics, in the first part of the paper,  we study properties known to hold for $L_{\omega, \omega}$ like the celebrated Orey-Henkin Omitting types Theorem 
We show that $\sf OTT$ remains to hold for modal expansions
of first order logic as long as there are variables existing {\it oustide} (atomic) formulas, but these results do not generalize any further. 
Our results cover ordinary predicate first order logic, possibly expanded with modalites. 
Our results apply to cylindric algebras when $\sf L=S5$, to basic Temporal cylindric algebras defined by Georgesco, and so called Tense algebras, that reflects time a the name might suggest, 
and to  so-called topological cylindric algebras, that expressed spatial properties of space in a modal simple setting obtained when 
$\sf L=S4$. In the last case, the modalities can be redefined topologically using the interior operator relative to some so-called Alexandrof 
topology defined on the base of a cylindric set algebra.

{\bf Topological logic:}
Topological logic provides a framework for studying the confluence of the topological semantics for
$\sf S4$ modalities, based on topological spaces
rather than Kripke frames, with the $\sf S4$
modality induced by the interior operator.
Motivated by questions like:  which spatial structures may be characterized by means of modal logic,
what is the logic of space, how to encode
in modal logic different geometric relations, 
topological logics are apt for dealing with {\it logic} and {\it space}.
The overall point is to take a common mathematical model of space (like a topological space)
and then to fashion logical tools to work with it.
One of the things which blatantly strikes one when studying elementary topology is that notions like open, closed, dense
are intuitively very transparent, and their
basic properties are absolutely straightforward to prove. However, topology
uses second order notions as it reasons with sets and subsets of `points'. This might suggest that like
second order logic, topology ought to be computationally very complex.
This apparent dichotomy between the two paradigms
vanishes when one realizes that a large portion of
topology can be formulated as a  simple modal logic,
namely, $\sf S4$.
This is for sure an asset
for modal logics tend to be much easier to handle than first order logic
let alone second order. 
We can summarize the above discussion in the following neat theorem, that we can and will
attribute to McKinsey, Tarski
and  Kripke; this historically is not very accurate. For a topological space $X$ and $\phi$ an $\sf S4$
formula we write $X\models \phi$, if $\phi$ is valid topologically in
$X$ (in either of the senses above). For example, $w\models \Box \phi$ $\iff$
for all  $w'$ if $w\leq w'$, then $w'\models \phi$, where $\leq$ is the relation $x\leq y$
$\iff$ $y\in {\sf cl}\{x\}$ where ${\sf cl}$ abbreviate `the closure of'.
\begin{theorem} (McKinsey-Tarski-Kripke)
Suppose that $X$ is a dense in itself metric space (every point is a limit point)
and $\phi$ is a modal $\sf S4$ formula. Then the following are equivalent:
\begin{enumerate}
\item $\phi\in \sf S4.$
\item$\models \phi.$
\item $X\models \phi.$
\item $\mathbb{R}\models \phi.$
\item $Y\models \phi$ for every finite topological space $Y.$
\item $Y\models \phi$ for every Alexandrov space $Y.$
\end{enumerate}
\end{theorem}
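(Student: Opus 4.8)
The plan is to prove the McKinsey--Tarski--Kripke theorem by establishing a cycle of implications among the six statements, pushing the content through the two classical hard directions and using soft arguments for the rest. First I would dispose of the easy implications. That $(1)\Rightarrow(2)$ is just soundness of $\sf S4$ with respect to its Kripke semantics, and $(2)\Rightarrow(3)$, $(2)\Rightarrow(5)$, $(2)\Rightarrow(6)$ follow once one checks that each topological space, each finite space, and each Alexandrov space gives rise to an $\sf S4$ Kripke frame under the specialization preorder $x\le y\iff y\in{\sf cl}\{x\}$, so that topological validity implies frame validity for these classes; conversely $(6)\Rightarrow(5)$ is trivial since finite spaces are Alexandrov, and $(3)\Rightarrow(4)$ is the instance $X=\mathbb R$ of $(3)$ (noting $\mathbb R$ is a dense-in-itself metric space). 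So the genuine work is to close the loop, and I would do it by proving $(4)\Rightarrow(1)$ and, separately, $(6)\Rightarrow(1)$ (equivalently $(5)\Rightarrow(1)$), together with $(1)\Rightarrow(3)$ for an arbitrary dense-in-itself metric $X$.

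The backbone is the classical McKinsey--Tarski completeness argument. The first ingredient is the \emph{finite model property} of $\sf S4$: if $\phi\notin\sf S4$ then $\phi$ fails at a point of a finite reflexive-transitive Kripke frame $(W,\le)$; this is standard via filtration of the canonical model through the (finite) subformula closure of $\phi$. Reading $(W,\le)$ as a finite topological space by declaring the up-sets to be open turns the Kripke falsification into a topological falsification, which already gives $\neg(1)\Rightarrow\neg(5)$ and $\neg(1)\Rightarrow\neg(6)$, i.e. $(5)\Rightarrow(1)$ and $(6)\Rightarrow(1)$. The second, deeper ingredient is the interior-preserving \emph{open and continuous surjection} from a dense-in-itself metric space onto any finite connected $\sf S4$-frame: McKinsey and Tarski construct, for each finite rooted reflexive transitive frame $W$, a continuous open surjection $f\colon X\to W$, and since such maps are exactly the maps that pull back the $\sf S4$ topological semantics faithfully (a p-morphism at the topological level), a failure of $\phi$ on $W$ lifts to a failure of $\phi$ on $X$. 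Specializing $X=\mathbb R$ yields $\neg(1)\Rightarrow\neg(4)$, i.e. $(4)\Rightarrow(1)$; the same construction over an arbitrary dense-in-itself metric $X$ (one must check the McKinsey--Tarski construction only uses a countable dense set of non-isolated points, available in any such $X$) yields $\neg(1)\Rightarrow\neg(3)$, hence $(1)\Rightarrow(3)$. Chaining $(1)\Rightarrow(3)\Rightarrow(4)\Rightarrow(1)$ and $(1)\Rightarrow(5)\Rightarrow(1)$, $(1)\Rightarrow(6)\Rightarrow(1)$, and $(2)\Leftrightarrow(1)$, all six become equivalent.

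The main obstacle is unquestionably the construction of the continuous open surjection from $\mathbb R$ (or a general dense-in-itself metric space) onto an arbitrary finite $\sf S4$-frame, together with the verification that such maps transport the modal semantics. This is the combinatorially delicate heart of the McKinsey--Tarski theorem: one builds, by a fractal-like recursive partition of an interval, a map whose fibers over each point of the frame are arranged so that openness and continuity hold simultaneously, respecting the accessibility order. For a general dense-in-itself metric $X$ one additionally has to argue that a countable dense-in-itself subspace embeds appropriately and that the construction localizes there; I would isolate this as a lemma and either cite the standard references or sketch the interval-subdivision argument, since reproducing it in full is lengthy and orthogonal to the algebraic-logic thrust of the present paper. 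Everything else—soundness, filtration/finite model property, the dictionary between $\sf S4$-frames and Alexandrov spaces via the specialization preorder—is routine and can be handled briskly.
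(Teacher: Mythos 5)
The paper does not actually prove this theorem: it is stated in the introduction as a classical background result, attributed (with an explicit caveat about historical accuracy) to McKinsey, Tarski and Kripke, and no argument is given. So there is no ``paper proof'' to compare against; your proposal stands on its own, and it is the standard, correct route: soundness, the specialization-preorder dictionary between Alexandrov (in particular finite) spaces and reflexive transitive Kripke frames, the finite model property of $\sf S4$ via filtration, and the McKinsey--Tarski interior/open continuous surjection from a dense-in-itself metric space onto a finite rooted $\sf S4$-frame as the hard completeness step. Your identification of that last construction as the genuine core, and your decision to isolate it as a lemma to be cited or sketched, is exactly how this result is normally handled in a paper whose focus is elsewhere.

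One small bookkeeping slip: you write that the construction over an arbitrary dense-in-itself metric $X$ ``yields $\neg(1)\Rightarrow\neg(3)$, hence $(1)\Rightarrow(3)$,'' but $\neg(1)\Rightarrow\neg(3)$ is the contrapositive of $(3)\Rightarrow(1)$, not of $(1)\Rightarrow(3)$; likewise $(3)\Rightarrow(4)$ is not an ``instance'' of $(3)$, since $X$ is a fixed space in the hypothesis rather than a universally quantified one. Neither error damages the proof, because the implications you actually establish --- $(1)\Rightarrow(2)$, $(2)\Rightarrow(3),(4),(5),(6)$ by instantiation/specialization, and $(3)\Rightarrow(1)$, $(4)\Rightarrow(1)$, $(5)\Rightarrow(1)$, $(6)\Rightarrow(1)$ by the two hard ingredients --- already make all six statements equivalent. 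Just relabel the cycle accordingly.
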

One can say that finite topological space or their natural extension to Alexandrov topological spaces reflect faithfully
the $\sf S4$ semantics, and that arbitrary topological spaces generalize $\sf S4$ frames. On the other hand,
every topological space gives rise to a normal modal logic. Indeed $\sf S4$ is the modal logic of $\mathbb{R}$, or
any metric that is  separable and dense in itself
space, or all topological spaces, as indicated above. Also $\sf S4$
is  the modal logic of the Cantor set, which is known to be Baire isomorphic to
$\mathbb{R}$.
But, on the other hand,  modal logic is too weak to detect interesting properties
of $\mathbb{R}$, for example it cannot distinguish between $[0, 1]$ and $\mathbb{R}$ despite
their topological dissimilarities, the most striking one being compactness; $[0, 1]$ is compact, but $\mathbb{R}$ is
not. However, when we step into the realm of the predicate topological logic, the expressive power becomes substantially stronger.

{\bf $\sf OTT$s  for topological predicate logic:}
It would seem to be a simple matter to outfit a modal logic with the quantifiers. One would simply add the standard rules for quantifiers to the principles of 
whichever propositional modal logic one chooses. 
However, adding quantifiers to modal logic involves a number of difficulties. 
The main points of disagreement concerning the quantifier rules are about how to handle the domain of quantification. 
The simplest alternative, the fixed-domain approach, assumes a single domain of quantification that contains all the possible objects. 
Another interpretation, the world-relative interpretation, assumes that the domain of quantification changes from world to world, 
and contains only the objects that actually exist in a given world. Each of these two alternatives has its pros and cons.
Here we adopt the fixed-domain approach which requires 
no major adjustments to the classical machinery for the quantifiers. Furthermore, we assume that this world carries an Alexandrov topology, inducing
infinitely many modalities whose semantics coincide with $\sf S4$.
The fixed-domain interpretation has advantages of simplicity and familiarity.
Other topological interpretations of propositional topological logic
were recently extended in a natural way to arbitrary theories of full first order logic by
Awodey and Kishida using so-called {\it topological pre-sheaves} to
interpret domains of quantification \cite{ak}.
They prove that $\sf S4\forall$ (predicate $\sf S4$ logic) is  complete with respect to such extended topological semantics, using
techniques related to recent work in topos theory. Indeed, historically Sheaf semantics was
first introduced by topoi theorists for higher order intuitionistic logic, and has been applied
to first order modal logic, by both modal and categorical logicians. Here, our syntax is similar to {\it op.cit} but completeness and $\sf OTT$ (the former deduced as a byproduct) 
is different than Sheaf semantics.

{\bf Topological, tense and Heyting polyadic algebras:} In this paper we investigate the $\sf OTT$ to topological predicate logic with $\alpha$ many variables, 
briefly  $\sf TopPL_{\alpha}$, where $\alpha$ is an ordinal. 
Georgescu \cite{g, g2, g3, g4, g5} applied algebraic logic outside the realm of first order logic. He applied the well developed theory of Halmos' theory 
of polyadic algebras to intuitionistic, modal, temporal  and topological logic.
Georgescu studied Chang, modal, topological and tense {\it locally  polyadic algebras} of infinite dimension, which are essentially equivalent to Tarski's locally finite cylindric 
algebras. 
The work of Georgescu in \cite{g5} is substantially generalized in \cite{Heyting} by relaxing the condition of local 
finiteness and studying besides representability, various amalgamation properties for Heyting polyadic algebras. The work in this paper, preceded with the results established 
in  \cite{Heyting},  can be seen as a far reaching 
generalization of the work in Georgescu's remaining aforementioned  references which dealt only with representation theorems of {\it locally finite} algebras. 
The work in \cite{Heyting} can be seen as yet another far reaching generalization of he work of Georgescu in the remaining aforementioned references. 
While in \cite{Heyting}, we studied interpolation for intuitionistic fragment of Keisler's logic topological predicate logic, here we go further by studying various forms of representations 
for various subclasses 
of topological and tense cylindric algebras of infinite dimension, reflecting the rich interplay between the syntax and semantics of predicate modal logics of space and time respectively; 
in the latter case dealing with basic temporal predicate 
logic. Furthermore, we formulate and prove an omitting types theorem for tense predicate logic.
We recover the Henkin-Orey $\sf OTT$ for ${\sf TopPl}_{\alpha}$ when $\alpha$ is an infinite countable ordinal for countable theories. We 
prove negative $\sf OTT$s for ${\sf TopPL}_n$  when $n$ is finite $>2$, and the types are required to be omitted  with respect to 
certain generalized semantics.
We address the case when a single non-principal type is required to be omitted  with respect to 
so-called  {\it $m$--square} models with $2<n<m<\omega$. 
An $m$-square modal is only 'locally square' with $m$ measuring the {\it degree} of squareness; it is only an approximation of an ordinary model which is $\omega$-square. 
The idea is that if we approach this $m$-square model using a movable window, then there will become a certain point, determined by $m$  where we will mistake 
this $m$-square model, for an ordinary genuine ($\omega$-square) model. For $2<n<l<k$ a $k$-square model is $l$ square, but the converse may fail, 
both are approximations to ordinary models, with the $k$-square one a better (closer) approximation. The larger the degree of squareness, the closer the model is to an ordinary one.
Figuratively speaking, the limit of this sequence of infinite locally relativized models is the $\omega$-square Tarskian models. 
These locally relativized models (representations) are invented by Hirsch and 
Hodkinson in the context of relation algebras \cite[Chapter 13]{HHbook} and is adapted to expansions of cyindric algebras here. 
Considering such 
{\it clique-guarded} semantics 
swiftly leads us to rich territory.

{\bf Topological, tense and Heyting polyadic algebras:} In this paper we investigate the $\sf OTT$ to topological predicate logic with $\alpha$ many variables, 
briefly  $\sf TopPL_{\alpha}$, where $\alpha$ is an ordinal. 
Georgescu \cite{g, g2, g3, g4, g5} applied algebraic logic outside the realm of first order logic. He applied the well developed theory of Halmos' theory 
of polyadic algebras to intuitionistic, modal, temporal  and topological logic.
Georgescu studied Chang, modal, topological and tense {\it locally  polyadic algebras} of infinite dimension, which are essentially equivalent to Tarski's locally finite cylindric 
algebras. 
The work of Georgescu in \cite{g5} is substantially generalized in \cite{Heyting} by relaxing the condition of local 
finiteness and studying besides representability, various amalgamation properties for Heyting polyadic algebras. The work in this paper, preceded with the results established 
in  \cite{Heyting},  can be seen as a far reaching 
generalization of the work in Georgescu's remaining aforementioned  references which dealt only with representation theorems of {\it locally finite} algebras. 
The work in \cite{Heyting} can be seen as yet another far reaching generalization of he work of Georgescu in the remaining aforementioned references. 
While in \cite{Heyting}, we studied interpolation for intuitionistic fragment of Keisler's logic topological predicate logic, here we go further by studying various forms of representations 
for various subclasses 
of topological and tense cylindric algebras of infinite dimension, reflecting the rich interplay between the syntax and semantics of predicate modal logics of space and time respectively; 
in the latter case dealing with basic temporal predicate 
logic. Furthermore, we formulate and prove an omitting types theorem for tense predicate logic.
We recover the Henkin-Orey $\sf OTT$ for ${\sf TopPl}_{\alpha}$ when $\alpha$ is an infinite countable ordinal for countable theories. We 
prove negative $\sf OTT$s for ${\sf TopPL}_n$  when $n$ is finite $>2$, and the types are required to be omitted  with respect to 
certain generalized semantics.
We address the case when a single non-principal type is required to be omitted  with respect to 
so-called  {\it $m$--square} models with $2<n<m<\omega$. 
An $m$-square modal is only 'locally square' with $m$ measuring the {\it degree} of squareness; it is only an approximation of an ordinary model which is $\omega$-square. 
The idea is that if we approach this $m$-square model using a movable window, then there will become a certain point, determined by $m$  where we will mistake 
this $m$-square model, for an ordinary genuine ($\omega$-square) model. For $2<n<l<k$ a $k$-square model is $l$ square, but the converse may fail, 
both are approximations to ordinary models, with the $k$-square one a better (closer) approximation. The larger the degree of squareness, the closer the model is to an ordinary one.
Figuratively speaking, the limit of this sequence of infinite locally relativized models is the $\omega$-square Tarskian models. 
These locally relativized models (representations) are invented by Hirsch and 
Hodkinson in the context of relation algebras \cite[Chapter 13]{HHbook} and is adapted to expansions of cyindric algebras here. 
Considering such 
{\it clique-guarded} semantics 
swiftly leads us to rich territory.

{\bf Results on cylindric algbras:} Fix finite $n>2$. Let ${\sf CRCA}_n$ denote the class of completely representable $\CA_n$s 
and ${\sf LCA}_n={\bf El}\CRCA_n$ be the class of algebras satisfying the Lyndon conditions.
For a class $\sf K$ of Boolean algebras with operators, let $\sf K\cap \bf At$ denote the class of atomic algebras in $\sf K$.  By modifying the games coding the Lyndon conditions 
allowing \pa\ to reuse the pebble pairs on the board, we will show 
that ${\sf LCA}_n={\bf El}\CRCA_n={\bf El}\bold S_c\Nr_n\CA_{\omega}\cap \bf At$. 
Define an $\A\in \CA_n$ to be {\it strongly representable} $\iff$ $\A$ is atomic and the complex algebra of its atom structure, equivalently its \de\ completion, in symbols $\Cm\At\A$ is in $\RCA_n$. 
This is a strong form of representability; of course $\A$ itself will be in $\RCA_n$, because $\A$ embeds into $\Cm\At\A$ and $\RCA_n$ is a variety, {\it a fortiori} closed under forming subalgebras.
We denote the class of strongly representable atomic algbras of dimension $n$ by ${\sf SRCA}_n$. Nevertheless,  there are atomic simple countable algebras that are representable, 
but not strongly representable. In fact, we shall see that there is a countable simple 
atomic algebra in $\RCA_n$ such that $\Cm\At\A\notin \bold S\Nr_n\CA_{t(n}(\supset \RCA_n)$. 
So in a way some algebras are more representable than others. In fact, the following inclusions are known to hold:  
$${\sf CRCA}_n\subsetneq {\sf LCA}_n\subsetneq {\sf SRCA}_n\subsetneq {\sf RCA}_n\cap \bf At.$$ 
In this paper we delve into a new notion, that of {\it degrees of representability}. Not all algebras are representable in the same way or strength. 
If $\C\subseteq \Nr_n\D$, with $\D\in \CA_m$ for some ordinal (possibly infinite) $m$,  we say that $\D$ is an $m$-dilation of $\C$ or simply a dilation if $m$ is clear from context. 
Using this jargon of 'dilating algebras' we  say that $\A\in {\sf RCA}_n$ is {\it strongly representable up to $m>n$} $\iff$ $\Cm\At\A\in \bold S\Nr_n\CA_m$.
This means that, though $\A$ itself is in $\RCA_n$,  the \de\ completion of $\A$ is not representable, but nevertheless it has some neat embedding property; it is `close' to bieng representable. 
Using this jargon, $\A$ admits a dilation of a bigger dimension. The bigger the dimension of the dilation of the representable algebra the more representable the algebra is, 
the closer it is to being strongly representable 
Through the unfolding of this paper, we will investigate and make precise the notion of an algebra being more representable than another.
It is known that ${\sf LCA}_n$ is an elementary class, but ${\sf SRCA}_n$ is not. 
We shall prove below that $\sf Str(\bold O\Nr_n\CA_{n+3}=\F: \Cm\F\in \bold O\Nr_n\CA_{n+3}\}$  
is not elementary with $\bold O\in \bold S_c, S_d, I$ as defind in the abstrcat.
We prove that any class $\bold K$, of $\CA_n$ atom structures obtained from $\bold K$ 
(Kripke frames) such that 
$\At\Nr_n\TCA_{\omega}\subseteq \bold K\subseteq  \bold \At\bold S_c\Nr_n\TCA_{n+3}$, $\bold K$ is not elementary and 
lifting from atom structures to atomic algebras, we show that any class $\sf K(\subseteq {RCA}_n)$ with
$\bold S_d\Nr_n\CA_{\omega}\cap \CRCA_n\subseteq \sf K\subseteq \bold S_c\Nr_n\CA_{n+3}$,  
$\sf K$ is not  elementary either. Here $\Nr_n$ is the operator of taking $n$ 
neat reducts defined similarly to cylindric algebras, 
while recall recall  $\bold S_c(\bold S_d)$ is the operation of forming complete (dense) subalgebras.

\section{ Predicate topological logic via expansions of cylindric algebras}

\subsection{Basic notions} 
Let $\alpha$ be an arbitrary ordinal $>0$ and $X$ be a set. Then 
$\B(X)$ denotes the Boolean set algebra $\langle \wp(X), \cup, \cap, \sim, \emptyset, X\rangle$.
Let $U$ be a non-empty set.
For $s,t\in {}^{\alpha}U$ write $s\equiv_i t$ if $s(j)=t(j)$ for all $j\neq i$.
For $X\subseteq {}^{\alpha}U$ and $i,j<\alpha,$ let
$${\sf c}_iX=\{s\in {}^{\alpha}U: (\exists t\in X) (t\equiv_i s)\}$$
and
$${\sf d}_{ij}=\{s\in {}^{\alpha}U: s_i=s_j\}.$$
The algebra $\langle \B(^{\alpha}U), {\sf c}_i, {\sf d}_{ij}\rangle_{i,j<\alpha}$ is called the {\it full cylindric set algebra of dimension $\alpha$
with unit (or greatest or top element) $^{\alpha}U$}. $^{\alpha}U$ is called a {\it Cartesian space}.
Instead of taking ordinary set algebras, 
as in the case of cylindric algebras, with units of the form $^{\alpha}U$, one 
may require {\it that the base $U$ is endowed with some topology}. This enriches the algebraic structure. 
For given such an algebra, 
for each $k<\alpha$, one defines an {\it interior operator} on $\wp(^{\alpha}U)$ by
$$I_k(X)=\{s\in {}^{\alpha}U; s_k\in {\sf int}\{a\in U: s_a^k\in X\}\}, X\subseteq {}^{\alpha}U.$$
Here $s_a^k$ is the sequence that agrees with $s$ except possibly at  $k$ where its value is $a$.
This gives a {\it topological cylindric set algebra of dimension $\alpha$}.

The interior operators, as well as the box operators can also be defined on {\it weak spaces}, 
that is, sets of sequences agreeing co-finitely with a given fixed sequence. This makes a difference only when $\alpha$ is infinite.

\begin{definition} A {\it weak space of dimension $\alpha$} is a set of the 
form $\{s\in {}^{\alpha}U: |\{i\in \alpha: s_i\neq p_i\}|<\omega\}$ 
for a given fixed in advance $p\in {}^{\alpha}U$.
Now for $k<\alpha$, define
$$I_k(X)=\{s\in {}^{\alpha}U^{(p)}: \{s_k\in {\sf int}\{u\in U: s_k^u\in X\}\}.$$
\end{definition}
But we can even go further. Such operations 
also extend to the class of representable algebras $\CA$s, briefly $\sf RCA_{\alpha}$. $\sf RCA_{\alpha}$ is defined to be the class
${\bf SP}\sf Cs_{\alpha}$. This class is also equal to ${\bf SP}\sf Ws_{\alpha}$, and 
it is known that $\sf RCA_{\alpha}$,  is a variety, hence closed under $\bf H$, 
though infinitely many schema of 
equations are required to axiomatize it.  
An algebra  in $\sf RCA_{\alpha}$ is isomorphic to a set algebra with universe $\wp(V)$;
the top element $V$ is a {\it generalized} space which is a set of the form $\bigcup_{i\in I}{}^{\alpha}U_i$, $I$ a set $U_i\neq \emptyset$ $(i\in I)$,
and  $U_i\cap U_j=\emptyset$ for $i\neq j$. The class of all such concrete algebras is denoted by $\sf Gs_{\alpha}$. We refer to $\A\in {\sf Gs}_{\alpha}$
as a {\it generalized set algebra of dimension $\alpha$}.
So let $\A\in {\sf RCA}_{\alpha}$, and assume that $\A\cong \B$ where  
$\B\in \sf Gs_{\alpha}$ 
has top element  the  generalized space $V$.
The base of $V$ is the set $U=\bigcup_{s\in V}\rng s.$
Then one defines the interior operator $I_k$ on $\B$ by:
$$I_k(X)=\{s\in V: s_k\in {\sf int}\{a\in U: s_a^k\in X\}\}, X\subseteq V.$$
and, for that matter  the box operator relative to a Chang system $V:U\to \wp(\wp(U))$ as follows
$$s\in \Box_k(X)\Longleftrightarrow \{a\in U: s_a^k\in X\}\in V(s_k), X\subseteq V.$$

The following lemma is very easy to prove, so we omit the proof.
Formulated only for set algebras, it also holds for weak set algebras. 
\begin{lemma}\label{box}  For any  ordinal $\mu>1$, $\A\in {\sf Cs}_{\mu}$ and  $k<\mu$, 
let $I_k$ and $\Box_k$ be as defined above. Then
if $\A\in \sf Cs_{\alpha}$ has top element $^{\alpha}U$ and $\beta>\alpha$, then 
the following hold for any $Y\subseteq {}^{\alpha}U$ and $k<\alpha:$
\begin{enumerate}
\item  $I_k(Y)\subseteq Y,$ $\Box_k(Y)\subseteq Y,$ 

\item  If $f: \wp(^{\alpha}U)\to {}\wp(^{\beta}U)$ is defined via
$$X\mapsto \{s\in {}^{\beta}U: s\upharpoonright \alpha\in X\},$$
then $f(I_kX)=I_k(f(X))$ and $f(\Box_kX)=\Box_k(f(X))$,
for any $X\subseteq {}^{\alpha}U$.
\end{enumerate}
\end{lemma}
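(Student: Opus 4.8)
The plan is simply to unwind the definitions; both clauses are direct chases through the restriction map $s\mapsto s\upharpoonright\alpha$, and the only thing I would need to isolate explicitly is one standing hypothesis on the Chang system.

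\emph{Part (1).} Since ${\sf int}(S)\subseteq S$ for every $S\subseteq U$, if $s\in I_k(Y)$ then $s_k\in{\sf int}\{a\in U: s_a^k\in Y\}\subseteq\{a\in U: s_a^k\in Y\}$; as $s_{s_k}^k=s$, this gives $s\in Y$, so $I_k(Y)\subseteq Y$. The same computation gives $\Box_k(Y)\subseteq Y$ once one uses the reflexivity condition on the Chang system $V$, namely $W\in V(u)\Rightarrow u\in W$ for all $u\in U$ (the condition in force in the topological/$\sf S4$ setting that is the case of interest here): from $\{a\in U: s_a^k\in Y\}\in V(s_k)$ one gets $s_k\in\{a\in U: s_a^k\in Y\}$, i.e.\ $s\in Y$. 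I would point out that this is the only place where anything beyond the raw definitions enters.

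\emph{Part (2).} The elementary observation driving this is that for $k<\alpha\le\beta$, any $s\in{}^{\beta}U$ and any $a\in U$ one has $(s_a^k)\upharpoonright\alpha=(s\upharpoonright\alpha)_a^k$ and $(s\upharpoonright\alpha)_k=s_k$. Writing $t=s\upharpoonright\alpha$, the definition of $f$ then gives
$$\{a\in U: s_a^k\in f(X)\}=\{a\in U: (s_a^k)\upharpoonright\alpha\in X\}=\{a\in U: t_a^k\in X\}$$
for every $s\in{}^{\beta}U$. Interpreting $I_k$ (resp.\ $\Box_k$) on the right-hand side as the operator of the $\beta$-dimensional set algebra over the \emph{same} base $U$, substitution into its definition yields
$$s\in I_k(f(X))\iff t_k\in{\sf int}\{a\in U: t_a^k\in X\}\iff t\in I_k(X)\iff s\in f(I_kX),$$
which is the first identity, and the argument for $\Box_k$ is word-for-word the same with ``$t_k\in{\sf int}(\,\cdot\,)$'' replaced by ``$(\,\cdot\,)\in V(t_k)$''. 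This records that $f$ — the usual map realising the $\alpha$-dimensional set algebra as a neat reduct of the $\beta$-dimensional one — respects $I_k$ and $\Box_k$ in addition to the cylindric operations.

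There is no genuine obstacle here: everything reduces to pushing the definitions through $s\mapsto s\upharpoonright\alpha$. The only points worth spelling out are (i) which property of the Chang system underlies $\Box_k(Y)\subseteq Y$, and (ii) that this property, together with the topology on $U$, is untouched in passing from $^{\alpha}U$ to $^{\beta}U$, so the same $I_k$ and $\Box_k$ formulas apply there. The identical argument, with $f$ modified so as to extend each sequence by the fixed base sequence $p$, covers the weak-space version mentioned right after the statement.
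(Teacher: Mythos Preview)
Your proof is correct and is precisely the kind of routine definition-chase the paper has in mind; in fact the paper explicitly omits the proof, remarking only that it is ``very easy to prove.'' Your flagging of the reflexivity condition on the Chang system needed for $\Box_k(Y)\subseteq Y$ is apt, and the restriction identity $(s_a^k)\upharpoonright\alpha=(s\upharpoonright\alpha)_a^k$ for $k<\alpha$ is exactly the point that makes part~(2) immediate.
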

In cylindric algebra theory a subdirect product of set algebras is isomorphic to a generalized set algebra.
We show that this phenomena persists when the bases carry topologies; we need to describe the topology on the base 
of the resulting generalized set algebra in terms
of the topologies on the bases of the set algebras 
involved in the subdirect product.

\begin{definition} Let $\{X_i:i\in I\}$ be a family of topological spaces indexed by $I$. 
Let $X=\bigcup X_i$ be the disjoint union of the underlying sets. For each $i\in I$ let
$\phi_i: X_i\to X$ be the canonical injection. 
The {\it coproduct on $X$} is defined as the finest topology on $X$ for which the canonical injections are continuous.
\end{definition}
That is a  subset $U$ of $X$ is open in the coproduct topology on $X$ $\iff$ its preimage $\phi_i^{-1}(U)$ is open in $X_i$ for each $i\in I$ $\iff$ its intersection with $X_i$ is open relative to $X_i$ for each $i\in I$.

\begin{theorem} Let $\B$ be the $\sf Gs_{\alpha}$ with unit $V=\bigcup_{i\in I}{}^{\alpha}U_i$ where $U_i\cap U_j=\emptyset$ and base 
$\bigcup_{i\in I}U_i$ carrying a topology.  Assume that $\B$ has universe $\wp(V)$. 
Let $\A_i$ be the $\sf Cs_{\alpha}$ 
with base $U_i$, $U_i$ having the subspace topology and universe $\wp(^{\alpha}U_i)$. 
Then $f:\B\to \prod_{i\in I}\A_i$ 
defined by
$X\mapsto (X\cap {}^{\alpha}U_i:I\in I)$ is an isomorphism of cylindric algebras; 
furthermore it respects the interior operators stimulated by the topologies
on the bases.
\end{theorem}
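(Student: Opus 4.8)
The plan is to verify three things: that $f$ is a well-defined Boolean isomorphism, that it commutes with the cylindric operations $\cyl{i}$ and $\diag{ij}$, and finally — the new content — that it commutes with each interior operator $I_k$. The first two are the standard fact that a $\sf Gs_\alpha$ with unit a disjoint union of Cartesian spaces decomposes as the direct product of the corresponding $\sf Cs_\alpha$'s; I would only sketch this. Injectivity of $f$ follows since $V=\bigcup_i {}^\alpha U_i$ means $X=\bigcup_i (X\cap {}^\alpha U_i)$, so $X$ is recovered from its image; surjectivity follows since for any $(Y_i)_{i\in I}$ with $Y_i\subseteq {}^\alpha U_i$, the set $Y=\bigcup_i Y_i$ lies in $\wp(V)$ and $f(Y)=(Y_i)$; and $f$ preserves $\cup,\cap,\sim$ coordinatewise because the $U_i$ are pairwise disjoint (so relativization to ${}^\alpha U_i$ distributes over these Boolean operations and complement-in-$V$ restricts to complement-in-${}^\alpha U_i$). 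For the cylindrifications, note that if $s\in {}^\alpha U_i$ and $t\equiv_i s$ with $t\in V$, then $t$ agrees with $s$ off coordinate $i$, hence $\rng t\subseteq U_i\cup\{t_i\}$; but $V$ is a disjoint union of full Cartesian spaces, so the block of $V$ containing $t$ must be ${}^\alpha U_i$ as well — therefore $\cyl{i}^{\B}X \cap {}^\alpha U_i = \cyl{i}^{\A_i}(X\cap {}^\alpha U_i)$. The diagonals are even easier: $\diag{ij}^{\B}\cap {}^\alpha U_i = \diag{ij}^{\A_i}$, and intersecting with $X$'s image is coordinatewise.

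The heart of the matter is the interior operators. Recall $I_k^{\B}(X)=\{s\in V: s_k\in {\sf int}_{U}\{a\in U: s_a^k\in X\}\}$ where $U=\bigcup_i U_i$ carries the coproduct topology, and $I_k^{\A_i}(Y)=\{s\in {}^\alpha U_i: s_k\in {\sf int}_{U_i}\{a\in U_i: s_a^k\in Y\}\}$ where $U_i$ carries the subspace topology. Fix $i\in I$ and $s\in {}^\alpha U_i$. The crucial observation is that for $a\in U$, the modified sequence $s_a^k$ lies in $V$ only when it lies in ${}^\alpha U_i$, i.e. only when $a\in U_i$ (again because distinct blocks of $V$ are disjoint Cartesian spaces and all coordinates of $s$ other than $k$ already lie in $U_i$). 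Hence the set $A:=\{a\in U: s_a^k\in X\}$ is actually a subset of $U_i$, and $A=\{a\in U_i: s_a^k\in X\cap {}^\alpha U_i\}$. Now I invoke the defining property of the coproduct topology: a set $A\subseteq U_i$ that happens to sit inside $U_i$ is open in the coproduct topology on $U$ iff $A\cap U_j$ is open in $U_j$ for every $j$, i.e. iff $A$ is open in $U_i$ (the intersections with the other blocks being empty, hence trivially open). Consequently ${\sf int}_U(A)$, which is the largest open subset of $A$ and is contained in $A\subseteq U_i$, coincides with ${\sf int}_{U_i}(A)$. Therefore $s\in I_k^{\B}(X)\iff s_k\in {\sf int}_U(A)\iff s_k\in {\sf int}_{U_i}(A)\iff s\in I_k^{\A_i}(X\cap {}^\alpha U_i)$, which is exactly the statement that $f$ respects $I_k$ in the $i$-th coordinate.

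The step I expect to require the most care is the interplay between the coproduct topology and the subspace topologies — specifically the claim that ${\sf int}_U$ restricted to subsets of a single block $U_i$ equals ${\sf int}_{U_i}$. This uses both that $U_i$ is open in the coproduct (so open-in-$U_i$ implies open-in-$U$) and that the coproduct is \emph{finer} than any topology making the injections continuous, so there are no "extra" interior points coming from outside $U_i$; since $U_i$ is clopen in the coproduct, ${\sf int}_U$ and ${\sf int}_{U_i}$ agree on $\wp(U_i)$. A subsidiary point worth stating explicitly is the "block rigidity" of $V$: because $V=\bigcup_{i\in I}{}^\alpha U_i$ with the $U_i$ pairwise disjoint, any sequence obtained from a member of ${}^\alpha U_i$ by changing a single coordinate to a value $a$ lies in $V$ only if $a\in U_i$ — this is what localizes the computation of $I_k$, $\cyl{k}$ and $\diag{ij}$ to a single factor. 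With these two observations in hand the verification is routine, and I would present it as above, concluding that $f$ is an isomorphism of cylindric algebras that additionally commutes with all the interior operators $I_k$, $k<\alpha$.
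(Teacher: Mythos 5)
Your proof is correct, and there is nothing in the paper to measure it against: the theorem is stated there without any proof at all (the preceding remark that the decomposition of a $\sf Gs_{\alpha}$ into a product of $\sf Cs_{\alpha}$'s ``persists when the bases carry topologies'' is simply asserted). Your argument is the natural one, and the part that matters --- the localization of the interior operators --- is handled properly. Two remarks. First, you have in effect repaired the statement: as literally worded the theorem only says the base $\bigcup_i U_i$ carries ``a topology'', and under an arbitrary topology the interior-operator claim is false (take $U=\mathbb{R}$ with $U_0=\Q$, $U_1=\mathbb{R}\smallsetminus \Q$: then for $X={}^{\alpha}\Q$ one gets $I_k^{\B}(X)=\emptyset$ while $I_k^{\A_0}$ of the top element is everything). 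Your reading, that $U$ carries the coproduct topology of the $U_i$ --- equivalently, that each $U_i$ is (cl)open in $U$ --- is exactly what the definition placed immediately before the theorem is there for, and your observation that ${\sf int}_U$ and ${\sf int}_{U_i}$ then agree on $\wp(U_i)$ is the one nontrivial point of the whole proof. Second, your ``block rigidity'' step (a sequence obtained from $s\in {}^{\alpha}U_i$ by changing one coordinate lies in $V$ only if the new value is in $U_i$) needs $\alpha>1$, since for $\alpha=1$ there is no surviving coordinate to anchor the block and the cylindrifier computation already fails; this matches the hypothesis $\mu>1$ in the lemma preceding the theorem and is worth stating, but it is a degenerate case and not a defect of your argument in the intended setting.
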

${\sf TCs}_{\alpha}({\sf TGs}_{\alpha})$ 
denotes the class of topological (generalized) set algebras.
Now such concrete set algebras lend itself to an abstract formulation aiming to capture the concrete set algebras; or rather the variety 
generated by them.
This consists of expanding the signature of cylindric algebras 
by unary operators,  or modalities, one for each $k<\alpha$, satisfying certain identities.
The axiomatizations we give are actually simpler than  those stipulated by Georgescu in \cite{g, g2}, 
although  locally finite polyadic algebras and locally finite cylindric algebras are  equivalent.
We use only substitutions corresponding to replacements; in the case of dimension complemented algebras
all substitutions corresponding to finite transformations are term 
definable from these, cf. \cite{HMT2}. This makes axiom $(A8)$ on p.1 of 
\cite{g} superfluous.
We start with the standard definition of cylindric algebras \cite[Definition 1.1.1]{HMT2}:
\begin{definition}
Let $\alpha$ be an ordinal. A {\it cylindric algebra of dimension $\alpha$}, a  $\CA_{\alpha}$ for short, 
is defined to be an algebra
$$\C=\langle C, +, \cdot, -, 0, 1, \cyl{i}, {\sf d}_{ij}  \rangle_{i,j\in \alpha}$$
obeying the following axioms for every $x,y\in C$, $i,j,k<\alpha$

\begin{enumerate}

\item The equations defining Boolean algebras,

\item $\cyl{i}0=0,$

\item $x\leq \cyl{i}x,$

\item $\cyl{i}(x\cdot \cyl{i}y)=\cyl{i}x\cdot \cyl{i}y,$

\item $\cyl{i}\cyl{j}x=\cyl{j}\cyl{i}x,$

\item ${\sf d}_{ii}=1,$

\item if $k\neq i,j$ then ${\sf d}_{ij}=\cyl{k}({\sf d}_{ik}\cdot {\sf d}_{jk}),$

\item If $i\neq j$, then ${\sf c}_i({\sf d}_{ij}\cdot x)\cdot {\sf c}_i({\sf d}_{ij}\cdot -x)=0.$

\end{enumerate}
\end{definition}
For a cylindric algebra $\A$, we set ${\sf q}_ix=-{\sf c}_i-x$ and ${\sf s}_i^j(x)={\sf c}_i({\sf d}_{ij}\cdot x)$.
Now we want to abstract equationally the prominent features of the concrete interior operators defined on cylindric 
set and weak set algebras.
We expand the signature of $\CA_{\alpha}$ 
by a unary operation $I_i$ 
for each $i\in \alpha.$ In what follows $\oplus$ denotes the operation of symmetric difference, that is, 
$a\oplus b=(\neg a+b)\cdot (\neg b+a)$.
For $\A\in \CA_{\alpha}$ and $p\in \A$, $\Delta p$, {\it the dimension set of $p$}, is defined to be the set 
$\{i\in \alpha: {\sf c}_ip\neq p\}.$ In polyadic terminology $\Delta p$ is called {\it the support of $p$}, and if $i\in \Delta p$, then $i$ 
is said to {\it  support $p$} \cite{g,g2}.

\begin{definition}\label{topology}\cite{g} A {\it topological cylindric algebra of dimension $\alpha$}, $\alpha$ an ordinal,
is an algebra of the form $(\A,I_i)_{i<\alpha}$ where $\A\in \sf CA_{\alpha}$ 
and for each $i<\alpha$, $I_i$ is a unary operation on 
$A$ called an {\it  interior operator} satisfying the following equations for all $p, q\in A$ and $i, j\in \alpha$:
\begin{enumerate}
\item ${\sf q}_i(p\oplus q)\leq {\sf q}_i (I_ip\oplus I_iq),$
\item $I_ip\leq p,$
\item $I_ip\cdot I_ip=I_i(p\cdot q),$
\item $I_ip\leq I_iI_ip,$
\item $I_i1=1,$
\item ${\sf c}_kI_ip=I_ip, k\neq i, k\notin \Delta p,$
\item ${\sf s}_j^iI_ip=I_j{\sf s}_j^ip, j\notin \Delta p.$ 
\end{enumerate}
\end{definition}
The class of all such topological cylindric  
algebras are denoted by ${\sf TCA}_{\alpha}.$ 
\subsection{Basic Lemmas}

For $\sf K$ any class of $\CA$s, we write $\sf TK$ for the corresponding class of topological $\CA$s. 
For example, for any ordinal $\beta$, (recall that) $\sf TCA_{\beta}$ and we write $\sf TDc_{\beta}$ to denote the classes of all topological $\CA_{\beta}$s 
and {\it dimension complemented} topological $\CA_{\beta}$s, respectively.
Throughout this section, unless otherwise indicated, $\alpha$ denotes an infinite ordinal. Then $\A\in \sf TDc_{\alpha}$ $\iff$ $\alpha\neq \Delta x$ for all $x\in A$ $\iff$$\alpha\sim \Delta x$ is infinite for all $x\in A$. 
Furthermore, for every $\A\in \sf TDc_{\alpha}$ and every finite transformation $\tau$ we have a unary operation
${\sf s}_{\tau}$ that happens to be a Boolean endomorphism on $\A$ 
\cite[Theorem 1.11.11]{HMT2}.
\begin{lemma}\label{diagonal} 
\begin{enumerate}
\item Let $\C\in \CA_{\alpha}$ and let $F$ be a Boolean filter on $\C$.
Define the relation $E$ on $\alpha$ by $(i,j)\in E$ if and only if
${\sf d}_{ij}\in F$. Then $E$ is an equivalence relation on $\alpha$.
\item Let $\C\in \CA_{\alpha}$ and $F$ be a Boolean filter of $\C$. Let $V=\{\tau\in {}^{\alpha}\alpha: |\{i\in \alpha: \tau(i)\neq i\}|<\omega\}$.
For $\sigma, \tau\in V$,  write 
$$\sigma\equiv_E\tau\textrm {  iff } 
(\forall i\in \alpha) (\sigma (i),\tau(i))\in E.$$ 
and let 
$$\bar{E}=\{(\sigma,\tau)\in {}^2V: \sigma\equiv_E \tau\}.$$
Then $\bar{E}$ is an euiqvalence relation on $V$. Let $W= V/\bar{E}.$ 
For $h\in W,$ write $h=\tau/\bar{E}$ for $\tau\in V$ such that
$\tau(j)/E=h(j)$ for all $j\in \alpha$. 
Let 
$f(x)=\{ \bar{\tau} \in W: {\sf s}_{\tau}x\in F\}.$  
Then $f$ is well defined. 
Furthemore, $W$ can be identified with the weak space $^{\alpha}[U/E]^{(\bar{p})}$ where $\bar{p}=(p(i)/E: i <\alpha)$ 
via $\tau/\bar{E}\mapsto [\tau]$, 
where $[\tau](i)=\tau(i)/E.$ Accordingly, we write $W={}^{\alpha}[U/E]^{(\bar{p})}.$ 
\end{enumerate}
\end{lemma}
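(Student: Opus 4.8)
\medskip
\noindent{\bf Proof sketch (plan).}
The plan is to handle the two parts separately; once the relevant cylindric identities are isolated, everything reduces to bookkeeping with equivalence relations and elementary manipulations of the filter $F$ (of which we use only that $1\in F$, that $F$ is closed under $\cdot$, and that $F$ is upward closed). For Part~1 I would verify the three defining properties of an equivalence relation directly from the $\CA_{\alpha}$ axioms. Reflexivity is immediate since ${\sf d}_{ii}=1\in F$. Symmetry reduces to the identity ${\sf d}_{ij}={\sf d}_{ji}$, which holds in every $\CA_{\alpha}$ (\cite[1.3.1]{HMT2}), so that ${\sf d}_{ij}\in F\iff{\sf d}_{ji}\in F$. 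For transitivity, given ${\sf d}_{ij},{\sf d}_{jk}\in F$: if $i,j,k$ are not pairwise distinct the conclusion ${\sf d}_{ik}\in F$ is trivial (it is $1$, or one of the hypotheses), while if they are distinct I would invoke the standard identity ${\sf d}_{ij}\cdot{\sf d}_{jk}\le{\sf d}_{ik}$ (\cite[1.3.2]{HMT2}); since $F$ is closed under $\cdot$ and upward closed, ${\sf d}_{ij}\cdot{\sf d}_{jk}\in F$ forces ${\sf d}_{ik}\in F$.

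For Part~2, that $\bar E$ is an equivalence relation on $V$ is immediate from Part~1, all three properties passing coordinatewise. Next I would set up the identification of $W=V/\bar E$ with the weak space. Writing $U=\alpha$ and $p=\mathrm{Id}_{\alpha}$, so that $V$ is the weak space ${}^{\alpha}U^{(p)}$ of dimension $\alpha$ over $U$ with reference point $p$, the assignment $\tau\mapsto[\tau]$ with $[\tau](i)=\tau(i)/E$ satisfies $[\sigma]=[\tau]\iff\sigma\,\bar E\,\tau$ by the very definition of $\bar E$, hence factors through a well-defined injection $V/\bar E\to{}^{\alpha}(U/E)$. Its image is exactly ${}^{\alpha}[U/E]^{(\bar p)}$ with $\bar p=(p(i)/E:i<\alpha)$: if $\tau\in V$ it moves only finitely many coordinates, so $\{i:[\tau](i)\neq\bar p(i)\}\subseteq\{i:\tau(i)\neq i\}$ is finite; and conversely, given $h$ in that weak space one lifts it to $\tau\in V$ by choosing a representative of each value $h(i)$, taking $\tau(i)=i$ whenever $h(i)=i/E$, so that $\tau$ moves only the finitely many coordinates where $h(i)\neq\bar p(i)$ and $[\tau]=h$. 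This gives the claimed bijection and legitimises both the notation $W={}^{\alpha}[U/E]^{(\bar p)}$ and the description ``$h=\tau/\bar E$ with $\tau(j)/E=h(j)$''.

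The one substantive point is that $f$ is well defined, i.e.\ that $f(x)=\{\bar\tau\in W:{\sf s}_{\tau}x\in F\}$ does not depend on the chosen representative $\tau$; concretely, for $\sigma,\tau\in V$ with $\sigma\,\bar E\,\tau$ and $x\in\C$ one needs
$${\sf s}_{\sigma}x\in F\ \Longleftrightarrow\ {\sf s}_{\tau}x\in F,$$
where ${\sf s}_{\tau}$ for a finite transformation $\tau$ denotes the Boolean endomorphism of $\C$ recalled above. I would first reduce to the case of two members of $V$ differing at a single coordinate: the set $J=\{i<\alpha:\sigma(i)\neq\tau(i)\}$ is finite (it lies in the union of the two finite ``supports''), one has ${\sf d}_{\sigma(i)\tau(i)}\in F$ for every $i$ (trivially off $J$, and by $\sigma\,\bar E\,\tau$ on $J$), and one passes from $\sigma$ to $\tau$ through a finite chain inside $V$ altering one coordinate of $J$ at a time. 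In the one-coordinate case, with the moved coordinate taking values $k$ and $l$ and ${\sf d}_{kl}\in F$, the crux is the substitution--diagonal identity
$${\sf d}_{kl}\cdot{\sf s}_{\sigma}x\ =\ {\sf d}_{kl}\cdot{\sf s}_{\tau}x;$$
granting it, ${\sf s}_{\sigma}x\in F$ yields ${\sf d}_{kl}\cdot{\sf s}_{\sigma}x={\sf d}_{kl}\cdot{\sf s}_{\tau}x\in F$ and hence ${\sf s}_{\tau}x\in F$ by upward closure, and symmetrically; chaining along the finite chain finishes the argument. \textbf{The main obstacle} is exactly this last identity: in a set algebra it is transparent (on the set $\{s:s_k=s_l\}$ the composites $s\circ\sigma$ and $s\circ\tau$ coincide, so ${\sf s}_{\sigma}$ and ${\sf s}_{\tau}$ agree there), but for an abstract $\CA_{\alpha}$ it has to be extracted from the standard identities governing ${\sf s}^i_j$, ${\sf d}_{ij}$ and general finite-transformation substitutions (\cite[\S1.5 and \S1.11]{HMT2}); everything else in the argument is routine.
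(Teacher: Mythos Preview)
The paper states this lemma without proof (no proof environment follows the statement; the next lemma on the homomorphism property cites \cite{IGPL}), so there is nothing to compare against directly. Your proposal is the standard argument and is correct. Part~1 is routine from the $\CA_{\alpha}$ axioms exactly as you say. For Part~2, your reduction to a chain of one-coordinate changes inside $V$ is valid (each intermediate map has support contained in the union of the supports of $\sigma$ and $\tau$, hence lies in $V$, and adjacent members of the chain are $\bar E$-related since they agree off the one changed coordinate and ${\sf d}_{\sigma(i)\tau(i)}\in F$ at that coordinate). The identity you flag as the ``main obstacle'', namely ${\sf d}_{kl}\cdot{\sf s}_{\sigma}x={\sf d}_{kl}\cdot{\sf s}_{\tau}x$ when $\sigma,\tau$ differ at a single coordinate with values $k,l$, does follow from the general calculus of finite-transformation substitutions in \cite[\S1.11]{HMT2}; your set-algebra heuristic is the right guide, and the abstract derivation goes through the identities ${\sf s}_{\rho}{\sf d}_{ij}={\sf d}_{\rho(i)\rho(j)}$ and the compatibility of ${\sf s}_{\rho}$ with replacements. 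One small caveat worth making explicit: the lemma as stated in the paper takes $\C\in\CA_{\alpha}$, but the general substitution ${\sf s}_{\tau}$ for $\tau\in V$ is only guaranteed to be a term operation (and a Boolean endomorphism) in the dimension-complemented setting $\sf Dc_{\alpha}$, which is the context in which the lemma is actually applied (Lemma~\ref{essence} and Theorem~\ref{infinite}); your argument implicitly uses this, and that is fine.
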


\begin{definition} Let $\A$ be an algebra having a cylindric reduct of dimension $\alpha$. 
A Boolean ultrafilter $F$ of $\A$ is said to be {\it Henkin} if for all $k<\alpha$, for all $x\in A$, whenever
${\sf c}_kx\in F$, then there exists $l\notin \Delta x$ such
that ${\sf s}_l^kx\in F$.
\end{definition}

\begin{lemma} Let everything be as in the previous lemma, and assume that $F$ is a Henkin ultrafilter.
Then $f$ as defined in the previous lemma is a $\sf CA$ homomorphsim.
\end{lemma}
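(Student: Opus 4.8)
The claim is that, given the data of Lemma~\ref{diagonal} together with the Henkin property on the ultrafilter $F$, the map $f$ defined by $f(x)=\{\bar\tau\in W:{\sf s}_\tau x\in F\}$ is a cylindric algebra homomorphism from $\C$ into the full (weak) cylindric set algebra with unit $W={}^{\alpha}[U/E]^{(\bar p)}$. The plan is to verify that $f$ preserves each of the cylindric operations in turn; the Boolean part and the diagonals are routine from the fact that $F$ is an ultrafilter and the $\mathsf{s}_\tau$ are Boolean endomorphisms (by \cite[Theorem 1.11.11]{HMT2}), so the real content lies in the cylindrification clause, which is exactly where the Henkin hypothesis is used.

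First I would record that $f$ is well defined (already noted in Lemma~\ref{diagonal}) and check the Boolean clauses: since each $\mathsf{s}_\tau$ is a Boolean endomorphism and $F$ is a Boolean ultrafilter, $\mathsf{s}_\tau(x\cdot y)=\mathsf{s}_\tau x\cdot\mathsf{s}_\tau y$ and $\mathsf{s}_\tau(-x)=-\mathsf{s}_\tau x$, so $\bar\tau\in f(x\cdot y)\iff\mathsf{s}_\tau x\in F$ and $\mathsf{s}_\tau y\in F\iff\bar\tau\in f(x)\cap f(y)$, and similarly $f(-x)=W\setminus f(x)$; hence $f$ is a Boolean homomorphism. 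For the diagonals, $\bar\tau\in f({\sf d}_{ij})$ iff $\mathsf{s}_\tau{\sf d}_{ij}={\sf d}_{\tau(i)\tau(j)}\in F$ iff $(\tau(i),\tau(j))\in E$ iff $[\tau](i)=[\tau](j)$, which says precisely that $\bar\tau$ lies in the diagonal set ${\sf d}_{ij}^{W}$ of the set algebra on $W$; so $f({\sf d}_{ij})={\sf d}_{ij}^{W}$.

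The main step is $f({\sf c}_k x)={\sf c}_k^{W} f(x)$, i.e. that $\bar\tau\in f({\sf c}_k x)$ iff there is $\bar\sigma\equiv_k\bar\tau$ (in $W$) with $\bar\sigma\in f(x)$. For the inclusion $\supseteq$: if $\mathsf{s}_\sigma x\in F$ and $\sigma$ differs from $\tau$ only in the $k$-th coordinate (up to $E$), then using the cylindric axioms $\mathsf{s}_\sigma x\le\mathsf{c}_k\mathsf{s}_\sigma x=\mathsf{s}_\tau\mathsf{c}_k x$ (one must juggle $\mathsf{s}$'s and $\mathsf{c}_k$, using that $\tau,\sigma$ agree off $k$ and the standard commutation laws, in a dimension-complemented-style computation available since $\alpha$ is infinite and we work with finite transformations), so $\mathsf{s}_\tau\mathsf{c}_k x\in F$, giving $\bar\tau\in f(\mathsf{c}_k x)$. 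For the harder inclusion $\subseteq$: suppose $\mathsf{s}_\tau\mathsf{c}_k x\in F$; I would rewrite $\mathsf{s}_\tau\mathsf{c}_k x$ as $\mathsf{c}_k\mathsf{s}_\tau x$ (legitimate after choosing a representative $\tau$ whose range avoids the critical index, or by a substitution identity), so $\mathsf{c}_k(\mathsf{s}_\tau x)\in F$; now apply the Henkin property to the element $\mathsf{s}_\tau x$: there is $l\notin\Delta(\mathsf{s}_\tau x)$ with $\mathsf{s}_l^k(\mathsf{s}_\tau x)\in F$, and $\mathsf{s}_l^k\mathsf{s}_\tau x=\mathsf{s}_{\tau'}x$ where $\tau'=[k\!\mapsto\! l]\circ\tau\in V$ agrees with $\tau$ off the coordinate(s) over $k$; thus $\bar{\tau'}\in f(x)$ and $\bar{\tau'}\equiv_k\bar\tau$ in $W$, as required.

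The expected obstacle is purely bookkeeping rather than conceptual: keeping straight, for a given $\bar\tau\in W$, a suitable representative $\tau\in V$ whose range is well-behaved so that the substitution identities $\mathsf{s}_\tau\mathsf{c}_k=\mathsf{c}_k\mathsf{s}_\tau$ and $\mathsf{s}_l^k\mathsf{s}_\tau=\mathsf{s}_{\tau'}$ apply verbatim, and then checking that the index $l$ produced by the Henkin condition (which lies outside $\Delta(\mathsf{s}_\tau x)$) indeed yields a $\tau'$ representing an element of $W$ that is $k$-equivalent to $\bar\tau$; the infiniteness of $\alpha$ together with the fact that all our transformations move only finitely many points guarantees enough room to choose such representatives, exactly as in the classical Henkin construction for cylindric algebras \cite{HMT2}. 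Once the cylindrification clause is in place, collecting the Boolean, diagonal, and cylindrification verifications shows $f$ is a $\CA$ homomorphism, completing the proof.
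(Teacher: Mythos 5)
Your proof is correct and is essentially the argument the paper delegates to the cited reference \cite{IGPL}: the Boolean and diagonal clauses follow from $\mathsf{s}_\tau$ being a Boolean endomorphism sending ${\sf d}_{ij}$ to ${\sf d}_{\tau(i)\tau(j)}$, and the cylindrifier clause is the classical Henkin witness argument, with the substitution/cylindrification commutation handled by choosing representatives of $\bar\tau$ with enough free indices (possible since $\alpha$ is infinite and all transformations are finite). No gaps beyond the bookkeeping you already flag.
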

\begin{proof}\cite{IGPL}.
\end{proof}
\begin{definition}\label{interior}
Let everything be as in the hypothesis of lemma \ref{essence}.
For $s\in W$ and $k<\alpha$ we write $s^k_u$ for $s^k_{u/E}$. 
For $k\in \alpha,$ then  $I_k$ is the (interior) operator  on $\wp(W)$ 
defined by $I_k(X)=\{s\in W: s_k\in {\sf int}\{u\in U: s^k_{u}\in X\}\}.$  Similarly, if $V: U/E\to \wp(\wp(U/E))$ is a Chang system
then $\Box_k$ is defined on $\wp(W)$ by  
$s\in \Box_k(X)\Longleftrightarrow \{u/E\in U/E: {s}^k_{u}\in X\}\in V[s(i/E)].$
\end{definition}

From now on, we replace the interior operator $I_i$ by $\Box_i$.
\begin{lemma}\label{essence}
Assume that $\C\in \sf TDc_{\alpha}$, $F$ is a Henkin  ultrafilter of $\C$ and $a\in F.$
Then there exist a non-empty set $U$, $p\in {}^{\alpha}U,$
a topology on $U/E$ and a homomorphism 
$f:\C\to (\wp(W), \Box_i)_{i<\alpha}$ with $f(a)\neq 0$, 
where $W={}^{\alpha}[U/E]^{\bar{p}}$, with $E$ as defined in lemma \ref{diagonal}
and  $\Box_i$ $(i<\alpha)$ is the concrete interior operator 
defined  in Definition \ref{interior}.
\end{lemma}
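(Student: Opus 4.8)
The plan is to build the homomorphism $f$ essentially as the one already produced in Lemma~\ref{diagonal} together with the homomorphism lemma for Henkin ultrafilters, and then verify that this same $f$ respects the box operators $\Box_i$ in the sense of Definition~\ref{interior}. So the only genuinely new content over the preceding lemmas is that the topology on $U/E$ can be \emph{chosen} so that $f$ becomes a homomorphism of \emph{topological} cylindric algebras, not merely of cylindric algebras.

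\medskip

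\noindent\textbf{Step 1: Set-up.} Apply Lemma~\ref{diagonal} to $\C$ and the Henkin ultrafilter $F$: this yields the equivalence relation $E$ on $\alpha$, the weak space $W = {}^{\alpha}[U/E]^{(\bar p)}$, and the map $f(x) = \{\bar\tau \in W : {\sf s}_\tau x \in F\}$, which by the Henkin-ultrafilter homomorphism lemma is a $\CA$-homomorphism with $f(a) \neq 0$ (since $a \in F$ forces $\bar{\Id} \in f(a)$). Here one uses $\C \in \sf TDc_\alpha$ so that all finite-transformation substitutions ${\sf s}_\tau$ are Boolean endomorphisms, as recalled after Definition~\ref{topology}.

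\medskip

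\noindent\textbf{Step 2: Define the topology on $U/E$.} This is the crux. For each $x \in A$ consider the set $U(x) = \{u/E \in U/E : {\sf s}_u^k\, x \in F\}$ — more precisely, for a fixed $k$ outside a suitable finite set, this carves out a subset of $U/E$. The natural move is to declare the topology on $U/E$ to be the one \emph{generated} by taking, as a sub-basis of open sets, all sets of the form coming from applying $f$ to elements of the form $\Box_k x$ in $\C$, sliced at an appropriate coordinate; i.e. the coarsest topology making all the "fibre sets'' $\{u/E : {\sf s}_u^k(\Box_k x) \in F\}$ open. One then checks, using axioms (2)--(5) of Definition~\ref{topology} transported through $F$ (which is a filter, so closed under the relevant meets, and an ultrafilter, so behaves well under $\oplus$ and ${\sf q}_i$ via axiom (1)), that these sets really do form (a basis for) a topology: axiom (4), $\Box_i p \le \Box_i\Box_i p$, gives that the candidate "interior'' of such a set contains it back, axiom (2) gives $\Box_i p \le p$, and axiom (3) gives closure under finite intersection of the basic opens. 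The role of axioms (6) and (7) — the commutation of $\Box_i$ with cylindrifications ${\sf c}_k$ ($k \neq i$, $k \notin \Delta p$) and with substitutions ${\sf s}_j^i$ — is to guarantee that the fibre $\{u/E : {\sf s}_u^k(\Box_k x)\in F\}$ does not depend on the irrelevant coordinates of the chosen $\tau$, so that the topology is well-defined on $U/E$ rather than on $W$, exactly paralleling the well-definedness argument for $f$ in Lemma~\ref{diagonal}.

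\medskip

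\noindent\textbf{Step 3: Verify $f(\Box_k x) = \Box_k(f(x))$.} With the topology in hand, unwind Definition~\ref{interior}: $s \in \Box_k(f(x))$ iff $s_k \in {\sf int}\{u : s_u^k \in f(x)\}$, i.e. iff $s_k$ is an interior point of $\{u/E : {\sf s}_u^k(\text{the }\C\text{-element coding }x\text{ along }s) \in F\}$. By the choice of topology in Step~2, the basic open neighbourhoods of $s_k$ are exactly the fibre sets of elements $\Box_k y \in \C$, and using Henkin-ness (to realize the relevant witness $l \notin \Delta(\cdot)$) together with axioms (2) and (4) one shows this interior-point condition is equivalent to ${\sf s}_\tau(\Box_k x) \in F$, which is precisely $s \in f(\Box_k x)$. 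One direction (${\sf s}_\tau \Box_k x \in F \Rightarrow$ interior point) is easy since $\Box_k x$'s own fibre is a witnessing open set by axiom (2); the converse uses that $F$ is an ultrafilter together with axiom (3) to shrink any basic open witness into the fibre of a single $\Box_k y \le x$.

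\medskip

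\noindent\textbf{Main obstacle.} The delicate point is Step~2 combined with the converse direction of Step~3: one must choose the topology small enough that "interior point of $f(x)$'s fibre'' is captured by a single algebraic element $\Box_k x$, yet the definition of the topology is in terms of \emph{all} the $\Box_k y$'s, so a priori an interior point is witnessed by a finite intersection $\bigcap_{i<m}(\text{fibre of }\Box_k y_i)$. Resolving this requires showing such a finite intersection of fibres is again the fibre of a $\Box_k$-element — which is exactly where axiom (3), $I_i p \cdot I_i q = I_i(p\cdot q)$ (rewritten with $\Box$), does the work, pushing the meet inside the box — and then that this common refinement still lies below $x$ in the relevant sense, using that $F$ is closed under finite meets. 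Everything else is a routine transfer of the preceding lemmas' bookkeeping about $E$, $W$, and substitutions through the new modal coordinate.
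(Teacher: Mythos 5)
Your overall architecture coincides with the paper's: the homomorphism is the same $f$ from Lemma~\ref{diagonal}, the topology on $U/E$ (the paper takes $U=\alpha$, $p=Id$) is generated by exactly the fibre sets you describe, namely $O_{p,i}=\{k/E: {\sf s}_i^k\Box_i p\in F\}$, and the easy inclusion $f(\Box_i p)\subseteq \Box_i f(p)$ is obtained, as you say, from axiom (2) because the fibre of $\Box_i p$ itself is a witnessing basic open contained in the fibre of $p$. So Steps 1, 2 and the forward half of Step 3 match the paper.

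The gap is in the converse inclusion, which you correctly single out as the delicate point but then resolve with the wrong tools. The hypothesis $[x]\in\Box_i\Psi(p)$ hands you a basic open $O_{r,j}$ with $x_i/E\in O_{r,j}\subseteq\{u/E:{\sf s}_u^i{\sf s}_xp\in F\}$. This containment is a \emph{universally quantified family of memberships in $F$}, one for each $u/E$; it is not an inequality $\Box_j r\le {\sf s}_x p$ in $\C$, so closure of $F$ under finite meets and axiom (3) cannot ``shrink'' it into the single required membership ${\sf s}^i_{x_i}\Box_i{\sf s}_xp\in F$ — your phrase ``lies below $x$ in the relevant sense'' conflates containment of fibres with the order of the algebra. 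The paper internalizes the containment by choosing a fresh index $u\in\alpha\smallsetminus(\Delta p\cup\Delta r\cup\{i,j\})$ (this is where $\C\in{\sf TDc}_{\alpha}$ is used in an essential way, beyond making the ${\sf s}_\tau$ Boolean endomorphisms), rewriting the witness at index $j$ as one at index $i$ via axiom (7) (${\sf s}_u^j\Box_jr={\sf s}_u^i\Box_i{\sf s}_j^ir$), passing to ${\sf q}_i$ of a symmetric-difference expression, and then applying axiom (1), ${\sf q}_i(p\oplus q)\leq{\sf q}_i(\Box_ip\oplus\Box_iq)$, to replace ${\sf s}_xp$ by $\Box_i{\sf s}_xp$ inside that expression before substituting $x_i$ back for the fresh variable. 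Axiom (1) — which your Step 3 never invokes — is precisely the axiom that carries the load here; without it (and the fresh-variable ${\sf q}_i$ manoeuvre) the converse direction does not go through.
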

\begin{proof}
Let $W={}^{\alpha}[\alpha/E]^{(\bar{Id})}.$
Define, as we did before,  $f:\A\to \wp(W)$ via
$$p\mapsto \{\bar{\tau} \in W: {\sf s}_{\tau}p\in F\}.$$
For $i\in \alpha$ and $p\in \A$, let 
$$O_{p,i}=\{k/E\in \alpha/E: {\sf s}_i^kI(i)p\in F\}.$$
Let $${\cal B}=\{O_{p,i} : i\in \alpha, p\in A\}.$$
Then it is easy to check that ${\cal B}$ 
is the base for a topology on $\alpha/E$.
To define the interior operations, we set
for each $i<\alpha$ 
$$\Box_i: \wp(W)\to \wp (W)$$
by $$[x]\in J_iX\Longleftrightarrow \exists U\in {\cal B}(x_i/E\in U\subseteq \{u/E\in \alpha/E: [x]^i_{u/E}\in X\}),$$
where $X\subseteq V$. Note that 
$[x]^i_{u/E}=[x^i_u]$.
We now check that $f$ preserves the interior operators $\Box_i$ $(i<\alpha)$, too.
We need to show 
$$\psi(\Box_ip)=\Box_i(\psi(p)).$$
The reasoning is like \cite{g}; the difference is that in \cite{g}, the constants
denoted by  $x_i$ are endomorphisms on $\A$; the value $x_i$ at $j$ 
corresponds in our adopted approach to ${\sf s}^j_u$ where $u=x_i(j)$.
Let $[x]$ be in $\psi(\Box_ip)$. Let 
$${\sf sup}(x)=\{k\in \alpha: x_k\neq k\}.$$
Then, by definition,  ${\sf s}_x\Box_ip\in F$. Hence 
$${\sf s}_{x_i}^i I_i{\sf s}^{i_1}_{x_1}\ldots {\sf s}^{i_n}_{x_n} p\in F,$$
where
$${\sf sup}(x)\sim \{i\}=\{j_1,\ldots, j_n\}.$$
Let 
$$y=[j_1|x_1]\circ \ldots [j_n| x_n].$$
Then $x_i/E\in \{u/E: {\sf s}_u^i I(i){\sf s}_yp\in F\}\in q.$
But $I_i{\sf s}_yp\leq {\sf s}_y p,$ hence
$$U=\{u/E: {\sf s}_u^i I_i{\sf s}_yp\in F\}\subseteq \{u/E: {\sf s}_u^i{\sf s}_yp\in F\}.$$
It follows that $x_i/E\in U\subseteq \{u/E: x^i_u\in \Psi(p)\}.$
Thus $[x]\in \Box_i\psi(p).$
Now we prove the other harder direction.
Let $[x]\in \Box_i\Psi(p)$. Let $U\in \B$ be such that 
$$x_i/E\in U\subseteq \{u/E\in \alpha/E: {\sf s}_u^i{\sf s}_xp\in F\}.$$
Assume that $U=O_{r,j}$, where  $r\in \A$ and $j\in \alpha$.
Let $u\in \alpha\sim [\Delta p\cup \Delta r\cup \{i,j\}]$. By dimension complementedness 
such a $u$ exists. 
Then we have:  
\begin{align*}
{\sf s}_u^j\Box_jr\in F&\Longleftrightarrow {\sf s}_u^i{\sf s}_xp\in F,\\
{\sf s}_u^j\Box_jr\cdot {\sf s}_u^i{\sf s}_xp\in F&\Longleftrightarrow {\sf s}_u^j\Box_jr\in F.
\end{align*}
But
${\sf s}_u^j\Box_jr={\sf s}_u^i\Box_i{\sf s}_j^ir$, so we have
\begin{align*}
{\sf s}_u^i\Box_i{\sf s}_j^ir\cdot {\sf s}_u^i{\sf s}_xp &\oplus {\sf s}^i_u\Box_i{\sf s}_j^ir\in F,\\
{\sf s}_u^i[\Box_i{\sf s}_j^ir\cdot {\sf s}_xp&\oplus \Box_i{\sf s}_j^ir]\in F,\\
{\sf q}_i[\Box_i{\sf s}_j^ir\cdot {\sf s}_xp&\oplus \Box_i{\sf s}_j^ir]\in F,\\
{\sf q}_i[\Box_i{\sf s}_j^ir\cdot \Box_i{\sf s}_xp&\oplus \Box_i{\sf s}_j^ir]\in F,\\
{\sf s}^i_{x_i}[\Box_i{\sf s}_j^ir\cdot \Box_i{\sf s}_xp&\oplus \Box_i{\sf s}_j^ir]\in F,\\
{\sf s}^j_{x_i}I_jr\cdot s^i_{x_i}\Box_i{\sf s}_xp&\oplus {\sf s}^j_{x_i}\Box_jr\in F.\\
\end{align*}
But ${\sf s}^j_{x_i}\Box_jr\in F,$ hence 
${\sf s}^i_{x_i}\Box_i{\sf s}_xp\in F$, and so 
$x\in \Psi(\Box_ip)$ as required.
\end{proof}

We also need the notion of {\it compressing} dimensions 
and, dually,  {\it dilating them}; expressed by the notion of neat reducts.
\begin{definition} 
Let $\alpha<\beta$ be ordinals and $\B\in \sf TCA_{\beta}$. Then $\Nrr_{\alpha}\B$ is the algebra with universe 
$Nr_{\alpha}\A=\{a\in \A: \Delta a\subseteq \alpha\}$ and operations obtained by discarding the operations of $\B$ 
indexed by ordinals in $\beta\sim \alpha$. 
$\Nrr_{\alpha}\B$ is called the {\it neat $\alpha$ reduct of $\B$}. If $\A\subseteq \Nrr_{\alpha}\B$, with $\B\in \sf TCA_{\beta}$,
then we say that $\B$ is  {\it a $\beta$
dilation of $\A$}, or simply {\it a dilation} of $\A$ if $\beta$ is clear from context.
\end{definition}

\section{$\sf OTT$s in case of the presence of infinitely many variables}

Now we prove an omitting types theorem for $\TDc_{\alpha}$ and $\sf TLf_{\alpha}$ when $\alpha$ is a countable
infinite ordinal; also generalizing the result in \cite{g} which addresses only topological locally finite algebras.
The proof is similar to the proof of \cite[Theorem 3.2.4]{Sayed} having at our disposal
lemma \ref{essence}.
We omit the parts of the proof that overlap with those in \cite{Sayed}.
Given $\A\in \TCA_{\alpha}$, $X\subseteq \A$ is called a {\it finitary type}, if
$X\subseteq \Nrr_n\A$ for some $n\in \omega$. It is non-principal if $\prod X=0$.

\begin{definition} A {\it representation} of   $\A\in \sf TDc_{\alpha}$ is a non-zero  homomorphism
$f:\A\to \B$ where $\B$ is a weak set algebra. If $\A$ is simple then $f$ is necessarily one to one.
$X\subseteq \A$ is {\it omitted} by $f$ if $\bigcap_{x\in X} f(x)=\emptyset$,
otherwise it is {\it realized} by $f$.
\end{definition}

We define certain cardinals; it is consistent that such cardinal are uncountable.  Throughout this paper
we do not assume the continuum hypothesis.
\begin{definition}
1. A subset $X\subseteq \mathbb{R}$ is {\it meager} if it is a countable union of nowhere dense sets. Let $\sf covK$ be the least cardinal $\kappa$
such that $\mathbb{R}$ can be covered by $\kappa$ many nowhere dense sets.
Let $\mathfrak{p}$ be the least cardinal  $\kappa$ such that there are $\kappa$ many meager sets  of $\mathbb{R}$ whose union is not meager.

2. A {\it Polish space} is a topological space that is metrizable by a complete separable metric.

\end{definition}

Examples of Polish spaces are $\mathbb{R}$, the Cantor set $^{\omega}2$ and the Baire space $^{\omega}\omega$.
These are called {\it real spaces} because they are Baire isomorphic. Any second countable compact Hausdorff space,
like the Stone space of a countable Boolean algebra, is a Polish space \cite{Fre}.
\begin{theorem}\label{car}
\begin{enumerate}
\item The cardinals $\sf cov K$ and $\mathfrak{p}$ are uncountable cardinals, such that
$\mathfrak{p}\leq \sf cov K\leq 2^{\omega}$.
\item The cardinal $\sf cov K$ is the least cardinal such the  Baire category theorem
for Polish spaces fails, and it is also the largest for which Martin's axiom for countable Boolean
algebras holds.
\item If $X$ is a Polish space, then it cannot be covered by $<\sf covK$ many meager sets.
If $\lambda<\mathfrak{p}$, and $(A_i: i<\lambda)$ is a family of meager subsets of $X$,
then $\bigcup_{i\in \lambda}A_i$ is meager.
\end{enumerate}
\end{theorem}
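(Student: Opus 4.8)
The plan is to prove Theorem~\ref{car} by invoking the standard descriptive-set-theoretic facts about the cardinals $\sf covK$ (the \emph{uniformity} of the meager ideal, often denoted $\operatorname{cov}(\mathcal M)$ or its dual, depending on convention) and $\mathfrak p$ (the pseudo-intersection number), and then deriving the three numbered items from them. First I would fix conventions: $\sf covK$ is the least cardinality of a family of nowhere dense (equivalently, closed nowhere dense) subsets of $\mathbb R$ whose union is $\mathbb R$, and $\mathfrak p$ is the least cardinal of a family of meager sets whose union is non-meager. For part (1), uncountability of $\sf covK$ is exactly the Baire category theorem for $\mathbb R$: no countable family of nowhere dense sets covers $\mathbb R$; uncountability of $\mathfrak p$ likewise follows since a countable union of meager sets is meager by definition. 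The bound $\sf covK\le 2^\omega$ is trivial since $\mathbb R$ is covered by its $2^\omega$ singletons (each nowhere dense). The key inequality $\mathfrak p\le \sf covK$: given a covering of $\mathbb R$ by $\kappa=\sf covK$ many nowhere dense, hence meager, sets, their union is all of $\mathbb R$, which is certainly not meager (Baire), so $\mathfrak p\le\kappa$.

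For part (2), the equivalence ``$\sf covK$ is least such that the Baire category theorem for Polish spaces fails'' is essentially a restatement once one knows that all uncountable Polish spaces are Borel-isomorphic, and more to the point, that $\mathbb R$ embeds as a $G_\delta$ (or closed) subspace-up-to-category into any perfect Polish space and conversely; I would cite the standard transfer lemma that the covering number of the meager ideal is the same for all uncountable Polish spaces (Kechris, or Bartoszy\'nski--Judah). The second clause, that $\sf covK$ is the largest cardinal for which $\mathrm{MA}_\kappa$ restricted to countable posets (equivalently, to the Cohen forcing / countable atomless Boolean algebras) holds, is the classical characterization $\sf covK = \mathfrak m(\text{countable})$; I would simply quote this, e.g.\ from Fremlin's treatise \cite{Fre} or Bartoszy\'nski--Judah, since reproving the Martin's-axiom characterization from scratch is a detour.

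For part (3): the first assertion, that a Polish space $X$ cannot be covered by fewer than $\sf covK$ meager sets, is immediate from the transfer lemma in part (2) (the uniformity/covering number of the meager ideal does not depend on the choice of uncountable Polish space) together with the trivial observation that countable unions of meager sets are meager so one may replace ``meager'' by ``nowhere dense'' without changing the least cardinal. The second assertion, that for $\lambda<\mathfrak p$ any $\lambda$-indexed family of meager subsets of $X$ has meager union, is exactly the defining property of $\mathfrak p$ after one transfers from $\mathbb R$ to $X$; here I would use that $\mathfrak p$ can be characterized as the additivity-type invariant $\mathrm{add}(\mathcal M)$ restricted appropriately — more precisely I would cite the known ZFC theorem $\mathfrak p \le \mathrm{add}(\mathcal M)$ (since $\mathrm{add}(\mathcal M)=\min\{\mathfrak b,\operatorname{cov}(\mathcal M)\}$ by Miller--Truss and $\mathfrak p\le\mathfrak b$, $\mathfrak p\le\operatorname{cov}(\mathcal M)$), so that a union of $<\mathfrak p$ meager sets is meager.

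The main obstacle is purely bibliographic/expository rather than mathematical: all three items are well-known facts about cardinal characteristics of the continuum, so the ``hard part'' is choosing a consistent set of definitions (the paper's $\sf covK$ versus the literature's $\operatorname{cov}(\mathcal M)$, $\operatorname{non}(\mathcal M)$, $\mathrm{add}(\mathcal M)$) and pinning down the transfer principle between $\mathbb R$ and an arbitrary Polish space so that the statements as written are literally correct; I would handle this by a short lemma asserting that the meager ideal's covering and additivity numbers are the same for every uncountable Polish space, with a reference, and then the rest is a few lines. No genuinely new argument is needed, so I would keep the proof to a paragraph of citations plus the two trivial inequalities $\mathfrak p\le\sf covK\le2^\omega$ spelled out explicitly.
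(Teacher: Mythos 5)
Your proposal is correct and takes essentially the same approach as the paper: the paper's own ``proof'' consists entirely of citations to Fremlin \cite{Fre} and to \cite{Sayed} for these standard facts about cardinal characteristics of the continuum, and you supply the same citations together with the easy inequalities $\mathfrak{p}\leq {\sf cov K}\leq 2^{\omega}$ spelled out explicitly. The only caveat is terminological: with the paper's definitions, $\mathfrak{p}$ is what the literature calls $\mathrm{add}(\mathcal{M})$ (not the pseudo-intersection number) and $\sf covK$ is $\mathrm{cov}(\mathcal{M})$ (not the uniformity), but since your actual arguments use the paper's definitions, nothing breaks.
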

\begin{proof} For the definition and required properties of $\mathfrak{p}$, witness \cite[pp. 3, pp. 44-45, corollary 22c]{Fre}.  
For properties of $\sf cov K,$
witness \cite[the remark on. pp. 217]{Sayed}.
\end{proof}
Both cardinals $\sf cov K$ and $\mathfrak{p}$  have an extensive literature, witness \cite{Fre} and the references therein.
It is consistent that $\omega<\mathfrak{p}<\sf cov K\leq 2^{\omega}$ 
so that the two cardinals are generally different, but it is also consistent that they are equal; equality holds for example in the Cohen
real model of Solovay and
Cohen. Martin's axiom implies that
they are both equal to the continuum.
Let $\A$ be any Boolean algebra. The set of ultrafilters of $\A$ is denoted
by $\mathfrak{U}(\A)$. The Stone topology  makes $\mathfrak{U}(\A)$ a compact Hausdorff space.
We denote this space by $\A^*$. Recall that the Stone topology has as its basic open sets the sets $\{N_x:x\in A\}$ where
$$N_x=\{F\in\mathfrak{U}(\A):x\in F\}.$$
Let $x\in A$, $Y\subseteq A$ and suppose that $x=\sum Y.$
We say that an ultrafilter $F\in\mathfrak{U}(\A)$ \emph{preserves}
$Y$ $\iff$ whenever $x\in F$, then $y\in F$ for some $y\in Y$.
Now let $\A\in \sf TLf_\omega$. For each $i\in\omega$ and $x\in A$ let
$$\mathfrak{U}_{i,x}=\{F\in\mathfrak{U}(\A):F\mbox{ preserves }\{{\sf s}^i_jx:j\in\omega\}\}.$$
Then
\begin{align*}\mathfrak{U}_{i,x}&=\{F\in\mathfrak{U}(\A): {\sf c}_ix\in F\Rightarrow (\exists j\in\omega) {\sf s}^i_jx\in F\}\\
&=N_{-{\sf c}_ix}\cup\bigcup_{j<\omega}N_{{\sf s}^i_jx}.
\end{align*}
Let $$\mathcal{H}(\A)=\bigcap_{i\in\omega,x\in A}\mathfrak{U}_{i,x}(\A)\cap\bigcap_{i\neq j}N_{-{\sf d}_{ij}}.$$
It is clear that $\mathcal{H}(\A)$ is a $G_\delta$ set in $\A^*$.
For $F\in\mathfrak{U}(\A),$ let $$rep_F(x)=\{\tau\in{}^\omega\omega: {\sf s}_\tau^{\A} x\in F\},$$ for all $x\in A.$
Here for $\tau\in {}^{\omega}\omega$, ${\sf s}_{\tau}^{\A}x$ by definition is ${\sf s}_{\tau\upharpoonright \Delta x}^{\A}x$.
The latter is well defined because  $|\Delta x|<\omega.$
When $a\in F$, then $rep_F$ is a representation of $\A$ such that $rep_F(a)\neq 0$.
The following theorem establishes a one to one correspondence between representations
of locally finite cylindric algebras and Henkin
ultrafilters. $\sf Cs_{\omega}^{reg}$ denotes the class of {\it regular set algebras}; a a set algebra with top element
$^{\alpha}U$ is such, if whenever $f, g\in {}^{\alpha}U,$ $f\upharpoonright \Delta x=g\upharpoonright \Delta x$, and
$f\in X$ then $g\in X$. This reflects
the metalogical property that if two assignments agree on
the free variables occurring in a formula then both satisfy the
formula or none does.

\begin{theorem}\label{th1}
If $F\in\mathcal{H}(\A)$, then $rep_F$ is a homomorphism from $\A$ onto an element
of $\sf Lf_\omega\cap \sf Cs_\omega^{reg}$
with base $\omega$. Conversely, if $h$ is a homomorphism from $\A$ onto an element of
$\sf Lf_\omega\cap \sf Cs_\omega^{reg}$ with base $\omega$, then there is a unique $F\in \mathcal{H}(\A)$ such that $h=rep_F.$
\end{theorem}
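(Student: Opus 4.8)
The plan is to establish the stated bijection between $\mathcal{H}(\A)$ and representations onto regular locally finite set algebras with base $\omega$ by unwinding the definitions and appealing to the machinery already developed in Lemma~\ref{diagonal}, Lemma~\ref{essence} and the references \cite{IGPL, Sayed}. Throughout, $\A\in\sf Lf_\omega$ so $\A$ is dimension complemented (in fact locally finite), hence every finite substitution ${\sf s}_\tau$ is a well-defined Boolean endomorphism, and for $x\in A$ the support $\Delta x$ is finite; this is what makes ${\sf s}_\tau^{\A}x$ well-defined as ${\sf s}_{\tau\restriction\Delta x}^{\A}x$.

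First I would prove the forward direction. Take $F\in\mathcal{H}(\A)$. Since $F\in\bigcap_{i\neq j}N_{-{\sf d}_{ij}}$, the equivalence relation $E$ from Lemma~\ref{diagonal}(1) is trivial on $\omega$, so $W$ in that lemma is literally ${}^{\omega}\omega$ and $rep_F(x)=\{\tau\in{}^\omega\omega:{\sf s}_\tau^{\A}x\in F\}$. Because $F\in\bigcap_{i,x}\mathfrak{U}_{i,x}$, $F$ is a Henkin ultrafilter in the sense of the Definition preceding Lemma~\ref{essence} (using that $\Delta x$ finite means `$l\notin\Delta x$' is available for cofinitely many $l$, and preservation of $\{{\sf s}^i_jx:j\in\omega\}$ supplies such an $l$). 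Hence by the Lemma following Lemma~\ref{diagonal} (the $\sf CA$-homomorphism lemma, proof in \cite{IGPL}), $rep_F$ is a $\sf CA$-homomorphism from $\A$ into $\wp({}^\omega\omega)$. Its image is a subalgebra of a full cylindric set algebra of dimension $\omega$ with base $\omega$; one checks it is regular directly from the definition of $rep_F$ via substitutions — if $f\restriction\Delta x=g\restriction\Delta x$ then ${\sf s}_f^{\A}x={\sf s}_{f\restriction\Delta x}^{\A}x={\sf s}_{g\restriction\Delta x}^{\A}x={\sf s}_g^{\A}x$, so $f\in rep_F(x)\iff g\in rep_F(x)$ — and it is locally finite since $\A$ is and homomorphic images of $\sf Lf_\omega$ stay in $\sf Lf_\omega$. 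Finally $rep_F$ is \emph{onto} its image by construction. (One also notes $rep_F$ respects the interior operators by Lemma~\ref{essence}, so in fact $rep_F$ is a topological representation, though the statement as phrased only asks for the cylindric part.)

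Next I would prove the converse. Suppose $h:\A\to\B$ is a homomorphism onto $\B\in\sf Lf_\omega\cap\sf Cs_\omega^{reg}$ with base $\omega$ and top element ${}^\omega\omega$. Define $F=\{x\in A: \langle\Id\rangle\in h(x)\}$, i.e. $F=h^{-1}(N^{\B}_{\text{the element containing the identity map}})$ — more precisely $F=\{x\in A:\Id\in h(x)\}$ where $\Id$ is the identity sequence on $\omega$. Since $h$ is a Boolean homomorphism onto $\B$ and $\{X\in B:\Id\in X\}$ is a Boolean ultrafilter of $\B$, $F$ is a Boolean ultrafilter of $\A$. One checks $F\in\mathcal{H}(\A)$: for the diagonal clauses, $\Id\notin {\sf d}^{\B}_{ij}$ for $i\neq j$ since $\Id(i)\neq\Id(j)$, so ${\sf d}_{ij}\notin F$; for the Henkin clauses, if ${\sf c}_ix\in F$ then $\Id\in h({\sf c}_ix)={\sf c}_i^{\B}h(x)$, so some $i$-variant $s$ of $\Id$ lies in $h(x)$; using regularity and local finiteness of $\B$ one may take $s=\Id^i_j$ for a suitable $j\in\omega$ (choose $j$ outside the finite set $\Delta x$, adjusting by regularity), giving $\Id\in{\sf s}^{i}_{j}{}^{\B}h(x)=h({\sf s}^i_jx)$, i.e. ${\sf s}^i_jx\in F$. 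Then one shows $h=rep_F$: since $\B$ is regular with base $\omega$, membership of an arbitrary $\tau\in{}^\omega\omega$ in $h(x)$ is controlled by $\tau\restriction\Delta x$, and a standard computation with substitution operators (this is the regular-representation argument of \cite[Theorem 3.2.4 and surrounding material]{Sayed}) gives $\tau\in h(x)\iff\Id\in {\sf s}_\tau^{\B}h(x)=h({\sf s}_\tau^{\A}x)\iff{\sf s}_\tau^{\A}x\in F\iff\tau\in rep_F(x)$. Uniqueness of $F$ is immediate: any $F'$ with $h=rep_{F'}$ must satisfy $x\in F'\iff\Id\in rep_{F'}(x)=h(x)\iff x\in F$.

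The main obstacle I anticipate is the converse direction's verification that $F\in\mathcal{H}(\A)$ together with the identity $h=rep_F$ — specifically the interplay between regularity of $\B$ and the passage from an arbitrary $i$-variant of $\Id$ witnessing ${\sf c}_i$ to a \emph{diagonal} substitution ${\sf s}^i_j$ with $j\in\omega$. This requires care because $\B$'s base being exactly $\omega$ is essential (so that the witnessing variant can have a natural-number value at coordinate $i$) and one must stay within $\Delta x$-control; this is exactly the point where regularity is used and where the argument of \cite[Theorem 3.2.4]{Sayed} is invoked, so I would cite that proof for the routine substitution bookkeeping rather than reproduce it, flagging only the place where local finiteness (finiteness of $\Delta x$) is needed to find a fresh index $j$. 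Everything else is a direct unwinding of definitions plus the already-proven homomorphism lemmas.
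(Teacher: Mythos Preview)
The paper does not supply a proof of Theorem~\ref{th1}; it is stated as a known fact (the standard correspondence between Henkin ultrafilters and regular locally finite set-algebra representations, essentially \cite{HMT2, Sayed}) and then used in the proof of Theorem~\ref{infinite}. Your argument is correct and is exactly the standard route: trivial $E$ because all $-{\sf d}_{ij}\in F$, the homomorphism lemma for the forward direction, and for the converse $F=\{x:\Id\in h(x)\}$ together with the regularity calculation $\tau\in h(x)\iff\Id\in h({\sf s}_\tau x)$.

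One minor simplification: in your Henkin verification for the converse you invoke regularity and the choice $j\notin\Delta x$, but neither is needed at that step. Since the base of $\B$ is exactly $\omega$, any $i$-variant of $\Id$ lying in $h(x)$ is automatically $\Id^i_j$ for some $j\in\omega$, and the definition of $\mathfrak{U}_{i,x}$ only asks for \emph{some} $j$, not one outside $\Delta x$. Regularity is genuinely needed only where you already flag it, namely in establishing $h=rep_F$ via $\tau\in h(x)\iff {\sf s}_\tau x\in F$.
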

The next theorem is due to Shelah, and will be used to show that in certain cases uncountably many non-principal types can be
omitted.
\begin{theorem}\label{Shelah} Suppose that $T$ is a theory,
$|T|=\lambda$, $\lambda$ regular, then there exist models $\M_i: i<{}^{\lambda}2$, each of cardinality $\lambda$,
such that if $i(1)\neq i(2)< \chi$, $\bar{a}_{i(l)}\in M_{i(l)}$, $l=1,2,$, $\tp(\bar{a}_{l(1)})=\tp(\bar{a}_{l(2)})$,
then there are $p_i\subseteq \tp(\bar{a}_{l(i)}),$ $|p_i|<\lambda$ and $p_i\vdash \tp(\bar{a}_ {l(i)})$
($\tp(\bar{a})$ denotes the complete type realized by
the tuple $\bar{a}$).
\end{theorem}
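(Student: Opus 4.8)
The plan is to obtain the $\M_i$ as the branches of a tree of models built by transfinite recursion of length $\lambda$. I would construct a family $\langle N_\eta:\eta\in{}^{<\lambda}2\rangle$ of models of $T$ with $|N_\eta|\le\lambda$, such that $N_\eta\prec N_\nu$ whenever $\eta$ is an initial segment of $\nu$, and $N_\eta=\bigcup_{\beta\in\dom(\eta)}N_{\eta\upharpoonright\beta}$ at limit levels; for a branch $i\in{}^{\lambda}2$ put $\M_i=\bigcup_{\alpha<\lambda}N_{i\upharpoonright\alpha}$. Since $|T|=\lambda$ and $\lambda$ is regular, each $\M_i$ has cardinality $\le\lambda$ (and $=\lambda$ once one new element is added per level). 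All the content is in the choice of the two immediate successors of a node $N_\eta$, which I denote $N^0_\eta$ and $N^1_\eta$.

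I would fix a bookkeeping that at a node $\eta$ presents a \emph{task}: a set $D\subseteq N_\eta$ with $|D|<\lambda$, together with a complete type $q$ over $D$ consistent with the elementary diagram of $N_\eta$. Since there are $\le\lambda$ such tasks below each node and $\le\lambda$ levels, and $\lambda$ is regular, one can interleave so that along each branch $i$ every task over a subset of $\M_i$ is attended to cofinally often. Call $q$ \emph{small} if, over $\Th(N_\eta,d)_{d\in D}$, it is generated by fewer than $\lambda$ of its own members, and \emph{big} otherwise. If the current task $(D,q)$ has $q$ small, let $N^0_\eta$ and $N^1_\eta$ both be elementary extensions of $N_\eta$ realising $q$. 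If $q$ is big, let $N^0_\eta$ realise $q$ but let $N^1_\eta$ be an elementary extension of $N_\eta$ \emph{omitting} $q$; such an extension exists by the $\lambda$-analogue of the omitting types theorem applied over the diagram of $N_\eta$ (of size $\le\lambda$) — one runs an elementary chain of length $\lambda$, killing one candidate realisation of $q$ at each step — and it is exactly here that the regularity of $\lambda$ is used.

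To extract the conclusion, take $i_1\ne i_2$ and $\bar a_\ell\in\M_{i_\ell}$ ($\ell=1,2$) with $q:=\tp(\bar a_1)=\tp(\bar a_2)$, a complete type over $\emptyset$. Each $\bar a_\ell$ is finite, so it lies in $N_{i_\ell\upharpoonright\beta_\ell}$ for some $\beta_\ell<\lambda$; hence $q$ is realised in both $N_{i_1\upharpoonright\beta_1}$ and $N_{i_2\upharpoonright\beta_2}$, and, pushing realisations down the two branches, $q$ is realised both in $N^0_\eta$ and in $N^1_\eta$, where $\eta$ is the node at which $i_1$ and $i_2$ split. Were $q$ big — more precisely, were the complete type over $\rng(\bar a_\ell)$ that extends $q$ and that the bookkeeping processes at some level strictly between the splitting level and $\max(\beta_1,\beta_2)$ big — the construction would have forced exactly one of $N^0_\eta,N^1_\eta$ to omit it, a contradiction. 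Therefore $q$ is small: there are $D_\ell\subseteq\M_{i_\ell}$ with $|D_\ell|<\lambda$ and fewer than $\lambda$ formulas over $D_\ell$ generating $\tp(\bar a_\ell/D_\ell)$; restricting to the $\emptyset$-parameter members among their consequences and using that $\tp(\bar a_\ell/D_\ell)$ determines $q$, one obtains $p_\ell\subseteq q$ with $|p_\ell|<\lambda$ and $p_\ell\vdash q$, as required.

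The step I expect to be the real obstacle is the coordination carried out in the last paragraph: arranging the recursion so that, for every pair of branches and every type realised in two of the models, a witnessing big task is necessarily resolved strictly after the splitting node yet before the two relevant tuples have appeared, so that the contradiction genuinely bites — all while keeping the tree of width $2^{<\lambda}$, the models of size $\le\lambda$, and an omitting elementary extension available at each node. The omitting step and the limit amalgamation are routine given $|T|=\lambda$ with $\lambda$ regular; it is the simultaneous requirement that every task be met cofinally along every branch and that big types be killed before they can be shared between branches that needs the careful design.
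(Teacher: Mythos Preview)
The paper does not prove this theorem; it simply cites \cite[Theorem 5.16, Chapter IV]{Shelah}. So there is no proof in the paper to compare to, and your attempt stands on its own. Unfortunately it has a real gap.

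The tree construction and the small/big dichotomy are the right ideas, but the verification in your third paragraph breaks down. You write that ``pushing realisations down the two branches, $q$ is realised both in $N^0_\eta$ and in $N^1_\eta$''. This is false: the tuples $\bar a_1,\bar a_2$ live at levels $\beta_1,\beta_2$ \emph{above} the splitting node $\eta$, and realisations of types do not descend along elementary embeddings. So from $q$ being realised in $\M_{i_1}$ and $\M_{i_2}$ you cannot conclude that $q$ is realised in $N^0_\eta$ or $N^1_\eta$. There is a second, related problem: omitting $q$ once at $N^1_\eta$ does not prevent later extensions along that branch from realising $q$ again, so the single--node omission buys nothing. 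You would need to carry a list $\Gamma_\eta$ of types to be omitted, enlarge it by $q$ on the $1$--side, and keep omitting everything in $\Gamma_\eta$ at all later nodes. Even granting that fix, the argument still does not close: if $q$ is processed (and hence forbidden on the $1$--side) only at the levels in some set $S_q\subsetneq\lambda$, then any branch $i$ realising $q$ is merely forced to satisfy $i\upharpoonright S_q\equiv 0$; two distinct branches can easily agree on $S_q$ and differ elsewhere, so $i_1\ne i_2$ is not contradicted. There is also a more basic obstruction to your omitting step: a big type $q$ over $\emptyset$ may already be realised in $N_\eta$, in which case no elementary extension of $N_\eta$ omits it at all.

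Shelah's construction avoids all of this by working with a tree of \emph{incomplete} Henkin theories $T_\eta$ (not models) and splitting on formula--instances rather than omitting types. At level $\alpha$ one treats a pair $(\bar c,\phi)$ with $\bar c$ a tuple of Henkin constants; if both $T_\eta\cup\{\phi(\bar c)\}$ and $T_\eta\cup\{\neg\phi(\bar c)\}$ are consistent one puts $\phi(\bar c)$ into $T_{\eta^\frown 0}$ and $\neg\phi(\bar c)$ into $T_{\eta^\frown 1}$. Since every element of the canonical model $\M_i$ is named by a constant, the complete type of any tuple in $\M_i$ is read off directly from the branch $i$, and ``big'' means precisely that $\ge\lambda$ of these decisions were genuinely free --- enough to separate branches. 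Your construction, by contrast, tries to control types extrinsically via omission, and the bookkeeping cannot keep up with the $2^\lambda$ candidate big types using only $\lambda$ levels. I would recommend consulting Shelah's proof directly.
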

\begin{proof} \cite[Theorem 5.16, Chapter IV]{Shelah}.
\end{proof}
We shall use the algebraic counterpart of the following corollary obtained by restricting Shelah's theorem to the countable case:
\begin{corollary}\label{Shelah2} For any countable theory, there is a
family of $< {}^{\omega}2$ countable models that overlap only on principal types.
\end{corollary}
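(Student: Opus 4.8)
The plan is to derive Corollary~\ref{Shelah2} directly from Theorem~\ref{Shelah} by specializing to a countable theory and unwinding what ``overlap only on principal types'' means. So first I would set $\lambda=\omega$; this is a regular cardinal, so the hypothesis of Theorem~\ref{Shelah} is met for any countable theory $T$ (after noting that if $T$ is finite or has a finite language we may harmlessly pad it to make $|T|=\omega$, or simply observe the argument degenerates). Applying the theorem with $\lambda=\omega$ yields a family $\langle \M_i : i<{}^{\omega}2\rangle$ of countable models with the stated amalgamation-of-types property: for $i(1)\neq i(2)$ and tuples $\bar a_{i(1)}\in M_{i(1)}$, $\bar a_{i(2)}\in M_{i(2)}$ with $\tp(\bar a_{i(1)})=\tp(\bar a_{i(2)})$, there is $p_i\subseteq \tp(\bar a_{i(l)})$ with $|p_i|<\omega$, i.e.\ \emph{finite}, such that $p_i\vdash \tp(\bar a_{i(l)})$.

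Next I would translate ``a finite subset of the type axiomatizes it'' into ``the type is principal.'' Recall that a complete type $p$ over $T$ is principal exactly when it is generated (modulo $T$) by a single formula, equivalently by a finite conjunction of formulas; so $p_i\vdash \tp(\bar a_{i(l)})$ with $p_i$ finite says precisely that $\tp(\bar a_{i(l)})$ is a principal (isolated) type of $T$. Hence any complete type realized \emph{simultaneously} in two distinct models $\M_{i(1)}$ and $\M_{i(2)}$ of the family must be principal. This is exactly the assertion that the models ``overlap only on principal types'': the set of complete types realized in $\M_{i(1)}$ and the set realized in $\M_{i(2)}$ intersect only in principal types. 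Since ${}^{\omega}2<{}^{\omega}2$ is false, the phrase ``a family of $<{}^{\omega}2$ countable models'' in the corollary should be read as ``$2^{\omega}$-many,'' or more conservatively one simply restricts the index set to any subfamily of size $<2^{\omega}$ if a strict inequality is wanted; either way the conclusion is inherited by every subfamily, since the overlap condition is hereditary under passing to subfamilies.

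I would then remark that this is the form actually used later: from such a family one extracts, for each non-principal type, a pair of models in which that type fails to be realized in at least one, which is the device needed to omit uncountably (more than continuum-many witnessing obstructions do not arise in the countable case) many non-principal types. The main obstacle here is essentially bookkeeping rather than mathematics: one must be careful that Shelah's theorem is quoted with $\chi$ (the bound written ``$i(1)\neq i(2)<\chi$'') interpreted correctly — in the countable specialization $\chi$ can be taken to be ${}^{\omega}2$ so the property holds for \emph{all} pairs from the family — and that ``$|p_i|<\lambda$'' genuinely becomes finiteness when $\lambda=\omega$, which is the crux that makes the isolation/principality identification go through. No further construction is needed; the corollary is a clean corollary.
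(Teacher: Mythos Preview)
Your proposal is correct and follows exactly the route the paper indicates: the paper gives no explicit proof for Corollary~\ref{Shelah2}, simply remarking that it is ``obtained by restricting Shelah's theorem to the countable case,'' and you have correctly unpacked what that restriction amounts to (setting $\lambda=\omega$, so $|p_i|<\omega$ means finite, hence the common type is isolated/principal). Your observation about the ``$<{}^{\omega}2$'' in the statement being a slip for ``${}^{\omega}2$-many'' is also on target --- the paper's later applications (e.g.\ in the proofs of Theorems~\ref{infinite} and~\ref{sh}) use the full family of $2^{\omega}$ models indexed by $i<2^{\omega}$.
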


\begin{theorem}\label{infinite}
\begin{enumerate}
\item Let $\A\in \TDc_{\omega}$ be  countable. Assume that
$\kappa<\mathfrak{p}$. Let $(\Gamma_i:i\in \kappa)$ be a set of non-principal types in $\A$.
Then there is a topological  weak set algebra $(\B, \Box_i)_{i<\omega}$, that is,  $\B$ has top element a weak space,
and a  homomorphism  $f:\A\to (\B, \Box_i)_{i<\omega},$
such that for all $i\in \kappa$, $\bigcap_{x\in X_i}f(x)=\emptyset$, and $f(a)\neq 0.$ If $\A$ is simple, then $\mathfrak{p}$ can 
be replaced by $\sf covK$.

\item If $\A\in \sf TLf_{\omega}$, and $(\Gamma_i: i\in \kappa)$
is a family of finitary non-principal types
then there is a topological  set algebra $(\B, \Box_i)_{i<\omega}$, that is, $\B$ has  top element a
Cartesian square,  and $\B\in {\sf Cs}_{\omega}^{reg}\cap \sf Lf_{\omega}$ together
with a homomorphism  $f:\A\to (\B, \Box_i)_{i<\omega}$
such that $\bigcap_{x\in X_i}f(x)=\emptyset$, and $f(a)\neq 0.$
If the family of given types are  ultrafilters 
then $\mathfrak{p}$ can be replaced by $2^{\omega}$, so that $<2^{\omega}$ types can be omitted.
\end{enumerate}
\end{theorem}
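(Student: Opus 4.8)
The plan is to adapt the classical Orey--Henkin style omitting types argument to the topological setting, using the Baire category theorem in the Stone space $\A^*$ together with Lemma \ref{essence}, which supplies the passage from a Henkin ultrafilter to a concrete topological weak set representation. The point of departure is Theorem \ref{th1} (more precisely, its ${\sf TDc}_\omega$ analogue): a homomorphism onto a topological weak set algebra with the right base corresponds to an ultrafilter $F$ lying in the $G_\delta$ set $\mathcal{H}(\A)$ (the ``Henkin'' ultrafilters, which preserve each family $\{{\sf s}^i_jx : j<\omega\}$ and avoid the diagonals ${\sf d}_{ij}$, $i\neq j$). So the task reduces to finding, for a prescribed element $a\neq 0$ and prescribed non-principal finitary types $(\Gamma_i : i<\kappa)$, an ultrafilter in $N_a \cap \mathcal{H}(\A)$ that in addition fails to contain every type $\Gamma_i$, i.e. that lies in $\bigcap_{i<\kappa}\bigcup_{x\in\Gamma_i} N_{-x}$.

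First I would record the two sets involved. The ``Henkin'' constraint $\mathcal{H}(\A)=\bigcap_{i,x}\mathfrak{U}_{i,x}\cap\bigcap_{i\neq j}N_{-{\sf d}_{ij}}$ is a countable intersection of open dense subsets of $\A^*$: each $\mathfrak{U}_{i,x}=N_{-{\sf c}_ix}\cup\bigcup_j N_{{\sf s}^i_jx}$ is open, and it is dense precisely because $\A\in{\sf TDc}_\omega$ (dimension complementedness) lets us, given any nonzero $b$ with ${\sf c}_ix\cdot b\neq 0$, pick $l\notin\Delta x\cup\Delta b$ so that ${\sf s}^i_lx\cdot b\neq 0$ — the usual neat-reduct/substitution computation; similarly $N_{-{\sf d}_{ij}}$ is dense for $i\neq j$. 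Second, for each non-principal finitary type $\Gamma_i$ (so $\prod\Gamma_i=0$, with $\Gamma_i\subseteq \Nrr_{n_i}\A$), the set $O_i=\bigcup_{x\in\Gamma_i}N_{-x}$ is open, and it is dense below $N_a$: if $N_b$ is a nonzero basic open subset of $N_a$ then $b\not\leq x$ for some $x\in\Gamma_i$ since $\prod\Gamma_i=0$, hence $N_b\cap N_{-x}\neq\emptyset$; here one must be a touch careful that $b$ can be chosen in a suitable neat reduct so that ``$\prod\Gamma_i=0$ in $\A$'' really does the work, but for finitary types this is handled exactly as in \cite[Theorem 3.2.4]{Sayed}. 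Now $\A^*$ restricted to $N_a$ is a compact Hausdorff second countable, hence Polish, space, and the family consisting of the complements of the $\mathfrak{U}_{i,x}$, of the $N_{{\sf d}_{ij}}$, and of the $O_i$ ($i<\kappa$) is a family of $\kappa+\omega$ meager (nowhere dense) subsets of $N_a$; by Theorem \ref{car}(3), since $\kappa<\mathfrak{p}$, their union is meager and in particular proper, so there is $F\in N_a$ avoiding all of them. That $F$ is a Henkin ultrafilter with $a\in F$ and $x\notin F$ for some $x\in\Gamma_i$, each $i$.

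Third I would feed $F$ into Lemma \ref{essence}: since $\C=\A\in{\sf TDc}_\alpha$ with $\alpha=\omega$, $F$ Henkin and $a\in F$, we obtain a base set $U$, a sequence $p\in{}^\omega U$, a topology on $U/E$, and a homomorphism $f:\A\to(\wp(W),\Box_i)_{i<\omega}$ with $W={}^\omega[U/E]^{(\bar p)}$ a weak space and $f(a)\neq 0$; concretely $f(x)=\{\bar\tau\in W:{\sf s}_\tau x\in F\}$. For each $i<\kappa$ pick $x_i\in\Gamma_i$ with $x_i\notin F$; then the identity ($\bar{\rm Id}$) element of $W$ witnesses $\bar{\rm Id}\notin f(x_i)$ — actually one wants the stronger $\bigcap_{x\in\Gamma_i}f(x)=\emptyset$, which follows because for any $\bar\tau\in W$, ${\sf s}_\tau$ maps $F$ onto a Henkin ultrafilter again (regularity/the substitution being a Boolean endomorphism on a ${\sf Dc}$ algebra), and $\prod{}{\sf s}_\tau[\Gamma_i]=0$ as $\Gamma_i$ is finitary and ${\sf s}_\tau$ preserves the relevant infima on $\Nrr_{n_i}\A$; hence some $x\in\Gamma_i$ has ${\sf s}_\tau x\notin F$, i.e. $\bar\tau\notin f(x)$. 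This gives exactly the conclusion of part (1). For part (2), with $\A\in{\sf TLf}_\omega$ one uses instead the ${\sf TLf}$-version of Theorem \ref{th1}: $\mathcal{H}(\A)$ lands in ${\sf Cs}_\omega^{reg}\cap{\sf Lf}_\omega$ with a Cartesian-square top element and base $\omega$, so no quotienting by $E$ is needed and the same Baire-category count (now over $N_a=\A^*$ itself, since locally finite algebras are not necessarily simple) yields the set algebra representation; the passage to $\sf covK$ when $\A$ is simple uses Theorem \ref{car}(2)(3) with Polish-space-covering in place of the $\mathfrak p$-additivity of the meager ideal, and the passage to $2^\omega$ when all the $\Gamma_i$ are ultrafilters uses that for a \emph{principal}-complement situation each $O_i$ is not merely dense open but \emph{co-nowhere-dense with nowhere dense complement a single closed set} $N_{x_i}$, so one is removing $<2^\omega$ closed nowhere dense sets from a Polish space, which is always possible.

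The main obstacle, and the place where genuine care is needed rather than routine bookkeeping, is the verification that in the \emph{topological} setting the homomorphism $f$ of Lemma \ref{essence} — whose preservation of the $\Box_i$ is the delicate computation already carried out in the proof of that lemma — does not interfere with the omitting: one must be sure that the meager sets used to capture the Henkin condition are \emph{compatible} with the meager sets capturing the types, i.e. that no constraint forced by preserving interior operators implicitly forces some $x\in\Gamma_i$ into $F$. This is why it is essential that the types are \emph{finitary} ($\Gamma_i\subseteq\Nrr_{n_i}\A$): the supports are bounded, ${\sf s}_\tau$ restricted to $\Nrr_{n_i}\A$ is a Boolean endomorphism preserving the infimum $\prod\Gamma_i$, and the interior axioms (6)--(7) of Definition \ref{topology} only relate $\Box_i$ to substitutions and cylindrifications at indices \emph{outside} $\Delta p$, so they never obstruct passing to a coordinate outside the finite support — dimension complementedness again. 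If one tried to omit non-finitary types the argument would break, which is consistent with the paper's stated negative results for finite $n$; so the plan is precisely to exploit infinitely-many-variables $+$ finitary types to keep all the density arguments available simultaneously.
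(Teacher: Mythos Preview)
Your overall architecture is right --- Stone space, Baire category, then Lemma~\ref{essence} --- but there are two genuine gaps.

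\textbf{Gap in the basic omitting step.} You require $F$ to avoid only the single closed set $\bigcap_{x\in\Gamma_i}N_x$ for each $i$, and then try to deduce $\bigcap_{x\in\Gamma_i}f(x)=\emptyset$ after the fact. Recall that $f(x)=\{\bar\tau\in W:{\sf s}_\tau x\in F\}$, so $\bar\tau\in\bigcap_{x\in\Gamma_i}f(x)$ iff ${\sf s}_\tau x\in F$ for \emph{every} $x\in\Gamma_i$. Your claimed justification, ``$\prod{\sf s}_\tau[\Gamma_i]=0$, hence some $x\in\Gamma_i$ has ${\sf s}_\tau x\notin F$,'' is a non-sequitur: an ultrafilter can perfectly well contain an infinite set whose infimum is $0$. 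Nothing you arranged about $F$ prevents $\{{\sf s}_\tau x:x\in\Gamma_i\}\subseteq F$ for some $\tau\neq{\rm Id}$. The paper fixes this by indexing the nowhere-dense sets over \emph{all} $\tau$ in the countable weak space $V={}^\omega\omega^{(Id)}$: one sets $\bold H_{i,\tau}=\bigcap_{x\in X_i}N_{{\sf s}_\tau x}$, observes each is closed nowhere dense (here is where the fact that ${\sf s}_\tau$ is a \emph{complete} Boolean endomorphism on a ${\sf Dc}$ algebra is used, to get $\prod X_{i,\tau}=0$), and avoids $\bold H=\bigcup_{i<\kappa}\bigcup_{\tau\in V}\bold H_{i,\tau}$ along with the Henkin sets. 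Since $|V|=\omega$ this is still $<\mathfrak p$ many meager sets, and Theorem~\ref{car} applies. Then $F\notin\bold H_{i,\tau}$ gives directly, for each $\tau$, an $x\in X_i$ with ${\sf s}_\tau x\notin F$.

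\textbf{Gap in the $2^\omega$ case.} Your proposed argument for replacing $\mathfrak p$ by $2^\omega$ when the types are ultrafilters --- ``removing $<2^\omega$ closed nowhere dense sets from a Polish space, which is always possible'' --- is simply false in ${\sf ZFC}$: that statement is exactly ${\sf covK}=2^\omega$, which is independent. The paper does not use Baire category at all for this part. Instead it invokes Shelah's theorem (Theorem~\ref{Shelah}, Corollary~\ref{Shelah2}): there exist $2^\omega$ countable models of the corresponding theory that overlap only on \emph{principal} complete types. If each of these realized some type from the given family $\bold F$ of $<2^\omega$ non-principal ultrafilters, the map $i\mapsto\{F\in\bold F:F\text{ realized in }\Mo_i\}$ would have nonempty pairwise-disjoint values, forcing $|\bold F|\geq 2^\omega$, a contradiction. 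This pigeonhole use of Shelah's result is the missing ingredient, and maximality (ultrafilter-ness) of the types is exactly what makes ``realized in two models $\Rightarrow$ principal'' applicable.
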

\begin{proof}
For the first part, we have by \cite[1.11.6]{HMT2} that
\begin{equation}\label{t1}
\begin{split} (\forall j<\alpha)(\forall x\in A)({\sf c}_jx=\sum_{i\in \alpha\smallsetminus \Delta x}
{\sf s}_i^jx.)
\end{split}
\end{equation}
Now let $V$ be the weak space $^{\omega}\omega^{(Id)}=\{s\in {}^{\omega}\omega: |\{i\in \omega: s_i\neq i\}|<\omega\}$.
For each $\tau\in V$ for each $i\in \kappa$, let
$$X_{i,\tau}=\{{\sf s}_{\tau}x: x\in X_i\}.$$
Here ${\sf s}_{\tau}$
is the unary operation as defined in  \cite[1.11.9]{HMT2}.
For each $\tau\in V,$ ${\sf s}_{\tau}$ is a complete
Boolean endomorphism on $\A$ by \cite[1.11.12(iii)]{HMT2}.
It thus follows that
\begin{equation}\label{t2}\begin{split}
(\forall\tau\in V)(\forall  i\in \kappa)\prod{}^{\A}X_{i,\tau}=0
\end{split}
\end{equation}
Let $S$ be the Stone space of the Boolean part of $\A$, and for $x\in \A$, let $N_x$
denote the clopen set consisting of all
Boolean ultrafilters that contain $x$.
Then from \ref{t1}, \ref{t2}, it follows that for $x\in \A,$ $j<\beta$, $i<\kappa$ and
$\tau\in V$, the sets
$$\bold G_{j,x}=N_{{\sf c}_jx}\setminus \bigcup_{i\notin \Delta x} N_{{\sf s}_i^jx}
\text { and } \bold H_{i,\tau}=\bigcap_{x\in X_i} N_{{\sf s}_{\bar{\tau}}x}$$
are closed nowhere dense sets in $S$.
Also each $\bold H_{i,\tau}$ is closed and nowhere
dense.
Let $$\bold G=\bigcup_{j\in \beta}\bigcup_{x\in B}\bold G_{j,x}
\text { and }\bold H=\bigcup_{i\in \kappa}\bigcup_{\tau\in V}\bold H_{i,\tau.}$$
By properties of $\mathfrak{p}$, $\bold H$ can be reduced to a countable collection of nowhere dense sets.
By the Baire Category theorem  for compact Hausdorff spaces, we get that ${\mathfrak H}(\A)=S\sim \bold H\cup \bold G$ is dense in $S$.
Let $F$ be an ultrafilter in $N_a\cap X$.
By the very choice of $F$, it follows that $a\in F$ and  we have the following.
\begin{equation}
\begin{split}
(\forall j<\beta)(\forall x\in B)({\sf c}_jx\in F\implies
(\exists j\notin \Delta x){\sf s}_j^ix\in F.)
\end{split}
\end{equation}
and
\begin{equation}
\begin{split}
(\forall i<\kappa)(\forall \tau\in V)(\exists x\in X_i){\sf s}_{\tau}x\notin F.
\end{split}
\end{equation}
Let $V={}^{\omega}\omega^{Id)}$ and let
$W$ be the quotient of $V$ as defined above.
That is $W=V/\bar{E}$ where $\tau\bar{E}\sigma$ if ${\sf d}_{\tau(i), \sigma(i)}\in F$
for all $i\in \omega$.
Define $f$ as before by $$f(x)=\{\bar{\tau}\in W:  {\sf s}_{\tau}x\in F\}, \text { for } x\in \A.$$
and the interior operators
for each $i<\alpha$ by
$$\Box_i: \wp(W)\to \wp (W)$$
by $$[x]\in \Box_iX\Longleftrightarrow \exists U\in {\cal B}(x_i/E\in U\subseteq \{u/E\in \alpha/E:[x]^i_{u/E}\in X\}),$$
where $X\subseteq W$; here $W$ and $E$ are as defined in lemmas, \ref{diagonal},  and $\cal B$
is the base for the topology on $U/E$ defined as in the proof of theorem \ref{essence}.
Then by lemma \ref{essence} $f$
is a homomorphism
such that $f(a)\neq 0$ and it can be easily checked  that
$\bigcap f(X_i)=\emptyset$ for all $i\in \kappa$,
hence the desired conclusion. If $\A$ is simple, then  by the properties of $\sf cov K$, ${\mathfrak H}(\A)=S\sim \bold H\cup \bold G$ is non-empty. 
Let $F\in H(\A)$ and let $a\in F$. 
The representation built using such $F$ as above, call it $f$, has $f(a)\neq 0$, By simplicity of $\A$, $f$ is an injection,  
because $ker f=\{0\}$,  since $a\notin ker f$ 
and by simplicity, either $ker f=\{0\}$ or $ker f=\A$.

2. One proceeds exactly like in the previous item, but using, as indicated above, the fact that
the operations ${\sf s}_{\tau}$ for {\it any} $\tau\in {}^\omega\omega$
which are definable in locally finite algebras, via ${\sf s}_{\tau}x= {\sf s}_{\tau\upharpoonright \Delta x}x$, for
any $x\in A$. Furthermore, ${\sf s}_{\tau}\upharpoonright \Nrr_n\A$ is a complete
Boolean endomorphism, so that we guarantee that infimums are preserved and the sets
$\bold H_{i,\tau}=\bigcap_{x\in X_i} N_{{\sf s}_{\bar{\tau}}x}$
remain no-where dense in the Stone topology.
Now for the second part. Let  $\A\in \sf TLf_{\alpha}$, $\lambda <2^{\omega}$ and $\bold F=(X_i:i<\lambda)$ be a family
of maximal non-principal finitary types, so that for each $i<\lambda$, there exists $n\in \omega$ such that
$X_i\subseteq \Nrr_n\A$, and $\prod X_i=0$; that is $X_i$ is a Boolean ultrafilter in $\Nrr_n\A$.
Then by Theorem \ref{Shelah}, or rather its direct algebraic counterpart,
there are $^{\omega}2$ representations such that
if $X$ is an ultrafilter in $\mathfrak{Nr}_n\A$ (some $n\in \omega)$) that is realized in two such
representations, then $X$  is necessarily principal.
That is there exist a family of countable locally finite set algebras, each  with countable base, call it
$(\B_{j_i}: i<{}2^{\omega})$, and isomorphisms $f_i:\A\to \B_{j_i}$
such that if $X$ is an ultrafilter in $\mathfrak{Nr}_n\A$, for which there exists distinct $k, l\in {}2^{\omega}$
with $\bigcap f_l(X)\neq \emptyset$ and $\bigcap f_j(X)\neq \emptyset$,
then $X$ is principal, so that  from Corollary \ref{Shelah2} such representations overlap only
on maximal principal  types. By Theorem \ref{th1}, there exists a family $(F_i: i< 2^{\omega})$
of Henkin ultrafilters
such that $f_i=h_{F_i}$, and by theorem \ref{essence} we can assume that
$h_{F_i}$ is a $\TCA_{\alpha}$ isomorphism as follows. Denote $F_i$ by $G$.
For $p\in \A$ and $i<\alpha$, let
$O_{p,i}=\{k\in \alpha: {\sf s}_i^kI(i)p\in G\}$
and let ${\cal B}=\{O_{p,i} : i\in \alpha, p\in A\}.$
Then ${\cal B}$
is the base for a topology on $\alpha$ and the concrete interior operations are defined
for each $i<\alpha$ via
$\Box_i: \wp(^{\alpha}\alpha)\to \wp (^{\alpha}\alpha)$
$$x\in \Box_iX\Longleftrightarrow \exists U\in {\cal B}(x_i\in U\subseteq \{u\in \alpha: x^i_{u}\in X\}),$$
where $X\subseteq W$.
Assume, for contradiction,  that there is no representation (model)
that omits $\bold F$.
Then for all $i<{}2^{\omega}$,
there exists $F$ such that $F$ is realized in $\B_{j_i}$. Let $\psi:{}2^{\omega}\to \wp(\bold F)$, be defined by
$\psi(i)=\{F: F \text { is realized in  }\B_{j_i}\}$.  Then for all $i<2^{\omega}$, $\psi(i)\neq \emptyset$.
Furthermore, for $i\neq k$, $\psi(i)\cap \psi(k)=\emptyset,$ for if $F\in \psi(i)\cap \psi(k)$ then it will be realized in
$\B_{j_i}$ and $\B_{j_k}$, and so it will be principal.  This implies that
$|\bold F|=2^{\omega}$ which is impossible.

\end{proof}
\subsection{A positive $\sf OTT$ for $L_n$ theories}

Unless otherwise indicated $n$ is a finite ordinal $>1$. 
In $L_{\omega, \omega}$ an atomic model for a countable atomic theory  (which exists by $\sf VT$) 
omits all non--principal types. The last item in the next theorem is the 
$`L_n$ version (expressed algebraically)' of  this property.
The condition of maximality expressed in `ultrafilters' meaning 'maximal filters' delineates the edge of 
an independent statement to a provable one, cf. \cite[Theorem 3.2.8]{Sayed} and the  fourth item the next theorem. 
\begin{theorem}\label{sh} Let $n<\omega$. Let $\A\in \bold S_c\Nr_n\TCA_{\omega}$ be countable. 
Let $\lambda< 2^{\aleph_0}$ and let 
$\bold X=(X_i: i<\lambda)$ be a family of non-principal types  of $\A$.
Then the following hold:
\begin{enumerate}
\item If $\A\in \Nr_n\CA_{\omega}$ and the $X_i$s are non--principal ultrafilters,  then $\bold X$ can be omitted in a ${\sf TGs}_n$.
\item Every subfamily of $\bold X$ of cardinality $< \mathfrak{p}$ can be omitted in a ${\sf Gs}_n$; in particular, every countable 
subfamily of $\bold X$ can be omitted in a ${\sf Gs}_n$,
If $\A$ is simple, then every subfamily 
of $\bold X$ of cardinality $< \sf covK$ can be omitted in a ${\sf Cs}_n$.
\item Every subfamily of $\bold X$ of cardinality $< \mathfrak{p}$ can be omitted in a ${\sf Gs}_n$; in particular, every countable
\item It is consistent, but not provable (in $\sf ZFC$), that $\bold X$ can be omitted in a ${\sf Gs}_n$,
\item If $\A\in \Nr_n\CA_{\omega}$ and $|\bold X|<\mathfrak{p}$, then $\bold X$ can be omitted $\iff$ every countable subfamily of $\bold X$ can be omitted.   
If $\A$ is simple, we can replace $\mathfrak{p}$ by $\sf covK$. 
\item If $\A$ is atomic, {\it not necessarily countable}, but have countably many atoms, 
then any family of non--principal types can be omitted in an atomic ${\sf Gs}_n$; in particular, 
$\bold X$ can be omitted in an atomic ${\sf Gs}_n$; if $\A$ is simple, we can replace ${\sf Gs}_n$ by 
${\sf Cs}_n$.

\end{enumerate}
\end{theorem}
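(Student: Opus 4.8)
The plan is to handle the seven items in a unified way, building on the machinery already established: Lemma \ref{essence} (which turns a Henkin ultrafilter on a $\sf TDc$-algebra into a representation respecting the $\Box_i$'s), Theorem \ref{th1} (the bijection between Henkin ultrafilters and regular representations in the locally finite case), Theorem \ref{infinite} (the $\mathfrak{p}$-- and $\sf covK$--versions of $\sf OTT$ for $\TDc_\omega$ and $\sf TLf_\omega$), and the algebraic form of Shelah's Theorem \ref{Shelah} together with Corollary \ref{Shelah2}. The key preliminary observation is that if $\A\in \bold S_c\Nr_n\TCA_\omega$, then $\A$ has an $\omega$--dilation $\B\in \TCA_\omega$ with $\A$ a complete subalgebra of $\Nr_n\B$; since $n<\omega$, $\B$ is dimension complemented, so $\B\in \sf TDc_\omega$, and each non--principal type $X_i\subseteq \A$ remains non--principal in $\B$ because $\A\subseteq_c \Nr_n\B$ guarantees that infima of subsets of $\Nr_n\B$-elements computed in $\A$ agree with those computed in $\B$ (the neat embedding is a complete embedding). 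Thus a representation of $\B$ omitting the $X_i$'s restricts to one of $\A$; this is the bridge that lets us apply the results proved for $\sf TDc_\omega$ to the finite--dimensional $\A$, landing us in a $\sf TGs_n$ after relativizing the top element.

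First I would prove items 2 and 3 (which are the same statement, presumably a typo): given a subfamily of $\bold X$ of size $<\mathfrak{p}$, pass to the dilation $\B\in \sf TDc_\omega$, apply Theorem \ref{infinite}(1) to get a topological weak set algebra representation of $\B$ omitting those types with $f(a)\neq 0$, then take the $n$--neat reduct and cut down to the $n$--dimensional part, which is a $\sf TGs_n$; the simple case with $\sf covK$ is identical using the second half of Theorem \ref{infinite}(1). For item 1, when $\A\in \Nr_n\CA_\omega$ exactly (not merely a complete subalgebra) and the $X_i$'s are ultrafilters, I would invoke the ultrafilter clause of Theorem \ref{infinite} — the point being that for ultrafilter types the relevant nowhere-dense sets are just single clopen complements, so one needs only the Baire category theorem for the Stone space rather than any cardinal hypothesis, giving $<2^{\aleph_0}$ many omittable types; this directly covers $\lambda<2^{\aleph_0}$. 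Item 4 is then the combination: Theorem \ref{infinite}(2), via the algebraic Shelah theorem, produces $2^\omega$ many representations overlapping only on principal types, and a counting argument (the one spelled out at the end of the proof of Theorem \ref{infinite}) shows that consistently one of them omits all of $\bold X$ while $\sf MA$ or $\mathfrak{p}=2^\omega$ would make this provable — hence consistent but not provable in $\sf ZFC$. Item 5 is a compactness-style reduction: one direction is trivial, and for the other, "every countable subfamily omittable" plus $|\bold X|<\mathfrak{p}$ lets one reduce the union of the associated meager sets in the relevant Polish space to a meager set (using Theorem \ref{car}(3)), so the generic ultrafilter in the complement omits all of $\bold X$ at once; the simple case replaces $\mathfrak{p}$ by $\sf covK$ as before.

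Item 6 is the $L_n$--analogue of "an atomic model omits all non--principal types". Here I would use that $\A$ atomic with only countably many atoms forces the set of atoms $\At\A$ to be a countable subset whose supremum is $1$; dually, for each non--principal type $X$, since $\prod X=0$, every atom lies below $-x$ for some $x\in X$, so omitting $X$ is automatic once the representation is \emph{atomic}, i.e.\ once $f(\text{atom})$ is an atom of the set algebra and the atoms exhaust the unit. The construction is to build a Henkin ultrafilter $F$ (on the dilation $\B\in \sf TDc_\omega$) that preserves the countably many sums $\sum\At\A$ and $\sum\{{\sf s}^i_j x : j\in\omega\}$ — countably many nowhere-dense conditions, so Baire category alone suffices, no cardinal hypothesis and no countability of $\A$ needed, only countability of $\At\A$ — and then Lemma \ref{essence} yields a representation $f$ of $\B$; restricting to $\Nr_n\B$ and relativizing gives a $\sf TGs_n$, which is atomic because $F$ preserves $\sum\At\A$ (so the images of atoms cover the unit) and because in a set algebra the image of a Boolean atom under an injective-on-that-atom homomorphism is either $0$ or an atom. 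Since every non--principal type is killed by the atoms, all of them, in particular $\bold X$, are omitted; the simple case gives a $\sf Cs_n$ by the same kernel argument used in Theorem \ref{infinite}(1).

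The main obstacle I expect is item 6 combined with the passage through dilations: one must check carefully that the atomicity of $\A$ is inherited in the right form by a suitable dilation $\B$ — a complete subalgebra of $\Nr_n\B$ need not make $\B$ atomic — and that the representation of $\B$, once cut down to $n$ dimensions and relativized to the generalized space, is genuinely atomic as a $\sf Gs_n$ rather than merely having the images of $\At\A$ as atoms of a larger algebra. The cleanest route is probably to avoid dilating for item 6 and instead work with $\A$ directly using its own (finitely many, hence term-definable in the $\sf Dc$ sense — but $\A$ is only $n$--dimensional, so this needs care) substitutions, or to choose the dilation so that $\At\A$ still generates a dense ideal; verifying that the concrete interior operators $\Box_i$ from Definition \ref{interior} behave correctly under the relativization is the routine-but-delicate part, handled exactly as in the proof of Lemma \ref{essence}.
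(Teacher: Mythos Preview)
Your overall scaffolding via the dilation $\B\in\sf TDc_\omega$ and the restriction-to-$n$-dimensions trick is right, and items (2), (3), (6) are handled essentially as the paper does. But items (1) and (4) are where you go wrong, and the error is that you have swapped the two mechanisms.

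For item (1) you claim that when the $X_i$ are ultrafilters ``the relevant nowhere-dense sets are just single clopen complements, so one needs only the Baire category theorem\ldots''. This is not correct: the sets $\bold H_{i,\tau}=\bigcap_{x\in X_i}N_{{\sf s}_\tau x}$ are closed nowhere-dense, but there are still $\lambda\cdot\omega$ of them with $\lambda$ possibly as large as anything below $2^{\aleph_0}$, and ordinary Baire category does not let you avoid that many. The paper's proof of (1) does \emph{not} go through Baire category at all; it is exactly the Shelah counting argument you placed under item (4). One passes to $\B=\Fm_T$ locally finite, observes that the $X_i$ correspond to \emph{complete} $n$--types (this is where maximality is used), invokes Theorem~\ref{Shelah} / Corollary~\ref{Shelah2} to obtain $2^\omega$ countable models pairwise overlapping only on principal complete types, and then argues by the pigeonhole map $\psi:2^\omega\to\wp(\bold F)$ that some model must omit all of $\bold X$.

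Conversely, for item (4) the Shelah argument gives a \emph{provable} result (for ultrafilter types), not merely a consistent one; it cannot by itself yield independence. The paper obtains the consistency direction from Martin's axiom (which forces $\mathfrak{p}=2^\omega$ so item (2) applies), and the \emph{non-provability} direction requires a separate construction you have not mentioned: one fixes a countable theory $T$ (e.g.\ random graphs) in which the isolated points of $S_n(T)$ are not dense, uses \cite{CF} to produce a family of $\sf covK$ non-principal $0$--types that cannot be omitted, and then lifts the failure from $\Nr_n\A$ to $\A$ via a careful argument that the representation of the neat reduct extends to one of the full algebra. Since $\sf covK<2^\omega$ is consistent, this gives non-provability. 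Your account of (4) contains no trace of this half. Similarly for (5): your meager-union idea via Theorem~\ref{car}(3) does not obviously give the biconditional; the paper proves the nontrivial direction by contraposition through the notion of a \emph{realizing tree} from \cite{CF}, which localizes a failure to omit $\bold X$ to a countable subfamily.
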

\begin{proof}
To substantially simplify the proof while retaining the gist of ideas used in the more general case  
for the first item we assume that $\A$ is countable {\it and simple}, that is to say, has no proper ideal. 
This means that $\A$ `algebraically represents' a complete countable theory. 
We have $\prod ^{\B}X_i=0$ for all $i<\kappa$ because,
$\A$ is a complete subalgebra of $\B$. 
To see why, assume that $S\subseteq \A$ and $\sum ^{\A}S=y$, and for contradiction that there exists $d\in \B$ such that
$s\leq d< y$ for all $s\in S$. Then, assuming that $A$ generates $\B$, we can infer that $d$ uses finitely many dimensions in $\omega\sim n$, 
$m_1,\ldots, m_n$, say.
Now let $t=y\cdot -{\sf c}_{m_1}\ldots {\sf c}_{m_n}(-d)$.
We claim that $t\in \A=\Nrr_{n}\B$ and $s\leq t<y$ for all $s\in S$. This contradicts
$y=\sum^{\A}S$.
The first required follows from the fact that $\Delta y\subseteq n$ and that all indices in $\omega\sim n$ that occur in $d$
are cylindrified.  In more detail, put $J=\{m_1, \ldots, m_n\}$ and let 
$i\in \omega\sim n$, then
${\sf c}_{i}t={\sf c}_{i}(-{\sf c}_{(J)} (-d))={\sf c}_{i}-{\sf c}_{(J)} (-d)$
$={\sf c}_{i} -{\sf c}_{i}{\sf c}_{(J)}( -d)
=-{\sf c}_{i}{\sf c}_{(J)}( -d)
=-{\sf c}_{(J)}( -d)=t.$
We have shown that ${\sf c}_it=t$ for all $i\in \omega\sim n$, thus $t\in \Nr_{n}\B=\A$. If $s\in S$, we show that $s\leq t$. We know that $s\leq y$. Also $s\leq d$, so $s\cdot -d=0$.
Hence $0={\sf c}_{m_1}\ldots {\sf c}_{m_n}(s\cdot -d)=s\cdot {\sf c}_{m_1}\ldots {\sf c}_{m_n}(-d)$, so
$s\leq -{\sf c}_{m_1}\ldots {\sf c}_{m_n}( -d)$, hence  $s\leq t$ as required. 
We finally check that $t<y$. If not, then
$t=y$ so $y \leq -{\sf c}_{m_1}\ldots {\sf c}_{m_n}(-d)$ and so $y\cdot  {\sf c}_m\ldots {\sf c}_{m_n}(-d)=0$.
But $-d\leq {\sf c}_m\ldots {\sf c}_{m_n}(-d)$,  hence $y\cdot -d\leq y\cdot  {\sf c}_m\ldots {\sf c}_{m_n}(-d)=0.$
Hence $y\cdot -d =0$ and this contradicts that $d<y$. We have proved that $\sum^{\B}X=1$ showing that $\A$ is indeed a complete subalgebra of $\B$.
Since $\B$ is a locally finite,  we can assume 
that $\B=\Fm_T$ for some countable consistent theory $T$.
For each $i<\kappa$, let $\Gamma_i=\{\phi/T: \phi\in X_i\}$.
Let ${\bold F}=(\Gamma_j: j<\kappa)$ be the corresponding set of types in $T$.
Then each $\Gamma_j$ $(j<\kappa)$ is a non-principal and {\it complete $n$-type} in $T$, because each $X_j$ is a maximal filter in $\A=\mathfrak{Nr}_n\B$.
(*) Let $(\Mo_i: i<2^{\omega})$ be a set of countable
models for $T$ that overlap only on principal maximal  types; these exist by Theorem \ref{Shelah}.

The rest is exactly like in the proof of item (2) of Theoren \ref{infinite} resorting to Shelah's Theorem \ref{Shelah}. 
Assume for contradiction that for all $i<2^{\omega}$, there exists
$\Gamma\in \bold F$, such that $\Gamma$ is realized in $\Mo_i$.
Let $\psi:{}2^{\omega}\to \wp(\bold F)$,
be defined by
$\psi(i)=\{F\in \bold F: F \text { is realized in  }\Mo_i\}$.  Then for all $i<2^{\omega}$,
$\psi(i)\neq \emptyset$.
Furthermore, for $i\neq j$, $\psi(i)\cap \psi(j)=\emptyset,$ for if $F\in \psi(i)\cap \psi(j)$, then it will be realized in
$\Mo_i$ and $\Mo_j$, and so it will be principal.
This implies that $|\bold F|=2^{\omega}$ which is impossible. Hence we obtain a model $\Mo\models T$ omitting $\bold X$
in which $\phi$ is satisfiable. The map $f$ defined from $\A=\Fm_T$ to ${\sf Cs}_n^{\Mo}$ (the set algebra based on $\Mo$ \cite[4.3.4]{HMT2})
via  $\phi_T\mapsto \phi^{\Mo},$ where the latter is the set of $n$--ary assignments in
$\Mo$ satisfying $\phi$, omits $\bold X$. Injectivity follows from the facts that $f$ 
is non--zero and $\A$ is simple.  The second item: We can assume that 
$\A\subseteq_c \Nr_n\B$, and $A$ generates $\B$, thus we can assume that $\B\in \sf Lf_{\omega}$ is countable. 
Since $\A\subseteq_c\Nr_n\B\subseteq_c\B$, then $\A\subseteq_c \B$, so that the non-principal types in $\A$ remain so in $\B$. By Theorem \ref{infinite} 
there exists 
$f:\B\to \C$, with $\C\in \sf Cs_{\omega}$, such that $f$ omitting $\bold X$. Then $g=f\upharpoonright \A$ 
will omit $\bold X$ in $\Nr_n\C(\in \Cs_n)$. 
For (2) and (3), we can assume that $\A\subseteq_c \Nrr_n\B$, $\B\in \Lf_{\omega}$. 
We work in $\B$. Using the notation on \cite[p. 216 of proof of Theorem 3.3.4]{Sayed} replacing $\Fm_T$ by $\B$, we have $\bold H=\bigcup_{i\in \lambda}\bigcup_{\tau\in V}\bold H_{i,\tau}$
where $\lambda <\mathfrak{p}$, and $V$ is the weak space ${}^{\omega}\omega^{(Id)}$,  
can be written as a countable union of nowhere dense sets, and so can 
the countable union $\bold G=\bigcup_{j\in \omega}\bigcup_{x\in \B}\bold G_{j,x}$.  
So for any $a\neq 0$,  there is an ultrafilter $F\in N_a\cap (S\setminus \bold H\cup \bold G$)
by the Baire category theorem. This induces a homomorphism $f_a:\A\to \C_a$, $\C_a\in {\sf Cs}_n$ that omits the given types, such that
$f_a(a)\neq 0$. (First one defines $f$ with domain $\B$ as on p.216, then restricts $f$ to $\A$ obtaining $f_a$ the obvious way.) 
The map $g:\A\to \bold P_{a\in \A\setminus \{0\}}\C_a$ defined via $x\mapsto (g_a(x): a\in \A\setminus\{0\}) (x\in \A)$ is as required. 
In case $\A$ is simple, then by properties of $\sf covK$, $S\setminus (\bold H\cup \bold G)$ is non--empty,  so
if $F\in S\setminus (\bold H\cup \bold G)$, then $F$ induces a non--zero homomorphism $f$ with domain $\A$ into a $\Cs_n$ 
omitting the given types. By simplicity of $\A$, $f$ is injective.

To prove independence, it suffices to show that $\sf cov K$ many types may not be omitted because it is consistent that $\sf cov K<2^{\omega}$. 
Fix $2<n<\omega$. Let $T$ be a countable theory such that for this given $n$, in $S_n(T)$, the Stone space of $n$--types,  the isolated points are not dense.
It is not hard to find such theories.
An example, is the  theory of random graphs.
This condition excludes the existence of a prime model for $T$ because $T$ has a prime model $\iff$ the isolated points in 
$S_n(T)$ are dense for all $n$. A prime model which in this context is an atomic model,  omits any family of non--principal types (see the proof of the last item).
We do not want this to happen.
Using exactly the same argument in \cite[Theorem 2.2(2)]{CF}, one can construct 
a family $P$ of non--principal $0$--types (having no free variable) of  $T$,
such that $|P|={\sf covK}$ and $P$ cannot be omitted.
Let $\A=\Fm_T$ and for $p\in P$, let $X_p=\{\phi/T:\phi\in p\}$. Then $X_p\subseteq \Nrr_n\A$, and $\prod X_p=0$,
because $\Nrr_n\A$ is a complete subalgebra of $\A$.
Then we claim that for any $0\neq a$, there is no set algebra $\C$ with countable base
and $g:\A\to \C$ such that $g(a)\neq 0$ and $\bigcap_{x\in X_p}f(x)=\emptyset$.
To see why, let $\B=\Nrr_n\A$. Let $a\neq 0$. Assume for contradiction, that there exists
$f:\B\to \D'$, such that $f(a)\neq 0$ and $\bigcap_{x\in X_i} f(x)=\emptyset$. We can assume that
$B$ generates $\A$ and that $\D'=\Nrr_n\B'$, where $\B'\in \Lf_{\omega}$.
Let $g=\Sg^{\A\times \B'}f$.
We show that $g$ is a homomorphism with $\dom(g)=\A$, $g(a)\neq 0$, and $g$ omits $P$, and for this, 
it suffices to show by symmetry that $g$ is a function with domain $A$. It obviously has co--domain $B'$. 
Settling the domain is easy: $\dom g=\dom\Sg^{\A\times \B'}f=\Sg^{\A}\dom f=\Sg^{\A}\Nrr_{n}\A=\A.$ 
Let $\bold K=\{\A\in \CA_{\omega}: \A=\Sg^{\A}\Nrr_{n}\A\}(\subseteq \sf Lf_{\omega}$). 
We show that $\bold K$ is closed under finite direct products. Assume that $\C$, $\D\in \bold K$, then we have 
$\Sg^{\C\times \D}\Nrr_{n}(\C\times \D)= \Sg^{\C\times \D}(\Nrr_{n}\C\times \Nrr_{n}\D)=
\Sg^{\C}\Nrr_{n}\C\times  \Sg^{\D}\Nrr_{n}\D=\C\times \D$.
Observe that (*):
 $$(a,b)\in g\land \Delta[(a,b)]\subseteq n\implies f(a)=b.$$
(Here $\Delta[(a,b)]$ is the {\it dimension set} of $(a,b)$ defined via $\{i\in \omega: {\sf c}_i(a,b)\neq (a,b)\}$).
Indeed $(a,b)\in \Nrr_{n}\Sg^{\A\times \B}f=\Sg^{\Nrr_{n}(\A\times \B)}f=\Sg^{\Nrr_{\alpha}\A\times \Nrr_{n}\B}f=f.$
Now suppose that $(x,y), (x,z)\in g$. We need to show that $y=z$ proving that $g$ is a function.
Let $k\in \omega\setminus n.$ Let $\oplus$ denote `symmetric difference'. Then (1):
$$(0, {\sf c}_k(y\oplus z))=({\sf c}_k0, {\sf c}_k(y\oplus z))={\sf c}_k(0,y\oplus z)={\sf c}_k((x,y)\oplus(x,z))\in g.$$
Also (2),
$${\sf c}_k(0, {\sf c}_k(y\oplus z))=(0,{\sf c}_k(y\oplus z)).$$
From (2) by observing that $k$ is arbitrarly chosen in $\omega\sim n$, we get (3): 
$$\Delta[(0, {\sf c}_k(y\oplus z))]\subseteq n$$
From (1) and (3) and  (*),  we get  $f(0)={\sf c}_k(y\oplus z)$ for any  $k\in \omega\setminus n.$
Fix $k\in \omega\sim n$. Then by the above, upon observing that $f$ is a homomophism, so that in particular $f(0)=0$, we get 
${\sf c}_k(y\oplus z)=0$. But  $y\oplus z\leq {\sf c}_k(x\oplus z)$, so $y\oplus z=0$, thus $y=z$. 
We have shown that $g$ is a function, $g(a)\neq 0$,  $\dom g=\A$ and $g$ omits $P$.
This contradicts that  $P$, by its construction,  cannot be omitted.
Assuming Martin's axiom, 
we get $\sf cov K=\mathfrak{p}=2^{\omega}$. Together with the above arguments this proves (4).

We now prove (5). Let $\A=\Nrr_n\D$, $\D\in {}\sf Lf_{\omega}$ is countable. Let $\lambda<\mathfrak{p}.$ Let $\bold X=(X_i: i<\lambda)$ be as in the hypothesis.
Let $T$ be the corresponding first order theory, so
that $\D\cong \Fm_T$. Let $\bold X'=(\Gamma_i: i<\lambda)$ be the family of non--principal types in $T$ corresponding to $\bold X$. 
If $\bold X'$ is not omitted, then there is a (countable) realizing tree
for $T$, hence there is a realizing tree for a countable subfamily of $\bold X'$ in the sense of \cite[Definition 3.1]{CF}, 
hence a countable subfamily of $\bold X'$ 
cannot be omitted. Let $\bold X_{\omega}\subseteq \bold X$ be the corresponding countable 
subset of $\bold X$. Assume that $\bold X_{\omega}$ can be omitted in a $\sf Gs_n$, via $f$ say. 
Then  by the same argument used in proving item (4)  $f$ can be lifted to $\Fm_T$ omitting $\bold X'$, 
which is a contradiction.  We leave the part when $\A$ is simple to the reader. 

For (6): If $\A\in {\bold S}_n\Nr_n\CA_{\omega}$, is atomic and has countably many atoms, 
then any complete representation of $\A$, equivalently, an atomic representation of $\A$, equivalently, a representation of $\A$ 
omitting the set of co--atoms is as required. 

\end{proof}
Using the full power of Theorem \ref{Shelah}  together with the argument in item (1) of Theorem \ref{sh}, one can replace in the last item of the last corollary $\omega$ 
by any regular uncountable cardinal $\mu$ as explicitly formulated next, 
\begin{theorem}\label{i2} Let $\kappa$ be a regular infinite cardinal and $n<\omega$. Assume that $\A\in \Nr_n\CA_{\omega}$ with $|A|\leq {\kappa}$, 
that $\lambda$ is a cardinal $< 2^{\kappa}$, and that $\bold X=(X_i: i<\lambda)$ is a family of non-principal types  of $\A$.
If the $X_i$s are non--principal ultrafilters of $\A$,  then $\bold X$ can be omitted in a ${\sf Gs}_n$.
\end{theorem}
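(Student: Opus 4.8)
The plan is to imitate the proof of item~(1) of Theorem~\ref{sh} almost verbatim, but to feed into it the full strength of Shelah's Theorem~\ref{Shelah} at the regular cardinal $\kappa$ in place of its countable shadow Corollary~\ref{Shelah2}. First I would assume $\A$ simple (as in Theorem~\ref{sh}(1); the general case is recovered by the usual passage to simple quotients, so that a representation into a $\Cs_n$ yields one into a ${\sf Gs}_n$). Since $\A\in\Nr_n\CA_\omega$, pick $\B\in\Lf_\omega$ with $\A=\Nr_n\B$ and $\B=\Sg^{\B}A$, so that $|B|\le|A|\le\kappa$; identify $\B$ with $\Fm_T$ for a complete consistent first-order theory $T$, and arrange the language so that $|T|=\kappa$ --- this is the one point at which one really wants $|A|=\kappa$ (if $|A|<\kappa$ the claim reduces to a smaller instance of this theorem or of Theorem~\ref{sh}). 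As in the earlier proofs, each non-principal ultrafilter $X_i$ of $\A=\Nr_n\B$ corresponds to a complete $n$-type $\Gamma_i$ of $T$ that is not isolated by a single formula.

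Next I would apply Theorem~\ref{Shelah} with its parameter $\lambda$ set equal to $\kappa$, obtaining models $(\M_j:j<2^\kappa)$ of $T$, each of cardinality $\kappa$, such that any complete type realized in two distinct $\M_j$'s possesses a generating subset of cardinality $<\kappa$. Suppose for contradiction that no $\M_j$ omits $\bold X$, and put $\psi(j)=\{X_i:\Gamma_i$ is realized in $\M_j\}\subseteq\bold X$; then $\psi(j)\neq\emptyset$ for every $j<2^\kappa$. The heart of the matter is that the $\psi(j)$ are pairwise disjoint. Indeed, if $X_i\in\psi(j)\cap\psi(j')$ with $j\neq j'$, then $\Gamma_i$ is realized in two distinct Shelah models, so by Theorem~\ref{Shelah} there is $p_0\subseteq\Gamma_i$ with $|p_0|<\kappa$ and $T\cup p_0\vdash\Gamma_i$; by compactness of first-order logic every formula in $\Gamma_i$ follows from $T$ together with a finite subset of $p_0$, whence the Lindenbaum classes of $p_0$ already generate the ultrafilter $X_i$ as a filter of $\A$, so $X_i$ has a generating set of size $<\kappa$. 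Granting disjointness, the $2^\kappa$ nonempty pairwise disjoint sets $\psi(j)$ sit inside $\bold X$, forcing $|\bold X|\ge 2^\kappa>\lambda$, a contradiction; hence some $\M_{j_0}$ omits $\bold X$. Reading off the set algebra on $\M_{j_0}$ via $\phi_T\mapsto\phi^{\M_{j_0}}$ exactly as in Theorem~\ref{sh}(1) (injective by simplicity of $\A$) produces the required representation in a ${\sf Gs}_n$.

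The step I expect to be the obstacle is precisely this disjointness of the $\psi(j)$. Theorem~\ref{Shelah} only guarantees that a type realized in two of the selected models is generated by fewer than $\kappa$ of its own members; but for uncountable $\kappa$ an ultrafilter that is non-principal in the bare sense $\prod X_i=0$ may nevertheless be countably generated, hence generated by fewer than $\kappa$ members, so bare non-principality does not by itself contradict the Shelah conclusion. Making the argument airtight therefore requires that the $X_i$ be non-principal in the $\kappa$-local sense of admitting no generating set of cardinality $<\kappa$; for $\kappa=\omega$ this is exactly ordinary non-principality, which is why item~(1) of Theorem~\ref{sh} goes through with no extra hypothesis, whereas for uncountable $\kappa$ one must either read this strengthening into the hypothesis on $\bold X$ or derive it from the structure of $\A$ (using $|A|=\kappa$ together with whatever completeness $\Nr_n\B$ inherits). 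Everything else --- the reduction to simple algebras, the translation into first-order types, the compactness argument, and the extraction of the $\Cs_n$ --- is the routine bookkeeping already carried out in Theorems~\ref{sh} and~\ref{infinite}, with $\omega$ replaced throughout by $\kappa$ and $2^{\aleph_0}$ by $2^\kappa$.
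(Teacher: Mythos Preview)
Your approach is exactly what the paper intends. The paper's entire justification for Theorem~\ref{i2} is the single sentence preceding its statement: ``Using the full power of Theorem~\ref{Shelah} together with the argument in item~(1) of Theorem~\ref{sh}, one can replace \ldots\ $\omega$ by any regular uncountable cardinal.'' No further detail is given. So you have reconstructed the intended argument: dilate to $\B\cong\Fm_T$ with $|T|=\kappa$, invoke Shelah's Theorem~\ref{Shelah} at the regular cardinal $\kappa$ to obtain $2^\kappa$ models, and run the pigeonhole argument on the map $\psi$ exactly as in Theorem~\ref{sh}(1).

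You have also correctly isolated the one genuine wrinkle, and it is worth noting that the paper itself does not address it. Shelah's conclusion is that a complete type realized in two of the distinguished models is generated by a subset of size $<\kappa$; when $\kappa=\omega$ this means \emph{finitely} generated, hence principal in the ordinary sense, and the disjointness of the $\psi(j)$ follows. For uncountable regular $\kappa$, ``$<\kappa$-generated'' does not by itself contradict ``$\prod X_i=0$'', so the disjointness step requires the stronger reading of non-principality that you describe (no generating subset of size $<\kappa$). The paper simply asserts that the countable argument lifts, without pausing over this point. Your diagnosis is accurate, and everything else in your write-up---the reduction to simple $\A$, the translation to complete $n$-types, the extraction of the $\Cs_n$ from the successful model---is the routine bookkeeping the paper has in mind.
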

We show (algebraically) that the maximality condition cannot be removed when we consider uncountable theories.
\begin{theorem}\label{bsl} Let $\kappa$ be an infinite cardinal. Then there exists an atomless  $\C\in \CA_{\omega}$ such that  for all 
$2<n<\omega$, $|\mathfrak{Nr}_n\C|=2^{\kappa}$, $\mathfrak{Nr}_n\C\in {\sf LCA}_n(={\bf El}\CRCA_n)$, 
but $\mathfrak{Nr}_n\C$ is not completely representable. Thus the non--principal type of co--atoms of $\mathfrak{Nr}_n\C$ 
cannot be omitted. In particular, the condition of maximality in Theorem \ref{i2} cannot be removed.  
\end{theorem}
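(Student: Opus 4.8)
The plan is to push everything down to atom structures and games. For an atomic $\CA_n$, the Lyndon conditions are equivalent to \pe\ having a \ws\ in the finite approximating games $G_k$ played on the atom structure, and any complete representation of an atomic algebra yields a \ws\ for \pe\ in the $\omega$--round game $G_{\omega}$. So it suffices to build a \emph{simple} $\CA_n$ atom structure $\At$ with $2^{\kappa}$ atoms such that (a) \pe\ wins $G_k(\At)$ for every finite $k$; (b) \pa\ wins $G_{\omega}(\At)$; and (c) the term algebra $\Tm\At$ --- which is simple and atomic with atom structure $\At$, hence has $2^{\kappa}$ atoms and $|\Tm\At| = 2^{\kappa}$ --- is the neat $n$--reduct of an \emph{atomless} $\C \in \CA_{\omega}$. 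Granting this and setting $\mathfrak{Nr}_n\C = \Tm\At$: by (a) all Lyndon conditions hold in $\mathfrak{Nr}_n\C$, so $\mathfrak{Nr}_n\C \in {\sf LCA}_n = {\bf El}\CRCA_n$, and by the inclusions ${\sf LCA}_n \subseteq {\sf SRCA}_n \subseteq \RCA_n \cap {\bf At}$ recorded in the introduction it is even strongly representable; by (b) it is not completely representable; and $|\mathfrak{Nr}_n\C| = 2^{\kappa}$.

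For $\At$ I would use a rainbow $\CA_n$ atom structure of the Hirsch--Hodkinson type \cite{HHbook}, with the ``greens'' carrying the order type of $\omega$ and the ``reds'' indexed by a set of cardinality $2^{\kappa}$; its atoms, being the surjective rainbow colourings of the edges of $K_n$, then number $2^{\kappa}$, and the structure is connected, hence $\Tm\At$ is simple. The cardinality of the red index set is immaterial to the games; what matters is that the greens have order type $\omega$, which is exactly what puts the cut--off between ``every finite game'' and ``$G_{\omega}$'': in $k$ rounds \pa\ can only force a descent of length $k$ through the greens, so \pe\ survives by answering with fresh reds along a consistent colouring, whereas over $\omega$ rounds \pa's cone strategy drives \pe\ into an infinite strictly descending chain of indices, which no colouring can realise. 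These are routine rainbow computations, and \pa's $\omega$--strategy touches only countably many atoms, so (b) holds however large $\kappa$ is.

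For (c) one first realises the rainbow $\CA_n$ term algebra as $\mathfrak{Nr}_n\D$ for the corresponding $\omega$--dimensional rainbow algebra $\D$, and then passes to the subalgebra $\C$ of $\D$ generated by (the copy of) $\Tm\At$ together with a supply of elements whose dimension sets are not contained in $n$ and which split the $n$--dimensional atoms --- e.g.\ repeatedly replacing a candidate atom $a$ by $a \cdot {\sf d}_{m,m+1}$ and $a \cdot (-{\sf d}_{m,m+1})$ for suitable $m \geq n$. Using the sort of dimension--set bookkeeping carried out in the proof of Theorem~\ref{sh} --- the computation showing ${\sf c}_i t = t$ for $i \notin n$ --- one checks that these new generators contribute nothing of dimension set $\subseteq n$, so that $\mathfrak{Nr}_n\C$ is still exactly $\Tm\At$, while iterating the splittings through all coordinates in $\omega \sim n$ leaves $\C$ with no atoms whatsoever.

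Finally, since $\mathfrak{Nr}_n\C = \Tm\At$ is atomic, the set $X = \{-a : a \in \At\}$ of its co--atoms has $\prod X = 0$, hence is a non--principal type, and it is plainly not an ultrafilter; as $\mathfrak{Nr}_n\C$ is simple and not completely representable, and a (necessarily injective) representation of a simple atomic algebra onto a ${\sf Gs}_n$ that omits its co--atom type is exactly a complete representation, the type $X$ cannot be omitted in any ${\sf Gs}_n$. Thus $\mathfrak{Nr}_n\C \in \mathfrak{Nr}_n\CA_{\omega} \subseteq \mathbf{S}_c\mathfrak{Nr}_n\CA_{\omega}$, with $|\mathfrak{Nr}_n\C| = 2^{\kappa}$, carries a \emph{single} non--principal type that is not maximal yet not omittable, whereas the types in Theorem~\ref{i2} are assumed to be ultrafilters; hence maximality cannot be dropped there. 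The step I expect to be the real obstacle is (c): arranging a genuinely atomless $\C \in \CA_{\omega}$ whose neat $n$--reduct is precisely the \emph{full} term algebra $\Tm\At$, so that cardinality $2^{\kappa}$, membership in ${\sf LCA}_n$ and failure of complete representability all transfer to it verbatim; the game analysis is by comparison routine rainbow combinatorics.
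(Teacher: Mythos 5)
There is a genuine gap, and it sits exactly where you flag the ``real obstacle'': your conditions (b) and (c) cannot both hold. If $\Tm\At=\mathfrak{Nr}_n\C$ for some $\C\in\CA_{\omega}$, then $\Tm\At$ is an atomic member of $\bold S_c{\sf Nr}_n\CA_{\omega}$, and by the game/neat-embedding correspondence the paper uses repeatedly (Lemma \ref{n}, as invoked in the proofs of Theorem \ref{bsl} itself and of Corollary \ref{HH}) it is \pe\ --- not \pa\ --- who has a \ws\ in $G_{\omega}(\At)$, indeed in the stronger game $\bold G^{\omega}$. So demanding that \pa\ win $G_{\omega}(\At)$ forecloses any realisation of $\Tm\At$ as a neat reduct of an $\omega$-dimensional algebra. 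The underlying confusion is the direction of the implication between games and complete representability for \emph{uncountable} atom structures: \pe\ winning $G_{\omega}$ fails to yield a complete representation when there are uncountably many atoms (the step-by-step construction needs countably many atoms, as the Remark after Corollary \ref{HH} stresses), so non-complete-representability must be witnessed by something other than a \pa\ \ws. Your rainbow combinatorics for (b) is also shaky on its own terms --- with greens of order type $\omega$ there is no infinite descending chain for \pa\ to force --- but even a correct \pa\ \ws\ in $G_{\omega}$ would be fatal to (c) rather than helpful.

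The paper's proof is organised precisely to avoid this trap. It builds a relation algebra $\A$ with $2^{\kappa}$ ``green'' atoms and only $\kappa$ ``red'' atoms, forbidding monochromatic red triangles, and kills complete representability \emph{directly}: any complete representation produces a set of $2^{\kappa}$ points whose pairs are labelled by only $\kappa$ reds, and the Erd\H{o}s--Rado partition theorem then yields a forbidden monochromatic triangle. No game enters this step, and \pe\ still wins $G_{\omega}$ on the atom structure, which is exactly what places $\mathfrak{Nr}_n\C$ in ${\sf LCA}_n$ (via an ultrapower and elementary-chain descent to a countable, hence completely representable, elementarily equivalent algebra). The atomless $\omega$-dilation is then obtained not by splitting atoms inside an ambient algebra but by forming the complex algebra $\Ca(S)$ of an amalgamation class $S$ of $\A$-networks on $\omega$ (an $\omega$-dimensional cylindric basis) and taking the subalgebra generated by the sets $\widehat{N}$; atomlessness is immediate because every nonzero $\widehat{N}$ can be properly refined by adjoining a node. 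To salvage your outline you would need to replace (b) by a direct Ramsey-type obstruction on putative complete representations, and obtain (c) from a cylindric basis rather than by ad hoc splitting.
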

\begin{proof} We use the following uncountable version of Ramsey's theorem due to
Erdos and Rado:
If $r\geq 2$ is finite, $k$  an infinite cardinal, then
$exp_r(k)^+\to (k^+)_k^{r+1}$,
where $exp_0(k)=k$ and inductively $exp_{r+1}(k)=2^{exp_r(k)}$.
The above partition symbol describes the following statement. If $f$ is a coloring of the $r+1$
element subsets of a set of cardinality $exp_r(k)^+$
in $k$ many colors, then there is a homogeneous set of cardinality $k^+$
(a set, all whose $r+1$ element subsets get the same $f$-value). We will construct the requred $\C\in \CA_{\omega}$ from a relation algebra (to be denoted in a while by $\A$) 
having an `$\omega$-dimensional cylindric basis.' 
To define the  relation algebra, we specify its atoms and forbidden triples.
Let $\kappa$ be the given cardinal in the hypothesis of the Theorem. The atoms are $\Id, \; \g_0^i:i<2^{\kappa}$ and $\r_j:1\leq j<
\kappa$, all symmetric.  The forbidden triples of atoms are all
permutations of $({\sf Id}, x, y)$ for $x \neq y$, \/$(\r_j, \r_j, \r_j)$ for
$1\leq j<\kappa$ and $(\g_0^i, \g_0^{i'}, \g_0^{i^*})$ for $i, i',
i^*<2^{\kappa}.$ 
Write $\g_0$ for $\set{\g_0^i:i<2^{\kappa}}$ and $\r_+$ for
$\set{\r_j:1\leq j<\kappa}$. Call this atom
structure $\alpha$.  
Consider the term algebra $\A$ defined to be the subalgebra of the complex algebra of this atom structure generated by the atoms.
We claim that $\A$, as a relation algebra,  has no complete representation, hence any algebra sharing this 
atom structure is not completely representable, too. Indeed, it is easy to show that if $\A$ and $\B$ 
are atomic relation algebras sharing the same atom structure, so that $\At\A=\At\B$, then $\A$ is completely representable $\iff$ $\B$ is completely representable.
Assume for contradiction that $\A$ has a complete representation with base $\Mo$.  Let $x, y$ be points in the
representation with $\Mo \models \r_1(x, y)$.  For each $i< 2^{\kappa}$, there is a
point $z_i \in \Mo$ such that $\Mo \models \g_0^i(x, z_i) \wedge \r_1(z_i, y)$.
Let $Z = \set{z_i:i<2^{\kappa}}$.  Within $Z$, each edge is labelled by one of the $\kappa$ atoms in
$\r_+$.  The Erdos-Rado theorem forces the existence of three points
$z^1, z^2, z^3 \in Z$ such that $\Mo \models \r_j(z^1, z^2) \wedge \r_j(z^2, z^3)
\wedge \r_j(z^3, z_1)$, for some single $j<\kappa$.  This contradicts the
definition of composition in $\A$ (since we avoided monochromatic triangles).
Let $S$ be the set of all atomic $\A$-networks $N$ with nodes
$\omega$ such that $\{\r_i: 1\leq i<\kappa: \r_i \text{ is the label
of an edge in $N$}\}$ is finite.
Then it is straightforward to show $S$ is an amalgamation class, that is for all $M, N
\in S$ if $M \equiv_{ij} N$ then there is $L \in S$ with
$M \equiv_i L \equiv_j N$, witness \cite[Definition 12.8]{HHbook} for notation.
We have $S$ is symmetric, that is, if $N\in S$ and $\theta:\omega\to \omega$ is a finitary function, in the sense
that $\{i\in \omega: \theta(i)\neq i\}$ is finite, then $N\theta$ is in $S$. It follows that the complex
algebra $\Ca(S)\in \QEA_\omega$.
Now let $X$ be the set of finite $\A$-networks $N$ with nodes
$\subseteq\kappa$ such that:

\begin{enumerate}
\item each edge of $N$ is either (a) an atom of
$\A$ or (b) a cofinite subset of $\r_+=\set{\r_j:1\leq j<\kappa}$ or (c)
a cofinite subset of $\g_0=\set{\g_0^i:i<2^{\kappa}}$ and

\item  $N$ is `triangle-closed', i.e. for all $l, m, n \in \nodes(N)$ we
have $N(l, n) \leq N(l,m);N(m,n)$.  That means if an edge $(l,m)$ is
labelled by $\sf Id$ then $N(l,n)= N(m,n)$ and if $N(l,m), N(m,n) \leq
\g_0$ then $N(l,n)\cdot \g_0 = 0$ and if $N(l,m)=N(m,n) =
\r_j$ (some $1\leq j<\omega$) then $N(l,n)\cdot \r_j = 0$.
\end{enumerate}
For $N\in X$ let $\widehat{N}\in\Ca(S)$ be defined by
$$\set{L\in S: L(m,n)\leq
N(m,n) \mbox{ for } m,n\in \nodes(N)}.$$
For $i\in \omega$, let $N\restr{-i}$ be the subgraph of $N$ obtained by deleting the node $i$.
Then if $N\in X, \; i<\omega$ then $\widehat{\cyl i N} =
\widehat{N\restr{-i}}$.
The inclusion $\widehat{\cyl i N} \subseteq (\widehat{N\restr{-i})}$ is clear.
Conversely, let $L \in \widehat{(N\restr{-i})}$.  We seek $M \equiv_i L$ with
$M\in \widehat{N}$.  This will prove that $L \in \widehat{\cyl i N}$, as required.
Since $L\in S$ the set $T = \set{\r_i \notin L}$ is infinite.  Let $T$
be the disjoint union of two infinite sets $Y \cup Y'$, say.  To
define the $\omega$-network $M$ we must define the labels of all edges
involving the node $i$ (other labels are given by $M\equiv_i L$).  We
define these labels by enumerating the edges and labeling them one at
a time.  So let $j \neq i < \kappa$.  Suppose $j\in \nodes(N)$.  We
must choose $M(i,j) \leq N(i,j)$.  If $N(i,j)$ is an atom then of
course $M(i,j)=N(i,j)$.  Since $N$ is finite, this defines only
finitely many labels of $M$.  If $N(i,j)$ is a cofinite subset of
$\g_0$ then we let $M(i,j)$ be an arbitrary atom in $N(i,j)$.  And if
$N(i,j)$ is a cofinite subset of $\r_+$ then let $M(i,j)$ be an element
of $N(i,j)\cap Y$ which has not been used as the label of any edge of
$M$ which has already been chosen (possible, since at each stage only
finitely many have been chosen so far).  If $j\notin \nodes(N)$ then we
can let $M(i,j)= \r_k \in Y$ some $1\leq k < \kappa$ such that no edge of $M$
has already been labelled by $\r_k$.  It is not hard to check that each
triangle of $M$ is consistent (we have avoided all monochromatic
triangles) and clearly $M\in \widehat{N}$ and $M\equiv_i L$.  The labeling avoided all
but finitely many elements of $Y'$, so $M\in S$. So
$\widehat{(N\restr{-i})} \subseteq \widehat{\cyl i N}$.
Now let $\widehat{X} = \set{\widehat{N}:N\in X} \subseteq \Ca(S)$.
Then we claim that the subalgebra of $\Ca(S)$ generated by $\widehat{X}$ is simply obtained from
$\widehat{X}$ by closing under finite unions.
Clearly all these finite unions are generated by $\widehat{X}$.  We must show
that the set of finite unions of $\widehat{X}$ is closed under all cylindric
operations.  Closure under unions is given.  For $\widehat{N}\in X$ we have
$-\widehat{N} = \bigcup_{m,n\in \nodes(N)}\widehat{N_{mn}}$ where $N_{mn}$ is a network
with nodes $\set{m,n}$ and labeling $N_{mn}(m,n) = -N(m,n)$. $N_{mn}$
may not belong to $X$ but it is equivalent to a union of at most finitely many
members of $\widehat{X}$.  The diagonal $\diag ij \in\Ca(S)$ is equal to $\widehat{N}$
where $N$ is a network with nodes $\set{i,j}$ and labeling
$N(i,j)=\sf Id$.  Closure under cylindrification is given.
Let $\C$ be the subalgebra of $\Ca(S)$ generated by $\widehat{X}$.
Then $\A = \mathfrak{Ra}\C$.
To see why, each element of $\A$ is a union of a finite number of atoms,
possibly a co--finite subset of $\g_0$ and possibly a co--finite subset
of $\r_+$.  Clearly $\A\subseteq\mathfrak{Ra}\C$.  Conversely, each element
$z \in \mathfrak{Ra}\C$ is a finite union $\bigcup_{N\in F}\widehat{N}$, for some
finite subset $F$ of $X$, satisfying $\cyl i z = z$, for $i > 1$. Let $i_0,
\ldots, i_k$ be an enumeration of all the nodes, other than $0$ and
$1$, that occur as nodes of networks in $F$.  Then, $\cyl
{i_0} \ldots
\cyl {i_k}z = \bigcup_{N\in F} \cyl {i_0} \ldots
\cyl {i_k}\widehat{N} = \bigcup_{N\in F} \widehat{(N\restr{\set{0,1}})} \in \A$.  So $\mathfrak{Ra}\C\subseteq \A$.
Thus $\A$ is the relation algebra reduct of $\C\in\CA_\omega$, but $\A$ has no complete representation.
Let $n>2$. Let $\B=\Nrr_n \C$. Then
$\B\in {\sf Nr}_n\CA_{\omega}$, is atomic, but has no complete representation for plainly a complete representation of $\B$ induces one of $\A$. 
In fact, because $\B$  is generated by its two dimensional elements,
and its dimension is at least three, its
$\Df$ reduct is not completely representable \cite[Proposition 4.10]{AU}.
We show that $\B$ is in  ${\bf El}\CRCA_n={\sf LCA}_n$. By Lemma \ref{n} \pe\ has a \ws\ in $G_{\omega}(\At\B)$, hence  \pe\ has a \ws\ in $G_k(\At\B)$ for all $k<\omega$.
Using ultrapowers and an elementary chain argument, we get that there is a countable $\C$ such that
that $\B\equiv \C$, so that $\C$ is atomic and \pe\ has a \ws\ in $G_{\omega}(\At\C)$. Since $\C$ is countable
then by \cite[Theorem 3.3.3]{HHbook2} it is completely representable. 
It remains to show that the $\omega$--dilation $\C$ is atomless. 
For any $N\in X$, we can add an extra node 
extending
$N$ to $M$ such that $\emptyset\subsetneq M'\subsetneq N'$, so that $N'$ cannot be an atom in $\C$.
\end{proof}

\section{Clique guarded semantics}

Fix $2<n<\omega$ (locally well--behaved) relativized representations, in analogy to the relation algebra case dealt with in 
\cite[Chapter 13]{HHbook}; such localized representations are called $m$--square with $2<n<m\leq \omega$, with $\omega$-square representations coinciding with ordinary ones. 
It will always be the case, unless otherwise explicitly indicated,
that $1<n<m<\omega$;  $n$ denotes the dimension. 
But first we recall certain relativized set algebras. A set $V$ ($\subseteq {}^nU$)  is {\it diagonizable} if  
$s\in V\implies s\circ [i|j]\in V$.
We say that $V\subseteq {}^nU$ is {\it locally square} if whenever $s\in V$ and
$\tau: n\to n$, then $s\circ \tau\in V$. 
Let ${\sf D}_n$ (${\sf G}_n$) be the class of set 
algebras whose top elements are diagonizable (locally square) and operations are defined like cylindric set algebra 
of dimension $n$ relativized to the top element $V$ . 
\begin{theorem}\label{m}
\cite{AT}.  
Fix  $2<n<\omega$. Then  ${\sf D}_n$ and ${\sf G}_n$ are 
finitely axiomatizable and have a decidable universal (hence equational) theory. 
\end{theorem}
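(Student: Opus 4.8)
The plan is to treat, for each of ${\sf D}_n$ and ${\sf G}_n$, two separate tasks: (i) exhibit a finite equational system $\Sigma$ — in the cylindric signature, enlarged harmlessly by the term-definable replacement operators ${\sf s}^i_j$ and, for ${\sf G}_n$, the transposition operators ${\sf s}_{[i,j]}$ — such that an abstract algebra satisfies $\Sigma$ exactly when it is isomorphic to a subalgebra of a relativized set algebra of the required kind; and (ii) establish the finite model property for the universal theory of the class. Task (i) yields finite axiomatizability (a scheme with indices ranging over the fixed finite $n$ is literally a finite set of equations, so the real content of (i) is the completeness half, i.e. a representation theorem); combining (ii) with the recursiveness of $\Sigma$ yields decidability of the universal theory, and hence — equations being particular universal sentences — of the equational theory.

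For (i) I would take $\Sigma$ to consist of all the ${\sf CA}_n$ axioms \emph{except} the commutativity of cylindrifications ${\sf c}_i{\sf c}_j x = {\sf c}_j{\sf c}_i x$, supplemented by the Resek--Thompson "merging" axiom together with the equations asserting that each ${\sf s}^i_j$ is a Boolean endomorphism interacting correctly with the ${\sf c}$'s and ${\sf d}$'s; for ${\sf G}_n$ one further adds the quasi-polyadic axioms for the ${\sf s}_{[i,j]}$, reflecting that the top is closed under all $\tau:n\to n$ rather than merely under replacements. Soundness is a routine verification that each axiom holds against the closure properties of a diagonalizable (resp. locally square) top element, the only point worth noting being that $\Sigma$ must be, and is, a genuine weakening of the ${\sf CA}_n$ axioms — commutativity of cylindrifications really does fail over such tops. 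For completeness I would run a step-by-step (back-and-forth) construction of a representation: given $\mathfrak{A}\models\Sigma$ and $0\neq a$, build a top element $V$ whose points are pairs $(s,F)$ consisting of a labelling $s:n\to U$ of the indices and an ultrafilter $F$ of $\mathfrak{A}$ coherent with the diagonal pattern of $s$, then repeatedly extend a finite partial representation so as to meet every demand ${\sf c}_i x\in F$ by adjoining a new point — the crucial extension lemma being: if ${\sf c}_i x\in F$ then there is an ultrafilter $G\ni x$ that is ${\sf s}^j_i$-coherent with $F$, and this uses only finitely many members of $\Sigma$. The reason finitely many axioms suffice here is structural: the target top is required to be only diagonalizable (resp. locally square), never $\omega$-square, so the global "simultaneous patchwork over all finite subconfigurations" phenomena responsible for the non-finite-axiomatizability of ${\sf RCA}_n$ ($2<n<\omega$) never arise; at the end one closes the constructed $V$ under all replacements (resp., for ${\sf G}_n$, under all $\tau:n\to n$), the transposition axioms guaranteeing that this closure does no harm.

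For (ii) I would prove the finite model property by a filtration on the base. Suppose a universal sentence $\varphi$, with quantifier-free matrix in the variables $x_1,\dots,x_k$, fails in some $\mathfrak{B}=\langle\wp(V),\dots\rangle$ in the class, witnessed by $a_1,\dots,a_k\in\wp(V)$; let $\Gamma$ be the finite family of subsets of $V$ obtained by evaluating all subterms of $\varphi$ at $a_1,\dots,a_k$. Collapse the base $U$ of $V$ by the equivalence identifying two elements whenever they are indistinguishable relative to $\Gamma$; the locality of the top element bounds the number of resulting classes by a computable function of $|\varphi|$ and $n$. The coordinatewise quotient map carries $V$ onto a finite $\bar V\subseteq{}^n\bar U$ that is again diagonalizable (resp. locally square), since these closure conditions on the top commute with the quotient, and it carries the $a_i$ to elements of $\wp(\bar V)$ still refuting $\varphi$. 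Hence every refutable universal sentence is refuted in a member of the class of effectively bounded size, and decidability follows. (Alternatively, one may simply invoke the decidability of the "locally cube" $n$-variable fragment of first-order logic, into whose validity problem ${\sf D}_n$- and ${\sf G}_n$-validity of equations translates faithfully.)

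The main obstacle is calibrating $\Sigma$ in step (i): the axioms must be weak enough to remain sound after relativization — several familiar ${\sf CA}_n$ identities are lost — yet strong enough for the witness-extension and merging lemmas in the step-by-step construction to go through using only finitely many instances, and one must track precisely which closure of the top (replacements only versus all transformations) is targeted in order to separate the ${\sf D}_n$ case from the ${\sf G}_n$ case. Getting the interaction axioms among the ${\sf c}_i$, the ${\sf d}_{ij}$, and the substitution operators exactly right — this is essentially the content of the Resek--Thompson--Andr\'eka analysis — is where the real work lies; once the semantics is thus pinned down, the filtration argument for (ii) is comparatively routine, since locality of the top is precisely what makes the truth value of a term depend on only boundedly much of the base.
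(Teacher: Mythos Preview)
The paper does not supply its own proof of this theorem: it is stated with a bare citation to \cite{AT} (Andr\'eka--Thompson) and used as a black box. So there is no ``paper's proof'' to compare against; the statement is imported from the literature.

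That said, your outline is essentially the correct account of how the cited result is established. The finite axiomatization of ${\sf D}_n$ (and its polyadic sibling ${\sf G}_n$) is precisely the Resek--Thompson representation theorem as simplified by Andr\'eka: one drops commutativity of cylindrifications from the ${\sf CA}_n$ axioms and adds the merry-go-round identities (what you call the ``merging'' axiom), and then runs a step-by-step construction to represent any model on a diagonalizable (resp.\ locally square) top. Your diagnosis of \emph{why} finitely many axioms suffice --- that the relativized top imposes only local coherence conditions, so the obstructions that make ${\sf RCA}_n$ non-finitely-axiomatizable never arise --- is exactly right. For decidability, your filtration argument is one valid route; another, which the paper in fact uses later (in material after the first \verb|\end{document}|, for the topological variant ${\sf TG}_n$) is to translate validity of equations into validity in the loosely guarded fragment of first-order logic and invoke the known decidability of that fragment. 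Either approach works, and they are essentially interchangeable here since locality of the top is what drives both.

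One minor calibration: for ${\sf D}_n$ the substitution operators ${\sf s}^i_j$ are already term-definable from the cylindric signature, so you do not need to enlarge the signature at all in that case; it is only for ${\sf G}_n$ that the transpositions ${\sf s}_{[i,j]}$ are genuinely new primitives (they are not definable in a non-commutative setting). This does not affect the correctness of your argument, only its packaging.
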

We identify notationally a 
set algebra with its universe.  Let $\Mo$ be a {\it relativized representation} of $\A\in \CA_n$, that is, there exists an injective
homomorphism $f:\A\to \wp(V)$ where $V\subseteq {}^n\Mo$ and $\bigcup_{s\in V} \rng(s)=\Mo$. For $s\in V$ and $a\in \A$,
we may write $a(s)$ for $s\in f(a)$. This notation does not refer to $f$, but whenever used 
then  either $f$ will be clear from context, or immaterial in the context. We may also write $1^{\Mo}$ for $V$.  
We assume that $\Mo$ carries an Alexandrov topology. Let  $\L(\A)^m$ be the first order signature using $m$ variables
and one $n$--ary relation symbol for each element of $\A$.  

{\it An $n$--clique}, or simply a clique,  is a set $C\subseteq \Mo$ such
$(a_0,\ldots, a_{n-1})\in V=1^{\Mo}$
for all distinct $a_0, \ldots, a_{n-1}\in C.$
Let
$${\sf C}^m(\Mo)=\{s\in {}^m\Mo :\rng(s) \text { is an $n$ clique}\}.$$
Then ${\sf C}^m(\Mo)$ is called the {\it $n$--Gaifman hypergraph}, or simply Gaifman hypergraph  of $\Mo$, with the $n$--hyperedge relation $1^{\Mo}$.
The {\it $n$-clique--guarded semantics}, or simply clique--guarded semantics,  $\models_c$, are defined inductively. 
Let $f$ be as above. For an atomic $n$--ary formula $a\in \A$, $i\in{}^nm$, 
and $s\in {}^m\Mo$, $\Mo, s\models_c a(x_{i_0},\ldots x_{i_{n-1}})\iff\ (s_{i_0}, \ldots s_{i_{n-1}})\in f(a).$ 
For equality, given $i<j<m$, $\Mo, s\models_c x_i=x_j\iff s_i=s_j.$
Boolean connectives, and infinitary disjunctions,
are defined as expected.  Semantics for existential quantifiers
(cylindrifiers) are defined inductively for $\phi\in \L(A)^m_{\infty, \omega}$ as follows:
For $i<m$ and $s\in {}^m\Mo$, $\Mo, s\models_c \exists x_i\phi \iff$ there is a $t\in {\sf C}^m(\Mo)$, $t\equiv_i s$ such that 
$\Mo, t\models_c \phi$. Finally, $\Mo\models \Box_i \phi$ $\iff$ $\exists t\in {\sf C}^m(\Mo): t_k\in {\sf int}\{u\in \Mo: \Mo, s_i^u\models \phi\}$.
\begin{definition}\label{cl}
Let $\A$ be an algebra having the signature of $\CA_n$,  $\Mo$ a relativized representation of $\A$ carrying an Alexandrov topology and $\L(\A)^m$  be as above. 
Then $\Mo$ is said to be {\it $m$--square},
if witnesses for cylindrifiers can be found on $n$--cliques. More precisely,
for all  $\bar{s}\in {\sf C}^m(\Mo), a\in \A$, $i<n$,
and for any injective map  $l:n\to m$, if $\Mo\models {\sf c}_ia(s_{l(0)}\ldots, s_{l(n-1)})$,
then there exists $\bar{t}\in {\sf C}^m(\Mo)$ with $\bar{t}\equiv _i \bar{s}$,
and $\Mo\models a(t_{l(0)},\ldots, t_{l(n-1)})$. 
\end{definition}
\begin{definition}\label{network}  
\item An {\it $n$--dimensional atomic network} on an atomic algebra $\A\in \CA_n$  is a map $N: {}^n\Delta\to  \At\A$, where
$\Delta$ is a non--empty finite set of {\it nodes} carrying a topology (hence an Alexandrov topology because the underlying set of nodes is finite), 
denoted by $\nodes(N)$, satisfying the following consistency conditions for all $i<j<n$: 
\begin{enumroman}
\item If $\bar{x}\in {}^n\nodes(N)$  then $N(\bar{x})\leq {\sf d}_{ij}\iff\bar{x}_i=\bar{x}_j$,
\item If $\bar{x}, \bar{y}\in {}^n\nodes(N)$, $i<n$ and $\bar{x}\equiv_i \bar{y}$, then  $N(\bar{x})\leq {\sf c}_iN(\bar{y})$,
\end{enumroman}
\end{definition}
The proof of the  following lemma can be distilled
from its $\sf RA$ analogue \cite[Theorem 13.20]{HHbook},  by reformulating deep concepts
originally introduced by Hirsch and Hodkinson for $\sf RA$s in the $\CA$ context, involving the notions of 
hypernetworks and hyperbasis. 
In the coming proof, we highlight
the main ideas needed to perform such a transfer from $\sf RA$s to $\CA$s
\cite[Definitions 12.1, 12.9, 12.10, 12.25, Propositions 12.25, 12.27]{HHbook}. 
Fix $1<n<\omega$. For $\A\in {\sf D}_n$ with top element $V$, the base of $\A$ is $\Mo=\bigcup_{s\in V} \rng(s)$, so that $V\subseteq {}^n\Mo$ and $\Mo$ is the smallest such set.
We let ${\sf D}_n^{\sf top}$ be the variety of 
${\sf D}_n$'s whose base carry an Alexandrof topology and with interior operators defined like in cylindric set algebras.
In all cases, the $m$--dimensional dilation is a set algebra in ${\sf D}_n^{\sf top}$ as stipulated in the statement of the theorem, will have
top element ${\sf C}^m(\Mo)$, where $\Mo$ is the $m$--relativized representation of the given algebra, and the operations of the dilation
are induced by the $n$-clique--guarded semantics. For a class $\sf K$ of $\sf BAO$s, $\sf K\cap \bf At$ denotes the class of atomic algebras in $\sf K$.
\begin{lemma}\label{flat}\cite[Theorems 13.45, 13.36]{HHbook}.
Assume that $2<n<m<\omega$ and let $\A$ have the signature of $\TCA_n$. Then $\A\in \bold S\Nr_n{\sf D}_m^{\sf top}\iff \A$ has an $m$--square representation. Furthermore, 
if $\A$ is atomic, then $\A$ has a complete $m$--square representation $\iff$ $\A\in \bold S_c\Nr_n({\sf D}_m^{\sf top}\cap \bf At)$.
\end{lemma}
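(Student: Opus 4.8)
The plan is to quote, for the cylindric reducts, the $\CA_n$--reformulation of \cite[Theorems 13.45, 13.36]{HHbook}, obtained by transferring the Hirsch--Hodkinson theory of hypernetworks and hyperbases from $\sf RA$ to $\CA_n$ along the lines of \cite[Definitions 12.1, 12.9, 12.10, 12.25, Propositions 12.25, 12.27]{HHbook}, and then to check that the unary interior operators and the Alexandrov topologies ride along unharmed through every construction. So in each of the four implications the statement about the $\CA_n$--reduct is taken from the cited theorems, and the only genuinely new bookkeeping concerns the $\Box_i$'s.

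The two ``$\Rightarrow$'' directions go together. Assume $\A\in\bold S\Nr_n{\sf D}_m^{\sf top}$, say $\A\subseteq\Nr_n\B$ with $\B\in{\sf D}_m^{\sf top}$, and in the atomic case assume moreover $\A\subseteq_c\Nr_n\B$ with $\B$ atomic. Realise $\B$ as a set algebra with diagonizable top element $V\subseteq{}^m\Mo$, base $\Mo=\bigcup_{s\in V}\rng s$ carrying an Alexandrov topology, and interior operators the concrete ones. Take $\Mo$ with this topology as the base of the wanted representation: top element ${\sf C}^m(\Mo)$, with all operations read off by the $n$--clique--guarded semantics. By the cited theorems the $\CA_n$--reduct is an $m$--square representation of the $\CA_n$--reduct of $\A$, complete when $\A$ is atomic. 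For the interior operators one verifies, coordinate by coordinate, that $\{s\in{\sf C}^m(\Mo):\Mo,s\models_c\Box_ia\}$ equals the clique--guarded $\Box_i$ of $\{s:\Mo,s\models_ca\}$ for $a\in\A$, $i<n$; being local in the $i$-th coordinate this reduces to the claim that the concrete $I_i$ of $\B$ acts correctly on elements of dimension $\leq n$, which is Lemma \ref{box}(2) together with axioms (6) and (7) of Definition \ref{topology} (i.e.\ ${\sf c}_kI_ia=I_ia$ for $k\notin\Delta a\cup\{i\}$, and ${\sf s}_j^iI_ia=I_j{\sf s}_j^ia$ for $j\notin\Delta a$). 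Hence $\Mo$ with the clique--guarded $\TCA_n$--operations is the sought $m$--square representation, complete in the atomic case.

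For the ``$\Leftarrow$'' directions, start from an $m$--square representation $\Mo$ of $\A$ (complete, in the atomic case) carrying its Alexandrov topology. Let $\B$ be the subalgebra of the full $n$--clique--guarded algebra over ${\sf C}^m(\Mo)$ generated, under the $m$--dimensional operations, by the clique--guarded interpretations of the elements of $\A$; its top element ${\sf C}^m(\Mo)$ is diagonizable, so $\B\in{\sf D}_m$, and endowing its base $\Mo$ with the given topology and the concrete interior operators puts $\B\in{\sf D}_m^{\sf top}$ (recall that by Theorem \ref{m} this class is well behaved). The cited $\CA$--theorems give that the $\CA_n$--reduct of $\A$ embeds into $\Nr_n\B$, completely when the representation is complete, with $\B$ then atomic, its atoms being realised precisely because the representation is complete. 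The same coordinatewise computation, run backwards, shows that $I_i^{\B}$ restricted to $\Nr_n\B$ agrees with the clique--guarded $\Box_i$ used to interpret $\A$ --- here axiom (6) of Definition \ref{topology} is what guarantees that $I_i^{\B}$ preserves elements of dimension $\leq n$. Thus $a\mapsto\{s\in{\sf C}^m(\Mo):\Mo,s\models_ca\}$ is a $\TCA_n$--embedding of $\A$ into $\Nr_n\B$, a complete one in the complete case; this gives $\A\in\bold S\Nr_n{\sf D}_m^{\sf top}$, resp.\ $\A\in\bold S_c\Nr_n({\sf D}_m^{\sf top}\cap\bf At)$, in the latter case because the complete representation omits the co--atoms of $\A$.

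The step I expect to be the real obstacle is exactly this coherence between the Alexandrov topology on the $m$--dimensional base $\Mo$ and the interior operators of the $n$--dimensional algebra: one must be sure that clique--guarding the box discards no interior points, and that the interior operator of the dilation, pulled back along the clique--guarded neat embedding, reproduces the interior operator of $\A$ verbatim. Axioms (6) and (7) of Definition \ref{topology} --- the commutation of the $I_i$ with cylindrifiers and substitutions --- are precisely what Lemma \ref{box} packages for genuine set algebras, and the task is to confirm that this bookkeeping survives relativization to the Gaifman hypergraph ${\sf C}^m(\Mo)$. By contrast, transferring the hypernetwork and hyperbasis machinery of \cite[Chapter 13]{HHbook} from $\sf RA$ to $\CA_n$ is routine once the $n$--dimensional hypernetworks and hyperbases are set up, and it adds nothing to the topological layer.
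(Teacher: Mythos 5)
Your proposal is correct and follows essentially the same route as the paper, which proves this lemma only by citing the $\sf RA$ analogues in \cite[Theorems 13.45, 13.36]{HHbook} and sketching the dilation as the $n$-clique--guarded algebra with top element ${\sf C}^m(\Mo)$; your added verification that the interior operators transfer via Lemma \ref{box}(2) and axioms (6)--(7) of Definition \ref{topology} supplies detail the paper leaves implicit rather than a different argument. The only slip is notational: in the forward direction the $m$--square representation of $\A$ has top element a subset of ${}^n\Mo$, while ${\sf C}^m(\Mo)\subseteq{}^m\Mo$ is the unit of the $m$--dimensional dilation, not of the representation itself.
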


\subsection{Non atom-canonicity and omitting types}

We recall that a class $\sf K$ of Boolean algebras with operators ($\sf BAO$s) is {\it atom--canonical} if whenever $\A\in \sf K$ is atomic and completely additive, then its \de\ completion, 
namely, the complex algebra of its atom structure, namely,  $\Cm\At\A$ is also in $\sf K$.
We use in what follows instances of the so--called blow up and blur construction.  But first a Lemma:
\begin{lemma}\label{n}
Let $2<n<m\leq \omega$. Let $\A\in \CA_n$.  If \pa\ has  \ws\ in $G_{\omega}^m(\At\A$), 
then $\A$ does not have an $m$-square representation.
\end{lemma}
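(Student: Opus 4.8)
The plan is to prove the contrapositive by showing that if $\A$ has an $m$--square representation then \pe\ has a \ws\ in $G_{\omega}^m(\At\A)$, so that \pa\ certainly has none. By Lemma~\ref{flat}, whose topological ingredient is vacuous for $\A\in\CA_n$, such a representation amounts to a witness to $\A\in{\bf S}\Nr_n{\sf D}_m$; so I may pass freely between the relativized representation $f\colon\A\to\wp(V)$ (with $V\subseteq{}^n\Mo$ and $\bigcup_{s\in V}\rng s=\Mo$), its $n$--Gaifman hypergraph ${\sf C}^m(\Mo)$ equipped with the clique--guarded semantics $\models_c$, and a ${\sf D}_m$--dilation of $\A$. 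Below I work with $f$.

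The strategy for \pe\ is to play inside $\Mo$. Throughout a play she maintains, alongside the atomic network $N_t$ currently on the board (its node set of size $\le m$), an assignment $v_t\colon\nodes(N_t)\to\Mo$ subject to two invariants: (a) $\rng(v_t)$ lies in the range of some $\bar s\in{\sf C}^m(\Mo)$, and (b) $(v_t(\bar x_0),\dots,v_t(\bar x_{n-1}))\in f(N_t(\bar x))$ for every $\bar x\in{}^n\nodes(N_t)$. In the opening round \pa\ names an atom $a$; since $a\neq 0$, \pe\ picks $\bar s\in f(a)$, sets $\nodes(N_0)=\{0,\dots,n-1\}$ and $v_0(j)=s_j$ for $j<n$, and labels every tuple of $N_0$ by an atom of $\A$ realising its $v_0$--image in $\Mo$. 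In a cylindrifier round \pa\ chooses $\bar x\in{}^n\nodes(N_t)$, an index $i<n$, an atom $b$ with $N_t(\bar x)\le{\sf c}_i b$, and a node $z$ to use (fresh if fewer than $m$ nodes are in play, otherwise an old one to be overwritten). By (b) and monotonicity of $f$ one has $(v_t(\bar x_0),\dots,v_t(\bar x_{n-1}))\in f({\sf c}_i b)$, that is, ${\sf c}_i b$ holds in $\Mo$ of that tuple; feeding this, together with $\bar s$ and the layout of $\bar x$'s nodes among the coordinates of $\bar s$, into the $m$--square condition of Definition~\ref{cl} yields $\bar t\in{\sf C}^m(\Mo)$ agreeing with $\bar s$ off the coordinate assigned to $z$ and with $b$ holding in $\Mo$ of the $\bar t$--values in the positions of $\bar x$. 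Then \pe\ puts $v_{t+1}(z)$ equal to that changed coordinate of $\bar t$, keeps $v_{t+1}$ unchanged on the surviving old nodes, and labels $N_{t+1}$ on ${}^n(\nodes(N_t)\cup\{z\})$ by atoms realising the $v_{t+1}$--images in $\Mo$; this label is forced to be $b$ on the tuple obtained from $\bar x$ by replacing its $i$--th entry by $z$, and it agrees with $N_t$ on tuples not mentioning $z$. Invariant (a) is restored since $\rng(v_{t+1})\subseteq\rng(\bar t)\in{\sf C}^m(\Mo)$; invariant (b) is restored since $\rng(\bar t)$, being an $n$--clique, together with closure of $V$ under coordinate replacements, places every $n$--subtuple of $\bar t$ into $V=f(1)$, so a realising atom exists and has been chosen; and the consistency axioms of Definition~\ref{network} hold for $N_{t+1}$ because they hold in the set algebra $\wp(V)$ for the diagonals and the relativized cylindrifications. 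Hence \pe\ never gets stuck and she has a \ws.

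The step I expect to be the crux is the preservation of invariant (a): to answer \pa\ one must not merely realise the atom $b$ over the relevant subtuple of length $n-1$, but realise it at a point of $\Mo$ compatible with all the $\le m$ points already in play, so that the other newly created $n$--tuples of $N_{t+1}$ remain representable; furnishing such a point is exactly, and solely, what the $m$--square condition provides, and this is the one place where ``approximation of degree $m$ to a genuine ($\omega$--square) model'' is invoked. Everything else is routine. The well--definedness of the labels \pe\ chooses is the usual normalisation of a relativized representation so that its clique tuples lie in the $f$--images of atoms (with $\A$ taken atomic, as in the intended applications); node reuse only shrinks the set of constraints \pe\ must honour, as the invariants speak only of the current nodes; and the bookkeeping translating Hirsch and Hodkinson's hyperbasis argument for relation algebras into the $\CA_n$ signature may be imported from \cite[Chapter~13]{HHbook}, precisely as was done for Lemma~\ref{flat}. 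The case $m=\omega$, where ${\sf C}^{\omega}(\Mo)$ contains all $\omega$--cliques and an $\omega$--square representation is an ordinary one, is the limiting instance of the same construction.
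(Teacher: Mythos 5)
Your overall strategy --- proving the contrapositive by letting \pe\ shadow the play inside an $m$--square model --- is the standard route, and the paper itself states this lemma without proof, so there is nothing to compare against beyond the intended folklore argument. The difficulty is that your argument, as written, establishes the contrapositive only for \emph{complete} $m$--square representations, whereas the lemma asserts the conclusion for arbitrary ones, and the step you defer as ``the usual normalisation of a relativized representation so that its clique tuples lie in the $f$--images of atoms'' is precisely where it breaks. In a network game over $\At\A$, \pe\ must return a total map $N\colon{}^{n}\nodes(N)\to\At\A$, so after introducing the witness point she must label \emph{every} new $n$--tuple of embedded points by an atom $c$ with the tuple lying in $f(c)$. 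For a mere $m$--square representation the union $\bigcup_{c\in\At\A}f(c)$ need not cover the clique tuples of $V$: a tuple $\bar s$ determines the ultrafilter $\{a\in A:\bar s\in f(a)\}$, and nothing forces this ultrafilter to be principal. There is no normalisation removing this; the existence of atomic, representable but not completely representable algebras (Theorem~\ref{bsl} of this paper, and the weakly--versus--strongly representable atom structures it cites) shows that one cannot in general pass to a representation in which all relevant tuples satisfy atoms. The same obstruction reappears if you rephrase the argument through a dilation $\A\subseteq\Nr_n\D$ with $\D\in{\sf D}_m$: to pick an atom label compatible with a nonzero element of $\D$ you need $\sum^{\D}\At\A=1$, i.e.\ $\A\subseteq_c\Nr_n\D$, which is exactly the complete--subalgebra hypothesis.

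What your argument does prove is: if $\A$ is atomic and has a \emph{complete} $m$--square representation (equivalently, by Lemma~\ref{flat}, $\A\in\bold S_c\Nr_n({\sf D}_m\cap{\bf At})$), then \pe\ has a \ws\ in $G_{\omega}^m(\At\A)$, so \pa\ has none. That weaker statement is in fact all the paper ever uses: in Theorem~\ref{can} the algebra $\B_f$ is finite, so $\B_f=\B_f^{+}$ and every $m$--square representation of it is automatically complete, and in Theorem~\ref{rainbow1} the conclusion actually drawn from the lemma is $\C_{\Z,\N}\notin\bold S_c\Nr_n\CA_{n+3}$. To obtain the lemma for arbitrary $m$--square representations you would have to route the argument through the canonical extension --- an $m$--square representation of $\A$ yields a complete one of $\A^{+}$, hence a \ws\ for \pe\ in $G_{\omega}^m(\At\A^{+})$ via an $m$--dimensional basis as in \cite[Chapter~13]{HHbook} --- and then justify separately why a \ws\ for \pa\ over $\At\A$ would contradict this, since the two games are played over different atom structures and a network labelled by ultrafilters cannot in general be projected to one labelled by atoms of $\A$. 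Either supply that transfer argument or weaken the conclusion to ``no complete $m$--square representation''; as it stands the proposal does not close this gap.
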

\begin{definition} 
A ${\sf TCA}_n$ atom structure $\bf At$  is {\it weakly representable} if there is an atomic $\A\in {\sf RTCA}_n$ such that $\bf At=\At\A$; it is  {\it strongly representable} if $\Cm {\bf At}\in {\sf RTCA}_n$.
\end{definition}
These two notions are distinct for $2<n<\omega$, cf. \cite{Hodkinson} for the $\CA$ case and the next Theorem.

\subsection{Blowing up and blurring finite rainbow cylindric algebas}

In \cite{ANT} a single blow up and blur construction was used to prove non-atom--canonicity of $\sf RRA$ and ${\sf RCA}_n$ for $2<n<\omega$.
To obtain finer results, we use {\it two blow up and blur constructions} applied to rainbow algebras.
To put things into a unified perspective, we formulate a definition: 

\begin{definition}\label{blow} Let $\bold M$ be a variety 
of completely additive $\sf BAO$s.

(1) Let $\A\in \bold M$ be a finite algebra. We say that {\it $\D\in \bold M$ is obtained by blowing up and blurring $\A$} if $\D$ is atomic,  
$\A$ does not embed in $\D$, but $\A$ embeds into $\Cm\At\D$. 

(2) Assume that $\sf K\subseteq L\subseteq \bold M$, such that $\bold S\sf L=\sf L$.

(a) We say that {\it $\sf K$ is \underline{not} atom-canonical with respect to $\sf L$} if there exists an atomic $\D\in \sf K$ 
such that $\Cm\At\D\notin \sf L$.  In particular, $\sf K$ is not atom--canonical $\iff$ $\sf K$  not atom-canoincal with respect to itself.

(b)  We say that a finite algebra $\A\in \bold M$ {\it detects} that 
$\bold K$ is not atom--canonical with respect to $\sf L$, if $\A\notin \sf L$, 
and there is a(n atomic)  $\D\in \sf K$ 
obtained by blowing up and blurring $\A$.

\end{definition}
The next proposition and its proof present  the construction in  \cite{ANT} in the framework of definition \ref{blow}.
\begin{proposition}\label{cp} Let $2<n<\omega$. Then for any finite $j>0$, ${\sf RRA}\cap \Ra\CA_{2+j}$ is not atom-canonical with respect to $\sf RRA$, 
and $\RCA_n\cap \Nr_n\CA_{n+j}$ is not atom--canonical with respect to $\RCA_n.$
\end{proposition}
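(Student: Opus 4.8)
The plan is to recycle the single ``blow up and blur'' construction of \cite{ANT} verbatim, and merely check that it fits the template of Definition \ref{blow}. Recall that the construction of \cite{ANT} starts from a fixed finite relation algebra $\A$ obtained by splitting the atoms of (a finite relation algebra related to) $\Ra\CA_{2+j}$, producing an infinite atomic term algebra $\D = \Tm\At(\A)$ together with a set of ``blurs'' --- finitely many big subsets of atoms and an index blur --- which witness that $\D$ is representable, while the $\de$ completion $\Cm\At\D$ contains a copy of the offending finite algebra $\A$ and hence is not representable. First I would recall this precise statement: $\D$ is atomic, $\D \in \RRA$ (so $\D \in \RRA \cap \Ra\CA_{2+j}$ since the construction is arranged so that the term algebra neat-embeds appropriately, or one simply observes $\D$ is a subalgebra of a full $\Ra$-reduct coming from $\CA_{2+j}$), $\A$ does not embed into $\D$ (the atoms have been split, so $\D$ has strictly more atoms below the relevant elements), but $\A$ embeds into $\Cm\At\D$ (the suprema of the split atoms recover the original atoms of $\A$), and $\Cm\At\D \notin \RRA$ (it has $\A$ as a subalgebra and $\A \notin \RRA$, or $\A \notin \Ra\CA_k$ for the relevant $k$).

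Next I would phrase this as: the finite algebra $\A$ \emph{detects}, in the sense of Definition \ref{blow}(2)(b), that $\RRA$ is not atom-canonical with respect to $\RRA$; indeed $\A \notin \RRA$ and $\D$ is obtained from $\A$ by blowing up and blurring. Since $\RRA$ is a variety, it is closed under $\bold S$, so $\bold S\,\RRA = \RRA$, and Definition \ref{blow}(2)(a) applies with $\sf K = \sf L = \RRA$: the atomic $\D \in \RRA$ has $\Cm\At\D \notin \RRA$. This gives the first assertion. For the refinement ``with respect to $\Ra\CA_{2+j}$'' (which is what makes the result finer than plain non-atom-canonicity), I would additionally note that the construction already places $\D$ inside $\Ra\CA_{2+j}$ while $\Cm\At\D \notin \bold S\Ra\CA_{2+j}$ --- this is exactly the feature of \cite{ANT} that controls the dimension of the dilation, and it follows because the finite $\A$ one splits was chosen with $\A \in \Ra\CA_{2+j}$ but $\A \notin \bold S\Ra\CA_{2+j+1}$ (or the analogous sharp bound), so $\Cm\At\D \supseteq \A$ forces $\Cm\At\D \notin \bold S\Ra\CA_{2+j}$.

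For the cylindric statement I would run the completely parallel argument one dimension up: apply the same blow-up-and-blur to a finite $\CA_n$ (rainbow-type or the one used in \cite{ANT}) sitting in $\Nr_n\CA_{n+j}$ but with $\Cm$ of its blown-up atom structure falling outside $\bold S\Nr_n\CA_{n+j} \supseteq \RCA_n$. Concretely: produce atomic $\D \in \RCA_n \cap \Nr_n\CA_{n+j}$ with $\Cm\At\D \notin \RCA_n$ (a fortiori $\notin \Nr_n\CA_{n+j}$, using $\RCA_n = \bold S\Nr_n\CA_\omega \subseteq \bold S\Nr_n\CA_{n+j}$ and the fact that $\Nr_n\CA_{n+j} \subseteq \bold S\Nr_n\CA_{n+j}$). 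Again $\RCA_n$ is a variety hence closed under $\bold S$, so Definition \ref{blow} applies and both clauses of the proposition follow.

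The routine verifications --- that the blurs genuinely represent $\D$, that the splitting prevents $\A$ from embedding into $\D$ while the joins of split atoms reconstitute $\A$ inside $\Cm\At\D$, and the bookkeeping showing $\D$ neat-embeds into the $(2{+}j)$- (resp.\ $(n{+}j)$-) dimensional dilation --- are exactly as in \cite{ANT} and I would cite them rather than reproduce them. The one point that genuinely needs care, and which I expect to be the main obstacle, is the \emph{dimension sharpness}: ensuring that the finite seed algebra lies in $\Ra\CA_{2+j}$ (resp.\ $\Nr_n\CA_{n+j}$) yet just barely fails the next neat-reduct condition, so that $\D$ inherits membership in $\Ra\CA_{2+j}$ (resp.\ $\Nr_n\CA_{n+j}$) but $\Cm\At\D$ does not. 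This amounts to invoking the precise ``$j$-parameter'' version of the \cite{ANT} construction (a known refinement) rather than its coarsest form, and checking that the neat-embedding of the term algebra into the dilation survives the blurring --- this is where one must be attentive to which games \pa\ and \pe\ can win on the relevant atom structures.
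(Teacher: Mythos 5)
Your overall plan coincides with the paper's: Proposition \ref{cp} is obtained by quoting the blow-up-and-blur construction of \cite{ANT}, applied to a finite non-representable Maddux algebra $\mathfrak{E}_k(2,3)$, and fitting its output into Definition \ref{blow}. Three of the four facts you need are handled correctly: the term algebra $\D$ is atomic and representable, the finite seed embeds into $\Cm\At\D$, and therefore $\Cm\At\D\notin\RRA$ (resp.\ $\notin\RCA_n$) because these are varieties, hence closed under $\bold S$.

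The gap is in the fourth fact, $\D\in\Ra\CA_{2+j}$ (resp.\ $\D\in\Nr_n\CA_{n+j}$), which is exactly what distinguishes this proposition from plain non-atom-canonicity. You derive it from the claim that the seed was ``chosen with $\A\in\Ra\CA_{2+j}$ but $\A\notin\bold S\Ra\CA_{2+j+1}$.'' That is not how the construction works, and the derivation would not go through even if it were: the seed is not arranged to lie in $\Ra\CA_{2+j}$ (it only needs to be finite, non-representable, and to admit a blur), and membership of the seed in $\Ra\CA_{2+j}$ would in any case not transfer to the blown-up term algebra, since splitting every atom into infinitely many destroys any given dilation. Your side inference ``$\A\notin\bold S\Ra\CA_{2+j+1}$ and $\A$ embeds into $\Cm\At\D$, hence $\Cm\At\D\notin\bold S\Ra\CA_{2+j}$'' is also a non-sequitur — you only get exclusion from the smaller class $\bold S\Ra\CA_{2+j+1}$ — and it aims at the wrong target anyway: the proposition asserts non-atom-canonicity \emph{with respect to} $\RRA$ (resp.\ $\RCA_n$); the class $\Ra\CA_{2+j}$ (resp.\ $\Nr_n\CA_{n+j}$) appears only as a constraint on the witness $\D$, not on where $\Cm\At\D$ must fail. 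The correct mechanism — the ``$j$-parameter version'' you allude to — is that $(J,E)$ is taken to be a \emph{strong} $(2+j)$-blur in the sense of Definition \ref{strongblur}: strength of the blur is what makes ${\sf Mat}_{2+j}({\bf At})$ a $(2+j)$-dimensional cylindric basis and yields the isomorphisms ${\Bb}(\mathfrak{E}_k(2,3),J,E)\cong\Ra\,{\Bb}_{2+j}(\mathfrak{E}_k(2,3),J,E)$ and ${\Bb}_n(\cdots)\cong\Nrr_n{\Bb}_{n+j}(\cdots)$ of Theorem \ref{ANT}(5), cf.\ \cite[item (3), p.~80]{ANT}; this is where $\D\in\Ra\CA_{2+j}$, resp.\ $\Nr_n\CA_{n+j}$, comes from. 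For each $j$ one then needs a non-representable finite seed admitting such a strong blur, which $\mathfrak{E}_k(2,3)$ supplies for suitable $k$ depending on $j$; no game-theoretic argument is needed at this step.
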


Till the end of this subsection, fix $2<n<\omega$.
The most general exposition of $\CA$ rainbow constructions is given
in \cite[Section 6.2, Definition 3.6.9]{HHbook2} in the context of constructing atom structures from classes of models.
Our models are just coloured graphs \cite{HH}.
Let $\sf G$, $\sf R$ be two relational structures. Let $2<n<\omega$.
Then the colours used are:
\begin{itemize}

\item greens: $\g_i$ ($1\leq i\leq n-2)$, $\g_0^i$, $i\in \sf G$,

\item whites : $\w_i: i\leq n-2,$

\item reds:  $\r_{ij}$ $(i,j\in \sf R)$,

\item shades of yellow : $\y_S: S\text { a finite subset of } \omega$ or $S=\omega$.

\end{itemize}
A {\it coloured graph} is a graph such that each of its edges is labelled by the colours in the above first three items,
greens, whites or reds, and some $n-1$ hyperedges are also
labelled by the shades of yellow.
Certain coloured graphs will deserve special attention.
\begin{definition}
Let $i\in \sf G$, and let $M$ be a coloured graph  consisting of $n$ nodes
$x_0,\ldots,  x_{n-2}, z$. We call $M$ {\it an $i$ - cone} if $M(x_0, z)=\g_0^i$
and for every $1\leq j\leq n-2$, $M(x_j, z)=\g_j$,
and no other edge of $M$
is coloured green.
$(x_0,\ldots, x_{n-2})$
is called  the {\it base of the cone}, $z$ the {\it apex of the cone}
and $i$ the {\it tint of the cone}.
\end{definition}
The rainbow algebra depending on $\sf G$ and $\sf R$ from the class $\bold K$ consisting
of all coloured graphs $M$ such that:
\begin{enumerate}
\item $M$ is a complete graph
and  $M$ contains no triangles (called forbidden triples)
of the following types:
\begin{eqnarray}
&&\nonumber\\
(\g, \g^{'}, \g^{*}), (\g_i, \g_{i}, \w_i)
&&\mbox{any }1\leq i\leq  n-2,  \\
(\g^j_0, \g^k_0, \w_0)&&\mbox{ any } j, k\in \sf G,\\
\label{forb:match}(\r_{ij}, \r_{j'k'}, \r_{i^*k^*})&&\mbox{unless }  |\set{(j, k), (j', k'), (j^*, k^*)}|=3\\
\end{eqnarray}
and no other triple of atoms is forbidden.

\item If $a_0,\ldots,   a_{n-2}\in M$ are distinct, and no edge $(a_i, a_j)$ $i<j<n$
is coloured green, then the sequence $(a_0, \ldots, a_{n-2})$
is coloured a unique shade of yellow.
No other $(n-1)$ tuples are coloured shades of yellow. Finally, if $D=\set{d_0,\ldots,  d_{n-2}, \delta}\subseteq M$ and
$M\upharpoonright D$ is an $i$ cone with apex $\delta$, inducing the order
$d_0,\ldots,  d_{n-2}$ on its base, and the tuple
$(d_0,\ldots, d_{n-2})$ is coloured by a unique shade
$\y_S$ then $i\in S.$
\end{enumerate}
Let $\sf G$ and $\sf R$ be relational structures as above. Take the set $\sf J$ consisting of all surjective maps $a:n\to \Delta$, where $\Delta\in \bold K$
and define an equivalence relation $\sim$  on this set  relating two such maps iff they essentially define the same graph \cite{HH};
the nodes are possibly different but the graph structure is the same.
Let $\At$ be the atom structure with underlying set $J\sim$. We denote the equivalence class of $a$ by $[a]$. Then define, for $i<j<n$,
the accessibility
relations corresponding to $ij$th--diagonal element, and $i$th--cylindrifier, as follows:

(1) \ \  $[a]\in E_{ij} \text { iff } a(i)=a(j),$

(2) \ \ $[a]T_i[b] \text { iff }a\upharpoonright n\smallsetminus \{i\}=b\upharpoonright n\smallsetminus \{i\},$

This, as easily checked, defines a $\CA_n$
atom structure. The complex $\CA_n$ over this atom structure will be denoted by
$\A_{\sf G, \sf R}$. The dimension of $\A_{\sf G, \sf R}$, always finite and
$>2$, will be clear from context.
For rainbow atom structures, there is a one to one correspondence between atomic networks and coloured graphs \cite[Lemma 30]{HH}, 
so for $2<n<m\leq \omega$, we use the graph versions of the games $G^m_k$, $k\leq \omega$, and  $\bold G^m$ played on rainbow atom 
structures of dimension $m$ \cite[pp.841--842]{HH}. 
The  the atomic $k$ rounded game game $G^m_k$  where the number of nodes are limited to $n$ 
to games on coloured graphs \cite[lemma 30]{HH}.
The game $\bold G^{m}$ lifts to a game on coloured graphs, that  is like the graph games
$G^m_{\omega}$ \cite{HH}, where the number of nodes of graphs played during the $\omega$ rounded
game does not exceed $m$, but \pa\ has the option to re-use nodes.
The typical \ws\ for \pa\ in the graph version 
of both atomic games is bombarding \pe\ with cones having a common base and {\it green} 
tints until she runs out of (suitable) {\it reds}, that is to say, reds whose indicies do not match \cite[4.3]{HH}.

\begin{definition} A $\CA_n$ atom structure $\bf At$  is {\it weakly representable} if there is an atomic $\A\in {\sf RCA}_n$ such that $\bf At=\At\A$; 
 it is  {\it strongly representable} if $\Cm {\bf At}\in {\sf RCA}_n$.
\end{definition}
These two notions are distinct, cf. \cite{Hodkinson} and the following  Theorem\ref{can}; see also the forthcoming Theorem \ref{fl}.
Let $\sf V\subseteq W$ be varieties of $\sf BAO$s. W say that $\sf V$ is {\it atom-canonical with respet to $\sf W$} if for any atomic $\A\in  \V$, 
its \de\ completion, namely, $\Cm\At\A$ is in $\sf W$. Let $\Rd_{ca}$ denote `cylindric reduct'
\begin{theorem}\label{can}
Let $2<n<\omega$ and $t(n)=n(n+1)/2+1.$ The variety ${\sf TRCA}_n$ is not-atom canonical 
with respect to $\bold S{\sf Nr}_n\CA_{t(n)}$. In fact, there is a countable atomic simple $\A\in {\sf TRCA}_n$ 
such that $\Rd_{ca}\Cm\At\A$ 
does not have an $t(n)$-square,{\it a fortiori} $t(n)$- flat,  representation.
\end{theorem}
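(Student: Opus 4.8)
The plan is to run a blow up and blur construction in the sense of Definition \ref{blow}, applied to a \emph{finite rainbow} cylindric algebra, and then to transport the resulting example into the topological signature by means of the discrete topology, under which every interior operator collapses to the identity, so that the topological layer is carried along essentially for free.

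First I would fix a finite rainbow cylindric algebra $\F=\A_{\sf G,\sf R}$ of dimension $n$, choosing the finite relational structures $\sf G$ and $\sf R$ of minimal size so that \pa\ has a \ws\ in the $\omega$--rounded game $G^{t(n)}_\omega$ played on $\At\F$, where $t(n)=n(n+1)/2+1$. Here \pa\ plays the usual cone--bombarding strategy recalled above: on a fixed base of $n-1$ nodes she keeps adding apexes that together with the base form $i$--cones with pairwise distinct green tints, which forces \pe\ to colour every edge among apexes by a red; a Ramsey/pigeonhole count shows that once the number of apexes passes a certain threshold a monochromatic red triangle — a forbidden triple — is unavoidable, and for the minimal choice of $\sf G$ and $\sf R$ this threshold, together with the $n-1$ base nodes, amounts to exactly $t(n)$ nodes. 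By Lemma \ref{n}, $\F$ therefore has no $t(n)$--square representation; in particular $\Rd_{ca}\F\notin \bold S\Nr_n\CA_{t(n)}$.

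Next I would blow up and blur $\F$, following the method of \cite{ANT} recast in the framework of Definition \ref{blow} (the rainbow refinement of Proposition \ref{cp}): replace each atom of $\At\F$ by $\omega$ many copies and adjoin a finite set of blurs, producing a countable atomic cylindric atom structure $\beta$ whose term algebra $\D=\Tm\beta$ is simple, lies in $\RCA_n$ — the blurs being precisely what is needed to build an ordinary set representation of $\D$ — and such that $\F$ embeds into $\Cm\At\D$ while \emph{not} embedding into $\D$. This is the technical heart of the argument and \textbf{the main obstacle}: one must transport the relation/cylindric blow up and blur of \cite{ANT} to the rainbow atom structure, defining the $n$--dimensional analogues of the blurs, verifying the cylindrifier and diagonal consistency conditions, checking faithfulness of the blurred representation, and ensuring that the embedded copy of $\F$ survives intact, so that the sharp level $t(n)$ is preserved.

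Finally I would topologise. Take a set representation of $\D$ with base $U$ and give $U$ the discrete topology; since ${\sf int}$ is then the identity on $\wp(U)$, the defining formula for the interior operators yields $I_k={\sf Id}$ for every $k<n$. Put $\A=(\D,I_k)_{k<n}$ with $I_k={\sf Id}$. The identity operator trivially satisfies equations (1)--(7) of Definition \ref{topology}, so $\A$ is a topological set algebra over a discrete space and hence $\A\in {\sf TRCA}_n$; moreover $\A$ is countable, atomic and simple because its cylindric reduct $\D$ is. The \de\ completion of $\A$ has cylindric reduct $\Cm\At\D$ and interior operators extending ${\sf Id}$, which are necessarily ${\sf Id}$ again, so $\Rd_{ca}\Cm\At\A=\Cm\At\D$. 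Now $\F$ embeds into $\Rd_{ca}\Cm\At\A$; since $t(n)$--square representations restrict to subalgebras, $\Rd_{ca}\Cm\At\A$ has no $t(n)$--square representation, and a fortiori no $t(n)$--flat one; and since every algebra in $\bold S\Nr_n\CA_{t(n)}$ has, in particular, a $t(n)$--flat (hence $t(n)$--square) representation by the standard neat--embedding/flatness correspondence (cf. \cite{HHbook}), it follows that $\Rd_{ca}\Cm\At\A\notin \bold S\Nr_n\CA_{t(n)}$. Thus $\A$ is a countable atomic simple algebra in ${\sf TRCA}_n$ witnessing that ${\sf TRCA}_n$ is not atom--canonical with respect to $\bold S\Nr_n\CA_{t(n)}$, which is the claim.
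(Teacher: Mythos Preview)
Your proposal is correct and follows essentially the same strategy as the paper: take a finite rainbow $\CA_n$ on which \pa\ wins the $t(n)$--node game via cone bombardment, blow it up and blur it to a countable atomic representable algebra whose completion still contains an embedded copy of the finite rainbow, and topologise discretely so the interior operators are identities. The one notable implementation difference is that the paper carries out the blow--up in Hodkinson's semantic style \cite{Hodkinson} rather than the \cite{ANT} style you invoke: only the \emph{red} atoms are split into $\omega$ copies, a single extra shade of red $\rho$ is adjoined (playing the role of a non--principal ultrafilter), and the representation is built from an $n$--homogeneous model $\Mo$ obtained as a game limit, with the set algebra living on the $\rho$--free tuples $W\subseteq{}^n\Mo$; the specific rainbow parameters are ${\sf R}=n$ and greens indexed by $1,\ldots,n(n-1)/2$, which is what makes the node count come out to exactly $t(n)=n-1+n(n-1)/2+2$.
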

\begin{proof} 
The proof is long and uses many ideas in  \cite{Hodkinson}. 
We will highlight only the differences in detail from the proof in \cite{Hodkinson} needed to make our result work.
When parts of the proof coincide we will be more sketchy.
The proof is divided into four parts:

1: {\bf Blowing up and blurring  $\B_f$ forming a weakly representable atom structure $\bf At$}:
Take the finite rainbow ${\sf CA}_n$,  $\B_f$
where the reds $\sf R$ is the complete irreflexive graph $n$, and the greens
are  $\{\g_i:1\leq i<n-1\}
\cup \{\g_0^{i}: 1\leq i\leq n(n-1)/2\}$, endowed with the cylindric operations.
We will show $\B$ detects that ${\sf RCA}_n$ is not atom-canonical with respect to $\bold S\Nr_n\CA_{t(n)}$ with $t(n)$ as specified in the statement of the 
theorem. 
Denote the finite atom structure of $\B_f$ by ${\bf At}_f$; 
so that ${\bf At}_f=\At(\B_{f})$.
One then defines a larger the class of coloured graphs like in \cite[Definition 2.5]{Hodkinson}. Let $2<n<\omega$.
Then the colours used are like above except that each red is `split' into $\omega$ 
many having `copies' the form $\r_{ij}^l$ with $i<j<n$ and $l\in \omega$, with an additional shade of red $\rho$ such that 
the consistency conditions for the new reds (in addition to the usual rainbow consistency conditions) are as follows:
\begin{itemize}
\item $(r^{i}_{jk}, r^{i}_{j'k'}, r^{i^*}_{j^*k^*})$ unless $i=i'=i^*$ 
and $|\{(j, k), (j', k'), (j^*, k^*)\}|=3$
 \item $(\r, \rho, \rho)$ and $(\r, \r^*, \rho)$, where $\r, \r^*$ are any reds. 
\end{itemize}
The consistency conditions can be coded in an $L_{\omega, \omega}$  theory $T$ having signature the reds with $\rho$ 
together with all other colours like in \cite[Definitio 3.6.9]{HHbook2}. The theory $T$ is only a first order theory (not an $L_{\omega_1, \omega}$ theory) 
because the number of greens is finite which is not the case with \cite{HHbook2} where the number of available greens are countably infinite coded by an infinite disjunction.
One construct an $n$-homogeneous  model $\Mo$ is as a countable limit of finite models of $T$ 
using a game played between \pe\ and \pa like in \cite[Theorem 2.16]{Hodkinson}.  
In the rainbow game 
\pa\ challenges \pe\  with  {\it cones} having  green {\it tints $(\g_0^i)$}, 
and \pe\ wins if she can respond to such moves. This is the only way that \pa\ can force a win.  \pe\ 
has to respond by labelling {\it appexes} of two successive cones, having the {\it same base} played by \pa.
By the rules of the game, she has to use a red label. She resorts to  $\rho$ whenever
she is forced a red while using the rainbow reds will lead to an inconsistent triangle of reds;  \cite[Proposition 2.6, Lemma 2.7]{Hodkinson}. The number of greens make  
\cite[Lemma 3.10]{Hodkinson} work with the same proof.
using only finitely many green and not infinitely many. 
The \ws\ is implemented by \pe\  using the red label $\rho$  
that comes to her rescue  whenever she runs out of `rainbow reds', so she can always and consistently respond with an extended coloured graph. 
This proof will  
is implemented in the framework of an entirely analagous blow up and blur constructions applied to strikingly similar finite rainbow atom structures in \cite{HHbook}.
In both cases, the relational structures 
$\sf G$ and $\sf R$ used
satisfy $|\sf G|=|\sf R|+1$. For $\sf RA$, ${\sf R}=3$ and for $\CA_n$s, ${\sf R}=n$ (the dimension), where
the finite ordinals $3$ and $n$ are viewed as complete irreflexive graphs. 
\footnote{Worthy of note, is that it is commonly accepted that relation algebras have dimension three being a natural habitat for three variable first order 
logic.  Nevertheless, sometimes it is argued that the dimension should be three and a half in the somewhat loose sense that $\sf RA$s 
lie `halfway' between $\CA_3$ and $\CA_4$ manifesting behaviour of each.} 
From Hodkinson's construction in \cite{Hodkinson}, we know that $\Cm\Bb(\B_f,  \r, \omega)\notin \bold S\Nr_n\CA_m$ for some finite $m>n$, where ${\Bb}(\B_f,  \r, \omega)$ denotes the result
of blowing up $\B_f$ by splitting each red atom into $\omega$-many ones, to be denoted henceforth by $\A$.
 The (semantical) argument used in \cite{Hodkinson} does not give any information on 
the value of such $m$. By truncating  the greens to be $n(n+1)/2$ (instead of the `overkill' of infinitely many in \cite{Hodkinson}), and using a syntactical blow up and blur construction,   
we could pin down such a value of $m$, namely, $m=n+t(n)$ (=number of greens +$t(n)$)
by showing in a moment that that although ${\Bb}(\B_f,  \r, \omega))$ containing the term algbra
is representable, but {\it not  completely representable}. On the other hand, its completion, namely, 
$\Cm\At({\Bb}(\B_f,  \r, \omega))$ will be outside $\bold S\Nr_n\CA_{t(n)}$. Proving representability  
$\Tm \At({\Bb}(\B_f,  \r, \omega))$ can be done by completely representing its canonical 
extension, in a fairly simple step by step manner. The atom structure $\bf At={\sf Uf}({\Bb}(\B_f,  \r, \omega))$ of $\A$ consists of principal ultrafilters generated by atoms, 
together with only one non-princilple ultraflter, 
that can be identified with the shade of red $\rho$. This is needed for representing $\A$, but not completely; the atom structure $\bf At$ is not and cannot be completely representable; 
it is not even stongly representable. As a matter of fact, it is just  weakly representable, with all these notions of reprsentability for atom structures are taken from  \cite{HHbook2}.  

2. {\bf Representing a term algebra (and its completion) as (generalized) set algebras:}
Having $\Mo$ at hand, one constructs  two atomic $n$--dimensional set algebras based on $\Mo$, sharing the same atom structure and having 
the same top element.  
The atoms of each will be the set of coloured graphs, seeing as how, quoting Hodkinson \cite{Hodkinson} such coloured graphs are `literally indivisible'. 
Now $L_n$ and $L_{\infty, \omega}^n$ are taken in the rainbow signature (without $\rho$). Continuing like in {\it op.cit}, deleting the one available red shade, set
$W = \{ \bar{a} \in {}^n\Mo : \Mo \models ( \bigwedge_{i < j <n} \neg \rho(x_i, x_j))(\bar{a}) \},$
and for $\phi\in L_{\infty, \omega}^n$, let
$\phi^W=\{s\in W: \Mo\models \phi[s]\}.$
Here $W$ is the set of all $n$--ary assignments in
$^n\Mo$, that have no edge labelled by $\rho$.
Let $\A$  be the relativized set algebra with domain
$\{\varphi^W : \varphi \,\ \textrm {a first-order} \;\ L_n-
\textrm{formula} \}$  and unit $W$, endowed with the
usual concrete quasipolyadic operations read off the connectives.
Classical semantics for {\it $L_n$ rainbow formulas} and their
semantics by relativizing to $W$ coincide \cite[Proposition 3.13]{Hodkinson} {\it but not with respect to 
$L_{\infty,\omega}^n$ rainbow formulas}.
Hence the set algebra $\A$ is isomorphic to a cylindric  set algebra of dimension $n$ 
having top element $^n\Mo$, so $\A$
is simple, in fact its $\Df$ reduct is simple.
Let $\E=\{\phi^W: \phi\in L_{\infty, \omega}^n\}$
\cite[Definition 4.1]{Hodkinson}
with the operations defined like on $\A$ the usual way. $\Cm\bf At$ is a complete $\CA_n$ and, so like in \cite[Lemma 5.3]{Hodkinson}
we have an isomorphism from $\Cm\bf At$  to $\E$ defined
via $X\mapsto \bigcup X$.
Since $\At\A=\At\Tm(\At\A)$, which we refer to only by $\bf At$,  
and $\Tm\At\A\subseteq \A$, hence $\Tm\At\A= Tm\bf At$  is representable.
The atoms of $\A$, $\Tm\At\A$ and $\Cm\At\A=\Cm \bf At$ are the coloured graphs whose edges are {\it not labelled} by $\rho$.
These atoms are uniquely determined by the interpretation in $\Mo$ of so-called $\sf MCA$ formulas in the rainbow signature of $\bf At$  as in
\cite[Definition 4.3]{Hodkinson}. Giving $\Mo$ the descrete topology make both algebras topological set algebras, who extra unary modal operators
all coincide with th identity operator, which we denote by adding with a sight abue of notation, denote by the notation used for their cylindric reducts. No confusion is like to ensue. 
Though the shades of red is  {\it outside}  signature, it was as a label
during an $\omega$--rounded game played on labelled finite graphs--which can be seen as finite models in  the extended signature having size $\leq n$--
in which \pe\ had a \ws,  enabling her to
construct the required $\Mo$ as a countable limit of the finite graphs played during the game. The construction entails that any subgraph (substructure)
of $\Mo$ of size $\leq n$, is independent of its location in $\Mo$;
it is uniquely determined by its isomorphism type.
A relativized set algebra $\A$ based on $\Mo$ was constructed
by discarding all assignments whose edges are labelled
by these shades of reds,  getting a set of $n$--ary sequences $W\subseteq {}^n\Mo$. This $W$ is definable in $^n\Mo$ by an $L_{\infty, \omega}$ formula
and the semantics with respect to $W$ coincides with classical Tarskian semantics (on $^n\Mo$) for formulas of the
signature taken in $L_n$ (but not for formulas taken in $L_{\infty, \omega}^n$).
This was proved in both cases using certain $n$ back--and--forth systems, thus $\A$ is representable classically,
in fact it (is isomorphic to a set algebra that) has base $\Mo$.

{\bf The heart and soul of the proof;} In the set algebra $A$, 
one replaces the red label by suitable
non--red binary relation symbols within an $n$ back--and--forth system, so that one  can
adjust that the system maps a tuple $\bar{b} \in {}^n \Mo \backslash W$ to a tuple
$\bar{c} \in W$ and this will preserve any formula
containing the non--red symbols that are
`moved' by the system.  In fact, all
injective maps of size $\leq n$ defined on $\Mo$ modulo an appropriate
permutation of the reds will
form an $n$ back--and--forth system.
This set algebra $\A$
was further atomic, countable, and simple (with top element $^n\Mo$). The subgraphs of size $\leq n$ of $\Mo$ whose edges are not labelled by any shade of red are 
the atoms of $\A$, expressed syntactically by $\sf MCA$ formulas. The  \de\  of $\A$, in symbols $\Cm\At\A,$
has top element $W$, but it is not in $\bold S\Nr_n\CA_{t(n)}$ in case of the rainbow construction, let alone  representable,
In this constructions `the shades of  red' -- which can be intrinsically identified with
non--principal ultrafilters in $\A$,  were used as colours, together with the principal ultrafilters
to completely represent  $\A^+$, inducing a representation of $\A$.  Non--representability for  Monk like constructions 
use an uncontollablr Ramsey number determined 
by Ramsey's theory. The non neat--embeddability of thr rainbow like algebra 
$\Cm\At\A$ in the present more stronger cas, we used {\it a finite number of greens} that gave us more delicate information on
when $\Cm\At\A$   stops to be representable.  
The reds, particularly $\rho$ acting as a non-princple ultrafilter  had to do 
with representing $\A$ using non-atomic networks.

3. {\bf Embedding $\A_{n+1, n}$ into $\Cm(\At({{\mathfrak{Bb}}}(\A_{n+1, n}, \r, \omega)))$:} Let ${\sf CRG}_f$ be the class of coloured graphs on 
${\bf At}_f$ and $\sf CRG$ be the class of coloured graph on $\bf At$. We 
can (and will) assume that  ${\sf CRG}_f\subseteq \sf CRG$.
Write $M_a$ for the atom that is the (equivalence class of the) surjection $a:n\to M$, $M\in \sf CGR$.
Here we identify $a$ with $[a]$; no harm will ensue.
We define the (equivalence) relation $\sim$ on $\bf At$ by
$M_b\sim N_a$, $(M, N\in {\sf CGR}):$
\begin{itemize}
\item $a(i)=a(j)\Longleftrightarrow b(i)=b(j),$

\item $M_a(a(i), a(j))=\r^l\iff N_b(b(i), b(j))=\r^k,  \text { for some $l,k$}\in \omega,$

\item $M_a(a(i), a(j))=N_b(b(i), b(j))$, if they are not red,

\item $M_a(a(k_0),\dots, a(k_{n-2}))=N_b(b(k_0),\ldots, b(k_{n-2}))$, whenever
defined.
\end{itemize}
We say that $M_a$ is a {\it copy of $N_b$} if $M_a\sim N_b$ (by symmetry $N_b$ is a copy of $M_a$.) 
Indeed, the relation `copy of' is an equivalence relation on $\bf At$.  An atom $M_a$ is called a {\it red atom}, if $M_a$ has at least one red edge. 
Any red atom has $\omega$ many copies, that are {\it cylindrically equivalent}, in the sense that, if $N_a\sim M_b$ with one (equivalently both) red,
with $a:n\to N$ and  $b:n\to M$, then we can assume that $\nodes(N) =\nodes(M)$ 
and that for all $i<n$, $a\upharpoonright n\sim\{i\}=b\upharpoonright n\sim \{i\}$.
In $\Cm\bf At$, we write $M_a$ for $\{M_a\}$ 
and we denote suprema taken in $\Cm\bf At$, possibly finite, by $\sum$.
Define the map $\Theta$ from $\A_{n+1, n}=\Cm{\bf At}_f$ to $\Cm\bf At$,
by specifying first its values on ${\bf At}_f$,
via $M_a\mapsto \sum_jM_a^{(j)}$ where $M_a^{(j)}$ is a copy of $M_a$. 
So each atom maps to the suprema of its  copies.  
This map is well-defined because $\Cm\bf At$ is complete. 
We check that $\Theta$ is an injective homomorphism. Injectivity is easy..
We check preservation of all the $\CA_n$ extra Boolean operations.  
\begin{itemize}

\item Diagonal elements. Let $l<k<n$. Then:
\begin{align*}
M_x\leq \Theta({\sf d}_{lk}^{\Cm{\bf At}_f})&\iff\ M_x\leq \sum_j\bigcup_{a_l=a_k}M_a^{(j)}\\
&\iff M_x\leq \bigcup_{a_l=a_k}\sum_j M_a^{(j)}\\
&\iff  M_x=M_a^{(j)}  \text { for some $a: n\to M$ such that $a(l)=a(k)$}\\
&\iff M_x\in {\sf d}_{lk}^{\Cm\bf At}.
\end{align*}

\item Cylindrifiers. Let $i<n$. By additivity of cylindrifiers, we restrict our attention to atoms 
$M_a\in {\bf At}_f$ with $a:n\to M$, and $M\in {\sf CRG}_f\subseteq \sf CRG$. Then: 

$$\Theta({\sf c}_i^{\Cm{\bf At}_f}M_a)=f (\bigcup_{[c]\equiv_i[a]} M_c)
=\bigcup_{[c]\equiv_i [a]}\Theta(M_c)$$
$$=\bigcup_{[c]\equiv_i [a]}\sum_j M_c^{(j)}=\sum_j \bigcup_{[c]\equiv_i [a]}M_c^{(j)}
=\sum _j{\sf c}_i^{\Cm\bf At}M_a^{(j)}$$
$$={\sf c}_i^{\Cm\bf At}(\sum_j M_a^{(j)})
={\sf c}_i^{\Cm\bf At}\Theta(M_a).$$

\item Modlities: Coinciding with the identity map on both sides ae trivially presrved.
\end{itemize}

4.: {\bf \pa\ has  a  \ws\ in $G^{t(n)}\At(\B_f)$; and the required result:} It is straightforward to show that 
\pa\ has \ws\ first in the  \ef\ forth  private 
game played between \pe\ and \pa\ on the complete
irreflexive graphs $n+1(\leq n(n-1)/2+1)$ and $n$ in 
$n+1$ rounds
${\sf EF}_{n+1}^{n+1}(n+1, n)$ \cite [Definition 16.2]{HHbook2}
since $n+1$ is `longer' than $n$.  Using (any) $p>n$ many pairs of pebbles available on the board \pa\ can win this game in $n+1$ many rounds.
 \pa\  lifts his \ws\ from the lst private \ef\ forth game to the graph game on ${\bf At}_f=\At(\B_f)$ 
\cite[pp. 841]{HH} forcing a
win using $t(n)$ nodes. 
One uses the $n(n-1)/2+2$ green relations in the usual way to force a red clique $C$, say with $n(n-1)/2+2$.  
Pick any point $x\in C$.  Then there are $>n(n-1)/2$ points $y$ in $C\{x\}$.  There are only $n(n-1)/2$ red relations.  
So there must be distinct $y, z\in C\{x\}$ such that $(x,y)$ and $(x,z)$ both have the same red label 
(it will be some $\r^m_{ij}$ for $i<j<n$).  But $(y,z)$ is also red, and this contradicts \cite [Definition 2.5(2), 4th bullet point]{Hodkinson}. 
In more detail, \pa\ bombards  \pe\ with cones
having  common
base and distinct green  tints until \pe\ is forced to play an inconsistent red triangle (where indicies of reds do not match).
He needs $n-1$ nodes as the base of cones, 
plus $|P| + 2$ more nodes, where $P=\{(i, j): i<j<n\}$ forming a red clique,  
triangle with two edges satisfying the same $r^m_p$ for $p\in P$.  
Calculating,  we get $t(n)=n-1+n(n-1)/2+2=n(n+1)/2+1$
By Lemma \ref{n}, $\B_f\notin
\bold S\Nr_n\CA_{t(n)}$ when $2<n<\omega)$. Since $\B_f$ is finite, then $\B_f\notin \bold S\Nr_n\CA_{t(n)}$, 
because $\B_f$ coincides with its canonical extension and for any $\D\in \CA_n$, $\D\in \bold S\Nr_n\CA_{2n}\implies \D^+\in \bold S_c\Nr_n\CA_{2n}$.
But $\B_f$ embeds into $\Rd_{ca}\Cm\At\A^{\sf top}$,
hence $\Rd_{ca}\Cm\At\A^{\sf top}$
is outside the variety $\bold S{\sf Nr}_n\CA_{t(n)}$, as well. 
By the second part of Lemma \ref{n}, the required follows. 
\end{proof}

\subsection{On non-elementary classes related to the class of completly representatable algebras}

\begin{definition}\label{hypernetwork} For an $n$--dimensional atomic network $N$  on an atomic $\CA_n$ and for  $x,y\in \nodes(N)$, set  $x\sim y$ if
there exists $\bar{z}$ such that $N(x,y,\bar{z})\leq {\sf d}_{01}$.
Define the  equivalence relation $\sim$ over the set of all finite sequences over $\nodes(N)$ by
$\bar x\sim\bar y$ iff $|\bar x|=|\bar y|$ and $x_i\sim y_i$ for all
$i<|\bar x|$. (It can be easily checked that this indeed an equivalence relation).
A \emph{ hypernetwork} $N=(N^a, N^h)$ over an atomic $\CA_n$
consists of an $n$--dimensional  network $N^a$
together with a labelling function for hyperlabels $N^h:\;\;^{<
\omega}\!\nodes(N)\to\Lambda$ (some arbitrary set of hyperlabels $\Lambda$)
such that for $\bar x, \bar y\in\; ^{< \omega}\!\nodes(N)$
if $\bar x\sim\bar y \Rightarrow N^h(\bar x)=N^h(\bar y).$
If $|\bar x|=k\in \N$ and $N^h(\bar x)=\lambda$, then we say that $\lambda$ is
a $k$-ary hyperlabel. $\bar x$ is referred to as a $k$--ary hyperedge, or simply a hyperedge.
A hyperedge $\bar{x}\in {}^{<\omega}\nodes(N)$ is {\it short}, if there are $y_0,\ldots, y_{n-1}$
that are nodes in $N$, such that
$N(x_i, y_0, \bar{z})\leq {\sf d}_{01}$
or $\ldots N(x_i, y_{n-1},\bar{z})\leq {\sf d}_{01}$
for all $i<|x|$, for some (equivalently for all)
$\bar{z}.$ Otherwise, it is called {\it long.}
This game involves, besides the standard 
cylindrifier move,  
two new amalgamation moves.
Concerning his moves, this game with $m$ rounds ($m\leq \omega$), call it  $\bold H_m$, \pa\ can play a cylindrifier move, like before but now played on $\lambda$---
neat hypernetworks ($\lambda$ a constant label).
Also \pa\ can play a \emph{transformation move} by picking a
previously played hypernetwork $N$ and a partial, finite surjection
$\theta:\omega\to\nodes(N)$, this move is denoted $(N, \theta)$.  \pe's
response is mandatory. She must respond with $N\theta$.
Finally, \pa\ can play an
\emph{amalgamation move} by picking previously played hypernetworks
$M, N$ such that
$M\restr {\nodes(M)\cap\nodes(N)}=N\restr {\nodes(M)\cap\nodes(N)},$
and $\nodes(M)\cap\nodes(N)\neq \emptyset$.
This move is denoted $(M,
N).$
To make a legal response, \pe\ must play a $\lambda_0$--neat
hypernetwork $L$ extending $M$ and $N$, where
$\nodes(L)=\nodes(M)\cup\nodes(N)$.
\end{definition}

The next Lemma will be needed to prove Theorem \ref{rainbow1} and Corollary \ref{rainbow2}, which are the main results in this section. With Theorem \ref{can}, they constitute the core of this article.
\begin{theorem}\label{gripneat} Let $\alpha$ be a countable atom structure. If \pe\ has a \ws\ in $\bold H_{\omega}(\alpha)$, 
then there exists a complete $\D\in \RCA_{\omega}$ such that 
$\Cm\alpha\cong \Nrr_n\D$
In particular, $\Cm\alpha\in \Nr_n\CA_{\omega}$ 
\end{theorem}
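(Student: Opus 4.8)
The plan is to convert $\exists$'s \ws\ in $\bold H_{\omega}(\alpha)$ into an $\omega$--dimensional infinite \emph{hyperbasis} for $\Cm\alpha$, then to let $\D$ be the complex algebra of that hyperbasis, and finally to identify $\Nrr_n\D$ with $\Cm\alpha$. First I would use countability of $\alpha$: there are only countably many potential cylindrifier, transformation and amalgamation demands, so fixing a \ws\ $\sigma$ for $\exists$ and letting $\forall$ follow a \emph{fair} schedule (every demand that ever becomes applicable is eventually issued), the resulting $\omega$--length play produces a $\subseteq$--increasing chain $N_0\subseteq N_1\subseteq\cdots$ of $\lambda$--neat hypernetworks. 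I would then take $H$ to be the closure, under subhypernetworks, transformations $N\mapsto N\theta$ and the amalgamation operation witnessed by $\exists$'s responses, of the hypernetworks occurring in plays consistent with $\sigma$ whose node set lies inside $\omega$. The rules of $\bold H_{\omega}$ together with fairness then give: every atom of $\alpha$ labels some $n$--tuple of some member of $H$; the \emph{witness property} holds (whenever $N\in H$, $\bar x\in{}^n\nodes(N)$, $i<n$ and $N(\bar x)\leq {\sf c}_i b$ for an atom $b$, some one--point extension $N'\in H$ of $N$ has $\bar y\equiv_i\bar x$ with $N'(\bar y)=b$); and $H$ is symmetric and closed under amalgamation. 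Thus $H$ is an $\omega$--dimensional hyperbasis for $\Cm\alpha$ in the sense of \cite[Chapter~12]{HHbook}, transferred to the cylindric signature exactly as the games $\bold H_m$ above were transferred from $\sf RA$ to $\CA$.

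Next I would form $\D=\Ca(H)$, the complex algebra of (the atom structure naturally associated with) the hyperbasis $H$, defined as in \cite[Chapter~13]{HHbook} but in the signature of $\CA_{\omega}$, with cylindrifications and diagonals read off node deletion and node equality in hypernetworks. Being a complex algebra, $\D$ is a complete, atomic $\CA_{\omega}$. Because $H$ is $\omega$--dimensional, symmetric and closed under amalgamation, the hyperbasis yields a \emph{genuine} (square) representation of $\D$: one glues the node sets of members of $H$ along their overlaps — equivalently works with $1$--ary hyperedges modulo the hyperlabel equivalence — to obtain a base $U$ and an injective $\CA_{\omega}$--homomorphism $\D\hookrightarrow\wp({}^{\omega}U)$; this is the cylindric analogue of \cite[Chapter~13]{HHbook}, so $\D\in\RCA_{\omega}$. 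For the identification $\Cm\alpha\cong\Nrr_n\D$ I would argue that the atoms of $\D$ whose dimension set is contained in $n$ are, by construction, in bijection with the $n$--ary hyperedge types, i.e.\ with the atoms of $\alpha$; since $\D$ is complete and atomic, any $x\in\Nrr_n\D$ (so $\Delta x\subseteq n$) is the join of the atoms of $\alpha$ below it, a join computed inside $\Cm\alpha$. The resulting Boolean bijection $\Cm\alpha\to\Nrr_n\D$ commutes with ${\sf c}_i$ and ${\sf d}_{ij}$ for $i,j<n$ because these are inherited from the hypernetwork structure, so it is a $\CA_n$--isomorphism; hence $\Cm\alpha\cong\Nrr_n\D$ with $\D$ complete and representable, and in particular $\Cm\alpha\in\Nr_n\CA_{\omega}$.

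The step I expect to be the main obstacle is the passage from a single $\omega$--dimensional hyperbasis to a \emph{square} (Tarskian) representation of $\Ca(H)$ rather than a merely $m$--square one for finite $m$: this is precisely where the amalgamation moves of $\bold H_{\omega}$, which have no counterpart in the finite games $G^m_k$, are indispensable, since closure of $H$ under amalgamation is exactly what lets the local pieces be patched into one base $U$. A secondary but essential point is verifying that the embedding $\Cm\alpha\hookrightarrow\Nrr_n\D$ is \emph{onto} $\Nrr_n\D$ and not merely an embedding; this rests on completeness of $\D$ together with the coincidence of $n$--ary hyperedge types with the atoms of $\alpha$, and is the place where one must be careful about the precise definition of $\Ca(H)$ so that $n$--supported elements of the dilation carry no more information than elements of $\Cm\alpha$.
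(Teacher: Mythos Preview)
Your approach is essentially correct but follows a genuinely different route from the paper. The paper does \emph{not} form a hyperbasis and take $\Ca(H)$. Instead, for each atom $a\in\alpha$ it plays a single fair $\omega$--play starting from $a$, takes the limit hypernetwork $\M_a$, and then --- working in $L_{\infty,\omega}$ in the signature consisting of an $n$--ary relation for each atom of $\alpha$ together with a predicate for each hyperlabel --- builds a \emph{weak set algebra} $\D_a$ whose elements are the $\phi^{\M_a}$ for $\phi\in L_{\infty,\omega}$ evaluated over the weak space $\U_a\subseteq{}^{\omega}\nodes(\M_a)$. The dilation is then the product $\D=\prod_{a\in\alpha}\D_a$, which is a generalized weak set algebra and hence in $\RCA_\omega$ \emph{by construction}; completeness of $\D$ comes from closure under the infinitary disjunctions of $L_{\infty,\omega}$. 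The final identification $\Cm\alpha\cong\Nrr_n\D$ is obtained by first checking that $\Nrr_n\D$ is complete (suprema in $\D$ of sets from $\Nrr_n\D$ land back in $\Nrr_n\D$ by complete additivity of the ${\sf c}_i$), and then using the dense inclusion $\Nrr_n\D\subseteq_d\Cm\alpha$ to force equality; the atom--level bijection underlying this density is spelled out only later, in the proof of Lemma~\ref{gripneat2}.

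The trade--offs are clear. In your route, completeness of $\D=\Ca(H)$ is free (it is a complex algebra), but you must \emph{earn} representability by gluing the hyperbasis into a square base, and you correctly flag this as the main obstacle. In the paper's route, representability is free (each $\D_a$ is already a concrete weak set algebra) but completeness must be earned, and this is exactly why the paper insists on $L_{\infty,\omega}$ rather than first--order formulas. Your secondary worry --- that the embedding $\Cm\alpha\hookrightarrow\Nrr_n\D$ be \emph{onto} --- is the same delicate point in both approaches; note that in your $\Ca(H)$ picture the atoms of $\D$ are (equivalence classes of) hypernetworks, not $n$--types, so the sentence ``atoms of $\D$ with dimension set $\subseteq n$ are in bijection with atoms of $\alpha$'' needs the $\lambda$--neat condition on short hyperedges to go through, and you should make that explicit. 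Both arguments ultimately rest on the amalgamation and transformation moves of $\bold H_\omega$, so your diagnosis of what makes the $\omega$--case work (versus the finite games $G^m_k$) is exactly right.
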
 
\begin{proof} 
Fix some $a\in\alpha$. The game $\bold H_{\omega}$ is designed so that using \pe\ s \ws\ in the game $\bold H_{\omega}(\alpha)$ 
one can define a
nested sequence $M_0\subseteq M_1,\ldots$ of $\lambda$--neat hypernetworks
where $M_0$ is \pe's response to the initial \pa-move $a$, such that:
If $M_r$ is in the sequence and $M_r(\bar{x})\leq {\sf c}_ia$ for an atom $a$ and some $i<n$,
then there is $s\geq r$ and $d\in\nodes(M_s)$
such that  $M_s(\bar{y})=a$,  $\bar{y}_i=d$ and $\bar{y}\equiv_i \bar{x}$.
In addition, if $M_r$ is in the sequence and $\theta$ is any partial
isomorphism of $M_r$, then there is $s\geq r$ and a
partial isomorphism $\theta^+$ of $M_s$ extending $\theta$ such that
$\rng(\theta^+)\supseteq\nodes(M_r)$ (This can be done using \pe's responses to amalgamation moves).
Now let $\M_a$ be the limit of this sequence, that is $\M_a=\bigcup M_i$, the labelling of $n-1$ tuples of nodes
by atoms, and hyperedges by hyperlabels done in the obvious way using the fact that the $M_i$s are nested.
Let $L$ be the signature with one $n$-ary relation for
each $b\in\alpha$, and one $k$--ary predicate symbol for
each $k$--ary hyperlabel $\lambda$.
{\it Now we work in $L_{\infty, \omega}.$}
For fixed $f_a\in\;^\omega\!\nodes(\M_a)$, let
$\U_a=\set{f\in\;^\omega\!\nodes(\M_a):\set{i<\omega:g(i)\neq
f_a(i)}\mbox{ is finite}}$.
We  make $\U_a$ into the base of an $L$ relativized structure 
${\cal M}_a$  allowing a clause for infinitary disjunctions.
In more detail,  for $b\in\alpha,\; l_0, \ldots, l_{n-1}, i_0 \ldots, i_{k-1}<\omega$, \/ $k$--ary hyperlabels $\lambda$,
and all $L$-formulas $\phi, \phi_i, \psi$, and $f\in U_a$:
\begin{eqnarray*}
{\cal M}_a, f\models b(x_{l_0}\ldots,  x_{l_{n-1}})&\iff&{\cal M}_a(f(l_0),\ldots,  f(l_{n-1}))=b,\\
{\cal M}_a, f\models\lambda(x_{i_0}, \ldots,x_{i_{k-1}})&\iff&  {\cal M}_a(f(i_0), \ldots,f(i_{k-1}))=\lambda,\\
{\cal M}_a, f\models\neg\phi&\iff&{\cal M}_a, f\not\models\phi,\\
{\cal M}_a, f\models (\bigvee_{i\in I} \phi_i)&\iff&(\exists i\in I)({\cal M}_a,  f\models\phi_i),\\
{\cal M}_a, f\models\exists x_i\phi&\iff& {\cal M}_a, f[i/m]\models\phi, \mbox{ some }m\in\nodes({\cal M}_a).
\end{eqnarray*}
For any such $L$-formula $\phi$, write $\phi^{{\cal M}_a}$ for
$\set{f\in\U_a: {\cal M}_a, f\models\phi}.$
Let
$D_a= \set{\phi^{{\cal M}_a}:\phi\mbox{ is an $L$-formula}}$ and
$\D_a$ be the weak set algebra with universe $D_a$. 
Let $\D=\bold P_{a\in \alpha} \D_a$. Then $\D$ is a  generalized {\it complete} weak set algebra \cite[Definition 3.1.2 (iv)]{HMT2}.
Now we show 
$\Cm\alpha\cong \mathfrak{Nr}_n\D$.
Let  $X\subseteq \mathfrak{Nr}_n\D$. Then by completeness of $\D$, we get that
$d=\sum^{\D}X$ exists.  Assume that  $i\notin n$, then
${\sf c}_id={\sf c}_i\sum X=\sum_{x\in X}{\sf c}_ix=\sum X=d,$
because the ${\sf c}_i$s are completely additive and ${\sf c}_ix=x,$
for all $i\notin n$, since $x\in \mathfrak{Nr}_n\D$.
We conclude that $d\in \mathfrak{Nr}_n\D$, hence $d$ is an upper bound of $X$ in $\mathfrak{Nr}_n\D$. Since 
$d=\sum_{x\in X}^{\D}X$ there can be no $b\in \mathfrak{Nr}_n\D$ $(\subseteq \D)$ with $b<d$ such that $b$ is an upper bound of $X$ for else it will be an upper bound of $X$ in $\D$. 
Thus $\sum_{x\in X}^{\mathfrak{Nr}_n\D}X=d$ 
We have shown that   
$\mathfrak{Nr}_n\D$ is complete. 
Making the legitimate identification  
$\mathfrak{Nr}_n\D\subseteq_d \Cm\alpha$ by density,
we get that  $\mathfrak{Nr}_n\D=\Cm\alpha$ 
(since $\mathfrak{Nr}_n\D$ is complete),  
hence $\Cm\alpha\in {\sf Nr}_n\CA_{\omega}$. 
\end{proof}
\begin{theorem}\label{rainbow1}Let $\bold O\in \{\bold S_c, \bold S_d, \bold I\}$, where $\bold S_d$ denotes the operation of forming dense subalgebras and let $k\geq 3$. 
Then the class of frames ${\sf K}_k=\{\F: \Cm\F\in \bold O\Nr_n\CA_{n+k}\}$ 
is not elementary. In particular, the class of extremely representable algebras up to $n+k$ is not elementary.
\end{theorem}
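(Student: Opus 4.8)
The plan is to show that ${\sf K}_k$ is not closed under elementary equivalence; since elementary classes are, this gives the result. Concretely I will exhibit two $\CA_n$ atom structures $\alpha\equiv\beta$ with $\alpha\notin{\sf K}_k$ and $\beta\in{\sf K}_k$. Everything is run with the blow up and blur rainbow machinery already set up for Theorem \ref{can}, together with Lemma \ref{n} on the negative side and Theorem \ref{gripneat} on the positive side. The asymmetry being exploited is that membership of $\Cm\F$ in $\bold O\Nr_n\CA_{n+k}$ fails, on the bad side, because \pa\ wins a \emph{node-bounded} $\omega$-round game -- a property that survives passage to ultrapowers -- whereas on the good side it is forced by \pe\ winning a \emph{node-unbounded} $\omega$-round game, which is not first order and can be engineered on a saturated elementarily equivalent structure out of the victories of \pe\ in all finite-round approximations.

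\textbf{The bad frame.} I would take a finite rainbow $\CA_n$, $\B_f=\A_{\sf G,\sf R}$, exactly as in Part~1 of the proof of Theorem \ref{can}, but choosing the number of green tints and the size of the red graph $\sf R$ so that the cone-bombardment strategy of \pa\ from \cite[4.3]{HH} forces an inconsistent monochromatic red triangle already on $n+k$ nodes; this is the arithmetic carried out in Part~4 of that proof (there tuned to $t(n)=n(n+1)/2+1$ greens), and for small $k$, in particular $k=3$, the margin in this count is tightest. With such $\B_f$, \pa\ wins $G^{n+k}_{\omega}(\At\B_f)$, so by Lemma \ref{n} $\B_f$ has no $(n+k)$-square representation, hence $\B_f\notin\bold S\Nr_n\CA_{n+k}$, reasoning as at the end of the proof of Theorem \ref{can} (via Lemma \ref{flat}). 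Now blow up and blur $\B_f$ as in \ref{can}: split each red atom into $\omega$ copies, adjoin the shade of red $\rho$, put $\D=\Bb(\B_f,\r,\omega)$ and $\alpha=\At\D$. As in Part~3 of \ref{can}, $\B_f$ embeds into $\Cm\alpha$ via $M_a\mapsto\sum_j M_a^{(j)}$, and since $\bold S\Nr_n\CA_{n+k}$ is closed under subalgebras and contains $\bold O\Nr_n\CA_{n+k}$ for every $\bold O\in\{\bold S_c,\bold S_d,\bold I\}$, we get $\Cm\alpha\notin\bold O\Nr_n\CA_{n+k}$, i.e.\ $\alpha\notin{\sf K}_k$. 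At the same time, as in Part~2 of \ref{can}, the blur makes $\Tm\alpha$ representable and, more to the point, gives \pe\ a winning strategy in the node-unbounded \emph{finite}-round hypernetwork game $\bold H_r(\alpha)$ for every $r<\omega$: in a play of bounded length she never exhausts the red copies and can always retreat to $\rho$.

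\textbf{The good frame and conclusion.} For each $r<\omega$ the assertion ``\pe\ has a winning strategy in $\bold H_r$'' is a first-order sentence $\sigma_r$ about $\CA_n$ atom structures, so $\alpha\models\sigma_r$ for all $r$. Pass to a non-principal ultrapower $\alpha^{*}=\alpha^{\omega}/U$; by the fundamental theorem of ultraproducts $\alpha^{*}\models\sigma_r$ for all $r$, and $\alpha^{*}$ is $\aleph_1$-saturated, hence $\omega$-saturated. The ``ultrapowers plus elementary chain'' argument used in the proof of Theorem \ref{bsl} then produces a countable $\beta$ with $\beta\equiv\alpha^{*}\equiv\alpha$ on which \pe\ wins the full $\omega$-round game $\bold H_{\omega}(\beta)$. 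By Theorem \ref{gripneat}, $\Cm\beta\cong\Nr_n\D'$ for a complete $\D'\in\RCA_{\omega}$; since $\Nr_n\CA_{\omega}\subseteq\Nr_n\CA_{n+k}=\bold I\Nr_n\CA_{n+k}\subseteq\bold S_c\Nr_n\CA_{n+k}\cap\bold S_d\Nr_n\CA_{n+k}$, we obtain $\Cm\beta\in\bold O\Nr_n\CA_{n+k}$ for each of the three operators, i.e.\ $\beta\in{\sf K}_k$. Hence $\alpha\equiv\beta$, $\alpha\notin{\sf K}_k$ and $\beta\in{\sf K}_k$, so ${\sf K}_k$ is not elementary. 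The ``in particular'' is obtained by the usual lifting from atom structures to atomic algebras applied to $\Tm\alpha$ and $\Tm\beta$.

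\textbf{Main obstacle.} The hard step is the passage, in the good-frame part, from ``\pe\ wins every finite-round game $\bold H_r$'' on a suitably saturated and elementarily correct structure to ``\pe\ wins the $\omega$-round game $\bold H_{\omega}$'' on a \emph{countable} $\beta\equiv\alpha$: the finite-round victories transfer to the ultrapower for free, but gluing them into an $\omega$-strategy requires realizing, by $\omega$-saturation, the types describing the obligatory responses of \pe\ to the amalgamation and transformation moves of $\bold H_{\omega}$ while keeping the universe countable, and then checking that the resulting quasi-strategy is genuinely winning. This is exactly the ``ultrapowers and an elementary chain argument'' invoked for Theorem \ref{bsl}, and verifying that it applies verbatim to the richer hypernetwork game rather than the plain atomic game is where most of the work lies. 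A secondary, purely combinatorial, difficulty is tuning the rainbow parameters so that the threshold $n+k$ is attained exactly for every $k\ge 3$, most delicately at $k=3$.
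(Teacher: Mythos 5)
There is a genuine gap, and it sits at the heart of your ``good frame'' half: the blown-up-and-blurred atom structure $\alpha=\At\Bb(\B_f,\r,\omega)$ from Theorem \ref{can} does \emph{not} satisfy the claim that \pe\ wins every finite-round node-unbounded game $\bold H_r(\alpha)$. First, she cannot ``retreat to $\rho$'': the shade of red $\rho$ is not an atom of $\alpha$ --- the atoms are precisely the coloured graphs with \emph{no} $\rho$-labelled edge, and $\rho$ enters the construction only as a non-principal ultrafilter used to represent the term algebra, not as a legal label in the atomic game. Second, and decisively, if \pe\ won every $G_r(\alpha)$ we would have $\Tm\alpha\in{\sf LCA}_n\subseteq{\sf SRCA}_n$, hence $\Cm\alpha\in\RCA_n$, contradicting the very conclusion of Theorem \ref{can} that $\Cm\alpha\notin\bold S\Nr_n\CA_{t(n)}\supseteq\RCA_n$. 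So \pa\ in fact wins $G_r(\alpha)$ for some finite $r$ (using many nodes); since a finite-round win by \pa\ is first-order expressible over the atom structure, it transfers to every $\beta\equiv\alpha$, and no elementarily equivalent $\beta$ can have \pe\ winning $\bold H_{\omega}(\beta)$. Your ultrapower-and-elementary-chain step therefore has nothing to glue together, and the witness you chose cannot separate ${\sf K}_k$ from its elementary closure at all: both $\alpha$ and everything elementarily equivalent to it lie outside ${\sf K}_k$.

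The paper's proof avoids this by choosing a witness with the \emph{opposite} game-theoretic profile: the rainbow algebra $\C_{\Z,\N}$ with greens indexed by $\Z$, reds indexed by $\N$, and the extra forbidden triples $(\g_0^i,\g_0^j,\r_{kl})$ whenever $\{(i,k),(j,l)\}$ is not order preserving from $\Z$ to $\N$. There \pe\ wins every finite-round game $\bold H_k(\At\C_{\Z,\N})$ (so the ultrapower/elementary-chain argument and Theorem \ref{gripneat} do apply, yielding a countable $\B\equiv\C_{\Z,\N}$ with $\Cm\At\B\in\Nr_n\CA_{\omega}$ and $\B\in\bold S_d\Nr_n\CA_{\omega}\cap\CRCA_n$), while \pa\ wins the $\omega$-round game $\bold G^{n+3}$ using only $n+3$ nodes by \emph{re-using} them: each new cone with a smaller green tint $\g_0^{-m}$ forces, via the order-preserving condition, a strictly decreasing sequence in $\N$, which cannot continue forever. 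It is precisely this ``few nodes, infinitely many rounds'' character of \pa's win that is not first-order and hence survives on one side of the elementary equivalence but not the other. If you want to salvage your write-up, you must replace the blow-up-and-blur witness by a rainbow algebra of this $\Z$-versus-$\N$ type; the blow-up-and-blur construction is the right tool for non-atom-canonicity (Theorem \ref{can}) but the wrong one for non-elementarity of classes of frames.
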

\begin{proof} 

{\bf (1)  Defining a rainbow-like atom structure $\alpha$:}  We use the algebra in \cite[Theorem 5.12]{mlq}.
The algebra $\C_{\Z, \N}(\in \RCA_n$) based on $\Z$ (greens) and $\N$ (reds) denotes the rainbow-like algebra used in {\it op.cit} 
which is defined as follows:  
The reds ${\sf R}$ is the set $\{\r_{ij}: i<j<\omega(=\N)\}$ and the green colours used 
constitute the set $\{\g_i:1\leq i <n-1\}\cup \{\g_0^i: i\in \Z\}$. 
In complete coloured graphs the forbidden triples are like 
the usual rainbow constructions based on $\Z$ and $\N$, with a significant addition:   
First  the colours used are:
\begin{itemize}

\item greens: $\g_i$ ($1\leq i\leq n-2)$, $\g_0^i$, $i\in \Z$,

\item whites : $\w_i: i\leq n-2,$

\item reds:  $\r_{ij}$ $(i,j\in \N)$,

\item shades of yellow : $\y_S: S\text { a finite subset of } \omega$ or $S=\omega$.

\end{itemize}

The rainbow algebra depending on $\N$ and $\Z$ from the class $\bold K$ consisting
of all coloured graphs $M$ such that:
\begin{enumerate}
\item $M$ is a complete graph
and  $M$ contains no triangles (called forbidden triples)
of the following types:
\begin{eqnarray}
&&\nonumber\\
(\g, \g^{'}, \g^{*}), (\g_i, \g_{i}, \w_i)
&&\mbox{any }1\leq i\leq  n-2,  \\
(\g^j_0, \g^k_0, \w_0)&&\mbox{ any } j, k\in A,\\
\label{forb:match}(\r_{ij}, \r_{j'k'}, \r_{i^*k^*})&&\mbox{unless }i=i^*,\; j=j'\mbox{ and }k'=k^*
\end{eqnarray}
Observe that this 1.7 is not as item 1.3 in the proof of Theorem \ref{can}. Here inconsistent triples of reds are defined differently.

\item {\it The triple  $(\g^i_0, \g^j_0, \r_{kl})$ is also forbidden if $\{(i, k), (j, l)\}$ is not an order preserving partial function from
$\Z\to\N$}
\end{enumerate}
It is proved in {\it op.cit} that \pe\ has a \ws\ in $G_k(\At\C_{\Z, \N} )$ for all $k\in \omega$, so that $\C_{\Z, \N} \in {\bf El}{\sf CRCA}_n$. 
With some more effort it can be proved that \pe\ has a \ws\ $\sigma_k$ say  in $\bold H_k(\At\C_{\Z, \N})$ for all $k\in \omega$.
Let $\alpha=\At\C_{\Z, \N}$.

{\bf (2) \pe\ has a \ws\ in $\bold H_{\omega}(\alpha)$:}  
We describe \pe's strategy in dealing with labelling hyperedges in $\lambda$--neat hypernetworks, where $\lambda$ is a constant label kept on short hyperedges
and, not to interrupt the main stream, we defer the rest of the highly technical proof to the appendix.
In a play, \pe\ is required to play $\lambda$--neat hypernetworks, so she has no choice about the
the short edges, these are labelled by $\lambda$. In response to a cylindrifier move by \pa\
extending the current hypernetwork providing a new node $k$,
and a previously played coloured hypernetwork $M$
all long hyperedges not incident with $k$ necessarily keep the hyperlabel they had in $M$.
All long hyperedges incident with $k$ in $M$
are given unique hyperlabels not occurring as the hyperlabel of any other hyperedge in $M$.
In response to an amalgamation move, which involves two hypernetworks required to be amalgamated, say $(M,N)$
all long hyperedges whose range is contained in $\nodes(M)$
have hyperlabel determined by $M$, and those whose range is contained in $\nodes(N)$ have hyperlabels determined
by $N$. If $\bar{x}$ is a long hyperedge of \pe\ s response $L$ where
$\rng(\bar{x})\nsubseteq \nodes(M)$, $\nodes(N)$ then $\bar{x}$
is given
a new hyperlabel, not used in 
any previously played hypernetwork and not used within $L$ as the label of any hyperedge other than $\bar{x}$.
This completes her strategy for labelling hyperedges.
In \cite{mlq} it is shown that \pe\ has a \ws\ in $G_k(\At\C_{\Z, \N})$ where $0<k<\omega$ is the number of rounds.
With some more effort it can be prove that \pe\ has a \ws\ $in \bold H_k(\At\C)$ for each $k<\omega$, call it $\sigma_k$. 
We can assume that $\sigma_k$ is deterministic.
Let $\D$ be a non--principal ultrapower of $\C_{\Z, \N}$.  Then \pe\ has a \ws\ $\sigma$ in $\bold H_{\omega}(\At\D)$ --- essentially she uses
$\sigma_k$ in the $k$'th component of the ultraproduct so that at each
round of $\bold H_{\omega}(\At\D)$,  \pe\ is still winning in co-finitely many
components, this suffices to show she has still not lost.
We can also assume that $\C_{\Z, \N}$  is countable by replacing it by the term algebra. 
Now one can use an
elementary chain argument to construct countable elementary
subalgebras $\C_{\Z, \N}=\A_0\preceq\A_1\preceq\ldots\preceq\ldots \D$ in this manner.
One defines  $\A_{i+1}$ be a countable elementary subalgebra of $\D$
containing $\A_i$ and all elements of $\D$ that $\sigma$ selects
in a play of $G(\At\D)$ in which \pa\ only chooses elements from
$\A_i$. Now let $\B=\bigcup_{i<\omega}\A_i$.  This is a
countable elementary subalgebra of $\D$, hence necessarily atomic,  and \pe\ has a \ws\ in
$\bold H_{\omega}(\At\B)$ and $\B\equiv \C_{\Z, \N}$ . Thus 
$\At\B\in \At\Nr_n\CA_{\omega}$ and $\Cm\At\B\in \Nr_n\CA_{\omega}$. (This does not imply that $\B\in \Nr_n\CA_{\omega}$, cf. example \ref{SL}).  
Since $\B\subseteq_d \Cm\At\B$,
$\B\in \bold S_d\Nr_n\CA_{\omega}$, so $\B\in \bold S_c\Nr_n\CA_{\omega}$.  Being countable, it follows by \cite[Theorem 5.3.6]{Sayedneat} that $\B\in \CRCA_n$. 

{\bf  (3) \pa\ has a \ws\ in $\bold G^{n+3}(\alpha)$:} We now show that hat \pa\ has a \ws\ in $\bold G^{n+3}(\At\C_{\Z, \N} )$ (denoted in {\it op.cit} 
by $F^{n+3}(\At\C_{\Z, \N} )$), 
hence by 
Lemma \ref{n}, $\C_{\Z, \N} \notin \bold S_c\Nr_n\CA_{n+3}$. 
It can be shown that \pa\ has a \ws\ in the graph version of the game 
$\bold G^{n+3}(\At\C)$ played on coloured graphs \cite{HH}.
The rough idea here, is that, as is the case with \ws's of \pa\ in rainbow constructions, 
\pa\ bombards \pe\ with cones having distinct green tints demanding a red label from \pe\ to appexes of succesive cones.
The number of nodes are limited but \pa\ has the option to re-use them, so this process will not end after finitely many rounds.
The added order preserving condition relating two greens and a red, forces \pe\ to choose red labels, one of whose indices form a decreasing 
sequence in $\N$.  In $\omega$ many rounds \pa\ 
forces a win, 
so by lemma \ref{n}, $\C\notin \bold S_c{\sf Nr}_n\CA_{n+3}$.
More rigorously, \pa\ plays as follows: In the initial round \pa\ plays a graph $M$ with nodes $0,1,\ldots, n-1$ such that $M(i,j)=\w_0$
for $i<j<n-1$
and $M(i, n-1)=\g_i$
$(i=1, \ldots, n-2)$, $M(0, n-1)=\g_0^0$ and $M(0,1,\ldots, n-2)=\y_{\Z}$. This is a $0$ cone.
In the following move \pa\ chooses the base  of the cone $(0,\ldots, n-2)$ and demands a node $n$
with $M_2(i,n)=\g_i$ $(i=1,\ldots, n-2)$, and $M_2(0,n)=\g_0^{-1}.$
\pe\ must choose a label for the edge $(n+1,n)$ of $M_2$. It must be a red atom $r_{mk}$, $m, k\in \N$. Since $-1<0$, then by the `order preserving' condition
we have $m<k$.
In the next move \pa\ plays the face $(0, \ldots, n-2)$ and demands a node $n+1$, with $M_3(i,n)=\g_i$ $(i=1,\ldots, n-2)$,
such that  $M_3(0, n+2)=\g_0^{-2}$.
Then $M_3(n+1,n)$ and $M_3(n+1, n-1)$ both being red, the indices must match.
$M_3(n+1,n)=r_{lk}$ and $M_3(n+1, r-1)=r_{km}$ with $l<m\in \N$.
In the next round \pa\ plays $(0,1,\ldots n-2)$ and re-uses the node $2$ such that $M_4(0,2)=\g_0^{-3}$.
This time we have $M_4(n,n-1)=\r_{jl}$ for some $j<l<m\in \N$.
Continuing in this manner leads to a decreasing
sequence in $\N$. We have proved the required.

{\bf (4): Proving the required} Let $\bold K$ be a class between $\bold S_d\Nr_n\CA_{\omega}\cap {\sf CRCA}_n$ 
and $\bold S_c\Nr_n\CA_{n+3}$. 
Then  $\bold K$ is not elementary, because $\C_{\Z, \N} \notin \bold S_c\Nr_n\CA_{n+3}(\supseteq \bold K)$, 
$\B\in \bold S_d\Nr_n\CA_{\omega}\cap {\sf CRCA}_n(\subseteq \bold K)$,
and $\C_{\Z, \N}\equiv \B$. 
It clearly suffices to show that $\bold K_k=\{\A\in \CA_n\cap {\bf At}: \Cm\At\A\in \bold O\Nr_n\CA_k\}$ is not elementary. 
\pe\ has a \ws\ in $\bold H_{\omega}(\alpha)$ for some countable atom structure $\alpha$, 
$\Tm\alpha\subseteq_d \Cm\alpha\in \bold {\sf Nr}_n\CA_{\omega}$ and $\Tm\alpha\in {\sf CRCA}_n$.
Since $\C_{\Z, \N}\notin \bold S_c{\sf Nr}_n\CA_{n+3}$,  then 
$\C_{\Z, \N}=\Cm\At\C_{\Z, \N}\notin {\bold K}_k$, $\C_{\Z, \N}\equiv \Tm\alpha$ 
and $\Tm\alpha\in {\bold K}_k$ 
because $\Cm\alpha\in \Nr_n\CA_{\omega}\subseteq \bold S_d\Nr_n\CA_{\omega}\subseteq \bold S_c\Nr_n\CA_{\omega}$.
We have shown that $\C_{\Z, \N}\in {\bf El}{\bold K}_k\sim {\bold K}_k$, proving the required. 


\end{proof}
Having the necessary tools at our disposal in the previous proof,  we obtain the following  subtantial generalization of Theorem \ref{HH}. Let $\bold S_d$ denote the operation of forming dense 
subagbras.
First observe  that from Theorem \ref{bsl}  the two classes $\CRCA_n$ and  $\bold S_d\Nr_n\CA_{\omega}$ are 
mutually distinct.
\begin{corollary} Let $2<n<\omega$ and $m\geq n+3$. Then any any class $\sf K$ such that 
$\bold S_d\Nr_n\CA_{\omega}\cap \CRCA_n\subseteq {\sf K}\subseteq \bold S_c\Nr_n\CA_m$ is not elementary.
\end{corollary}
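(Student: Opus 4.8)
The plan is to recycle the separation already achieved in the proof of Theorem \ref{rainbow1} and to upgrade it from $\bold S_c\Nr_n\CA_{n+3}$ to $\bold S_c\Nr_n\CA_m$ for every $m\ge n+3$ by a single monotonicity observation. Concretely, it suffices to exhibit two elementarily equivalent $\CA_n$s, one lying inside $\bold S_d\Nr_n\CA_{\omega}\cap\CRCA_n$ and one lying outside $\bold S_c\Nr_n\CA_m$; since the former class is contained in $\sf K$ and the latter contains $\sf K$, this shows $\sf K$ is not closed under elementary equivalence, hence not elementary.

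For the ``inside'' algebra I would take the countable $\B$ constructed in the proof of Theorem \ref{rainbow1}: starting from the rainbow--like algebra $\C_{\Z,\N}\in\RCA_n$ of \cite[Theorem 5.12]{mlq}, on whose atom structure \pe\ has a \ws\ in $\bold H_k$ for every finite $k$, pass to a non--principal ultrapower $\D$ of $\C_{\Z,\N}$ (so that \pe\ wins $\bold H_{\omega}(\At\D)$ by winning co--finitely often component--wise), and extract by an elementary chain argument a countable $\B\preceq\D$ with $\B\equiv\C_{\Z,\N}$ and \pe\ still winning $\bold H_{\omega}(\At\B)$. By Theorem \ref{gripneat}, $\Cm\At\B\in\Nr_n\CA_{\omega}$; since $\B\subseteq_d\Cm\At\B$ this gives $\B\in\bold S_d\Nr_n\CA_{\omega}\subseteq\bold S_c\Nr_n\CA_{\omega}$, and countability together with \cite[Theorem 5.3.6]{Sayedneat} yields $\B\in\CRCA_n$. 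Hence $\B\in\bold S_d\Nr_n\CA_{\omega}\cap\CRCA_n\subseteq\sf K$.

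For the ``outside'' algebra I would keep $\C_{\Z,\N}$ itself. In the proof of Theorem \ref{rainbow1} it is shown that \pa\ has a \ws\ in $\bold G^{n+3}(\At\C_{\Z,\N})$, so by Lemma \ref{n}, $\C_{\Z,\N}\notin\bold S_c\Nr_n\CA_{n+3}$. The only new ingredient is the monotonicity $\bold S_c\Nr_n\CA_m\subseteq\bold S_c\Nr_n\CA_{n+3}$ for $m\ge n+3$: if $\A\subseteq_c\Nr_n\E$ with $\E\in\CA_m$, then $\Nr_{n+3}\E\in\CA_{n+3}$ and $\A\subseteq_c\Nr_n\Nr_{n+3}\E$, so $\A\in\bold S_c\Nr_n\CA_{n+3}$. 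Therefore $\C_{\Z,\N}\notin\bold S_c\Nr_n\CA_m\supseteq\sf K$, i.e.\ $\C_{\Z,\N}\notin\sf K$, while $\B\in\sf K$ and $\C_{\Z,\N}\equiv\B$; so $\sf K$ is not elementary.

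The routine ingredients here (the ultrapower/elementary chain packaging and the $\Nr$--monotonicity) are immediate; the genuine work lies entirely in the two game--theoretic inputs borrowed from Theorem \ref{rainbow1}: that \pe\ maintains a \ws\ through the transformation and amalgamation moves of the hypernetwork game $\bold H_{\omega}$ on the rainbow atom structure (the part deferred to the appendix), and that \pa's cone--bombardment strategy, exploiting the order--preserving constraint between two greens and a red, forces a win in $\bold G^{n+3}$. Once those are granted, the corollary is just the sandwiching argument above.
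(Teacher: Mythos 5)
Your proof is correct and follows essentially the same route as the paper: the same witnesses $\C_{\Z,\N}$ (outside $\bold S_c\Nr_n\CA_{n+3}$ via \pa's \ws\ in $\bold G^{n+3}$) and the countable elementarily equivalent $\B$ (inside $\bold S_d\Nr_n\CA_{\omega}\cap\CRCA_n$ via \pe's \ws\ in $\bold H_{\omega}$ and Lemma \ref{gripneat}). The only addition is that you spell out the monotonicity $\bold S_c\Nr_n\CA_m\subseteq\bold S_c\Nr_n\CA_{n+3}$ for $m\geq n+3$, which the paper leaves implicit but which is indeed needed for the stated generality.
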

\begin{proof} The rainbow-like atomic algebra algebra $\C_{\Z, \N}$  having countably many atoms is 
outside $\bold S_c\Nr_n\CA_{n+3}$. However, the algebra $\C_{\Z, \N}$ is elementary equivalent to a countable atomic  
algebra $\B$ such that \pe\ has a \ws\ $\in \bold H_{\omega}\At\B$, so by Lemma \ref{gripneat}, 
$\Cm\At\B\in \Nr_n\CA_{\omega}$. Thus $\B\in \bold S_d\Nr_n\CA_{\omega}\subseteq \bold S_c\Nr_n\CA_{\omega}$. Being countable, by 
\cite[Theorem 5.3.3]{Sayedneat}, we get $\B\in \CRCA_n$. 
\end{proof}

For a variety $\sf V$ of $\sf BAO$s, $\sf Str(V)=\{\F: \Cm\F\in \V\}$. Fix finite $k>2$.  Then $\V_k={\sf Str}(\bold S\Nr_n\CA_{n+k})$ is not elementary 
$\implies   \V_k$ is not-atom canonical because (argueing contrapositively) in the case of atom--canonicity, 
we get that ${\sf Str}(\bold S\Nr_n\CA_{n+k})={\sf At}(\bold S\Nr_n\CA_{n+k})$, and the last class is elementary \cite[Theorem 2.84]{HHbook}. However, the converse implication may fail. 
In particular, we do not know whether ${\sf Str}(\bold S\Nr_n\CA_{n+k})$, for a particular finite $k\geq 3$, is elementary or not.
Nevertheless, it is easy to show that {\it there has to be a finite $k<\omega$ such that $\V_j$ is not elementary for all $j\geq k$}:
\begin{theorem}\label{truncate}  
There is a finite $k$ such that the class of strongly representable algbras up to $n+k$
is not elementary. 
\end{theorem}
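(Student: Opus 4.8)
The plan is to derive the statement from the already-known non-elementarity of ${\sf SRCA}_n$ by a soft observation: ${\sf SRCA}_n$ is a \emph{countable} intersection of the truncated classes $\V_k=\Str(\bold S\Nr_n\CA_{n+k})$. First I would invoke the Neat Embedding Theorem \cite{HMT2}, which gives $\RCA_n=\bold S\Nr_n\CA_{\omega}=\bigcap_{k<\omega}\bold S\Nr_n\CA_{n+k}$. Since $\Str$ commutes with arbitrary intersections (a complex algebra lies in $\bigcap_i{\sf C}_i$ iff it lies in each ${\sf C}_i$), it follows that
\[
{\sf SRCA}_n=\Str(\RCA_n)=\bigcap_{k<\omega}\Str(\bold S\Nr_n\CA_{n+k})=\bigcap_{k<\omega}\V_k .
\]
If one prefers to read ``the class of strongly representable algebras up to $n+k$'' as a class of \emph{algebras}, replace the atom structure $\F$ by $\At\A$ for an atomic $\A$ throughout; nothing below is affected.

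Now suppose, towards a contradiction, that $\V_k$ is an elementary class for every $k<\omega$, say $\V_k={\sf Mod}(T_k)$ for a first-order theory $T_k$ in the relational signature of $n$-dimensional atom structures. Then $\bigcap_{k<\omega}\V_k={\sf Mod}\bigl(\bigcup_{k<\omega}T_k\bigr)$ is elementary as well, so by the displayed identity ${\sf SRCA}_n$ would be elementary --- contradicting the known fact, recalled in the Introduction, that ${\sf SRCA}_n$ is not elementary. Hence $\V_k$ fails to be elementary for at least one finite $k$, which is the assertion of the theorem. The sharper claim flagged just before the theorem --- that $\V_j$ is non-elementary for \emph{all} $j$ past some finite threshold --- comes out of the same argument once one notes that $(\V_k)_{k<\omega}$ is a \emph{descending} chain: $\bold S\Nr_n\CA_{n+k+1}\subseteq\bold S\Nr_n\CA_{n+k}$ (a standard composition property of neat reducts) and $\Str$ is monotone, so $\V_{k+1}\subseteq\V_k$; if the set $J=\{j<\omega:\V_j\text{ is elementary}\}$ were infinite, hence cofinal in $\omega$, then $\bigcap_{j\in J}\V_j=\bigcap_{k<\omega}\V_k={\sf SRCA}_n$ would be elementary, the same contradiction --- so $J$ is finite and any $k$ exceeding all elements of $J$ works.

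The real work hides in the ingredient I am quoting as known, the non-elementarity of ${\sf SRCA}_n$, established by a Monk-- / Hirsch--Hodkinson-style construction: atom structures built from a sequence of graphs of unbounded chromatic number, arranged so that each resulting algebra is strongly representable while a suitable non-principal ultraproduct is not. That construction is not reproduced here, and it is the only genuine obstacle --- everything else is bookkeeping with unions of first-order theories. I would, however, flag in a remark that this argument is essentially non-constructive: it yields \emph{some} finite $k$ with no explicit bound, in contrast with the effective value $t(n)=n(n+1)/2+1$ isolated for non-atom-canonicity in Theorem~\ref{can}; pinning down the least such $k$ --- equivalently, how far the Hirsch--Hodkinson witnessing sequence can be squeezed into a low-dimensional dilation --- is exactly what this soft argument leaves unresolved.
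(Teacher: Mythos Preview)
Your proof is correct and follows essentially the same route as the paper's. Both arguments rest on the identical two ingredients: the Neat Embedding identity $\RCA_n=\bigcap_{k<\omega}\bold S\Nr_n\CA_{n+k}$ and the Hirsch--Hodkinson non-elementarity of ${\sf SRCA}_n$. The paper unpacks the latter concretely---taking the witnessing sequence $(\A_i)$ with strongly representable $\A_i$ but non--strongly-representable ultraproduct $\A$, then observing that $\Cm\At\A$ must fall out of $\bold S\Nr_n\CA_l$ at some finite stage $l$---while you package the same step abstractly as ``an intersection of elementary classes is elementary''; these are two phrasings of the same compactness-style observation, and your cofinality argument for the eventual-failure claim is exactly the descending-chain monotonicity the paper uses implicitly.
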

\begin{proof}
 We show that ${\sf Str}(\bold S\Nr_n\CA_{m})$ is not elementary for some finite $m\geq n+2$. 
Let $(\A_i: i\in \omega)$ be a sequence of (strongly) representable $\CA_n$s with $\Cm\At\A_i=\A_i$
and $\A=\Pi_{i/U}\A_i$ is not strongly representable with respect to any non-principal ultrafilter $U$ on $\omega$.
Such algebras exist \cite{HHbook2}. Hence $\Cm\At\A\notin \bold S\Nr_n\CA_{\omega}=\bigcap_{i\in \omega}\bold S\Nr_n\CA_{n+i}$, 
so $\Cm\At\A\notin \bold S\Nr_n\CA_{l}$ for all $l>m$, for some $m\in \omega$, $m\geq n+2$. 
But for each such $l$, $\A_i\in \bold S\Nr_n\CA_l(\subseteq {\sf RCA}_n)$, 
so $(\A_i:i\in \omega)$ is a sequence of algebras such that $\Cm\At(\A_i)\in \bold S\Nr_n\CA_{l}$ $(i\in I)$, but 
$\Cm(\At(\Pi_{i/U}\A_i))=\Cm\At(\A)\notin \bold S\Nr_n\CA_l$, for all $l\geq m$.
\end{proof}

\section{An application on $\sf OTT$s, Vaught's Theorem for $\sf TopL_{\alpha}$}

\begin{definition} Let $\R$ be an atomic  relation algebra.  An {$n$--dimensional basic matrix}, or simply a matrix  
on $\R$, is a map $f: {}^2n\to \At\R$ satsfying the 
following two consistency 
conditions $f(x, x)\leq \Id$ and $f(x, y)\leq f(x, z); f(z, y)$ for all $x, y, z<n$. For any $f, g$ basic matrices
and $x, y<m$ we write $f\equiv_{xy}g$ if for all $w, z\in m\setminus\set {x, y}$ we have $f(w, z)=g(w, z)$.
We may write $f\equiv_x g$ instead of $f\equiv_{xx}g$.  
\end{definition}
\begin{definition}\label{b}
An {\it $n$--dimensional cylindric basis} for an atomic relaton algebra 
$\R$ is a set $\cal M$ of $n$--dimensional matrices on $\R$ with the following properties:
\begin{itemize}
\item If $a, b, c\in \At\R$ and $a\leq b;c$, then there is an $f\in {\cal M}$ with $f(0, 1)=a, f(0, 2)=b$ and $f(2, 1)=c$
\item For all $f,g\in {\cal M}$ and $x,y<n$, with $f\equiv_{xy}g$, there is $h\in {\cal M}$ such that
$f\equiv_xh\equiv_yg$. 
\end{itemize}
\end{definition}
For the next lemma, we refer the reader to \cite[Definition 12.11]{HHbook} 
for the definition of of hyperbasis for relation algebras as well as to 
\cite[Chapter 13, Definitions 13.4, 13.6]{HHbook} for the notions 
of $n$--flat and $n$--square representations for  relation algebras ($n>2$) to be generalized below to cylindric algebras, cf. Definition \ref{cl}.
For a relation algebra $\R$, recall that $\R^+$ denotes its canonical extension.

\begin{lemma}\label{i} Let $\R$ be  a relation algebra and $3<n<\omega$.  Then the following hold:
\begin{enumerate}
\item $\R^+$ has an $n$--dimensional infinite basis $\iff\ \R$ has an infinite $n$--square representation.

\item $\R^+$ has an $n$--dimensional infinite hyperbasis $\iff\ \R$ has an infinite $n$--flat representation.
\end{enumerate}
\end{lemma}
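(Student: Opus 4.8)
The plan is to adapt the argument of \cite[Theorems 13.45 and 13.46]{HHbook}, proving the two biconditionals in tandem: item (2) is item (1) with \emph{matrix} replaced by \emph{hypermatrix} and \emph{$n$--square} by \emph{$n$--flat}, so I would carry the two constructions simultaneously, pointing out where the hyperlabels force extra bookkeeping. In each item there are two directions --- extracting a (hyper)basis from a representation, and building a representation from a (hyper)basis --- and I would treat them in that order.

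For the direction $(\Leftarrow)$ of (1): suppose $M$ is an infinite $n$--square representation of $\R$. Since $\R^+$ is atomic and completely additive, each pair $(a,b)\in{}^2M$ belongs to a unique atom of $\R^+$, which I denote $M[a,b]$. For each $n$--clique $\bar a=(a_0,\dots,a_{n-1})$ of $M$ put $f_{\bar a}(i,j)=M[a_i,a_j]$. The two matrix conditions of Definition~\ref{b} hold pointwise: $f_{\bar a}(x,x)\leq\Id$ and $f_{\bar a}(x,y)\leq f_{\bar a}(x,z);f_{\bar a}(z,y)$, because in any representation identity sits on the diagonal and composition obeys the triangle law. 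Let $\mathcal M$ be the set of all such $f_{\bar a}$; it is infinite because $M$ is. The first cylindric--basis clause (realization of $a\leq b;c$) holds because $\R$ is representable, so such atoms are witnessed by a triangle of points, which extends to an $n$--clique by squareness; the amalgamation clause ``$f\equiv_{xy}g\Rightarrow\exists h\,(f\equiv_x h\equiv_y g)$'' is exactly where $n$--squareness of $M$ is used --- a required cylindrifier witness must be found on an $n$--clique, and that clique yields the amalgamating matrix $h$. Hence $\mathcal M$ is an $n$--dimensional infinite basis for $\R^+$.

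For the direction $(\Rightarrow)$ of (1): given an infinite $n$--dimensional basis $\mathcal M$ for $\R^+$, build an $n$--square representation by a step--by--step construction over $\omega$ stages, maintaining at each stage a finite partial structure all of whose substructures of size $\leq n$ are (copies of) matrices from $\mathcal M$. On a fair schedule one processes countably many tasks: realizing a required atom $a\leq b;c$ via the first basis clause, and supplying a cylindrifier witness for an already--present edge via the amalgamation clause, which splices in a fresh node carrying a new matrix. Always introducing fresh nodes keeps the limit infinite, and the amalgamation clause guarantees that in the limit every cylindrifier requirement is met on an $n$--clique, i.e. the structure is $n$--square in the sense of \cite[Definition 13.4]{HHbook}; reading off atoms of $\R^+$ gives in particular a representation of $\R\subseteq\R^+$. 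This is the desired infinite $n$--square representation.

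For item (2) one replaces matrices by $n$--dimensional hypermatrices (matrices carrying a consistent hyperlabelling) and squareness by flatness, which strengthens squareness by the commutativity--of--cylindrifiers condition. In $(\Leftarrow)$ one reads hyperlabels off the $n$--flat representation exactly as the matrix entries were read off, flatness being precisely what verifies the hyperbasis amalgamation condition of \cite[Definition 12.11]{HHbook}; in $(\Rightarrow)$ the step--by--step build additionally processes amalgamation moves on hyperedges using the hyperbasis amalgamation clause, and flatness of the limit is exactly the coherence of the hyperlabels. The main obstacle in both items is the verification of the amalgamation clauses --- and, for (2), keeping the hyperlabel bookkeeping consistent when two finite pieces are glued, so that their hyperlabels can be reconciled; this is the content of \cite[Propositions 12.25 and 12.27]{HHbook} transported to the present setting, and once that machinery is in place the remaining checks are routine.
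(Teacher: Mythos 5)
Your overall route is the one the paper itself relies on---Lemma~\ref{i} is in fact discharged in the text by a bare citation of \cite[Theorem 13.46, $(1)\Leftrightarrow(5)$ and $(7)\Leftrightarrow(11)$]{HHbook}, and your two-directional scheme (read matrices off a representation; build a representation step-by-step from a basis) is exactly the Hirsch--Hodkinson argument, with the correct observation that item (2) is item (1) plus hyperlabel bookkeeping and the commutativity condition.

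There is, however, a genuine gap in your $(\Leftarrow)$ direction, i.e.\ in passing from an infinite $n$--square representation $M$ of $\R$ to an $n$--dimensional basis for $\R^{+}$. The matrices of the basis take values in $\At\R^{+}$, and an atom of $\R^{+}$ is an \emph{ultrafilter} of $\R$. Reading $M[a,b]$ off an edge is fine (the set of $r\in\R$ holding at $(a,b)$ is an ultrafilter), but your verification of the two basis clauses of Definition~\ref{b} does not go through as stated: for the triangle clause you need, for \emph{every} triple of ultrafilters $a\leq b;c$ of $\R^{+}$, an actual triangle of points of $M$ whose edges realize precisely those ultrafilters, and for the amalgamation clause you need a single $n$--clique of $M$ realizing the amalgam of two matrices that merely happen to agree off $\{x,y\}$. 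An arbitrary $n$--square representation of $\R$ need not realize a given non-principal ultrafilter on any edge at all (this is exactly the gap between representability and complete representability exploited elsewhere in the paper, e.g.\ Theorem~\ref{bsl}), so ``$\R$ is representable, hence such atoms are witnessed by a triangle of points'' is false in general. The missing idea is to first replace $M$ by an $\omega$--saturated elementary extension of (a first-order rendering of) the representation: $n$--squareness is preserved, and saturation guarantees that every ultrafilter of $\R$, and every consistent configuration of ultrafilters demanded by the basis clauses, is realized. This is precisely the role of \cite[Proposition 13.17]{HHbook} in the proof of \cite[Theorem 13.46]{HHbook}, and the same repair is needed for the hyperbasis/flat case in item (2). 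With that insertion your $(\Leftarrow)$ direction is sound; your $(\Rightarrow)$ direction (the step-by-step construction from the basis, seeding with the triangle clause to get faithfulness and using the amalgamation clause for cylindrifier witnesses on cliques) is the standard argument and is fine as a sketch.
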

\begin{proof} \cite[Theorem 13.46, the equivalence $(1)\iff (5)$ for basis, and the equivalence $(7)\iff (11)$ for hyperbasis]{HHbook}.
\end{proof}

One can construct a $\CA_n$ in a natural way from an $n$--dimensional cylindric basis which can be viewed as an atom structure of a $\CA_n$
(like in \cite[Definition 12.17]{HHbook} addressing hyperbasis).
For an atomic  relation algebra $\R$ and $l>3$, we denote by ${\sf Mat}_n(\At\R)$ the set of all $n$--dimensional basic matrices on $\R$.
${\sf Mat}_n(\At\R)$ is not always an $n$--dimensional cylindric basis, but sometimes it is,
as will be the case described next. On the other 
hand, ${\sf Mat}_3(\At\R)$ is always a  $3$--dimensional cylindric basis; a result of Maddux's, so that 
$\Cm{\sf Mat}_3(\At\R)\in \CA_3$. 
The following definition to be used in the sequel is taken from \cite{ANT}:
\begin{definition}\label{strongblur}\cite[Definition 3.1]{ANT}
Let $\R$ be a relation algebra, with non--identity atoms $I$ and $2<n<\omega$. Assume that  
$J\subseteq \wp(I)$ and $E\subseteq {}^3\omega$.
\begin{enumerate}
\item We say that $(J, E)$  is an {\it $n$--blur} for $\R$, if $J$ is a {\it complex $n$--blur} defined as follows:   
\begin{enumarab}
\item Each element of $J$ is non--empty,
\item $\bigcup J=I,$
\item $(\forall P\in I)(\forall W\in J)(I\subseteq P;W),$
\item $(\forall V_1,\ldots V_n, W_2,\ldots W_n\in J)(\exists T\in J)(\forall 2\leq i\leq n)
{\sf safe}(V_i,W_i,T)$, that is there is for $v\in V_i$, $w\in W_i$ and $t\in T$,
we have
$v;w\leq t,$ 
\item $(\forall P_2,\ldots P_n, Q_2,\ldots Q_n\in I)(\forall W\in J)W\cap P_2;Q_n\cap \ldots P_n;Q_n\neq \emptyset$.
\end{enumarab}
and the tenary relation $E$ is an {\it index blur} defined  as 
in item (ii) of \cite[Definition 3.1]{ANT}.

\item We say that $(J, E)$ is a {\it strong $n$--blur}, if it $(J, E)$ is an $n$--blur,  such that the complex 
$n$--blur  satisfies:
$$(\forall V_1,\ldots V_n, W_2,\ldots W_n\in J)(\forall T\in J)(\forall 2\leq i\leq n)
{\sf safe}(V_i,W_i,T).$$ 
\end{enumerate}
\end{definition}

The following theorem concisely summarizes 
the construction in \cite{ANT} and says some more easy facts.

\begin{theorem}\label{ANT} Let $2<n\leq l<\omega$.
Let $\R$ be a finite relation algebra with an $l$--blur $(J, E)$ where $J$ is the $l$--complex blur and $E$ is the index blur, as in definition \ref{strongblur}.  
\begin{enumerate}
\item Then for ${\cal R}={\Bb}(\R, J, E)$, with atom structure $\bf At$ obtained by blowing up and blurring $\R$ 
(with underlying set is denoted by $At$ on \cite[p.73]{ANT}), the set of $l$ by $l$--dimensional matrices 
${\bf At}_{ca}={\sf Mat}_l(\bf At)$ is an $l$--dimensional cylindric basis, that is a weakly representable atom structure \cite[Theorem 3.2]{ANT}. 
The algebra ${\Bb}_l(\R, J, E)$, with last notation as in \cite[Top of p. 78]{ANT} having atom structure ${\bf At}_{ra}$ is in $\RCA_l$. Furthermore, 
$\R$ embeds into $\Cm{\bf At}$ which embeds into $\Ra\Cm({\bf At}_{ca}).$ 

\item For very $n<l$, 
there is an $\R$ having a strong $l$--blur 
but no finite representations. Hence $\bf At$ obtained by blowing up and blurring $\R$ and 
the $\CA_n$ atom structure ${\bf At}_{ca}$ as in the previous item are 
not strongly representable.

\item Let $m<\omega$. If $\R$ is  as in the hypothesis, 
$(J, E)$ is a strong $l$--blur, and $\R$ has no $m$--dimensional hyperbasis, 
then $l<m$.

\item If $n=l<m<\omega$ and $\R$ as above has no infinite $m$--dimensional hyperbasis, then $\Cm\At{\Bb}_l(\R, J, E)\notin 
\bold S{\sf Nr}_n\CA_m$, and the latter class is not atom--canonical.

\item If $2<n\leq l<m\leq \omega$, and $(J, E)$ is a strong $m$--blur,  definition \ref{strongblur},
then $(J, E)$ is a strong $l$--blur,  ${\Bb}_l(\R, J, E)\cong \mathfrak{Nr}_l{\Bb}_m(\R, J, E)$ and 
${\cal R}\cong \Ra {\Bb}_l(\R, J, E)\cong \Ra{\Bb}_m(\R, J, E).$ 
\end{enumerate}
\end{theorem}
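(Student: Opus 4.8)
The plan is to handle the five items in turn, leaning on the blow--up--and--blur apparatus of \cite{ANT} for items (1) and (2), and deriving items (3)--(5) largely as bookkeeping about how the blur parameters $(J,E)$ behave under changing the dimension, the neat--reduct identity in (5) being the one genuinely technical point. For item (1), I would recall the construction ${\cal R}={\Bb}(\R,J,E)$ --- each non--identity atom of $\R$ is split into countably many copies, consistency of triples being governed by the index blur $E$ and the complex blur $J$ fixing which defect triangles become consistent --- and then cite \cite[Theorem 3.2]{ANT} for the two facts that (a) the resulting atom structure ${\bf At}$ is weakly representable, a representation of ${\cal R}$ being built step by step using $\bigcup J=I$ and the $P;W\supseteq I$ and safe clauses of Definition~\ref{strongblur}, and (b) ${\sf Mat}_l({\bf At})$ satisfies the two closure conditions of Definition~\ref{b}, hence is an $l$--dimensional cylindric basis, so $\Cm({\sf Mat}_l{\bf At})\in\CA_l$ and the subalgebra ${\Bb}_l(\R,J,E)$ generated by the matrices lies in $\RCA_l$. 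The embedding chain $\R\hookrightarrow\Cm{\bf At}\hookrightarrow\Ra\Cm({\bf At}_{ca})$ is then the map $a\mapsto\sum\{\text{copies of }a\}$ followed by the canonical $\Ra$--embedding sending a relation--algebra element to the set of matrices whose $(0,1)$--entry lies below it, exactly as in \cite{ANT}.

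For item (2), I would invoke the explicit finite relation algebras of \cite{ANT}: for each $n<l<\omega$ one carrying a strong $l$--blur yet admitting no finite representation. Since $\R$ is finite, $\R=\R^+$, and a representation of $\Cm{\bf At}$ would, by the interchangeability of the copies of each atom, collapse to a finite representation of $\R$ --- a contradiction; so ${\bf At}$ is not strongly representable, and since a representation of $\Cm{\bf At}_{ca}$ would induce one of its $\Ra$--reduct and hence of $\Cm{\bf At}$, the $\CA_n$ atom structure ${\bf At}_{ca}$ is not strongly representable either. For item (3) I would argue contrapositively: if $m\le l$, then a strong $l$--blur is a strong $m$--blur (condition (5) of Definition~\ref{strongblur} becomes an intersection of fewer terms and the uniform safe clause is inherited by padding short tuples), and a strong $m$--blur yields an $m$--dimensional hyperbasis of $\R$, the uniform safe clause being precisely what closes the set of $m$--dimensional hyper--matrices over ${\bf At}$ under the amalgamation move, cf. \cite[Definition 12.11]{HHbook}; hence $\R$ having no $m$--dimensional hyperbasis forces $m>l$.

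For item (4), put $l=n$. By item (1), ${\Bb}_n(\R,J,E)\in\RCA_n$ is atomic with $\At{\Bb}_n(\R,J,E)={\sf Mat}_n({\bf At})={\bf At}_{ca}$ and $\R\hookrightarrow\Cm{\bf At}\hookrightarrow\Ra\Cm{\bf At}_{ca}$. If $\Cm{\bf At}_{ca}\in\bold S{\sf Nr}_n\CA_m$, then, since $\Ra\mathfrak{Nr}_n\D=\Ra\D$ for $\D\in\CA_m$, its $\Ra$--reduct --- hence $\R$ --- would lie in $\bold S\Ra\CA_m$, so $\R$ would have an $m$--dimensional hyperbasis (infinite, after the usual inflation) by Lemma~\ref{i} and the correspondence between $\bold S\Ra\CA_m$ and the existence of $m$--dimensional hyperbases, contradicting the hypothesis; so $\Cm\At{\Bb}_n(\R,J,E)=\Cm{\bf At}_{ca}\notin\bold S{\sf Nr}_n\CA_m$. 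Since $\RCA_n=\bold S{\sf Nr}_n\CA_\omega\subseteq\bold S{\sf Nr}_n\CA_m$, this exhibits an atomic member of $\bold S{\sf Nr}_n\CA_m$ whose \de\ completion lies outside it, so $\bold S{\sf Nr}_n\CA_m$ is not atom--canonical.

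For item (5), the reduction ``strong $m$--blur $\Rightarrow$ strong $l$--blur for $l\le m$'' is as in item (3). The substantive point is the identity ${\Bb}_l(\R,J,E)\cong\mathfrak{Nr}_l{\Bb}_m(\R,J,E)$: I would show that restricting an $m$--dimensional matrix to its first $l$ coordinates induces this isomorphism --- surjectivity onto the neat $l$--reduct because an $m$--matrix with support contained in $l$ is determined by its $l$--part, and well--definedness together with the fact that \emph{every} $l$--matrix extends coherently to an $m$--matrix being exactly what the \emph{strong} uniform safe clause provides, in contrast with a plain $l$--blur. Finally ${\cal R}\cong\Ra{\Bb}_l(\R,J,E)$ holds because the strong blur forces every $2$--dimensional element of ${\Bb}_l$ to be a finite, possibly cofinite, join of copies of atoms of $\R$ --- exactly the elements of ${\cal R}$ --- so the embedding of item (1) is onto; and then $\Ra{\Bb}_l(\R,J,E)\cong\Ra\mathfrak{Nr}_l{\Bb}_m(\R,J,E)=\Ra{\Bb}_m(\R,J,E)$ since the $\Ra$--reduct sees only the $\{0,1\}$--part. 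I expect this last item --- specifically the upward amalgamation of $l$--matrices to $m$--matrices underlying ${\Bb}_l\cong\mathfrak{Nr}_l{\Bb}_m$ and ${\cal R}\cong\Ra{\Bb}_l$ --- to be the main obstacle; everything else is either a citation to \cite{ANT} or a routine check of the blur axioms under lowering the dimension parameter.
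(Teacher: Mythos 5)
Your proposal tracks the paper's own proof closely on items (1), (3), (4) and (5): item (1) is the citation of \cite[Theorem 3.2 and pp.~73--80]{ANT} for the blow-up-and-blur construction, the cylindric basis ${\sf Mat}_l({\bf At})$, and the embedding chain; item (3) reduces, as in the paper, to producing an $m$-dimensional hyperbasis from the assumption $m\leq l$ (the paper routes this through the atomic dilation ${\Bb}_n(\R,J,E)\cong \mathfrak{Nr}_n{\Bb}_l(\R,J,E)$ and a complete $l$-flat representation of $\R$, whereas you argue directly from the strong safe clause; both variants reach the same contradiction); item (4) is the same $\Ra$-reduct argument combined with Lemma~\ref{i}; and item (5) uses the same restriction map $x\mapsto\{M\upharpoonright l\times l:M\in x\}$ with surjectivity supplied by the strong blur condition, exactly as in \cite[p.~80]{ANT}.

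The one genuine gap is your argument in item (2) that ${\bf At}$ is not strongly representable. The Maddux algebras $\mathfrak{E}_k(2,3)$ used here forbid monochromatic triangles, so by Ramsey's theorem they admit representations only on \emph{finite} bases; the hypothesis in the statement should be read as ``no \emph{infinite} representations.'' Your deduction runs in the wrong direction: a representation of $\Cm{\bf At}$ cannot ``collapse to a finite representation of $\R$.'' On the contrary, since every split atom of $\R$ has infinitely many pairwise disjoint nonzero copies in $\Cm{\bf At}$, any representation of $\Cm{\bf At}$ must live on an infinite base, and restricting it along the embedding $\R\hookrightarrow\Cm{\bf At}$ from item (1) produces an \emph{infinite} representation of $\R$ --- and that is what is impossible. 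As written, your step neither exhibits a finite representation (so it does not contradict the hypothesis as you read it) nor an infinite one by a sound argument. The repair is short, but the direction of the contradiction must be reversed.
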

\begin{proof} Cf.  \cite[ For notation, cf. p.73, p.80, and for proofs cf. Lemmata 3.2, 4.2, 4.3]{ANT}. 
We start by an outline of (1).  Let $\R$ be as in the hypothesis. The idea is to blow up and blur $\R$ in place of the Maddux algebra 
$\mathfrak{E}_k(2, 3)$ dealt with in \cite[Lemma 5.1]{ANT}   (where $k<\omega$ is the number of non--identity atoms 
and it depends on $l$). 
Let $3<n\leq l$. We blow up and blurr $\R$ as in the hypothesis. $\R$ is blown up by splitting all of the atoms each to infinitely many.
$\R$ is blurred by using a finite set of blurs (or colours) $J$. This can be expressed by the product ${\bf At}=\omega\times \At \R\times J$,
which will define an infinite atom structure of a new
relation algebra. (One can view such a product as a ternary matrix with $\omega$ rows, and for each fixed $n\in \omega$,  we have the rectangle
$\At \R\times J$.)
Then two partitions are defined on $\bf At$, call them $P_1$ and $P_2$.
Composition is re-defined on this new infinite atom structure; it is induced by the composition in $\R$, and a ternary relation $E$
on $\omega$, that `synchronizes' which three rectangles sitting on the $i,j,k$ $E$--related rows compose like the original algebra $\R$.
This relation is definable in the first order structure $(\omega, <)$.
The first partition $P_1$ is used to show that $\R$ embeds in the complex algebra of this new atom structure, namely $\Cm \bf At$, 
The second partition $P_2$ divides $\bf At$ into {\it finitely many (infinite) rectangles}, each with base $W\in J$,
and the term algebra denoted in \cite{ANT} by ${\Bb}(\R, J, E)$ over $\bf At$, consists of the sets that intersect co--finitely with every member of this partition.
On the level of the term algebra $\R$ is blurred, so that the embedding of the small algebra into
the complex algebra via taking infinite joins, do not exist in the term algebra for only finite and co--finite joins exist
in the term algebra. 
The algebra ${\Bb}(\R, J, E)$ is representable using the finite number of blurs. These correspond to non--principal ultrafilters
in the Boolean reduct, which are necessary to
represent this term algebra, for the principal ultrafilter alone would give a complete representation,
hence a representation of the complex algebra and this is impossible.
Thereby, in particular, as stated in theorem \ref{ANT} an atom structure that is weakly representable but not strongly representable is obtained.
Because $(J, E)$ is a complex set of $l$--blurs, this atom structure has an $l$--dimensional cylindric basis, 
namely, ${\bf At}_{ca}={\sf Mat}_l(\bf At)$. The resulting $l$--dimensional cylindric term algebra $\Tm{\sf Mat}_l(\bf At)$, 
and an algebra $\C$ having tatom structure ${\bf At}_{ca}$ denoted in \cite{ANT} by 
${\Bb}_l(\R, J, E)$, such that $\Tm{\sf Mat}_l({\bf At})\subseteq \C\ \subseteq \Cm{\sf Mat}_l(\bf At)$ 
is shown to be  representable.

For (2):  The Maddux relation algebra $\mathfrak{E}_k(2,3)$ 
Like in \cite[Lemma 5.1]{ANT},  one take $l\geq 2n-1$, $k\geq (2n-1)l$, $k\in \omega$, and then take the finite
integral relation algebra ${\mathfrak E}_k(2, 3)$ 
where $k$ is the number of non-identity atoms in
${\mathfrak E}_k(2,3)$. 
with $k$ depending on $l$ as in \cite[Lemma 5.1]{ANT} is the required $\R$ in (2).

We prove (3). Let $(J, E)$ be the strong $l$--blur of $\R$. Assume  for contradiction that $m\leq l$. Then we get by \cite[item (3), p.80]{ANT},  
that  $\A={\sf Bb}_n(\R, J, E)\cong \mathfrak{Nr}_n{\Bb}_l(\R, J, E)$.  But the cylindric $l$--dimensional algebra ${\Bb}_l(\R, J, E)$ is atomic,  having atom structure  
${\sf Mat}_l \At({\Bb}(\R, J, E))$, so $\A$ has an atomic $l$--dilation.
Hence $\A=\Nr_n\D$ where $\D\in \CA_l$ is atomic.
But $\R\subseteq_c \Ra\Nr_n\D\subseteq_c \Ra\D$. 
Hence $\R$ has a complete $l$--flat representation, 
hence a complete $m$--flat representation, because $m<l$ and $l\in \omega$. 
This is a contradiction.  We prove (4). Assume that $\R$ is as in the hypothesis.
Take $\B={\Bb}_n(\R, J, E)$. Then by the above $\B\in \RCA_n$. We claim that $\C=\Cm\At\B\notin \bold S{\sf Nr}_n\CA_{m}$.
To see why, suppose for contradiction that $\C\subseteq \mathfrak{Nr}_n\D$,
where $\D$ is atomic, with $\D\in \CA_{m}$.  Then $\C$ has a (necessarily infinite $m$--flat representation), hence 
$\Ra\C$ has an infinite $m$--flat representation as an $\RA$.  But $\R$ embeds into $\Cm\At({\Bb}(\R, J, E)$) which, in turn, 
embeds  into $\Ra\C$, so $\R$ has an infinite $m$--flat representation. By lemma \ref{i}, 
$\R$ has a $m$--dimensional infinite hyperbases which
contradicts the hypothesis.    

Now we prove (the last) item (5). For $2<n\leq l<m<\omega.$
If the $m$--blur happens to be {\it strong}, in the sense of definition \ref{strongblur} and $n\leq l<m$
then we get by \cite[item (3) pp. 80]{ANT},  that ${\Bb}_l(\R, J, E)\cong \mathfrak{Nr}_l{\Bb}_m(\R, J, E)$.
This is proved by defining  an embedding 
$h:\Rd_l{\Bb}_m(\R, J, E)\to {\Bb}_l(\R, J, E)$ 
via  $x\mapsto \{M\upharpoonright l\times l: M\in x\}$ and showing that  
$h\upharpoonright  \mathfrak{Nr}_l{\Bb}_m(\R, J, E)$ 
is an isomorphism onto ${\Bb}_l(\R, J, E)$ \cite[p.80]{ANT}. 
Surjectiveness uses the condition $(J5)_l$.  The resulting $l$--dimensional cylindric term algebra $\Tm{\sf Mat}_l(\bf At)$, 
and an algebra $\C$ having tatom structure ${\bf At}_{ca}$ denoted in \cite{ANT} by 
$\Bb_l(\R, J, E)$, such that $\Tm{\sf Mat}_l({\bf At})\subseteq \C\ \subseteq \Cm{\sf Mat}_l(\bf At)$ 
is shown to be  representable. The complex algebra $\Cm{\sf Mat}_l(\bf At)=\Cm \At\C$
is outside in $\bold S{\sf Nr}_n\CA_{m}$, because $\R$ 
embeds into $\Cm\bf At$ which embeds into $\Ra\Cm{\sf Mat}_l(\bf At)$, so if 
$\Cm{\sf Mat}_l({\bf At})\in \bold S{\sf Nr}_n\CA_{m}$, 
then $\R\in {\sf Ra}\bold S{\sf Nr}_n\CA_m\subseteq \bold S{\sf Ra}\CA_m$
which is contrary to assumption.
\end{proof}

Fix $2<n\leq l<m\leq \omega$.  The statement $\Psi(l, m)$ is: 

{\it There is an atomic, countable, topological and complete $L_n$ theory $T$, such that the type $\Gamma$ consisting of co--atoms is realizable 
in every $m$-- square model, but any formula isolating this type has
to contain more than $l$ variables.}  

By an $m$--square model $\Mo$ of $T$ we understand an $m$--square representation of the algebra $\Fm_T$ with base $\Mo$.
Let ${\sf VT}(l, m))=\neg \Psi(l, m)$, short for {\bf Vaught's Theorem holds `at the parameters $l$ and $m$'} where
by definition, we stipulate that ${\sf VT}(\omega, \omega)$ is just Vaught's Theorem for $L_{\omega, \omega}$: Countable 
atomic theories  have countable atomic models.
For $2<n\leq l<m\leq \omega$  and $l=m=\omega$,  we investigate the likelihood and plausability of the following 
statement which we abbreviate by (**):
$\VT(l, m)\iff l=m=\omega.$
In the next Theorem several conditions are given implying $\Psi(l, m)$ for various values of $l$ and $m$.
$\Psi(l, m)_f$ is the formula obtained from $\Psi(l, m)$ be replacing square by flat.
In the first item by no infinite 
$\omega$--dimensional hyperbasis (basis), we understand no representation on an 
infinite base.  By $\omega$--flat (square) representation, we mean an ordinary representation, and by complete $\omega$--flat (square) representation, we mean 
a complete representation. 
\begin{theorem} \label{main} Let $2<n\leq l<m\leq \omega$. Then
every item implies the immediately following one. 
\begin{enumerate}
\item There exists a finite relation algebra $\R$ algebra with a strong $l$--blur and no infinite $m$--dimensional hyperbasis, 
\item There is a countable atomic $\A\in \Nr_n\CA_l\cap \sf RCA_n$ such that $\Cm\At\A$ does not have an 
$m$--flat representation,
\item There is a countable atomic $\A\in \Nr_n\CA_l\cap \sf RCA_n$ such that $\Cm\At\A\notin \bold S\Nr_n\CA_m$, 
\item There is a countable atomic $\A\in \Nr_n\CA_l\cap \sf RCA_n$ such that $\A$ has no complete infinitry $m$--flat  representation,
\item There is a countable atomic $\A\in \Nr_n\CA_l\cap \sf RCA_n$ such that $\A\notin \bold S_c\Nr_n\CA_m$,
\item $\Psi(l, m)_f$ is true,
\item $\Psi(l', m')_f$ is true for any $l'\leq l$ and $m'\geq m$.
\end{enumerate}
The same implications hold upon replacing infinite $m$--dimensional hyperbasis by $m$--dimensional relational basis (not necessarily infinite), 
$m$--flat by $m$--square and $\bold S\Nr_n\CA_m$ by $\bold S\Nr_n{\sf D}_m$. Furthermore, in the new chain of implications every item implies the corresponding item in 
Theorem \ref{main}. In particular, 
$\Psi(l, m)\implies \Psi(l, m)_f$.  
\end{theorem}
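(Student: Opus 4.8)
The plan is to prove the chain of implications $(1)\Rightarrow(2)\Rightarrow\cdots\Rightarrow(7)$ by assembling results already available in the excerpt, principally Theorem~\ref{ANT}, Lemma~\ref{i}, Lemma~\ref{flat}, and Lemma~\ref{n}, together with the standard algebraic--logic dictionary that turns an atomic $\CA_n$ into a countable atomic theory $T$ with $\A\cong\Fm_T$. The two ``parallel tracks'' (flat/square, and the respective hyperbasis/relational-basis hypotheses) are handled by the same arguments, the only change being which clause of Lemma~\ref{i} and Lemma~\ref{flat} one invokes; I will write the flat version in full and remark that replacing ``flat'' by ``square'', ``hyperbasis'' by ``relational basis'' and $\bold S\Nr_n\CA_m$ by $\bold S\Nr_n{\sf D}_m$ goes through verbatim, since Lemma~\ref{i}(1) and the ``square'' half of Lemma~\ref{flat} are exactly the square analogues of the flat statements. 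The final sentence ``every item implies the corresponding item in Theorem~\ref{main}'' is immediate from the fact that an $m$--flat representation is in particular $m$--square (flatness is a strengthening), hence $\Psi(l,m)$ implies $\Psi(l,m)_f$; I will note this at the end.

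\textbf{Key steps.} $(1)\Rightarrow(2)$: given a finite $\R$ with a strong $l$--blur $(J,E)$ and no infinite $m$--dimensional hyperbasis, set $\A={\sf Bb}_n(\R,J,E)$. By Theorem~\ref{ANT}(1), $\A\in\RCA_n$ is atomic and countable; by Theorem~\ref{ANT}(5) (applied with the strong $l$--blur, which is also a strong $n'$--blur for $n\le l$), $\A\cong\mathfrak{Nr}_n{\sf Bb}_l(\R,J,E)$, so $\A\in\Nr_n\CA_l$; and by Theorem~\ref{ANT}(4), since $\R$ has no infinite $m$--dimensional hyperbasis, $\Cm\At\A\notin\bold S\Nr_n\CA_m$. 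Now $\Cm\At\A\notin\bold S\Nr_n\CA_m$ already gives (3); to get the a~priori stronger (2) one uses Lemma~\ref{flat}: if $\Cm\At\A$ had an $m$--flat representation it would lie in $\bold S\Nr_n{\sf D}_m^{\sf top}$ and hence (forgetting the topology, which here is discrete) in $\bold S\Nr_n\CA_m$, a contradiction. $(2)\Rightarrow(3)$ is then the same observation run once more; $(3)\Rightarrow(4)$: if $\A$ had a complete infinitary $m$--flat representation, then by the second part of Lemma~\ref{flat} $\A\in\bold S_c\Nr_n({\sf D}_m^{\sf top}\cap\bf At)\subseteq\bold S\Nr_n\CA_m$, and since $\A\subseteq_c\Cm\At\A$ and $\bold S\Nr_n\CA_m$ is closed under complete subalgebras (indeed under $\bold S$) this would force $\Cm\At\A\in\bold S\Nr_n\CA_m$ --- wait, more carefully: a complete representation of $\A$ extends to one of $\Cm\At\A$ (this is the standard fact that $\Cm\At\A$ embeds completely into the complex algebra of the representation's atom structure), contradicting (3). $(4)\Rightarrow(5)$: immediate from Lemma~\ref{flat}, since $\A\in\bold S_c\Nr_n\CA_m$ would yield, via the atomic $m$--dilation, a complete $m$--flat (even $m$--square $\bf At$-)representation. $(5)\Rightarrow(6)$: pass to $T$ with $\Fm_T\cong\A$; $\A$ atomic and countable gives $T$ countable, atomic, complete; $\Gamma=\{$co-atoms$\}$ is realized in every $m$--square model because such a model is an $m$--square representation of $\A$ and $\A\notin\bold S_c\Nr_n\CA_m$ means no such representation omits $\Gamma$ (an omitting representation would be complete, putting $\A\in\bold S_c\Nr_n\CA_m$ by Lemma~\ref{flat}); and no $L_n$ formula on $\le l$ variables isolates $\Gamma$ because $\A\in\Nr_n\CA_l$ forces every principal type to be generated inside the $l$--neat reduct, yet $\prod\Gamma=0$ with $\Gamma$ the co-atoms means $\Gamma$ is non-principal already in $\A$ --- and the $\Nr_n\CA_l$ membership rules out an $l$--variable isolating formula by the usual correspondence between $l$--variable definability and the $l$--dilation. $(6)\Rightarrow(7)$ is trivial: a theory witnessing $\Psi(l,m)_f$ witnesses $\Psi(l',m')_f$ for $l'\le l$, $m'\ge m$, since ``more than $l$ variables'' implies ``more than $l'$'' and ``every $m$--square model'' is contained in ``every $m'$--square model'' as $m'$--squareness is weaker for larger $m'$ --- here one uses that $k$--square implies $k'$--square for $k\ge k'$, the degrees-of-squareness remark from the introduction. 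Finally, the square--track is identical with Lemma~\ref{i}(1) and the square clause of Lemma~\ref{flat} in place of the flat ones, and $\Psi(l,m)\Rightarrow\Psi(l,m)_f$ since flat $\Rightarrow$ square for representations.

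\textbf{Main obstacle.} The delicate point is step $(5)\Rightarrow(6)$: translating the algebraic statement $\A\notin\bold S_c\Nr_n\CA_m$ with $\A\in\Nr_n\CA_l\cap\RCA_n$ into the precise metalogical assertion about $m$--square models and $l$--variable isolating formulas, so that both the ``realized in every $m$--square model'' clause and the ``no $l$--variable isolating formula'' clause come out correctly. The first clause rests on the equivalence (a special case of Lemma~\ref{flat}, second part) between an $m$--square representation of $\A$ omitting the co-atom type and $\A$ being in $\bold S_c\Nr_n({\sf D}_m^{\sf top}\cap\bf At)$, which one must state carefully for the topological-discrete case; the second clause rests on unwinding what it means for a type to be isolated by a formula with $l$ variables in the $\Nr_n\CA_l$ setting, i.e.\ that $\A=\mathfrak{Nr}_n\D$ with $\D\in\CA_l$ lets the isolating formula ``live'' in $\D$, and this is where $\A\in\Nr_n\CA_l$ genuinely controls the number of variables. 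I expect to spend most of the write-up making this dictionary precise and invoking \cite[Theorem 3.2.8]{Sayed} or the analogous result there for the exact form of the correspondence; the remaining implications are short chases through Lemma~\ref{flat} and Lemma~\ref{n}.
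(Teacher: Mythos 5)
Your proposal follows essentially the same route as the paper's proof: Theorem \ref{ANT} supplies the witness $\A={\Bb}_n(\R,J,E)\cong\Nrr_n{\Bb}_l(\R,J,E)$ for $(1)\Rightarrow(2)$ and $(3)$, the representation--dilation equivalences drive $(2)$ through $(5)$, and the $\Fm_T$ dictionary together with the fact that $\A\in\Nr_n\CA_l$ forces $l$-variable formulas to denote elements of the atomic algebra $\A$ gives $(5)\Rightarrow(6)$ exactly as in the paper. The one step to tighten is $(4)\Rightarrow(5)$ in the flat track: Lemma \ref{flat} as stated covers only the square case, and for flatness the paper instead extracts an $m$-dimensional hyperbasis from the atomic $m$-dilation and builds the complete infinitary $m$-flat representation from it, so the fact you need is true but is not literally Lemma \ref{flat}.
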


\begin{proof}  Let $\R$ be as in the hypothesis with strong $l$--blur $(J, E)$. 
The idea is to `blow up and blur' $\R$ in place of the Maddux algebra 
$\mathfrak{E}_k(2, 3)$ dealt with in \cite[Lemma 5.1]{ANT}, where $k<\omega$ is the number of non--identity atoms 
and  $l$ depends recursively on $k$. 
Let $2<n\leq l<\omega$.  The relation algebra $\R$ is blown up by splitting all of the atoms each to infinitely many.
$\R$ is blurred by using a finite set of blurs (or colours) $J$. 
Then two partitions are defined on $\bf At$, call them $P_1$ and $P_2$.
Composition is re-defined on this new infinite atom structure; it is induced by the composition in $\R$, and a ternary relation $E$
on $\omega$, that `synchronizes' which three rectangles sitting on the $i,j,k$ $E$--related rows compose like the original algebra $\R$.
(This relation is definable in the first order structure $(\omega, <)$ \cite{ANT}).
The first partition $P_1$ is used to show that $\R$ embeds in the complex algebra of this new atom structure, namely, $\Cm \bf At$. 
The second partition $P_2$ divides $\bf At$ into {\it finitely many (infinite) rectangles}, each with base $W\in J$,
and the term algebra denoted in \cite{ANT} by  ${\Bb}(\R, J, E)$ over $\bf At$ (where $(J, E)$ is the strong $l$--blur for $\R$ 
assumed to exist by hypothesis)  consists of the sets that intersect co--finitely with every member of this partition.
One proves that ${\Bb}(\R, J, E)$ with atom structure $\bf At$ is representable using the finite number of blurs in $J$. 
Because $(J, E)$ is a strong $l$--blur, then, by definition, it is a strong $j$--blur for all $n\leq j\leq l$, so the atom structure $\bf At$ has a $j$--dimensional cylindric basis for all $n\leq j\leq l$, 
namely, ${\sf Mat}_j(\bf At)$.
For all such $j$, there is an $\RCA_j$ denoted on \cite[Top of p. 78]{ANT} by ${\Bb}_j(\R, J, E)$ such 
that $\Tm{\sf Mat}_j({\bf At})\subseteq {\Bb}_j(\R, J, E)\subseteq \Cm{\sf Mat}_j(\bf At)$ 
and $\At{\Bb}_j(\R, J, E)$ is a weakly representable atom structure of dimension $j$. 
Now take $\A={\Bb} _n(\R, J, E)$. We claim that $\A$ endowed with the identity operators as modalties induces by a the discrete toplogy on it base as required.  
Since $\R$ has a strong $j$--blur $(J, E)$ for all $n\leq j\leq l$, then 
$\A\cong \mathfrak{Nr}_n{\Bb}_j(\R, J, E)$, with ${\Bb}_j(\R, J, E)$ expanded to a ${\sf TCA}_j$ 
the same way for all $n\leq j\leq l$ as proved in \cite[item (3) p. 80]{ANT} for 'cylindric educts. Identity operators on both sids 
are obviously preserved.  In particular, 
taking $j=l$, $\A\in {\sf TRCA}_n\cap \Nr_n{\sf CA}_l$. We show that $\Rd_{ca}\Cm\At\A$  
does not have an $m$--flat representation.  
Assume for contradicton that $\Cm\At\A$ does have an $m$--flat representation $\Mo$.
Then $\Mo$ is infinite of course.  Since $\R$ embeds into ${\Bb}(\R, J, E)$ which in turn embeds into $\mathfrak{Ra}\Cm\At\A$, then 
$\R$ has an $m$--flat representation with base $\Mo$. But since $\R$ is finite, $\R=\R^+$, and consequently $\R$ has an  
infinite $m$--dimensional hyperbasis.  This is contrary to our assumption and we are done.

$(2)\implies (3)$: 
Fix $2<n<m<\omega$. 
As above, let $\L(\A)^m$ denote the signature that contains
an $n$--ary predicate symbol for every $a\in A$.
We that the existence of $m$--flat representations, implies the existence of $m$--dilations.
Let $\Mo$ be an $m$--flat representation of $\A$. We show that $\A\subseteq \mathfrak{Nr}_n\D$, for some $\D\in \CA_m$,
and that $\A$ actually has an infinitary $m$--flat representation.
For $\phi\in \L(\A)^m$,
let $\phi^{\Mo}=\{\bar{a}\in {\sf C}^m(\Mo):\Mo\models_c \phi(\bar{a})\}$, where ${\sf C}^m(\Mo)$ is the $n$--Gaifman hypergraph.
Let $\D$ be the algebra with universe $\{\phi^{\Mo}: \phi\in \L(A)^m\}$ and with  cylindric
operations induced by the $n$-clique--guarded (flat) semantics. 
For $r\in \A$, and $\bar{x}\in {\sf C}^m(\Mo)$, we identify $r$ with the formula it defines in $\L(A)^m$, and 
we write $r(\bar{x})^{\Mo}\iff \Mo, \bar{x}\models_c r$.
Then certainly $\D$ is a subalgebra of the ${\sf Crs}_m$ (the class
of algebras whose units are arbitrary sets of $m$--ary sequences)
with domain $\wp({\sf C}^m(\Mo))$, so $\D\in {\sf Crs_m}$ with unit $1^{\D}={\sf C}^m(\Mo)$.
Since $\Mo$ is $m$--flat, then cylindrifiers in $\D$ commute, and so $\D\in \CA_m$.
Now define $\theta:\A\to \D$, via $r\mapsto r(\bar{x})^{\Mo}$. Then exactly like in the proof of \cite[Theorem 13.20]{HHbook},
$\theta$ is a neat embedding, that is, $\theta(\A)\subseteq \Nrr_n\D$.
It is straightforward to check that $\theta$ is a homomorphism.  We show that $\theta$ is injective.
Let $r\in A$ be non--zero. Then $\Mo$ is a relativized representation, so there is $\bar{a}\in \Mo$
with $r(\bar{a})$, hence $\bar{a}$ is a clique in $\Mo$,
and so $\Mo\models r(\bar{x})(\bar{a})$, and $\bar{a}\in \theta(r)$, proving the required.
$\Mo$ itself might not be  infinitary $m$--flat, but one can build an infinitary $m$--flat representation of $\A$, whose base is an $\omega$--saturated model
of the consistent first order theory, stipulating the existence of an $m$--flat representation \cite[Proposition 13.17, Theorem 13.46 items (6) and (7)]{HHbook}.
This idea (of using saturation) will be given in more detail in the last item.

$(3)\implies (4)$: A complete $m$--flat representation of (any) 
$\B\in \CA_n$ induces an $m$--flat representation of $\Cm\At\B$ which implies by Theorem \ref{flat} that $\Cm\At\B\in \bold S\Nr_n\CA_m$. 
To see why, assume that $\B$ has an $m$--flat  complete representable via $f:\B\to \D$, where $\D=\wp(V)$ and 
the base of the representation $\Mo=\bigcup_{s\in V} \rng(s)$ is $m$--flat. Let $\C=\Cm\At\B$.
For $c\in C$, let $c\downarrow=\{a\in \At\C: a\leq c\}=\{a\in \At\B: a\leq c\}$; the last equality holds because 
$\At\B=\At\C$. Define, representing $\C$,  
$g:\C\to \D$ by $g(c)=\sum_{x\in c\downarrow}f(x).$ The map $g$ is well defined because $\C$ is complete so arbitrary suprema exist in $\C$. 
Furthermore, it can be easily checked that $g$ is a homomorphism into $\wp(V)$ having base $\Mo$ (basically because by assumption $f$ is a homomorphism).

$(4)\implies (5)$: if $\A\in \bold S_c\Nr_n\CA_m$ then it has an $m$ flat  
complete representation. Essentially a completeness theorem, this is a `truncated version' of Henkin's
neat embedding theorem: Existence of atomic $m$--dimensional dilations $\implies$  
existence of infinitay complete $m$--flat representations.
One constructs an infinitary $m$--flat representation $M$ of $\A$
Suppose that  $\A\subseteq_c \mathfrak{Nr}_m\D$, and $\D$ is atomic. We first show that $\D$ has an $m$--dimensional hyperbasis, lifted from relation algebra the obvious way.
First, it is not hard to see that for every $n\leq l\leq m$, $\mathfrak{Nr}_l\D$ is atomic.
The set of non--atomic labels $\Lambda$ is the set $\bigcup_{k<m-1}\At\mathfrak{Nr}_k\D$.
Before proceeding we need a piece of notation that is somewhat techical. . Let $m$ be a finite ordinal $>0$. An $\sf s$ word is a finite string of substitutions $({\sf s}_i^j)$ $(i, j<m)$,
a $\sf c$ word is a finite string of cylindrifications $({\sf c}_i), i<m$;
an $\sf sc$ word $w$, is a finite string of both, namely, of substitutions and cylindrifications.
An $\sf sc$ word
induces a partial map $\hat{w}:m\to m$:
\begin{itemize}

\item $\hat{\epsilon}=Id,$

\item $\widehat{w_j^i}=\hat{w}\circ [i|j],$

\item $\widehat{w{\sf c}_i}= \hat{w}\upharpoonright(m\smallsetminus \{i\}).$
\end{itemize}
If $\bar a\in {}^{<m-1}m$, we write ${\sf s}_{\bar a}$, or
${\sf s}_{a_0\ldots a_{k-1}}$, where $k=|\bar a|$,
for an  arbitrary chosen $\sf sc$ word $w$
such that $\hat{w}=\bar a.$
Such a $w$  exists by \cite[Definition~5.23 ~Lemma 13.29]{HHbook}.
Resuming main stream proof, For each atom $a$ of $\D$, define a labelled  hypergraph $N_a$ as follows:
Let $\bar{b}\in {}^{\leq m}m$. Then if $|\bar{b}|=n$,  so that $\bar{b}$  has to get a label that is an atom of $\D$, one sets  $N_a(\bar{b})$ to be 
the unique $r\in \At\D+$ such that $a\leq {\sf s}_{\bar{b}}r$; notation here
is given as above.
If $n\neq |\bar{b}| <m-1$, $N_a(\bar{b})$ is the unique atom $r\in \mathfrak{Nr}_{|b|}\D$ such that $a\leq {\sf s}_{\bar{b}}r.$ Since
$\mathfrak{Nr}_{|b|}\D$ is atomic, this is well defined. Note that this label may be a non--atomic one; it 
might not be an atom of $\D$. But by definition it is a permitted label.
Now fix $\lambda\in \Lambda$. The rest of the labelling is defined by $N_a(\bar{b})=\lambda$.
Then $N_a$ as an $m$--dimensional
hypernetwork, for each 
such chosen $a$,  and $\{N_a: a\in \At\D^+\}$ is the required $m$--dimensional hyperbasis.
The rest of the proof consists of a fairly straightforward adaptation of the proof \cite[Proposition 13.37]{HHbook},
replacing edges by $n$--hyperedges.

$(5)\implies (6)$: By \cite[\S 4.3]{HMT2}, we can (and will) assume that $\A= \Fm_T$ for a countable, atomic theory $L_n$ theory $T$.  
Let $\Gamma$ be the $n$--type consisting of co--atoms of $T$. Then $\Gamma$ is realizable in every $m$--flat model, for if $\Mo$ is an $m$--flat model omitting 
$\Gamma$, then $\Mo$ would be the base of a complete $m$--flat  representation of $\A$, and so $\A\in \bold S_c\Nr_n\CA_m$ which is impossible.
Suppose for contradiction that $\phi$ is an $l$ witness, so that $T\models \phi\to \alpha$, for
all $\alpha\in \Gamma$, where recall that $\Gamma$ is the set of coatoms.
Then since $\A$ is simple, we can assume
without loss  that $\A$ is a set algebra with 
base $M$ say.
Let $\Mo=(M,R_i)_{i\in \omega}$  be the corresponding model (in a relational signature)
to this set algebra in the sense of \cite[\S 4.3]{HMT2}. Let $\phi^{\Mo}$ denote the set of all assignments satisfying $\phi$ in $\Mo$.
We have  $\Mo\models T$ and $\phi^{\Mo}\in \A$, because $\A\in \Nr_n\CA_{m-1}$.
But $T\models \exists x\phi$, hence $\phi^{\Mo}\neq 0,$
from which it follows that  $\phi^{\Mo}$ must intersect an atom $\alpha\in \A$ (recall that the latter is atomic).
Let $\psi$ be the formula, such that $\psi^{\Mo}=\alpha$. Then it cannot
be the case
that $T\models \phi\to \neg \psi$,
hence $\phi$ is not a  witness,
contradiction and we are done.

$(6)\implies (7)$: follows from the definitions.

For squareness the proofs are essentially the same undergoing the obvious modifications. In the first implication `infinite' in the hypothesis is not needed because any finite 
relation algebra having an infinite $m$--dimensional relational basis has a finite one, cf. \cite[Theorem 19.18]{HHbook}. 
On the other hand,  
there are finite relation algebras having infinite $m$--dimensional hyperbasis for $m\geq 5$ 
but has no finite ones \cite[Prop. 19.19]{HHbook}.
 
\end{proof}

We show that (4) and (5) are actually equivalent. 
If $\A$ has a complete infinitary $m$-flat representation $\implies \Cm\At\A\in \bold S_c\Nr_n\CA_m$.
One works in $L_{\infty, \omega}^m$ instead of first order logic.
Let $\Mo$ be the given representation of $\A$. In this case, the dilation $\D$ of $\A$ having again top element the Gaifman hypergraph ${\sf C}^m(\Mo)$, 
where $\Mo$ is the  complete infinitary $m$--flat representation of $\A$,
will now have  (the larger) universe $\{\phi^{\Mo}: \phi\in \L(A)^m_{\infty, \omega}\}$ with operations also induced by the $n$-clique--guarded semantics extended to
$L_{\infty, \omega}^m$.
Like before $\D$ will be a $\CA_m$, but this time, it will be {\it an atomic} one.  To prove atomicity, let $\phi^{\Mo}$ be a non--zero element in $\D$.
Choose $\bar{a}\in \phi^{\Mo}$, and consider the following infinitary conjunction (which we did not have before when working in $L_m$)
\footnote{There are set--theoretic subtleties involved here, that we prefer to ignore.}:
$\tau=\bigwedge \{\psi\in \L(A)_{\infty,\omega}^m: \Mo\models_C \psi(\bar{a})\}.$
Then $\tau\in \L(A)_{\infty,\omega}^m$, and $\tau^{\Mo}$ is an atom below $\phi^{\Mo}$. 
The neat embedding will be an atomic one, hence
it will be a complete neat embedding \cite[p. 411]{HHbook}.

\begin{corollary}\label{fl} For $2<n<\omega$ and $n\leq l<\omega$, $\Psi(n, t(n)$ and $\Psi(l, \omega)$ hold.
\end{corollary}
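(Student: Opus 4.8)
The plan is to derive Corollary~\ref{fl} by instantiating Theorem~\ref{main} at two particular pairs of parameters, using Theorem~\ref{can} for the first pair and Theorem~\ref{ANT} together with Theorem~\ref{main} for the second. Recall that in the chain of implications of Theorem~\ref{main}, item~(1) is the existence of a finite relation algebra with a strong $l$--blur and no infinite $m$--dimensional hyperbasis, and the last items (6), (7) are exactly $\Psi(l', m')_f$ (and, in the ``squareness'' version of the chain, $\Psi(l', m')$ itself). So it suffices in each case to exhibit the relevant combinatorial object, or to quote an already-established non-representability result that sits at an intermediate node of the chain.

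First I would handle $\Psi(n, t(n))$, where $t(n)=n(n+1)/2+1$. Here the key input is Theorem~\ref{can}: there is a countable atomic simple $\A\in {\sf TRCA}_n$ such that $\Rd_{ca}\Cm\At\A$ has no $t(n)$--square, \emph{a fortiori} no $t(n)$--flat, representation. In particular $\A$ is a countable atomic algebra in ${\sf TRCA}_n$ whose \de\ completion is not in $\bold S\Nr_n\CA_{t(n)}$ (by Theorem~\ref{flat}, lack of a $t(n)$--square representation gives $\Rd_{ca}\Cm\At\A\notin\bold S\Nr_n{\sf D}_{t(n)}\supseteq\bold S\Nr_n\CA_{t(n)}$). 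Moreover, in the proof of Theorem~\ref{can} the algebra $\A$ is built from a rainbow construction and one checks (as in part~3 of that proof, via the embedding $\Theta$ and the $\sf MCA$ formulas) that $\A\in\Nr_n\CA_l$ for $l=n$ — indeed $\A$ itself has dimension $n$, so trivially $\A\in\Nr_n\CA_n\cap{\sf RCA}_n$. Thus $\A$ witnesses item~(3) (equivalently the square-version item~(3$'$)) of Theorem~\ref{main} with $l=n$, $m=t(n)$, and feeding it down the chain $\,(3)\implies(4)\implies(5)\implies(6)=\Psi(n,t(n))$. Since $\A$ carries the discrete topology making it a topological algebra, the topological theory $T=\Fm_T$ one extracts is atomic, countable, topological, complete, and its type of co--atoms cannot be isolated by a $t(n)$--variable formula, which is precisely $\Psi(n,t(n))$.

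Second I would handle $\Psi(l,\omega)$ for each $l$ with $n\le l<\omega$. For this the natural source is Theorem~\ref{ANT}: take the Maddux relation algebra $\mathfrak E_k(2,3)$ with $k$ large enough relative to $l$ (as in \cite[Lemma 5.1]{ANT}, choosing $l\ge 2n-1$ and $k\ge(2n-1)l$), which has a strong $l$--blur $(J,E)$ but no infinite $m$--dimensional hyperbasis for any finite $m$, and in fact no infinite representation at all. This is exactly item~(1) of Theorem~\ref{main} with the given $l$ and with $m=\omega$ (reading ``no infinite $\omega$--dimensional hyperbasis'' as ``no representation on an infinite base,'' per the convention stated just before Theorem~\ref{main}). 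Running the chain of implications, with the discrete topology imposed on the base so that everything stays inside ${\sf TRCA}_n$ (as in the proof of Theorem~\ref{main}, where $\A={\Bb}_n(\R,J,E)$ is expanded by identity modalities and $\A\cong\mathfrak{Nr}_n{\Bb}_l(\R,J,E)$), we obtain at node~(6) that $\Psi(l,\omega)_f$ holds; and since $\mathfrak E_k(2,3)$ has no infinite representation whatsoever, the square version goes through verbatim and yields $\Psi(l,\omega)$ itself.

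The main obstacle I anticipate is bookkeeping rather than conceptual: making sure the algebra produced by Theorem~\ref{can} is genuinely a \emph{topological} $\CA_n$ (the discrete-topology remark in the proof of Theorem~\ref{can} settles this, since all the box operators collapse to the identity and the ${\sf TCA}_n$ axioms of Definition~\ref{topology} are trivially verified), and verifying that the ``no $t(n)$--square representation of $\Rd_{ca}\Cm\At\A$'' conclusion of Theorem~\ref{can} is precisely what node~(3) of Theorem~\ref{main} (in its square incarnation, $\bold S\Nr_n{\sf D}_m$) requires — this is where Theorem~\ref{flat} and Lemma~\ref{n} are used to translate between ``$\bold S\Nr_n\CA_m$'', ``$\bold S\Nr_n{\sf D}_m^{\sf top}$'', and the existence of $m$--square/flat representations. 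For the $\Psi(l,\omega)$ part, the only delicate point is checking that ``no infinite $m$--dimensional hyperbasis for all finite $m$'' plus ``no infinite representation'' really does feed through the chain at $m=\omega$; this follows because $\mathfrak E_k(2,3)$ is finite so $\R=\R^+$, and an infinite $\omega$--flat (i.e.\ ordinary) representation of $\Cm\At{\Bb}_n(\R,J,E)$ would restrict to an infinite representation of $\R$ via $\R\hookrightarrow\Cm\At({\Bb}(\R,J,E))\hookrightarrow\mathfrak{Ra}\Cm\At{\Bb}_n(\R,J,E)$, contradicting the choice of $\R$.
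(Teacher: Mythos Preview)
Your proposal is correct and follows essentially the same route as the paper's own proof: for $\Psi(n,t(n))$ you invoke Theorem~\ref{can} (which produces a countable atomic simple $\A\in{\sf TRCA}_n$ with $\Rd_{ca}\Cm\At\A$ lacking a $t(n)$--square representation) and feed it into the square version of the implication chain of Theorem~\ref{main} at $l=n$, $m=t(n)$; for $\Psi(l,\omega)$ you invoke the Andr\'eka--N\'emeti--Sayed Ahmed construction (Theorem~\ref{ANT}) applied to a Maddux algebra $\mathfrak E_k(2,3)$ with a strong $l$--blur and no infinite representation, which is exactly item~(1) of Theorem~\ref{main} at $m=\omega$. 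The paper does precisely this, only entering the chain for the second case at node~(2)/(3) (exhibiting the cylindric algebra $\C={\Bb}_n(\R,J,E)\in\Nr_n\CA_l\cap\RCA_n$ with $\Cm\At\C\notin\RCA_n$) rather than at node~(1) (the finite relation algebra $\R$ itself); these are the same construction viewed at different stages. Your extra care about the topological structure (discrete topology, identity interior operators) and about the equivalence at $m=\omega$ of $\Psi(l,\omega)$ and $\Psi(l,\omega)_f$ is well placed but not a departure from the paper's argument.
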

\begin{proof} The first case, follows from Theorem \ref{can} and  \ref{main} (by taking $l=n$ and $m=n+3)$. 
For the second case, 
it suffices by Theorem \ref{main} (by taking $m=\omega$) to find a countable algebra $\C\in \Nr_n\CA_l\cap \RCA_n$
such that $\Cm\At\C\notin \RCA_n$. This algebra is constructed in \cite{ANT}, cf. item (1) of Theorem \ref{main}.
\end{proof}

Reproving the main results in \cite{HH} and \cite{maddux} 
in a completely diferent way using Monk like algebras rather than rainbow ones, we get:
\begin{corollary}\label{HH} Let $2<n<\omega$
\begin{enumerate} 
\item The  set of equations using only one variable that holds in each of the varieties ${\sf RCA}_n$ and $\sf RRA$,  
together with any finite first order definable expansion of each, 
cannot be derived from any finite set of equations valid in 
the variety \cite{Biro, maddux}. Furthermore,  ${\sf LCA}_n$ 
is not finitely axiomatizable.
\item The classes ${\sf CRCA}_n$ and $\sf CRRA$ are not elementary.
\end{enumerate}
\end{corollary}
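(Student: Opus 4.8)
The plan is to derive both parts of Corollary~\ref{HH} from the rainbow-algebra machinery set up earlier in the paper, especially Theorem~\ref{can} (via its proof technique) and Theorem~\ref{rainbow1}, by stripping away the topological expansion (all modal operators collapse to the identity, as noted repeatedly in the relevant proofs) and reading off what the constructions already give for bare $\CA_n$s and $\sf RA$s. For part~(1), I would first invoke Theorem~\ref{can}: for $t(n)=n(n+1)/2+1$ there is a countable atomic $\A\in\sf TRCA_n$ with $\Rd_{ca}\Cm\At\A\notin\bold S\Nr_n\CA_{t(n)}$, while $\Tm\At\A\in\RCA_n$. Truncating to the cylindric reduct, this produces, for every $n>2$, a representable atomic $\CA_n$ whose \de\ completion fails to be representable --- in fact fails even $t(n)$-squareness --- and hence witnesses that $\RCA_n$ is not atom-canonical and that the Lyndon conditions ${\sf LCA}_n$ are not finitely axiomatizable (an atomic algebra satisfying all $G_k$ but not completely representable cannot be cut out by finitely many of the games). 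The one-variable non-finite-axiomatizability statement is then obtained exactly as in \cite{Biro,maddux}: the relation-algebra version comes from the $\sf RRA$ side of the same blow-up-and-blur construction (the $\Ra$-reduct computations in the proof of Theorem~\ref{can} and in Theorem~\ref{bsl} already produce a relation algebra $\A$ with $\A=\Ra\C$, $\C\in\CA_\omega$, $\A$ atomic but with no complete representation), and the point is that the equational consequences using a single variable of any finite first-order definable expansion are captured by finitely many games, which the constructed algebras defeat.

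For part~(2), that ${\sf CRCA}_n$ and $\sf CRRA$ are not elementary, I would use Theorem~\ref{rainbow1} together with Theorem~\ref{bsl}. Theorem~\ref{rainbow1} produces the rainbow-like algebra $\C_{\Z,\N}$, atomic with countably many atoms, such that \pe\ has a \ws\ in $G_k(\At\C_{\Z,\N})$ for all finite $k$ --- so $\C_{\Z,\N}\in{\bf El}\,{\sf CRCA}_n$ --- yet $\C_{\Z,\N}\notin\bold S_c\Nr_n\CA_{n+3}$, hence $\C_{\Z,\N}$ is not completely representable; meanwhile its elementary equivalent $\B$, obtained by the ultrapower/elementary-chain argument, is countable and atomic with \pe\ winning $\bold H_\omega(\At\B)$, so $\B\in{\sf CRCA}_n$ by \cite[Theorem 5.3.3/5.3.6]{Sayedneat}. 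Thus ${\sf CRCA}_n$ is not closed under elementary equivalence, i.e.\ not elementary. For $\sf CRRA$ one runs the identical argument on the relation-algebra reduct: the same $\Z,\N$ rainbow construction gives an $\sf RA$ analogue (this is literally how \cite{HH} proceeds), with \pe\ winning all finite games but \pa\ winning the relevant $n+3$-node game at the $\sf RA$ level, and passing to a countable elementary equivalent on which \pe\ wins the $\omega$-round hypernetwork game yields a completely representable relation algebra elementary equivalent to a non-completely-representable one.

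The routine steps are the translations: checking that the topological expansions genuinely do not interfere (every $I_i$/$\Box_i$ in these constructions is the identity on the discrete base, so the cylindric reduct carries all the content), and that the games $G_k$, $\bold G^{n+3}$, $\bold H_\omega$ behave the same on $\sf RA$ atom structures as on $\CA_n$ atom structures --- this is exactly the $\sf RA$-to-$\CA$ transfer already cited from \cite[Chapter 13]{HHbook} and \cite{HH}. The main obstacle, and the place where I would be most careful, is the single-variable equational part of (1): one must make precise that "finite first order definable expansion" does not help, i.e.\ that the expanded algebras built on the same atom structures still satisfy every finitely-axiomatizable theory valid in the variety while failing some consequence of the full (infinite) axiomatization. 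This is handled by noting that a finite first-order definable expansion adds only finitely many new operations defined by first-order formulas, so the relevant $k$-pebble/$k$-round games are only boosted by a bounded amount; since our algebras defeat the games for \emph{all} $k$, no finite axiomatization of the expanded variety can hold of them --- but pinning down the game for the expansion, rather than the bare $\CA_n$/$\sf RA$ signature, is the delicate bookkeeping, and I would lean on the arguments of \cite{Biro} and \cite{maddux} to carry it through rather than redo it from scratch.
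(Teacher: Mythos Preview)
Your treatment of part~(2) is correct but takes a different route from the paper. You invoke the rainbow algebra $\C_{\Z,\N}$ of Theorem~\ref{rainbow1}: it lies in ${\bf El}\,{\sf CRCA}_n$ because \pe\ wins every finite $G_k$, yet with countably many atoms and $\C_{\Z,\N}\notin\bold S_c\Nr_n\CA_{n+3}$ it is not completely representable, while its countable elementary equivalent $\B$ is. The paper instead uses the algebra of Theorem~\ref{bsl}, which has \emph{uncountably} many atoms, sits in $\Nr_n\CA_\omega\subseteq{\sf LCA}_n={\bf El}\,{\sf CRCA}_n$ directly, and lacks a complete representation by the Erd\H{o}s--Rado argument; for $\sf CRRA$ it uses the relation algebra $\A\in\Ra\CA_\omega$ built in that same proof. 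Both arguments are valid; the paper's avoids the ultrapower/elementary-chain step, and is precisely what the preamble means by ``Monk-like rather than rainbow''.

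Part~(1), however, has a genuine gap. Non-finite axiomatizability cannot be read off a \emph{single} witness to non--atom-canonicity: one needs a \emph{sequence} of algebras outside the variety whose non-trivial ultraproduct falls back inside it, so that the complement of the variety is not closed under ultraproducts. Theorem~\ref{can} gives one algebra per dimension $n$, not such a sequence. The paper's proof is accordingly different: for each $l\in\omega\setminus n$ it blows up and blurs the finite Maddux algebra $\mathfrak{E}_{f(l)}(2,3)$ via the construction of \cite{ANT}, obtaining ${\cal R}_l=\mathfrak{Bb}(\R_l,J_l,E_l)\in{\sf RRA}$ and $\A_l=\mathfrak{Nr}_n\mathfrak{Bb}_l(\R_l,J_l,E_l)\in\RCA_n$ whose atom structures are weakly but not strongly representable, while the ultraproduct over $l$ is completely representable. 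This sequence simultaneously handles $\RCA_n$, $\sf RRA$, their finite first-order definable expansions, and ${\sf LCA}_n$. Your parenthetical for ${\sf LCA}_n$ --- ``an atomic algebra satisfying all $G_k$ but not completely representable cannot be cut out by finitely many of the games'' --- is also off: such an algebra is by definition \emph{in} ${\sf LCA}_n$, so it witnesses that ${\sf CRCA}_n$ is not elementary (your part~(2)) rather than that ${\sf LCA}_n$ is not finitely axiomatizable. For the latter you need, for each $k$, an algebra passing the $k$th Lyndon condition but failing a later one, with a representable ultraproduct --- again the indexed Monk-type family, not a single rainbow algebra. Citing \cite{Biro,maddux} does not close the gap, since those papers supply exactly the sequences your Theorem~\ref{can} route omits.
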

\begin{proof} 
1. Let ${\cal R}_l={{\mathfrak{Bb}}}(\R_l, J_l, E_l)\in \sf RRA$ where ${\cal R}_l$ is the relation algebra having atom structure denoted  $At$ in \cite[p. 73]{ANT} 
when the blown up and blurred algebra denoted $\R_l$ happens to be the finite Maddux algebra  $\mathfrak{E}_{f(l)}(2, 3)$
 and let  $\A_l=\mathfrak{Nr}_n{{\mathfrak{Bb}}}_l(\R_l, J_l, E_l)\in \RCA_n$ as defined in \cite[Top of p.80]{ANT} (with $\R_l=\mathfrak{E}_{f(l)}(2, 3)$).
Then $(\At{\cal R}_l: l\in \omega\sim n)$, and $(\At\A_l: l\in \omega\sim n)$ are sequences of weakly representable atom structures 
that are not strongly representable with a completely representable 
ultraproduct. 

2. The algebra $\B$ constructed in Theorem \ref{bsl} satisfies that $\B\in \Nr_n\CA_{\omega}\subseteq {\sf LCA}_n={\bf El}\CRCA_n$.
To see why,  $\B\in \bold S_c\Nr_n\CA_{\omega}$ is atomic, then by Lemma \ref{n}, \pe\ has a \ws\ in $\bold G^{\omega}(\At\B)$, hence in $G_{\omega}(\At\B)$, {\it a fortiori}, \pe\ has a \ws\ 
in $G_k(\At\B)$ for all $k<\omega$, so (by definition) $\B\in {\sf LCA}_n$.
We have already dealt with the $\CA$ case in Theorem \ref{bsl}, since the algebra $\B$ constructed therein satisfies that $\B\in {\bf El}\CRCA_n\sim \CRCA_n$. 
For relation algebras, we use the algebra $\A$ constructed in the previous Theorem, too.  We have  $\A\in \Ra\CA_{\omega}$ and $\A$ has no complete representation. The rest is like the $\CA$ case, 
using the $\Ra$ analogue of Lemma \ref{n} , when the dilation is $\omega$--dimensional, 
namely, $\A\in \bold S_c\Ra\CA_{\omega}\implies$ \pe\ has a \ws\ in $F^{\omega}$ with the last notation taken from \cite{r}. 
The last  argument proves that $\Ra\CA_{\omega}\subseteq {\bf El}\sf CRRA$.
\end{proof}

\begin{remark} Observe (from the proof of Proposition \ref{bsl})  that any atomic algebra in $\Nr_n\CA_{\omega}$ $(\Ra\CA_{\omega}$) with no complete representation, 
witnesses that $\CRCA_n$ $(\sf CRRA)$  is not elementary.  This cannot be witnessed on algebras having countably many atoms because restricting to such algebras we have 
$\bold S_c\Nr_n\CA_{\omega}\subseteq \sf CRCA_n$ ($\bold S_c\Ra\CA_{\omega}\subseteq \sf CRRA$.)
\end{remark}

Nevertheless, if we impose extra conditions on theories and possibly uncountably many non-principal types to be omitted, we get positive results in terms of $\omega$- square Tarskian 
usual semantics. Weprove a positive $\sf OTT$ for $L_n$ theories by imposing `elimination of quantifiers' on theoies and  maximality conditions on non-principal types we wish to omit,  such as being 
{\it complete}. If $T$ is a first order theory in a language $L$, then a set of formulas each using at most $m$ variables) $\Gamma$ say, is complete, if for any $L$-formula $\phi$
having at most $m$ variables, either $T\cup \Gamma\vdash \phi$ or $T\cup \Gamma \neg \phi$.
\begin{corollary} \label{eliminate} Let $n$ be any finite ordinal.  Let $T$ be a countable and consistent $L_n$ theory and $\lambda$ be a cardinal $< \mathfrak{p}$. 
Let $\bold F=(\Gamma_i: i<\lambda)$ be a family of non-principal types of $T$. Suppose that $T$ admits elimination of quantifiers. Then the following hold:
\begin{enumerate}
\item If $\phi$ is a formula consistent with $T$, then there 
is a model $\Mo$ of $T$ that omits $\bold F$, and $\phi$ is satisfiable in $\Mo$. 
If $T$ is complete, then we can replace $\mathfrak{p}$ by $\sf cov K$, 

\item If the non-principal types constituting $\bold F$ are maximal, then we can replace $\mathfrak{p}$ by $2^{\omega}$.
\end{enumerate}
\end{corollary}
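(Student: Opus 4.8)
The plan is to translate everything into the Tarski--Lindenbaum cylindric algebra $\A=\Fm_T\in\CA_n$ of $T$ (whose elements are the $L_n$--formulas in $x_0,\dots,x_{n-1}$ modulo $T$--provable equivalence, with ${\sf c}_i[\phi]_T=[\exists x_i\,\phi]_T$ and ${\sf d}_{ij}=[x_i=x_j]_T$), which is countable since $T$ is, and then to invoke Theorem \ref{sh}. First I would record the routine dictionary: putting $X_i=\{[\gamma]_T:\gamma\in\Gamma_i\}$, non--principality of $\Gamma_i$ is exactly $\prod^{\A}X_i=0$; maximality of $\Gamma_i$ makes $X_i$ an ultrafilter of $\A$; completeness of $T$ makes $\A$ simple, since a nonzero ideal $I$ of $\A$ would contain, for a witnessing $0\neq[\phi]_T\in I$, the element ${\sf c}_0\cdots {\sf c}_{n-1}[\phi]_T=[\exists x_0\cdots\exists x_{n-1}\phi]_T$, which equals $1$ because $\exists x_0\cdots\exists x_{n-1}\phi$ is an $L_n$--sentence consistent with the complete $T$; and a model $\Mo\models T$ omitting $\bold F$ in which $\phi$ is satisfiable is precisely a nonzero homomorphism $f$ from $\A$ into a set algebra with $f([\phi]_T)\neq 0$ and $\bigcap_{x\in X_i}f(x)=\emptyset$ for all $i<\lambda$.

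The heart of the matter is the lemma that \emph{quantifier elimination implies $\A\in\Nr_n\CA_\omega$} (hence $\A\in\bold S_c\Nr_n\CA_\omega$ as well, since a neat reduct is trivially a complete subalgebra of itself). I would prove this by exhibiting the $\omega$--dilation explicitly: let $T^{+}$ be $T$ read in the full first order language with $\omega$ many variables, let $\B=\Fm_{T^{+}}\in{\sf Lf}_\omega\subseteq\CA_\omega$, and let $\iota:\A\to\Nr_n\B$ be the natural map $[\phi]_T\mapsto[\phi]_{T^{+}}$. Since $T$ and $T^{+}$ have the same models, $\iota$ is a well defined $\CA_n$--embedding respecting diagonals and the cylindrifiers ${\sf c}_i$, $i<n$; and it is onto $\Nr_n\B$ because any element of $\Nr_n\B$ is $[\chi]_{T^{+}}$ for an $L_\omega$--formula $\chi$ with free variables among $x_0,\dots,x_{n-1}$, which by quantifier elimination is $T$--equivalent to a quantifier--free formula $\theta$, and $\theta$, having no bound variables, involves only $x_0,\dots,x_{n-1}$ and is already an $L_n$--formula. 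I expect this surjectivity to be the main obstacle: it is exactly the property that fails for the Monk/rainbow algebras behind the negative results Theorems \ref{can} and \ref{bsl}, and if ``admits elimination of quantifiers'' is read only inside the fragment $L_n$ one cannot take $\B=\Fm_{T^{+}}$ naively (that dilation can be strictly too coarse), but must instead use the Tarski--Lindenbaum algebra of a quantifier--eliminable first order extension of $T$ agreeing with $T$ on quantifier--free formulas, so that its $n$--neat reduct is the algebra of quantifier--free formulas of $T$, which by elimination of quantifiers is all of $\A$.

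With the lemma in hand I would read the three clauses off Theorem \ref{sh}. For item~(1) with arbitrary $T$: $\A\in\bold S_c\Nr_n\CA_\omega$ is countable and $|\bold F|=\lambda<\mathfrak{p}$, so Theorem \ref{sh}(2) omits $\bold F$ in a ${\sf Gs}_n$; to realise $\phi$ simultaneously I would run the Baire category step of that proof inside the clopen set $N_{[\phi]_T}$ (nonempty, as $\phi$ is consistent with $T$ so $[\phi]_T\neq 0$), obtaining a single ${\sf Cs}_n$ with a homomorphism $f$, $f([\phi]_T)\neq 0$, omitting all of $\bold F$, and take its base as $\Mo$. If $T$ is moreover complete then $\A$ is simple and Theorem \ref{sh}(2) upgrades $\mathfrak{p}$ to ${\sf covK}$, the resulting homomorphism being injective so that $f([\phi]_T)\neq 0$ is automatic. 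For item~(2): each $X_i$ is a non--principal ultrafilter of $\A$ and $\A\in\Nr_n\CA_\omega$, so Theorem \ref{sh}(1) applies under the weaker bound $\lambda<2^{\aleph_0}=2^{\omega}$ and already yields a model of $T$ omitting $\bold F$ in which $\phi$ is satisfiable, exactly as in the proof of that item. Everything past the neat--embedding lemma is bookkeeping around Theorem \ref{sh}.
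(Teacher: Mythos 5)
Your proposal is correct and follows essentially the same route as the paper: pass to $\A=\Fm_T$, note that completeness gives simplicity and maximality gives ultrafilters, prove $\A\in\Nr_n\CA_\omega$ by mapping $[\phi]_T\mapsto[\phi]_{T_\omega}$ onto $\Nr_n\Fm_{T_\omega}$ with surjectivity coming from quantifier elimination, and then invoke Theorem \ref{sh}. Your extra care about where quantifier elimination is read, and about localising the Baire category argument to $N_{[\phi]_T}$, only makes explicit what the paper leaves implicit.
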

\begin{proof} Let $T$ be as given in a signature $L$ having $n$ variables.  Let $\A=\Fm_T$, 
and $\bold G_i=\{\phi_T: \phi\in \Gamma_i\}$. Then $\bold G_i$ is a a non-principal ultrafilter; maximality follows from the completeness of 
types considered. By completeness of $T$, $\A$ is simple. Since $T$ admits elimination of quantifiers, then $\Fm_T\in \Nr_n\CA_{\omega}$. Indeed, let 
$T_{\omega}$ be the theory in the same signature $L$ but using $\omega$ many variables. Let $\C=\Fm_{T_{\omega}}$ be the Tarski-Lindenbaum quotient cylindric algebra algebra. 
Then $\C\in \CA_{\omega}$ (because we have $\omega$ many variables); in fact $\C\in \bold I\sf Cs_{\omega}$, and the map $\Phi$ defined from $\A$ to $\Nrr_n\C$ via $\phi/\equiv_T\mapsto \phi/\equiv_{T_{\omega}}$ is injective 
and bijective, that is to say, $\Phi$ having domain $\A$ and codomain $\Nrr_n\C$ is in fact {\it onto} $\Nrr_n\C$ due to quantifier 
elimination. An application of Theorem \ref{main} finishes the proof.
\end{proof}
Let $n<\omega$. By observing that if $T$ is a topological theory using $n$ variable admitting quantifier elimination, then $\Fm_T\in \Nr_n\TCA_{\omega}$, cf \cite[Theorem 3.2.10]{Sayed}, then we get:
\begin{corollary} Let $T$ be a topological countable predicate theory in $n$ variables, that admits elimination of quantifies. 
Then any family of $<2^{\omega}$ non-principal complete types can be omitted in a countable 
topological model.
\end{corollary}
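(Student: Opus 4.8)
The plan is to transcribe the classical argument of Corollary~\ref{eliminate}(2) into the topological setting, the only genuinely new ingredient being that quantifier elimination realizes $\Fm_T$ as a \emph{full} neat reduct inside a topological dilation. First I would form the topological cylindric Lindenbaum--Tarski algebra $\A=\Fm_T\in\TCA_n$; since $|T|\le\omega$ this algebra is countable, and it is (isomorphic to) a topological cylindric set algebra of dimension $n$, the interior operators $\Box_i$ being those induced on $\wp({}^n\Mo)$ by the Alexandrov topology carried by any model $\Mo$ of $T$. To promote $\A$ to a neat reduct I would pass to the $\omega$--variable theory $T_\omega$ (the same axioms read in the $\omega$--variable language), set $\C=\Fm_{T_\omega}\in\TCA_\omega$ --- in fact $\C\in\bold I\,{\sf TCs}_\omega$ --- and observe that $\phi/\equiv_T\mapsto\phi/\equiv_{T_\omega}$ is injective and, because $T$ admits elimination of quantifiers, surjective onto $\Nr_n\C$. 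Hence $\A\cong\Nr_n\C\in\Nr_n\TCA_\omega$, exactly as in \cite[Theorem~3.2.10]{Sayed}.

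Next I would translate the types algebraically. For the given family $(\Gamma_i:i<\lambda)$ of non-principal complete types, $\lambda<2^{\omega}$, say with free variables among $x_0,\dots,x_{k_i-1}$ for some $k_i\le n$, put $X_i=\{\phi/\equiv_T:\phi\in\Gamma_i\}\subseteq\Nr_{k_i}\A$. Completeness of $\Gamma_i$ is precisely the statement that $X_i$ is a maximal Boolean filter, i.e.\ a Boolean ultrafilter, of $\Nr_{k_i}\A$, and non-principality gives $\prod^{\A}X_i=0$; thus $\bold X=(X_i:i<\lambda)$ is a family of non-principal ultrafilter (finitary) types of $\A$ with $|\bold X|<2^{\omega}$.

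Now I would apply the algebraic omitting types theorem in its topological form: since $\A\in\Nr_n\TCA_\omega$ is countable and the $X_i$ are non-principal ultrafilters with $\lambda<2^{\omega}$, Theorem~\ref{sh}(1) --- equivalently the chain of Theorem~\ref{main} with $m=\omega$, read with $\TCA$ in place of $\CA$ and using Lemma~\ref{essence} to carry the interior operators through the representation --- yields a nonzero homomorphism $f:\A\to\B$ into a countable topological generalized set algebra $\B\in{\sf TGs}_n$ with $\bigcap_{x\in X_i}f(x)=\emptyset$ for every $i<\lambda$; countability of $\B$, and of its base, is inherited from the countable $\omega$--dilation $\C$. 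Passing back through the standard correspondence \cite[\S 4.3]{HMT2} between $n$--dimensional set algebras and structures, and transporting the base topology of $\B$ to an Alexandrov topology on the resulting universe, $f$ gives a countable topological model $\Mo$ of $T$ omitting every $\Gamma_i$, which is the assertion.

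The step I expect to be the main obstacle is this last passage from a ${\sf TGs}_n$ representation to a \emph{single} countable topological model of $T$, since the omitting-types construction naturally produces a subdirect product of topological ${\sf Cs}_n$'s rather than one model. This is handled as in Corollary~\ref{eliminate}: when $T$ is complete, $\A=\Fm_T$ is simple, so $f$ is an embedding into a topological ${\sf Cs}_n$ over a countable base and $\Mo$ is an honest model; in general one takes the disjoint union of the factor structures, whose base carries the coproduct topology, which is Alexandrov since each factor's is. The remaining verifications --- that the operators $\Box_i$ of Definition~\ref{interior} built from a Henkin ultrafilter are indeed preserved by $f$, and that classical satisfaction agrees with the set-algebra representation --- are exactly the content of Lemma~\ref{essence} and require nothing beyond what is already proved there.
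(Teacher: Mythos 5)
Your proposal is correct and follows essentially the same route as the paper, which derives the corollary in one line from the observation that quantifier elimination places $\Fm_T$ in $\Nr_n\TCA_{\omega}$ (via the $\omega$--variable theory $T_{\omega}$) and then invokes the ultrafilter case of the algebraic omitting types theorem (Theorem~\ref{sh}(1), resting on Shelah's theorem). The extra care you take over passing from the ${\sf TGs}_n$ representation back to a single countable topological model is a faithful elaboration of what the paper leaves implicit rather than a different argument.
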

\section{Non elementary classes of atom structues (Kripke frames)}
We start with an easy lemma to be used in the last item of the next theorem. If $\B$ is a Boolean algebra and $b\in \B$, 
then $\Rl_b\B$ denotes the Boolean algebra with domain $\{x\in B: x\leq b\}$, top element $b$, and other Boolean operations those of $\B$ relativized to $b$.
\begin{lemma}\label{join} 
In the following $\A$ and $\D$ are Boolean algebras.
\begin{enumerate}
  
\item  If $\A$  is atomic  and $0\neq a\in \A$, then $\Rl_a\A$ is also atomic. 
If $\A\subseteq_d \D$, and $a\in A$, then $\Rl_a\A\subseteq_d \Rl_a\D$,

\item  If $\A\subseteq_d \D$ then $\A\subseteq_c \D$. In particular, for any class $\sf K$ of $\sf BAOs$, ${\sf K}\subseteq \bold S_d{\sf K}\subseteq \bold  S_c{\sf K}$. 
If furthermore $\A$ and $\D$ are atomic,  then $\At\D\subseteq \At\A$. 
\end{enumerate}
\end{lemma}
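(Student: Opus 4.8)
\textbf{Proof proposal for Lemma \ref{join}.}

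The plan is to verify the two items by unwinding the definitions of $\subseteq_d$ (dense subalgebra) and $\subseteq_c$ (complete subalgebra), together with elementary Boolean algebra facts about relativization. First, for item (1): suppose $\A$ is atomic and $0\neq a\in\A$. To see $\Rl_a\A$ is atomic, take any nonzero $x\leq a$ in $\A$; by atomicity of $\A$ there is an atom $p\in\A$ with $p\leq x\leq a$, and $p$ is clearly an atom of $\Rl_a\A$ as well (its domain is the principal ideal below $a$ with the same partial order). For the second assertion, assume $\A\subseteq_d\D$ and $a\in A$. Recall $\A\subseteq_d\D$ means $\A$ is a subalgebra of $\D$ and every nonzero element of $\D$ lies above a nonzero element of $\A$. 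Then $\Rl_a\A$ is a subalgebra of $\Rl_a\D$ since the relativized operations agree; and if $0\neq d\in\Rl_a\D$, i.e. $0\neq d\leq a$ in $\D$, density gives $0\neq b\in\A$ with $b\leq d\leq a$, so $b\in\Rl_a\A$ witnesses density of $\Rl_a\A$ in $\Rl_a\D$.

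For item (2): assume $\A\subseteq_d\D$; I would show $\A\subseteq_c\D$, i.e. that suprema of subsets of $\A$ that exist in $\A$ are also suprema in $\D$. Let $S\subseteq A$ with $b=\sum^{\A}S$, and suppose for contradiction that some $d\in\D$ with $d<b$ is an upper bound of $S$ in $\D$. Then $b\cdot{-}d\neq 0$ in $\D$, so by density there is $0\neq b'\in\A$ with $b'\leq b\cdot{-}d$. But then $b'\cdot s\leq (b\cdot{-}d)\cdot d=0$ for every $s\in S$ (as $s\leq d$), so $b-b'$ is an upper bound of $S$ in $\A$ strictly below $b$, contradicting $b=\sum^{\A}S$. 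Hence $\A\subseteq_c\D$. The inclusion $\sf K\subseteq\bold S_d\sf K\subseteq\bold S_c\sf K$ then follows since any algebra is trivially a dense (hence, by what was just shown, complete) subalgebra of itself, and $\bold S_d\sf K\subseteq\bold S_c\sf K$ is exactly the first statement applied pointwise. Finally, suppose in addition $\A$ and $\D$ are atomic; I claim $\At\D\subseteq\At\A$. Let $d\in\At\D$. By density there is $0\neq a\in\A$ with $a\leq d$; since $d$ is an atom of $\D$ and $a\in D$, we get $a=d$, so $d\in A$. It remains to check $d$ is an atom of $\A$: if $0\neq a'\in\A$ with $a'\leq d$, then again $a'\leq d$ in $\D$ forces $a'=d$, so $d$ has no proper nonzero element of $\A$ below it.

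I do not expect a genuine obstacle here; the lemma is entirely routine Boolean-algebra bookkeeping. The only point that requires a moment's care is the contradiction argument in item (2) — making sure the auxiliary element $b-b'$ (equivalently $b\cdot{-}b'$) is genuinely in $\A$ and genuinely an upper bound of $S$, which is immediate since $\A$ is closed under the Boolean operations and $b'$ was chosen disjoint from every $s\in S$. This is in fact the same argument already used in the proof of Theorem \ref{sh} to show that a certain subalgebra is complete, so one could alternatively just cite that computation.
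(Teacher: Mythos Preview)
Your proposal is correct and follows essentially the same route as the paper. The paper omits item (1) as ``entirely straightforward'' and, for item (2), argues only the special case $\sum^{\A}S=1$ (finding $a\in\A$ below $1-b'$ and concluding $a=0$), whereas you handle an arbitrary supremum $b$ directly; your atom argument is also marginally cleaner in that it does not invoke atomicity of $\A$. One small slip: the displayed inequality $b'\cdot s\leq (b\cdot{-}d)\cdot d=0$ is not quite the right intermediate step---what you want is $b'\leq -d$ and $s\leq d$, hence $b'\cdot s\leq (-d)\cdot d=0$---but the conclusion is unaffected.
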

\begin{proof}
(1): Entirely straightforward.

(2): Assume that $\sum^{\A}S=1$ and for contradiction that there exists $b'\in \D$, $b'<1$ such that 
$s\leq b'$ for all $s\in S$. Let $b=1-b'$ then $b\neq 0$, hence by assumption (density) there exists a non-zero  
$a\in \A$ such that $a\leq b$, i.e $a\leq (1- b')$. If $a\cdot s\neq 0$ for some $s\in S$, then $a$ is not less than $b'$ which is impossible.
So $a\cdot s=0$  for every $s\in S$, implying that $a=0$, contradiction.
Now we prove the second part. Assume that $\A\subseteq_d \D$ and $\D$ is atomic. Let $b\in \D$ be an atom. We show that $b\in \At\A$.
By density there is a 
non--zero $a'\in \A$, such that $a'\leq b$ in $\D$. Since $\A$ is atomic, there is an atom $a\in \A$ such that $a\leq a'\leq b$. 
But $b$ is an atom of  $\D$,  and $a$ is non--zero in $\D$, too, so it must be the case that $a=b\in \At\A$.
Thus $\At\B\subseteq \At\A$ and we are done.
\end{proof}
 
Fix $2<n<\omega$. Call an atomic $\A\in \CA_n$ {\it weakly (strongly) representable} $\iff \At\A$ is weakly (strongly) representable.
Let  ${\sf WRCA}_n$ (${\sf SRCA}_n$) denote the class of all such $\CA_n$s, respectively. 
Then the class ${\sf SRCA}_n$  is not elementary and 
${\sf LCA}_n\subsetneq {\sf SRCA}_n\subsetneq {\sf WRCA}_n$ \cite{HHbook2}; the strictness of the two inclusions follow from the fact that the classes ${\sf LCA}_n$ and ${\sf WRCA}_n$ 
are elementary. 
For an atom structure $\bf At$, let $\F(\bf At)$ be the subalgebra of $\Cm\bf At$
consisting of all sets of atoms in $\bf At$ of the form $\{a\in {\bf At}: {\bf At}\models \phi(a, \bar{b})\}(\in \Cm {\bf At})$,
for some first order formula $\phi(x, \bar{y})$ of the signature
of $\bf At$  and some tuple $\bar{b}$ of atoms, cf.\cite[item (3), p. 456]{HHbook} for the analogous definition for 
relation algebras. 
Let ${\sf FCA}_n$  be the class of all such $\CA_n$s. Then it can be proved, similarly 
to the $\RA$ case that  
${\sf SRCA}_n\subseteq {\sf FCA}_n$ 
and that ${\sf FCA}_n$ is elementary, cf. \cite[Theorem 14.17]{HHbook}, hence the inclusion is strict.

In the following $\bf Up$, $\bf Ur$, $\bold P$ and  $\bold H$ denote the operations of 
forming ultraproducts, ultraroots, products 
and homomorphic images, respectively. 
\begin{theorem}\label{iii}
For $2<n<\omega$ the following hold:
\begin{enumerate}

\item For  $n<m\leq \omega$, $\Nr_n\CA_m$ is a psuedo elementay class that is not elementay;  
it is closed under $\bold P$, $\bold H$ but not under $\bold S_d$, {\t a fortiori $\bold S_c$ nor $\bold S$},  nor $\bf Ur$. 
The elementary theory of $\Nr_n\K_{\omega}$ is recursively enumerable.
\item For any class $\sf K$ of frames such that $\sf At\CRCA_n\subseteq \bold K\subseteq \At{\sf SRCA}_n$, $\bold K$ generates $\RCA_n$. 
If $\bold K$ is elementary then $\RCA_n$ is canonical, but $\bold S\Cm\At\A\nsubseteq \RCA_n$, since the last class is ot atom-canonical.
\item Although ${\sf Str}\RCA_n=\{\F\in \At\RCA_n: \Cm\F\in \RCA_n\}$ 
is not elementay and $\bold S\Cm\At\A\nsubseteq \RCA_n$, there is an elementary class of frames that generates $\RCA_n$, 
so the last class is canonical.
\item $\Nr_n\CA_{\omega}\subsetneq \bold S_d\Nr_n\CA_{\omega}\subseteq \bold S_c\Nr_n\CA_{\omega} \subsetneq {\bf El}\bold S_c\Nr_n\CA_{\omega} \subsetneq \RCA_n.$
Furthermore, the strictness of inclusions are witnessed by atomic algebras,

\end{enumerate}
\end{theorem}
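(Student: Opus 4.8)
Looking at Theorem \ref{iii}, I need to prove four statements about the operator $\Nr_n$ and related classes. Let me plan out a proof.

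\medskip

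The plan is to assemble this omnibus theorem from the machinery already developed in the excerpt, treating each item in turn. For item (1), the key tool is that $\Nr_n\CA_m$ is \emph{pseudo-elementary}: an algebra $\A$ is in $\Nr_n\CA_m$ iff there is a two-sorted structure coding $\A$ together with a $\CA_m$ whose $n$-neat reduct is $\A$, a condition expressible in first-order logic over the two-sorted signature; this gives closure under $\bf Up$ and recursive enumerability of the elementary theory via the Keisler--Shelah theorem and the fact that pseudo-elementary classes have r.e.\ elementary theory when the defining first-order theory is recursive. Closure under $\bold P$ and $\bold H$ is a routine verification (neat reducts commute with products, and are closed under homomorphic images using that $\Delta$ is preserved); non-closure under $\bold S_d$, $\bold S_c$, $\bold S$ and $\bf Ur$, \emph{and} non-elementarity, I would get in one stroke from Theorem \ref{bsl}: the algebra $\B=\Nr_n\C$ constructed there is elementarily equivalent to a countable algebra that is \emph{not} in $\Nr_n\CA_\omega$ (indeed, as remarked, $\B$ itself is in $\bold S_c\Nr_n\CA_\omega$ but witnesses $\CRCA_n$ non-elementary, and a countable elementary subalgebra obtained via $\bold H_\omega$-games lies in $\bold S_d\Nr_n\CA_\omega\setminus\Nr_n\CA_\omega$); this simultaneously refutes elementarity and closure under $\bf Ur$, while Example \ref{SL} (referenced in the Theorem \ref{rainbow1} proof) handles $\bold S_d$ and $\bold S_c$.

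\medskip

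For items (2) and (3), I would invoke the standard correspondence between atom structures (Kripke frames) and complex/term algebras. Since $\CRCA_n\subseteq\RCA_n$ and every atomic representable algebra embeds densely into the complex algebra of its atom structure, any $\bold K$ with $\At\CRCA_n\subseteq\bold K\subseteq\At{\sf SRCA}_n$ has $\Cm\bold K\subseteq\RCA_n$ (since strongly representable means $\Cm\F\in\RCA_n$) and $\bold K$ contains $\At\A$ for every completely representable $\A$; as $\RCA_n$ is a variety generated by its atomic (indeed completely representable) members, $\bold K$ generates $\RCA_n$. Canonicity of $\RCA_n$ when $\bold K$ is elementary follows from the general principle that a variety of $\sf BAO$s generated by an elementary class of frames is canonical (Goldblatt); the remark that $\bold S\Cm\At\A\not\subseteq\RCA_n$ is precisely non-atom-canonicity, available from Theorem \ref{can} or Proposition \ref{cp}. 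Item (3) then just records that although $\sf Str\,\RCA_n$ itself is not elementary (Theorem \ref{truncate} style, or the classical result), one can still find \emph{some} elementary class of frames — e.g.\ the frames of the $\Cs_n$'s — generating $\RCA_n$, so $\RCA_n$ is canonical; this is really a restatement of known facts and I would cite \cite{HHbook2}.

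\medskip

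Item (4) is the chain $\Nr_n\CA_\omega\subsetneq\bold S_d\Nr_n\CA_\omega\subseteq\bold S_c\Nr_n\CA_\omega\subsetneq{\bf El}\bold S_c\Nr_n\CA_\omega\subsetneq\RCA_n$ with strictness witnessed by atomic algebras. The inclusions $\bold S_d\subseteq\bold S_c$ and ${\sf K}\subseteq{\bf El}\,{\sf K}$ are from Lemma \ref{join}(2) and trivialities. For $\Nr_n\CA_\omega\subsetneq\bold S_d\Nr_n\CA_\omega$: the algebra $\B$ from Theorem \ref{bsl} (or the $\B$ in the proof of Theorem \ref{rainbow1}) is atomic, lies in $\bold S_d\Nr_n\CA_\omega$ via the $\bold H_\omega$-game and Lemma \ref{gripneat}, but is not in $\Nr_n\CA_\omega$. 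For $\bold S_c\Nr_n\CA_\omega\subsetneq{\bf El}\bold S_c\Nr_n\CA_\omega$: the rainbow algebra $\C_{\Z,\N}$ from Theorem \ref{rainbow1} is atomic, not in $\bold S_c\Nr_n\CA_{n+3}\supseteq\bold S_c\Nr_n\CA_\omega$, yet elementarily equivalent to $\B\in\bold S_c\Nr_n\CA_\omega$. For ${\bf El}\bold S_c\Nr_n\CA_\omega\subsetneq\RCA_n$: one needs a \emph{representable} atomic algebra not in ${\bf El}\bold S_c\Nr_n\CA_\omega={\sf LCA}_n$ — this is the strongly-representable-but-not-Lyndon example, i.e.\ an atomic $\A\in{\sf SRCA}_n\setminus{\sf LCA}_n$, known to exist from the strictness ${\sf LCA}_n\subsetneq{\sf SRCA}_n$ cited in the excerpt.

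\medskip

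The main obstacle I anticipate is item (1): carefully justifying that $\Nr_n\CA_m$ is pseudo-elementary with a recursive defining theory (so its elementary theory is r.e.) while simultaneously showing non-elementarity — these must be reconciled by noting pseudo-elementary classes need not be elementary — and pinning down exactly which of the published constructions (Theorem \ref{bsl}, the proof of Theorem \ref{rainbow1}, Example \ref{SL}) supplies each non-closure witness without circularity. The frame-theoretic items (2), (3) are mostly bookkeeping over known canonicity/generation facts, and item (4) is a matter of correctly matching each strict inclusion to its witness among the algebras already built.
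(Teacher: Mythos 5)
Your proposal follows essentially the same route as the paper: a multi-sorted recursive defining theory for pseudo-elementarity and the Keisler--Shelah argument for $\bf Ur$ in item (1), generation of $\RCA_n$ by (completely representable) atom structures and canonicity via elementary frame classes for items (2)--(3), and the same stock of witnesses (Example \ref{SL}, Theorem \ref{bsl}, the rainbow algebra, and the non-atom-canonical algebra of Theorem \ref{can}) for the strict inclusions in item (4). The only divergences are cosmetic: the paper uses a three-sorted rather than two-sorted defining theory, cites \cite{SL} for closure under $\bold H$, and witnesses the final strict inclusion ${\bf El}\bold S_c\Nr_n\CA_\omega\subsetneq\RCA_n$ with the weakly-but-not-strongly representable algebra of Theorem \ref{can} rather than an algebra in ${\sf SRCA}_n\setminus{\sf LCA}_n$; both choices work.
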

\begin{proof}
The case when $m$ is finite is eays we need only a tw sorted defining theory. To show that $\Nr_n\CA_{\omega}$  is pseudo-elementary, 
we use a three sorted defining theory, with one sort for a cylindric algebra of dimension $n$
$(c)$, the second sort for the Boolean reduct of a cylindric algebra $(b)$
and the third sort for a set of dimensions $(\delta)$; the argument is analogous to that of Hirsch used for relation algebra reducts \cite[Theorem 21]{r}.
We use superscripts $n,b,\delta$ for variables
and functions to indicate that the variable, or the returned value of the function,
is of the sort of the cylindric algebra of dimension $n$, the Boolean part of the cylindric algebra or the dimension set, respectively.
The signature includes dimension sort constants $i^{\delta}$ for each $i<\omega$ to represent the dimensions.
The defining theory for $\Nr_n{\sf CA}_{\omega}$ includes sentences stipulating
that the constants $i^{\delta}$ for $i<\omega$
are distinct and that the last two sorts define
a cylindric algebra of dimension $\omega$. For example the sentence
$$\forall x^{\delta}, y^{\delta}, z^{\delta}(d^b(x^{\delta}, y^{\delta})=c^b(z^{\delta}, d^b(x^{\delta}, z^{\delta}). d^{b}(z^{\delta}, y^{\delta})))$$
represents the cylindric algebra axiom ${\sf d}_{ij}={\sf c}_k({\sf d}_{ik}.{\sf d}_{kj})$ for all $i,j,k<\omega$.
We have have a function $I^b$ from sort $c$ to sort $b$ and sentences requiring that $I^b$ be injective and to respect the $n$ dimensional
cylindric operations as follows: for all $x^r$
$$I^b({\sf d}_{ij})=d^b(i^{\delta}, j^{\delta})$$
$$I^b({\sf c}_i x^r)= {\sf c}_i^b(I^b(x)).$$
Finally we require that $I^b$ maps onto the set of $n$ dimensional elements
$$\forall y^b((\forall z^{\delta}(z^{\delta}\neq 0^{\delta},\ldots (n-1)^{\delta}\rightarrow c^b(z^{\delta}, y^b)=y^b))\leftrightarrow \exists x^r(y^b=I^b(x^r))).$$
In all cases, it is clear that any algebra of the right type is the first sort of a model of this theory.
Conversely, a model for this theory will consist of an $n$ dimensional cylindric algebra type (sort c),
and a cylindric algebra whose dimension is the cardinality of
the $\delta$-sorted elements, which is at least $|m|$.
Thus the three sorted theory defines the class of neat reduct, furthermore, it is clearly recursive.
Finally, if $\K$ be a pseudo elementary class, that is
$\K=\{M^a|L: M\models U\}$ of $L$ structures, and $L, L^s, U$ are recursive.
Then there a set of first order recursive theory  $T$ in $L$,
so that for any $\A$ an $L$ structure, we have
$\A\models T$ iff there is a $\B\in \K$ with $\A\equiv \B$. In other words,
$T$ axiomatizes the closure of $\K$ under elementary equivalence, see
\cite[Theorem 9.37]{HHbook} for unexplained notation and proof.
Closure under $\bold P$ follows from that  
and that $\bold P\Nr_n\CA_{\omega}=\Nr_n\CA_{\omega}$. Let $\langle A_i: i\in I\rangle$ be  system of $\CA_m$s indexed by th non empty  set $I$. 
Then, in fact, $\bold P_{i\in I}\Nr_n\A_i=\Nr_n\bold P_{i\in i}\A_i$. 
Closure under $\bf Up$ 
follows from that $\Nr_n\CA_m$ for any $n<m\leq \omega$ 
is pseudo-elementary, cf. \cite[Theorem 21]{r} and \cite[\S 9.3]{HHbook} for similar cases. 
Hence it is clde under ultraproducts. Using the  system of agebras as above with $F$ a non principal ultrafilter on $I$ 
(more explicity),  we have $\bold P_{i\in I}\Nr_n\A_i/F\cong \Nr_n\bold P_{i\in I} A_i/F$
Closure of $\Nr_n\CA_m$ under $\bold H$ is proved in \cite{SL}. 
Th last is not closed under $\bold S_d$ by example \ref{SL}, 
and not under $\bf Ur$, because it is pseudo-elementary, but not elmentary.
Being closed under ultrapoducts, the Keisler-Shelah 
Ultrapower Theorem finishes the proof, showing that $\Nr_n\CA_m$ is not closed under ultraroots, that is to say, that here is an algebar $\A\notin \N_n\CA_m$ but $\A^I/F\in \Nr_n\CA_m$  

(2): Given an algebra $\A$ having $\CA_n$ signature, then $\A\in \RCA_n$ $\iff$ the canonical extension $\A^+$ of $\A$, based on the ultrafilter frame in symbols $\sf Uf\A$ 
or Stone spac of $\A$, whose underyling set consists of 
all   Boolean ultrafilters of $\A$,
namely, the complex algebra of $\sf Uf$, in symbols, $\Cm\sf Uf\A$,  is completely representable.
Therefore $\RCA_n$ is atomically generated by $\CRCA_n$ in the (strong sense sense), that is to say,  $\bold S\CRCA_n=\RCA_n$ (without the help of  $\bf HP$.)
For $t(n)=n(n+1)/2+1$, $\bold S\Nr_n\CA_m$ is not Sahlqvist axiomatizable by \cite{Venema}, because it is not atom-canonical {\it a fortiotri} 
not closed under \de\ completions. 

(3) The variety $\RCA_n$ is not atom-canonical $\iff$ $\bold S\Cm\At\RCA_n\nsubseteq \RCA_n$. 
Since by Theorem \ref{can}, there exist $n<m<\omega$, namely, $m=n(n+1)/2+1$ such 
that $\bold S\Nr_n\CA_m\supset \RCA_n$ and $\bold S\Nr_n\CA_m$ is not atom-canonical with respect
to $\RCA_n$, it follows that $\bold S\Cm\At\A\nsubseteq \RCA_m$, though $\bold S\Cm{\sf StrCA}_n\subseteq \RCA_n$.
 Althuogh ${\sf Str}\RCA_n$ is not elementay, then for any elementary class $\bold K$ such that ${\sf LCA}_n\subseteq \bold K\subseteq {\sf RCA}_n$, 
where recall that  ${\sf LCA}_n={\bf El}\CRCA_n$ is elementary 
by definition, consisting only of atomic algebras, we would have $\At\bold K$ elementary genetrating 
$\RCA_n$ in the srong sense, meaning that $\bold S\Cm\At\bold K=\RCA_n$ 
hence $\RCA_n$ is canonical.

(4):  The algebra $\B$ used 
in example \ref{SL} witnesses that $\Nr_n\CA_{\omega}\subsetneq \bold S_d\Nr_n\CA_{\omega}$, because, as proved in \cite{SL}, 
$\B\notin {\bf El}\Nr_n\CA_{\omega}(\supsetneq \Nr_n\CA_{\omega})$ and $\E\subseteq_ d \A$ where
$\A\in \Nr_n\CA_{\omega}$ is the full ${\sf Cs}_n$ with top element $^n\Q$ (and universe $\wp(^n\Q))$. Let $\A\in \CA_n$ be the algebra constructed in Theorem \ref{can}
We know that $\A\in {\sf RCA}_n\cap \bf At$,  but  
$\A\notin {\sf LCA}_n$, because $\At\A$ does not satisfy the Lyndon conditions, lest $\Cm\At\A\in {\sf LCA}_n(\subseteq \RCA_n)$.
We conclude that $\A\notin {\bf ElS}_c\Nr_n\CA_{\omega}$ proving the strictness of the last inclusion. 
Since $\E, \C$ and $\A$ are all atomic, we are done.
To show  that $\bold S_d\Nr_n\CA_{\omega}\subsetneq \bold S_c\Nr_n\CA_{\omega}$, 
we slighty modify the construction in \cite[Lemma 5.1.3, Theorem 5.1.4]{Sayedneat} as done below. 
The algebra denoted by $\B$ in {\it op.cit} 
witnesses the strictness of the inclusion.

\end{proof}

\begin{theorem}\label{finalresult} Let $2<n<\omega$. Then the following hold: 
\begin{enumerate}
\item $\bold S_c\Nr_n\CA_{\omega}\cap {\sf Count}={\sf CRCA}_n\cap {\sf Count}$,
\item ${\sf CRCA}_{n} \subseteq \bold S_c{\Nr}_{n}(\CA_{\omega}\cap {\bf At})\cap {\bf At}\subseteq \bold S_c{\Nr}_{n}\CA_{\omega}\cap \bf At.$
At least two of the above three classes are distinct but they coincide on  
algebras having countably many atoms. Non of all these classes 
is elementary. 
\item ${\bf El}(\bold S_c\Nr_n\CA_{\omega}\cap {\bf At})={\bf El}\bold S_c\Nr_n\CA_{\omega}\cap {\bf At}={\sf LCA}_n$, 
\item $\bold S\Nr_n\CA_{\omega}\cap {\bf At}= {\sf WRCA}_n$,
\item ${\bf P El S}_c\Nr_n\CA_{\omega}\cap {\bf At}\subseteq {\sf SRCA}_n$,
and ${\bf El P El S}_c\Nr_n\CA_{\omega}\cap {\bf At}\subseteq {\sf FCA}_n$.
\end{enumerate}
\end{theorem}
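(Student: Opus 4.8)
\textbf{Proof plan for Theorem \ref{finalresult}.}

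The plan is to assemble the five items from the machinery already developed in the paper, treating them as consequences of the neat-embedding characterisations of complete representability, the games $G_k$, $G_\omega$, $\bold G^m$, $\bold H_\omega$, and the Lyndon conditions. The unifying principle is the chain of implications: for countable atomic $\A$, having \pe\ a \ws\ in $G_\omega(\At\A)$ is equivalent (by \cite[Theorem 5.3.3 / 5.3.6]{Sayedneat}, invoked already in the proof of Theorem \ref{rainbow1} and its corollary) to $\A\in\CRCA_n$, and the game-theoretic hierarchy interpolates $\bold S_c\Nr_n\CA_\omega$ between $\CRCA_n$ and ${\sf LCA}_n={\bf El}\CRCA_n$. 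So most of the work is bookkeeping about which inclusions become equalities after applying ${\bf El}$ or restricting to ${\sf Count}$, and locating the witnesses of strictness among the atomic algebras built in Theorems \ref{bsl}, \ref{can}, \ref{rainbow1} and in \cite{SL}.

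First I would prove (1): for countable atomic $\A$, $\A\in\bold S_c\Nr_n\CA_\omega$ gives (by Lemma \ref{n} read contrapositively, and the standard fact that $\bold S_c\Nr_n\CA_\omega$-membership of an atomic algebra yields a \ws\ for \pe\ in $\bold G^\omega(\At\A)$, hence in $G_\omega(\At\A)$) a \ws\ for \pe\ in $G_\omega(\At\A)$; countability then upgrades this to complete representability via \cite[Theorem 5.3.6]{Sayedneat}, exactly as in the Corollary after Theorem \ref{rainbow1}. The reverse inclusion $\CRCA_n\subseteq\bold S_c\Nr_n\CA_\omega$ is the easy direction (a complete representation of $\A$ assembles an $\omega$-dilation in which $\A$ neatly and completely embeds). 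For (2), the first inclusion is the same easy direction refined: a complete representation produces an \emph{atomic} $\omega$-dilation, so $\CRCA_n\subseteq\bold S_c\Nr_n(\CA_\omega\cap{\bf At})\cap{\bf At}$; the second inclusion is trivial. That at least two of the three classes differ, while all three agree on algebras with countably many atoms, follows from item (1) together with Theorem \ref{bsl}: the algebra $\mathfrak{Nr}_n\C$ there lies in $\bold S_c\Nr_n\CA_\omega\cap{\bf At}$ (indeed in $\Nr_n\CA_\omega$) but is not completely representable, and it has uncountably many atoms --- so it separates $\CRCA_n$ from the larger classes while the Corollary after Theorem \ref{rainbow1} shows coincidence once atoms are countable. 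Non-elementarity of all three is Corollary \ref{HH}(2) (equivalently the Remark following it).

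For (3) I would argue: ${\bf El}$ applied to any class sandwiched between $\CRCA_n$ and $\bold S_c\Nr_n\CA_\omega\cap{\bf At}$ collapses to ${\bf El}\CRCA_n={\sf LCA}_n$, because ${\bf El}$ is monotone and idempotent and ${\bf El}(\bold S_c\Nr_n\CA_\omega\cap{\bf At})\subseteq{\bf El}\bold S_c\Nr_n\CA_\omega\cap{\bf At}$, while by Theorem \ref{iii}(4) the outer class ${\bf El}\bold S_c\Nr_n\CA_\omega\cap{\bf At}$ still sits inside $\RCA_n$ and consists of atomic algebras satisfying the Lyndon conditions --- one checks $\A\in{\bf El}\bold S_c\Nr_n\CA_\omega\cap{\bf At}\Rightarrow$ \pe\ has a \ws\ in $G_k(\At\A)$ for each finite $k$ (the existence of such a \ws\ is first-order expressible and holds in $\bold S_c\Nr_n\CA_\omega\cap{\bf At}$ via Lemma \ref{n}), i.e. $\A\in{\sf LCA}_n$. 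For (4), $\bold S\Nr_n\CA_\omega\cap{\bf At}$: an atomic $\A$ is weakly representable iff $\At\A$ is, iff $\Tm\At\A\in\RCA_n$, and $\RCA_n=\bold S\Nr_n\CA_\omega$, so $\A\subseteq_c$ some member of $\bold S\Nr_n\CA_\omega$ --- unwinding definitions gives ${\sf WRCA}_n$ exactly (here one uses that $\bold S\Nr_n\CA_\omega$ is a variety, closed under subalgebras, so membership transfers). For (5), ${\bf P}$ and ${\bf El}$ preserve the relevant closure properties: a product of algebras each having a \ws\ for \pe\ in $G_\omega$ (or $\bold H_\omega$) on its atom structure, being atomic with atom structure the disjoint union, still admits such a \ws; hence ${\bf PEl}\,\bold S_c\Nr_n\CA_\omega\cap{\bf At}$ lands in ${\sf SRCA}_n$ --- strong representability being exactly $\Cm\At\A\in\RCA_n$, which one gets from Lemma \ref{gripneat} when \pe\ wins $\bold H_\omega$; and one further ${\bf El}$ is absorbed into ${\sf FCA}_n$ since ${\sf FCA}_n$ is elementary and contains ${\sf SRCA}_n$ (as recorded just before Theorem \ref{iii}).

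The main obstacle I anticipate is item (2), specifically the claim that \emph{exactly} two of the three classes are distinct (rather than all three) and the precise handling of ``algebras with countably many atoms.'' Distinguishing $\bold S_c\Nr_n(\CA_\omega\cap{\bf At})\cap{\bf At}$ from $\bold S_c\Nr_n\CA_\omega\cap{\bf At}$ --- or proving they coincide --- requires knowing whether membership in $\bold S_c\Nr_n\CA_\omega$ for an atomic algebra can always be witnessed by an \emph{atomic} dilation; the clean statement is that one cannot decide this in general, which is why the theorem only asserts ``at least two'' are distinct. I would handle this by invoking Theorem \ref{bsl} for the one separation that is provable ($\CRCA_n\subsetneq$ the others) and leaving the finer question as it stands in the statement, being careful that the coincidence-on-countably-many-atoms claim is precisely what the Corollary after Theorem \ref{rainbow1} delivers via the $\bold H_\omega$-to-$\Nr_n\CA_\omega$ route of Lemma \ref{gripneat} combined with \cite[Theorem 5.3.3]{Sayedneat}.
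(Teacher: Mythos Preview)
Your plan for items (1)--(4) tracks the paper's argument closely; the explicit construction of an \emph{atomic} $\omega$-dilation from a complete representation (item (2), first inclusion) is spelled out in more detail in the paper, but your ``a complete representation produces an atomic $\omega$-dilation'' is the same idea. One small point on item (2): to get that \emph{all three} classes are non-elementary, the paper does not just cite Corollary~\ref{HH}(2); it uses the coincidence of the three classes on algebras with countably many atoms together with a single pair $\C\equiv\B$ (with $\B\in{\sf CRCA}_n$, $\C$ countable atomic and not completely representable) to place $\C$ outside and $\B$ inside each of the three simultaneously. Your sandwich intuition would give this too, but you should make it explicit.

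The real gap is in item (5). You write that ``a product of algebras each having a \ws\ for \pe\ in $G_\omega$ (or $\bold H_\omega$) on its atom structure \ldots\ still admits such a \ws,'' and then invoke Lemma~\ref{gripneat}. But the factors live in ${\bf El}\,\bold S_c\Nr_n\CA_\omega\cap{\bf At}={\sf LCA}_n$, and an algebra in ${\sf LCA}_n$ is only guaranteed a \ws\ for \pe\ in each \emph{finite} $G_k$, not in $G_\omega$ or $\bold H_\omega$; that is precisely the content of ${\sf LCA}_n$ being the elementary closure rather than $\bold S_c\Nr_n\CA_\omega$ itself. Moreover, Lemma~\ref{gripneat} applies only to countable atom structures, so it cannot be used on an arbitrary product. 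The paper's route is much shorter and avoids games entirely at this step: by item (3) one has ${\bf El}\,\bold S_c\Nr_n\CA_\omega\cap{\bf At}={\sf LCA}_n$, and it is already known that ${\sf LCA}_n\subseteq{\sf SRCA}_n$; since ${\sf SRCA}_n$ is closed under products, ${\bf P}\,{\sf LCA}_n\subseteq{\sf SRCA}_n$. The second inclusion then follows from ${\sf SRCA}_n\subseteq{\sf FCA}_n$ and the fact that ${\sf FCA}_n$ is elementary. You should replace your game-on-products argument with this chain.
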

\begin{proof}The first required is aleady dealt with. For the second required, we know show that ${\sf CRCA}_{n}\subseteq \bold S_c{\Nr}_{n}(\CA_{\omega}\cap \bf At)\cap \bf At$. 
Let $\A\in {\sf CRCA}_n$. Assume that $\M$ is the base of a complete representation of $\A$, whose
unit is a generalized cartesian space,
that is, $1^{\M}=\bigcup {}^nU_i$, where $^{n}U_i\cap {}^{n}U_j=\emptyset$ for distinct $i$ and $j$, in some
index set $I$, that is, we have an isomorphism $t:\B\to \C$, where $\C\in {\sf Gs}_{n}$ 
has unit $1^{\M}$, and $t$ preserves arbitrary meets carrying
them to set--theoretic intersections.
For $i\in I$, let $E_i={}^{n}U_i$. Take  $f_i\in {}^{\omega}U_i$ 
and let $W_i=\{f\in  {}^{\omega}U_i^{(f_i)}: |\{k\in \omega: f(k)\neq f_i(k)\}|<\omega\}$.
Let ${\C}_i=\wp(W_i)$. Then $\C_i\in {\sf Ws}_{\omega}$ and is atomic; indeed the atoms are the singletons. 
 Let $x\in \mathfrak{Nr}_{n}\C_i$, that is ${\sf c}_ix=x$ for all $n\leq i<\omega$.
Now if  $f\in x$ and $g\in W_i$ satisfy $g(k)=f(k) $ for all $k<n$, then $g\in x$.
Hence $\mathfrak{Nr}_{n}\C_i$
is atomic;  its atoms are $\{g\in W_i:  \{g(i):i<n\}\subseteq U_i\}.$
Define $h_i: \A\to \mathfrak{Nr}_{n}\C_i$ by
$h_i(a)=\{f\in W_i: \exists a'\in \At\A, a'\leq a;  (f(i): i<n)\in t(a')\}.$
Let $\D=\bold P _i \C_i$. Let $\pi_i:\D\to \C_i$ be the $i$th projection map.
Now clearly  $\D$ is atomic, because it is a product of atomic algebras,
and its atoms are $(\pi_i(\beta): \beta\in \At(\C_i))$.  
Now  $\A$ embeds into $\mathfrak{Nr}_{n}\D$ via $J:a\mapsto (\pi_i(a) :i\in I)$. If $x\in \mathfrak{Nr}_{n}\D$,
then for each $i$, we have $\pi_i(x)\in \mathfrak{Nr}_{n}\C_i$, and if $x$
is non--zero, then $\pi_i(x)\neq 0$. By atomicity of $\C_i$, there is an $n$--ary tuple $y$, such that
$\{g\in W_i: g(k)=y_k\}\subseteq \pi_i(x)$. It follows that there is an atom
of $b\in \A$, such that  $x\cdot  J(b)\neq 0$, and so the embedding is atomic, hence complete.
We have shown that $\A\in \bold S_c{\sf Nr}_{n}\CA_{\omega}\cap \bf At$, and since $\A$ is atomic because $\A\in {\sf CRCA}_n$
we are done with the first inclusion.  The construction of an atomic $\A\in \Nr_n\CA_{\omega}$
(having uncountably many atoms) that lacks a complete representation in Theorem \ref{bsl} 
shows that the first and last classes are distinct. 
We show that ${\sf LCA}_n={\bf El}{\sf CRCA}_n={\bf El}(\bold S_c{\sf Nr}_n\CA_{\omega}\cap {\bf At}$). 
Assume that $\A\in {\sf LCA}_n$.
Then, by definition,  for all $k<\omega$, \pe\ has a \ws\ in $G_k(\At\A)$. Using ultrapowers followed by an elementary chain argument like in  \cite[Theorem 3.3.5]{HHbook2},   \pe\ has a \ws\ in
$G_{\omega}(\At\B)$ for some countable $\B\equiv \A$, and so by \cite[Theorem 3.3.3]{HHbook2} $\B$ is completely representable.
Thus $\A\in {\bf El}{\sf CRCA}_n$. 
If $\A\in \bold S_c\Nr_n\CA_{\omega}$ is atomic, then by Lemma \ref{n}, \pe\ has a \ws\ in $\bold G^{\omega}(\At\A)$, hence in $G_{\omega}(\At\A)$, {\it a fortiori}, \pe\ has a \ws\ 
in $G_k(\At\A)$ for all $k<\omega$, so (by definition) $\A\in {\sf LCA}_n$ so (since ${\sf LCA}_n$ is elementary) ${\bf El}(\bold S_c{\sf Nr}_n\CA_{\omega}\cap {\bf At})\subseteq {\sf LCA}_n$.
So ${\sf LCA}_n={\bf El}{\sf CRCA}_n\subseteq {\bf El}(\bold S_c{\sf Nr}_n\CA_{\omega}\cap {\bf At})\subseteq {\sf LCA}_{n}$, 
and we are done.  Item (3) follows by definition taking into account that  $\RCA_n=\bold S\Nr_n\CA_{\omega}$.

Assume that $\D\in \CRCA_n$ and   $\A\subseteq_c \D$. 
Identifying set algebras with their domain let $f:\D\to \wp(V)$ be a complete representation of
$\D$, where $V$ is a ${\sf Gs}_n$ unit. 
We claim that $g=f\upharpoonright \A$ is a complete representation of $\A$. 
Let $X\subseteq \A$ be such that $\sum^{\A}X=1$. 
Then by $\A\subseteq_c \D$, we have  $\sum ^{\D}X=1$. Furthermore, for all $x\in X(\subseteq \A)$ we have $f(x)=g(x)$, so that 
$\bigcup_{x\in X}g(x)=\bigcup_{x\in X} f(x)=V$, since $f$ is a complete representation, 
and we are done. 
Let $\sf C$ be any of the two remaining classes. Closure under $\bold S_c$ follows from that $\bold S_c\bold S_c\sf C=\bold S_c\sf C$ 
Non-closure under $\bold S$ is trivial for a subalgebra of an atomic algebra may not be atomic.  We prove non--closure under $\bold H$ for all three classes in one blow. 
Take a family $(U_i: i\in \N)$ of pairwise disjoint non--empty sets. Let $V_i={}^nU_i (i\in \N)$. Take the full ${\sf Gs}_n$, $\A$ with universe $\wp(V)$, where $V=\bigcup_{i\in \N}{}V_i$. 
Then $\A\in \CRCA_n\subseteq \sf C$.  
Let 
$I$ be the ideal consisting of elements of $\A$ that intersect only finitely many of the $V_i$'s. 
Then $\A/I$ is not atomic, so $\A/I$ is outside all three classes.
Now we approach closure under ${\bf Ur}$. 
Let $\C\in {\sf CA}_n\sim {\sf CRCA}_n$ be atomic 
having countably many atoms 
and elementary equivalent to a $\B\in {\sf CRCA}_n$. Such algebras exist, cf. \cite{HH}, \cite[Theorem 5.12]{mlq}. 
Then $\C\equiv \B$, $\C$ will be outside all three classes (since they coincide on atomic algebras having countably many atoms),  
while $\B$ will be inside them all proving that non of the three is elementary, so being closed under $\bf Up$, since they are psuedo-elementary classes
(cf. \cite[Theorem 21]{r} and \cite[\S 9.3]{HHbook} for analogous cases), 
by the Keisler-Shelah ultrapower Theorem 
they are not closed under $\bf Ur$.  

Item (4) follows from that ${\sf LCA}_n\subseteq {\sf SRCA}_n$, that (it is easy to check that) 
${\sf SRCA}_n$ is closed under $\bold P$, that ${\sf SRCA}_n\subseteq {\sf FCA}_n$ and finally that the last class is elementary.
\end{proof}

For a variety $\sf V$ of $\sf BAO$s, $\sf Str(V)=\{\F: \Cm\F\in \V\}$. Fix finite $k>2$.  Then $\V_k={\sf Str}(\bold S\Nr_n\CA_{n+k})$ is not elementary 
$\implies   \V_k$ is not-atom canonical because (argueing contrapositively) in the case of atom--canonicity, 
we get that ${\sf Str}(\bold S\Nr_n\CA_{n+k})={\sf At}(\bold S\Nr_n\CA_{n+k})$, and the last class is elementary \cite[Theorem 2.84]{HHbook}. However, the converse implication may fail. 
In particular, we do not know whether ${\sf Str}(\bold S\Nr_n\CA_{n+k})$, for a particular finite $k\geq 3$, is elementary or not.
But we know that {\it there has to be a finite $k<\omega$ such that $\V_j$ is not elementary for all $j\geq k$}:
There is a finite $k$ such that the class of strongly representable algbras up to $n+k$
is not elementary. 
In the next few examples we check the boundaries of theorems \ref{rainbow1}, \ref{rainbow2} motivated by the fact that there could be 
an atomic algebra $\A$, such $\At\A\in \At\Nr_n\CA_{\omega}$ but $\A\notin \Nr_n\CA_{m}$ for all $m\leq \omega$. 
This can happen also for $\bold S\Nr_n\CA_{t(n)}$ (in place of $\Nr_n\CA_m$ with $m$ as above) 
where $t(n)=n(n+1)/2+1$ as shown in Theorem \ref{can}, but cannot happen for $\CRCA_n$. 
\begin{example}\label{SL}
Assume that $1<n<\omega$. Let $V={}^n\mathbb{Q}$ and let ${\A}\in {\sf Cs}_n$ have universe $\wp(V)$.
Then $\A\in {\sf Nr}_{n}\CA_{\omega}$.  
Let 
$y=\{s\in V: s_0+1=\sum_{i>0} s_i\}$ and ${\E}=\Sg^{\A}(\{y\}\cup X)$, where $X=\{\{s\}: s\in V\}$. 
Now $\E$ and $\A$ having same top element $V$, share the same atom structure, namely, the singletons, so 
$\Cm\At\E=\A$. Thus $\At\E\in \At\Nr_n\CA_{\omega}$ and $\A=\Cm\At\E\in \Nr_n\CA_{\omega}$. 
Since $\E\subseteq_d \A$, so $\E\in \bold S_d\Nr_n\CA_{\omega}\subseteq \bold S_c\Nr_n\CA_{\omega}$, 
but as proved in \cite{SL} $\E\notin {\bf El}\Nr_m\CA_{n+1}\subseteq \Nr_m\CA_{n+1}\supseteq \Nr_n\CA_{\omega}$. 
This can be generalized as follows:  
Let $\alpha$ be an ordinal $>1$; could be infinite. Let $\F$ is field of characteristic $0$.
$$V=\{s\in {}^{\alpha}\F: |\{i\in \alpha: s_i\neq 0\}|<\omega\},$$
$${\C}=(\wp(V),
\cup,\cap,\sim, \emptyset , V, {\sf c}_{i},{\sf d}_{i,j})_{i,j\in \alpha}.$$
Then clearly $\wp(V)\in \Nr_{\alpha}\sf CA_{\alpha+\omega}$.
Indeed let $W={}^{\alpha+\omega}\F^{(0)}$. Then
$\psi: \wp(V)\to \Nr_{\alpha}\wp(W)$ defined via
$$X\mapsto \{s\in W: s\upharpoonright \alpha\in X\}$$
is an isomorphism from $\wp(V)$ to $\Nr_{\alpha}\wp(W)$.
We shall construct an algebra $\B$, $\B\notin \Nr_{\alpha}{\sf CA}_{\alpha+1}$.
Let $y$ denote the following $\alpha$-ary relation:
$$y=\{s\in V: s_0+1=\sum_{i>0} s_i\}.$$
Let $y_s$ be the singleton containing $s$, i.e. $y_s=\{s\}.$
Define as before
${\B}\in {\CA }_{\alpha}$
as follows:
$${\B}=\Sg^{\C}\{y,y_s:s\in y\}.$$
The first order sentence that codes the idea of the proof says
that $\B$ is neither an elementary nor complete subalgebra of $\wp(V)$.
Let $\At(x)$ be the first order formula asserting that $x$ is an atom.
Let $$\tau(x,y) ={\sf c}_1({\sf c}_0x\cdot {\sf s}_1^0{\sf c}_1y)\cdot {\sf c}_1x\cdot {\sf c}_0y.$$
Let $${\sf Rc}(x):=c_0x\cap c_1x=x,$$
$$\phi:=\forall x(x\neq 0\to \exists y(\At(y)\land y\leq x))\land
\forall x(\At(x) \to {\sf Rc}(x)),$$
$$\alpha(x,y):=\At(x)\land x\leq y,$$
and  $\psi (y_0,y_1)$ be the following first order formula
$$\forall z(\forall x(\alpha(x,y_0)\to x\leq z)\to y_0\leq z)\land
\forall x(\At(x)\to \At(\sf c_0x\cap y_0)\land \At(\sf c_1x\cap y_0))$$
$$\to [\forall x_1\forall x_2(\alpha(x_1,y_0)\land \alpha(x_2,y_0)\to \tau(x_1,x_2)\leq y_1)$$
$$\land \forall z(\forall x_1 \forall x_2(\alpha(x_1,y_0)\land \alpha(x_2,y_0)\to
\tau(x_1,x_2)\leq z)\to y_1\leq z)].$$
Then
$$\Nr_{\alpha}{\sf CA}_{\beta}\models \phi\to \forall y_0 \exists y_1 \psi(y_0,y_1).$$
But this formula does not hold in $\B$.
We have $\B\models \phi\text {  and not }
\B\models \forall y_0\exists y_1\psi (y_0,y_1).$
In words: we have a set $X=\{y_s: s\in V\}$ of atoms such that $\sum^{\A}X=y,$ and $\B$
models $\phi$ in the sense that below any non zero element there is a
{\it rectangular} atom, namely a singleton.
Let $Y=\{\tau(y_r,y_s), r,s\in V\}$, then
$Y\subseteq \B$, but it has {\it no supremum} in $\B$, but {\it it does have one} in any full neat reduct $\A$ containing $\B$,
and this is $\tau_{\alpha}^{\A}(y,y)$, where
$\tau_{\alpha}(x,y) = {\sf c}_{\alpha}({\sf s}_{\alpha}^1{\sf c}_{\alpha}x\cdot {\sf s}_{\alpha}^0{\sf c}_{\alpha}y).$
In $\wp(V)$ this last is $w=\{s\in {}^{\alpha}\F^{(\bold 0)}: s_0+2=s_1+2\sum_{i>1}s_i\},$
and $w\notin \B$. .
For $y_0=y$, there is no $y_1\in \B$ satisfying $\psi(y_0,y_1)$.
\end{example}
If $\bold K$ is  class of $\sf BAO$s, and $\A\in \bold K$ is atomic, then plainly $\At\A\in \bold \At\bold K$. But the converse might not be true.
But, conversely, if $\A\in {\sf CRCA}_n$ 
has atom structure $\bf At$, then if $\A\in \CA_n$ and $\At\A=\bf At$, then $\A\in {\sf CRCA}_n$. This motivates:
\begin{definition}\label{grip} 
\begin{enumarab}
\item The  class $\bold K$ is {\it gripped by its atom structures,} or simply {\it gripped,}
if for $\A\in \CA_n$, whenever $\At\A\in \At \bold K$, then $\A\in \bold K$.
\item An $\omega$--rounded game $\bold H$ {\it grips} $\bold K$, if whenever  $\A\in\CA_n$ is atomic with countably many atoms and \pe\ has a \ws\ in $\bold H(\A)$,
then $\A\in \bold K$. The game $\bold H$ {\it weakly grips} $\bold K$, if whenever  $\A\in\CA_n$ is atomic with countably many atoms and
\pe\ has a \ws\ in $\bold H(\At\A)$, then $\At\A\in \At\bold K$. The game $\bold H$ {\it densely grips} $\bold K$,  if whenever  $\A\in\CA_n$ is atomic with countably many atoms and
\pe\ has a \ws\ in $\bold H(\At\A)$, then $\At\A\in \At\bold K$ and $\Cm\At\A\in \bold K$. 
\end{enumarab}
\end{definition}

\begin{example} 
Let $2<n<m\leq \omega$. 
\begin{itemize}
\item The classes $\RCA_n$ and $\Nr_n\CA_m$ are not gripped, by \cite{mlq, Hodkinson} and example \ref{SL} 
In fact, by \cite{mlq}, if $m\geq n+3$,  then $\bold S\Nr_n\CA_m$ is not gripped.
For any $n<\omega$, the class ${\sf CRCA}_n$, and its elementary closure, namely,
${\sf LCA}_n$ (the class of algebras satisfying the Lyndon conditions) 
are gripped. 

\item The class $\bold S_c\Nr_n\CA_m$ is gripped.

\item The usual atomic game $G$ weakly grips, densely grips and 
grips ${\sf CRCA}_n$. 
\end{itemize}
\end{example}
Now we define the game $\bold H$, mentioned above outline of Theorem \ref{rainbow}, that  
densely grips, hence weakly grips, 
$\Nr_n\CA_{\omega}$, but we it can be shown that $\bold H$ 
does not grip $\Nr_n\CA_{\omega}$. Accordingly $\bold H$ will be used to prove a slightly weaker result 
as stated in the idea of proof.  

The following definition, inspired by the result in Theorem \ref{can}, stresses the fact that some algebras are 'more representable' than others.

\begin{definition}

Let $m\leq\omega$ and $\A\in \RCA_n$ be atomic:. 

1. Then  $\A$ is  {\it strongly representable} up to $m$ if  if $\Cm\At\A\in \bold S\Nr_n\CA_{n+m}$.

2. We say that $\A$ is {\it extremely} representable up to $m$ if   if $\Cm\At\A\in \bold S_c\Nr_n\CA_{n+m}$.

\end{definition}

\begin{example} 
The atomic algebra $\C=\C_{\Z, \N}$ used in \cite{mlq}, to be recalled below is $\Cm({\sf At}\C)\in \RCA_n$, hence $\C$ strongly representable up to $m$ for any $m\leq \omega$, 
but $\C$  is not extremely represetable up to $\omega$ since 
it lacks a complete representation to be proved in a while
 upon observing that it has only countably many atoms. 
Worthy of note is that in fact $\C\in {\sf LCA}_n(\subseteq {\sf SRCA}_n$).    
However,  if $\A\in \RCA_n$ is finite and $m\leq \omega$, 
then $\A$ is strongly representable up to $m$ $\iff$ $\A$ is  extremely representable up to $m$.
\end{example}

\begin{theorem} An atomic algebra $\A\in \RCA_n$  is strongly representable up to $\omega\iff \A\in {\sf SRCA}_n$. If $\A$ is atomic with countably many atoms, then 
$\A$ extremely representable up to $\omega$ $\iff$ $\A\in \CRCA_n$. 
\end{theorem}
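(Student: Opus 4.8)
The plan is to prove each of the two biconditionals separately, in both cases by unwinding the definition of the newly introduced notions of ``strongly representable up to $m$'' and ``extremely representable up to $m$'' at $m=\omega$, and then feeding the resulting statements into theorems already established in the excerpt.

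For the first biconditional, recall that $\A\in\RCA_n$ is atomic and ``strongly representable up to $\omega$'' means $\Cm\At\A\in\bold S\Nr_n\CA_{n+m}$ for all $m\leq\omega$, that is, $\Cm\At\A\in\bigcap_{m<\omega}\bold S\Nr_n\CA_{n+m}=\bold S\Nr_n\CA_{\omega}$. By the discussion following the definition of ${\sf SRCA}_n$ in the introduction (where $\RCA_\omega=\bold{SP}\sf Cs_\omega$ was identified with $\bold S\Nr_n\CA_{\omega}$, or rather $\RCA_n=\bold S\Nr_n\CA_\omega$), this is precisely the assertion $\Cm\At\A\in\RCA_n$, which by definition means $\At\A$ is strongly representable, i.e. $\A\in{\sf SRCA}_n$. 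So the first part is essentially a bookkeeping step: one only has to note that $\bold S\Nr_n\CA_{\omega}=\bigcap_{k<\omega}\bold S\Nr_n\CA_{k}$ (so that ``up to $m$ for all $m$'' collapses to ``up to $\omega$''), and then invoke that $\RCA_n=\bold S\Nr_n\CA_\omega$. No genuine difficulty here.

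For the second biconditional, assume $\A\in\RCA_n$ is atomic \emph{with countably many atoms}. ``Extremely representable up to $\omega$'' unwinds to $\Cm\At\A\in\bold S_c\Nr_n\CA_{\omega}$. Now for the forward direction: since $\A\subseteq_c\Cm\At\A$ (an atomic algebra is a complete — in particular dense, hence by Lemma \ref{join} complete — subalgebra of the complex algebra of its atom structure), and $\bold S_c\bold S_c=\bold S_c$, we get $\A\in\bold S_c\Nr_n\CA_{\omega}$; being countable (it has countably many atoms, hence is countable, being atomic) Theorem \ref{finalresult}(1) gives $\A\in{\sf CRCA}_n$. For the converse, if $\A\in{\sf CRCA}_n$ then by Theorem \ref{finalresult}(1) again $\A\in\bold S_c\Nr_n\CA_{\omega}$; I must upgrade this to $\Cm\At\A\in\bold S_c\Nr_n\CA_{\omega}$. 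Here one uses that a complete representation of $\A$ extends to a complete representation of $\Cm\At\A$: since $\At\A=\At\Cm\At\A$ and a complete representation of an atomic algebra is determined by its action on atoms (the argument spelled out in the proof of Theorem \ref{finalresult} for the step ``$\A\subseteq_c\D\in{\sf CRCA}_n\Rightarrow\A\in{\sf CRCA}_n$'', run in the reverse direction: define $g$ on $\Cm\At\A$ by $g(c)=\sum_{x\in c\downarrow}f(x)$), $\Cm\At\A$ is completely representable, hence lies in $\bold S_c\Nr_n\CA_{\omega}$ by the other inclusion of Theorem \ref{finalresult}(1) (note $\Cm\At\A$ also has only countably many atoms). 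Then $\Cm\At\A\in\bold S_c\Nr_n\CA_\omega$ is exactly ``extremely representable up to $\omega$''.

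The main obstacle is the forward direction of the second biconditional, specifically justifying that $\A$ being countable is what licenses the passage from $\Cm\At\A\in\bold S_c\Nr_n\CA_{\omega}$ to complete representability of $\A$ itself — but this is handled cleanly by Theorem \ref{finalresult}(1), which states $\bold S_c\Nr_n\CA_{\omega}\cap{\sf Count}={\sf CRCA}_n\cap{\sf Count}$, together with the observation $\A\subseteq_c\Cm\At\A$. The only subtlety worth a sentence in the writeup is confirming that ``countably many atoms'' plus atomicity indeed forces $\A$ (and $\Cm\At\A$, which has the same atoms) to be countable, so that the ${\sf Count}$ hypothesis of Theorem \ref{finalresult}(1) applies on both sides; for a $\CA_n$ this follows since every element is a (possibly infinite, but in a countable complete algebra still only countably many distinct) join of atoms, and more carefully $\Cm\At\A$ has at most continuum-many elements — but in fact for the application we only need $\A$ itself countable, which is immediate as $\A$ embeds into $\Cm\At\A$ and is generated under the countable operations by its countably many atoms together with $0$. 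I would close by remarking that the countability hypothesis cannot be dropped in the second biconditional: Theorem \ref{bsl} produces an atomic $\A\in\Nr_n\CA_\omega\subseteq\bold S_c\Nr_n\CA_\omega$ (so $\Cm\At\A=\A$ is extremely representable up to $\omega$) which is not completely representable.
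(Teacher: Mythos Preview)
Your approach matches the paper's: unwind the definitions at $m=\omega$, invoke $\RCA_n=\bold S\Nr_n\CA_\omega$ for the first biconditional, and for the second use $\A\subseteq_c\Cm\At\A$ plus idempotence of $\bold S_c$ in one direction and the fact that complete representability is determined by the atom structure in the other.

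There is however one genuine gap. You assert that an atomic $\A$ with countably many atoms is itself countable, arguing it is ``generated under the countable operations by its countably many atoms''. This is false: an atomic algebra need not be generated by its atoms. The stock counterexample is $\wp(\omega)$, atomic with countably many atoms (the singletons) yet of size $2^{\aleph_0}$. The same problem reappears when you invoke Theorem~\ref{finalresult}(1) for $\Cm\At\A$, which is never countable when $\At\A$ is countably infinite. The paper avoids this: for the forward direction it simply uses the form of the result that requires only \emph{countably many atoms}, not countability of the whole algebra --- equivalently, one passes to the genuinely countable term algebra $\Tm\At\A\subseteq_c\Cm\At\A$ and uses that $\CRCA_n$ is gripped by its atom structures. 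For the converse, once you have $\Cm\At\A\in\CRCA_n$ (your extension-of-complete-representation argument is fine here and is exactly what the paper does), use the unconditional inclusion $\CRCA_n\subseteq\bold S_c\Nr_n\CA_\omega$ of Theorem~\ref{finalresult}(2) rather than item (1).

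A minor point: in the paper's definition, ``strongly representable up to $\omega$'' is the single condition $\Cm\At\A\in\bold S\Nr_n\CA_{n+\omega}$ (the value $m=\omega$), not ``for all $m\leq\omega$'' as you parse it; since $n+\omega=\omega$ your conclusion is unaffected.
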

\begin{proof} The first part follows by observing that $\RCA_n=\bold S\Nr_n\CA_{\omega}$. 
For the second part. Assume that $\A$ has countably many atoms and $\A$ is extremely representabe up to $\omega$. 
Then $\Cm\At\A\in \bold S_c\Nr_n\CA_{\omega}$. Since $\A\subseteq_d \Cm\At\A$, 
then $\A\subseteq_c \Cm\At\A$, and since the operator $\bold S_c$ is obviously idempotent ($\bold S_c\bold S_c=\bold S_c$), we get that
$\A\in \bold S_c\Nr_n\CA_{\omega}$; having countably many atoms we get $\A\in \CRCA_n$. Conversely, if $\A\in \CRCA_n$ is atomic 
having countably many atoms, then $\Cm\At\A\in \CRCA_n$, 
$\A\in \bold S_c\Nr_n\CA_{\omega}$. 
\end{proof}
\begin{lemma}\label{gripneat2} Let $\alpha$ be a countable atom structure. If \pe\ has a \ws\ in $\bold H_{\omega}(\alpha)$, 
then there exists a complete $\D\in \RCA_{\omega}$ such that 
$\alpha\cong \At\Nrr_n\D$. 
\end{lemma}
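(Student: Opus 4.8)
The statement to be proved is Lemma \ref{gripneat2}: if \pe\ has a \ws\ in $\bold H_{\omega}(\alpha)$ for a countable atom structure $\alpha$, then there is a complete $\D\in \RCA_{\omega}$ with $\alpha\cong \At\Nrr_n\D$. This is the ``atom-structure level'' refinement of Theorem \ref{gripneat}, which already gives a complete $\D\in \RCA_{\omega}$ with $\Cm\alpha\cong \Nrr_n\D$ (and in particular $\Cm\alpha\in \Nr_n\CA_{\omega}$). The plan is therefore to \emph{reuse the construction of Theorem \ref{gripneat} verbatim} and then add one extra bookkeeping observation, namely that the dilation $\D$ produced there can be taken to be atomic, with its atoms sitting inside $\Nrr_n\D$ and corresponding bijectively to the atoms of $\alpha$.

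First I would recall the construction from the proof of Theorem \ref{gripneat}: fixing $a\in\alpha$, one uses \pe's \ws\ in $\bold H_{\omega}(\alpha)$ to build a nested chain $M_0\subseteq M_1\subseteq\cdots$ of $\lambda$--neat hypernetworks whose limit $\M_a=\bigcup M_i$ has the two key saturation properties (the cylindrifier/witness property and the partial-isomorphism extension property obtained from \pe's responses to amalgamation moves). From $\M_a$ one forms the weak set algebra $\D_a$ with base $\U_a=\{f\in {}^{\omega}\nodes(\M_a): |\{i:f(i)\neq f_a(i)\}|<\omega\}$ and universe $\{\phi^{{\cal M}_a}:\phi\in L_{\infty,\omega}\}$, and sets $\D=\bold P_{a\in\alpha}\D_a$, a generalized complete weak set algebra. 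The argument in Theorem \ref{gripneat} shows $\Nrr_n\D$ is complete and, via density $\Nrr_n\D\subseteq_d\Cm\alpha$, that $\Nrr_n\D=\Cm\alpha$. For the present lemma I would instead track the atoms: for each atom $b\in\alpha$ and each $f\in\U_a$ with a clique whose ``type'' realizes $b$, the element of $\D_a$ defined by the $n$--ary formula $b(x_0,\ldots,x_{n-1})$ is nonzero, and more importantly the coloured-graph/atom $b$ determines an atom of $\Nrr_n\D_a$ — precisely because in these hypernetwork constructions the atoms of $\alpha$ are ``literally indivisible'' (the same phenomenon exploited in Hodkinson's construction and quoted in the proof of Theorem \ref{can}). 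Hence each $b\in\alpha$ gives an atom $\hat b\in\Nrr_n\D$, the map $b\mapsto\hat b$ is injective, and it respects the diagonal and accessibility relations $E_{ij}$, $T_i$ of the atom structure by the definition of the $n$--clique--guarded operations; so $b\mapsto\hat b$ is an embedding of atom structures $\alpha\hookrightarrow\At\Nrr_n\D$.

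For surjectivity — that \emph{every} atom of $\Nrr_n\D$ arises this way — I would use completeness of $\Nrr_n\D$ together with $\Nrr_n\D=\Cm\alpha$ from Theorem \ref{gripneat}: since $\Cm\alpha$ is the complex algebra of $\alpha$, its atoms are exactly the singletons $\{b\}$, $b\in\alpha$, and these are identified under the isomorphism with the $\hat b$; so $\At\Nrr_n\D=\{\hat b:b\in\alpha\}\cong\alpha$. Alternatively, without invoking $\Nrr_n\D=\Cm\alpha$ directly, one argues that any nonzero $x\in\Nrr_n\D$ has a component $\pi_a(x)\neq0$ in some $\D_a$, pick $f\in\pi_a(x)$, and the ``$n$--type'' of the clique $\rng(f)$ in $\M_a$ is some atom $b\in\alpha$ with $\hat b\leq x$; this gives atomicity of $\Nrr_n\D$ with atoms among the $\hat b$, and combined with injectivity yields the bijection. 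Finally $\D\in\RCA_{\omega}$ because it is a product of weak set algebras, and $\D$ is complete because each $\D_a$ is (being a full weak set algebra the relevant suprema exist, as already used in Theorem \ref{gripneat}).

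\textbf{Main obstacle.} The one genuinely load-bearing step is the claim that atoms of $\alpha$ remain indivisible after passing to $\Nrr_n\D$, i.e.\ that the ``type'' of an $n$--clique in $\M_a$ is a single atom of $\alpha$ rather than splitting into a join of several. This is where one must lean on the structure of the game $\bold H_{\omega}$ and of hypernetworks — the $\lambda$--neat labelling and the fact that $M_i(\bar x)$ for $\bar x\in{}^n\nodes(M_i)$ is always \emph{an atom} of $\alpha$, not an arbitrary element — so that distinct atoms of $\alpha$ are never ``glued'' and no atom is ever refined. I expect the argument here to parallel \cite[Lemma 5.3]{Hodkinson} and the corresponding step in the proof of Theorem \ref{gripneat}, so I would present it by citing that pattern and spelling out only the point where the present (topological-free, $\CA_n$) setting differs, namely that the extra modal operators are the identity and so contribute nothing to the atom structure.
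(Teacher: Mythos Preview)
Your proposal is essentially correct, and in fact your \emph{first} route --- invoking Theorem \ref{gripneat}'s conclusion $\Nrr_n\D\cong\Cm\alpha$ and then reading off $\At\Nrr_n\D\cong\At\Cm\alpha=\alpha$ --- is a perfectly valid one--line proof of the lemma, cleaner than what the paper does. The paper instead takes (a sharper version of) your second, ``alternative'' route: it works directly inside $\D=\bold P_{a\in\alpha}\D_a$, picks a nonzero $x\in\Nrr_n\D$, projects to some $\pi_a(x)=\phi(x_{i_0},\ldots,x_{i_{k-1}})^{\D_a}\neq 0$, chooses $f$ in this set with ${\cal M}_a,f\models b(x_0,\ldots,x_{n-1})$ for some $b\in\alpha$, and then proves the key inclusion $b(x_0,\ldots,x_{n-1})^{\D_a}\subseteq\phi^{\D_a}$. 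The mechanism for this inclusion is precisely the amalgamation--move property built into $\M_a$: for any $g$ also realizing $b$, the map $\{(f(i),g(i)):i<n\}$ is a partial isomorphism of $\M_a$ (here the constant $\lambda$--labels on short hyperedges are what make this a partial isomorphism), which extends to some $\theta$ whose domain contains $f(i_0),\ldots,f(i_{k-1})$; transporting along $\theta$ and using $\phi^{\D_a}\in\Nrr_n\D_a$ gives $g\in\phi^{\D_a}$. This pins down exactly why the $\iota_a(b(\bar x)^{\D_a})$ are the atoms of $\Nrr_n\D$, and the map $b\mapsto (b(\bar x)^{\D_a}:a\in\alpha)$ is then the desired isomorphism of atom structures.

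Your ``main obstacle'' paragraph correctly locates the load--bearing step but is vague about the mechanism: you speak of atoms not being ``glued'' or ``refined'' and cite the Hodkinson pattern, but the actual lever is the partial--isomorphism extension property coming from \pe's responses to amalgamation moves, together with the $\lambda$--neat labelling. That said, since your shortcut via Theorem \ref{gripneat} is available, you can legitimately bypass this entirely; the paper's direct argument has the advantage of making visible \emph{which} feature of the game $\bold H_{\omega}$ (beyond ordinary cylindrifier moves) is doing the work, and of not leaning on the somewhat under--argued ``legitimate identification $\Nrr_n\D\subseteq_d\Cm\alpha$'' step at the end of Theorem \ref{gripneat}'s proof.
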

\begin{proof} We show that $\alpha\cong \At\mathfrak{Nr}_n\D$ with $\D$ as in Theorem \ref{gripneat}
Let $x\in \D$. Then $x=(x_a:a\in\alpha)$, where $x_a\in\D_a$.  For $b\in\alpha$ let
$\pi_b:\D\to \D_b$ be the projection map defined by
$\pi_b(x_a:a\in\alpha) = x_b$.  Conversely, let $\iota_a:\D_a\to \D$
be the embedding defined by $\iota_a(y)=(x_b:b\in\alpha)$, where
$x_a=y$ and $x_b=0$ for $b\neq a$.  
Suppose $x\in\Nrr_n\D\setminus\set0$.  Since $x\neq 0$,
then it has a non-zero component  $\pi_a(x)\in\D_a$, for some $a\in \alpha$.
Assume that $\emptyset\neq\phi(x_{i_0}, \ldots, x_{i_{k-1}})^{\D_a}= \pi_a(x)$, for some $L$-formula $\phi(x_{i_0},\ldots, x_{i_{k-1}})$.  We
have $\phi(x_{i_0},\ldots, x_{i_{k-1}})^{\D_a}\in\Nrr_{n}\D_a$.
Pick
$f\in \phi(x_{i_0},\ldots, x_{i_{k-1}})^{\D_a}$  
and assume that ${\cal M}_a, f\models b(x_0,\ldots x_{n-1})$ for some $b\in \alpha$.
We show that
$b(x_0, x_1, \ldots, x_{n-1})^{\D_a}\subseteq
 \phi(x_{i_0},\ldots, x_{i_{k-1}})^{\D_a}$.  
Take any $g\in
b(x_0, x_1\ldots, x_{n-1})^{\D_a}$,
so that ${\cal M}_a, g\models b(x_0, \ldots x_{n-1})$.  
The map $\{(f(i), g(i)): i<n\}$
is a partial isomorphism of ${\cal M}_a.$ Here that short hyperedges are constantly labelled by $\lambda$ 
is used.
This map extends to a finite partial isomorphism
$\theta$ of $M_a$ whose domain includes $f(i_0), \ldots, f(i_{k-1})$.
Let $g'\in {\cal M}_a$ be defined by
\[ g'(i) =\left\{\begin{array}{ll}\theta(i)&\mbox{if }i\in\dom(\theta)\\
g(i)&\mbox{otherwise}\end{array}\right.\] 
We have ${\cal M}_a,
g'\models\phi(x_{i_0}, \ldots, x_{i_{k-1}})$. But 
$g'(0)=\theta(0)=g(0)$ and similarly $g'(n-1)=g(n-1)$, so $g$ is identical
to $g'$ over $n$ and it differs from $g'$ on only a finite
set.  Since $\phi(x_{i_0}, \ldots, x_{i_{k-1}})^{\D_a}\in\Nrr_{n}\D_a$, we get that
${\cal M}_a, g \models \phi(x_{i_0}, \ldots,
x_{i_k})$, so $g\in\phi(x_{i_0}, \ldots, x_{i_{k-1}})^{\D_a}$ (this can be proved by induction on quantifier depth of formulas).  
This
proves that 
$$b(x_0, x_1\ldots x_{n-1})^{\D_a}\subseteq\phi(x_{i_0},\ldots,
x_{i_k})^{\D_a}=\pi_a(x),$$ and so
$$\iota_a(b(x_0, x_1,\ldots x_{n-1})^{\D_a})\leq
\iota_a(\phi(x_{i_0},\ldots, x_{i_{k-1}})^{\D_a})\leq x\in\D_a\setminus\set0.$$
Now every non--zero element 
$x$ of $\Nrr_{n}\D_a$ is above a non--zero element of the following form 
$\iota_a(b(x_0, x_1,\ldots, x_{n-1})^{\D_a})$
(some $a, b\in \alpha$) and these are the atoms of $\Nrr_{n}\D_a$.  
The map defined  via $b \mapsto (b(x_0, x_1,\dots, x_{n-1})^{\D_a}:a\in \alpha)$ 
is an isomorphism of atom structures, 
so that $\alpha\in \At{\sf Nr}_n\CA_{\omega}$.  
\end{proof}
Having Lemma \ref{gripneat2} at our hand,  if we go down to the level of atom structures (of atomic algebras), 
then we can remove the $\bold S_d$ appearing in Theorem \ref{rainbow1}. Going from atomic algebras to atom strctures this is feasable, 
but the process is not reversile as shown in 
example \ref{SL}. More succinctly the implicaion Corolary \ref{rainbow2}$\implies$ Theorem \ref{rainbow1} is not valid, 
because precisely $\Nr_n\CA_m$ for any $2<n<m\leq m$ is not 
gripped.
\begin{corollary}\label{rainbow2} Any class of frames $\bold K$, such $\At(\Nr_n\CA_{\omega}\cap \CRCA_n)\subseteq \bold K\subseteq \At\bold S_c\Nr_n\CA_{n+3}$  is not elementary
\end{corollary}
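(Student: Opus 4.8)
The plan is to reproduce the argument of Theorem~\ref{rainbow1} one level down — at atom structures rather than at atomic algebras — invoking Lemma~\ref{gripneat2} in place of Lemma~\ref{gripneat}. Concretely, I will exhibit two elementarily equivalent frames, one lying in $\bold K$ and one outside $\bold K$, so that $\bold K$ fails to be closed under elementary equivalence. The witness is again the rainbow-like algebra $\C=\C_{\Z,\N}$ of \cite[Theorem 5.12]{mlq}; replacing it by its term algebra we may assume $\C$ is countable, and we set $\alpha=\At\C$. From the proof of Theorem~\ref{rainbow1} we may carry over two facts about $\alpha$: (i) \pe\ has a \ws\ in $\bold H_k(\alpha)$ for every finite $k$ (the order-preserving constraint on the reds lets her keep the constant hyperlabel $\lambda$ on short hyperedges and assign fresh hyperlabels to long ones, as spelled out there), and (ii) \pa\ has a \ws\ in $\bold G^{n+3}(\alpha)$ by bombarding \pe\ with cones of distinct green tints and re-using nodes to force a strictly decreasing sequence in $\N$.

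By (ii) and Lemma~\ref{n}, \emph{no} algebra with atom structure $\alpha$ has an $(n+3)$-square (hence no complete $(n+3)$-flat) representation; in particular no such algebra lies in $\bold S_c\Nr_n\CA_{n+3}$, so $\alpha\notin\At\bold S_c\Nr_n\CA_{n+3}\supseteq\bold K$, i.e. $\alpha\notin\bold K$. For the second frame, I would pass to a non-principal ultrapower $\D$ of $\C$: playing $\sigma_k$ in the $k$-th coordinate, \pe\ has a \ws\ in $\bold H_\omega(\At\D)$, and the usual elementary chain argument (building countable $\C=\A_0\preceq\A_1\preceq\cdots\preceq\D$ with $\A_{i+1}$ absorbing the elements that \pe's strategy selects against \pa-moves from $\A_i$, and setting $\B=\bigcup_i\A_i$) yields a countable $\B\equiv\C$ on whose atom structure $\beta:=\At\B$ \pe\ has a \ws\ in $\bold H_\omega(\beta)$. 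By Lemma~\ref{gripneat2}, $\beta\cong\At\Nr_n\D'$ for some complete $\D'\in\RCA_\omega$, so $\Cm\beta\cong\Nr_n\D'\in\Nr_n\CA_\omega$, and $\Cm\beta$ is atomic with only countably many atoms; hence by Theorem~\ref{finalresult}(1) (equivalently \cite[Theorem 5.3.6]{Sayedneat}) $\Cm\beta\in\CRCA_n$. Therefore $\Cm\beta\in\Nr_n\CA_\omega\cap\CRCA_n$ is atomic, whence $\beta=\At\Cm\beta\in\At(\Nr_n\CA_\omega\cap\CRCA_n)\subseteq\bold K$.

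Finally, $\C\equiv\B$ as $\CA_n$'s, and the atom structure of an atomic $\CA_n$ is first-order interpretable in the algebra (the atoms form a first-order definable subset and each relation $E_{ij}$, $T_i$ of the atom structure is definable from the cylindric operations), so $\alpha\equiv\beta$ as frames; thus $\bold K$ contains $\beta$ but not the elementarily equivalent $\alpha$, and $\bold K$ is not elementary. The only technically delicate ingredient is point (i): upgrading \pe's winning strategies in the atomic games $\bold G^m_k$ on $\C_{\Z,\N}$ to winning strategies in the hypernetwork games $\bold H_k$, uniformly in $k$, so that the ultrapower/elementary-chain step goes through — this is precisely the (appendix) computation already required for Theorem~\ref{rainbow1}, and nothing new is needed here. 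One should also remark why this corollary does \emph{not} in turn recover Theorem~\ref{rainbow1}: one cannot pull the conclusion back from the frame $\beta$ to the algebra $\B$ itself lying in $\Nr_n\CA_\omega$, since $\Nr_n\CA_m$ is not gripped by its atom structures, as Example~\ref{SL} shows.
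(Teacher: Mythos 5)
Your proposal is correct and follows essentially the same route as the paper, whose own proof is just a pointer to the technique of Theorem~\ref{rainbow1} combined with Lemmata~\ref{gripneat} and \ref{gripneat2}: you use the same witnesses $\alpha=\At\C_{\Z,\N}$ (excluded from $\At\bold S_c\Nr_n\CA_{n+3}$ via \pa's \ws\ in $\bold G^{n+3}$ and Lemma~\ref{n}) and $\beta=\At\B$ from the ultrapower/elementary-chain construction (placed in $\At(\Nr_n\CA_{\omega}\cap\CRCA_n)$ via Lemmata~\ref{gripneat}, \ref{gripneat2} and countability). The only detail you supply that the paper leaves tacit is the passage from $\C_{\Z,\N}\equiv\B$ to $\alpha\equiv\beta$ by first-order interpretability of the atom structure in an atomic algebra, which is exactly the intended step.
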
 
\begin{proof} 
Using the technique in Theorem \ref{rainbow1} together with Lemmata \ref{gripneat} and \ref{gripneat2}.
 \end{proof}
\begin{theorem} Any class $\bold K$ such that $\Nr_n\CA_{\omega}\subseteq \bold K\subseteq \bold S_d\Nr_n\CA_{n+1}$, $\bold K$ is not elementary.
\end{theorem}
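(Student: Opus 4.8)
The plan is to exploit the two algebras already constructed in the excerpt: the atomic $\C_{\Z,\N}$ of Theorem~\ref{rainbow1}, for which \pa\ has a \ws\ in $\bold G^{n+3}(\At\C_{\Z,\N})$ (hence, by Lemma~\ref{n}, $\C_{\Z,\N}\notin \bold S_c\Nr_n\CA_{n+3}$, \emph{a fortiori} $\C_{\Z,\N}\notin \bold S_d\Nr_n\CA_{n+3}\supseteq \bold S_d\Nr_n\CA_{n+1}$), and its countable elementary equivalent $\B$ with $\Cm\At\B\in \Nr_n\CA_{\omega}$ and $\B\subseteq_d\Cm\At\B$, so that $\B\in \bold S_d\Nr_n\CA_{\omega}\subseteq \bold S_d\Nr_n\CA_{n+1}$. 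Since $\Nr_n\CA_{\omega}\subseteq \bold K\subseteq \bold S_d\Nr_n\CA_{n+1}$, we have $\B\in \bold S_d\Nr_n\CA_{\omega}$; but $\B\in \Nr_n\CA_{\omega}$ need not hold, so I cannot simply invoke $\Nr_n\CA_{\omega}\subseteq \bold K$ on $\B$ directly. The fix is to use Lemma~\ref{gripneat2}: \pe\ has a \ws\ in $\bold H_{\omega}(\At\B)$ (this was established in part (2) of the proof of Theorem~\ref{rainbow1}, where it is shown $\sigma_k$ lifts through the ultrapower/elementary-chain construction to a \ws\ for \pe\ in $\bold H_{\omega}(\At\B)$), hence $\At\B\cong \At\Nr_n\D$ for some complete $\D\in\RCA_{\omega}$, and so $\Cm\At\B\cong \Nr_n\D\in \Nr_n\CA_{\omega}\subseteq \bold K$.

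First I would record that $\Cm\At\B\in\Nr_n\CA_{\omega}\subseteq\bold K$ by the argument just sketched (via Lemma~\ref{gripneat2} and Theorem~\ref{gripneat}). Next I would show $\C_{\Z,\N}\notin\bold K$: since $\bold K\subseteq\bold S_d\Nr_n\CA_{n+1}\subseteq\bold S_c\Nr_n\CA_{n+1}$, it suffices that $\C_{\Z,\N}\notin\bold S_c\Nr_n\CA_{n+3}$, which is part (3) of the proof of Theorem~\ref{rainbow1} via Lemma~\ref{n} (note $\bold S_c\Nr_n\CA_{n+3}\subseteq \bold S_c\Nr_n\CA_{n+1}$, so the weaker non-membership at level $n+1$ follows; here one uses that \pa's \ws\ in $\bold G^{n+3}$ is also a \ws\ in $\bold G^{n+1}$ by reusing fewer nodes, or more simply that $\bold S_c\Nr_n\CA_{n+1}\supseteq\bold S_c\Nr_n\CA_{n+3}\not\ni\C_{\Z,\N}$ already gives it if one is careful about direction — in fact the correct reading is $\C_{\Z,\N}\notin \bold S_c\Nr_n\CA_{n+3}$ and one wants $\C_{\Z,\N}\notin\bold S_c\Nr_n\CA_{n+1}$; but $\bold S_c\Nr_n\CA_{n+1}\supseteq\bold S_c\Nr_n\CA_{n+3}$ so non-membership does not transfer downwards automatically — one must instead observe that \pa's winning strategy in $\bold G^{n+3}$ restricts to one in the coarser game on fewer variables, which is exactly how the graph-game argument in part (3) runs, bombarding \pe\ with cones and forcing a decreasing sequence in $\N$ using only the base $n-1$ nodes plus a bounded number of reused nodes, so the same strategy works at parameter $n+1$ as well). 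Finally, combining $\Cm\At\B\in\bold K$, $\C_{\Z,\N}=\Cm\At\C_{\Z,\N}\notin\bold K$, and $\C_{\Z,\N}\equiv\B$ — whence $\Cm\At\C_{\Z,\N}\equiv\Cm\At\B$ by the interpretability of the complex-algebra-of-the-atom-structure construction in the first-order theory of an atomic algebra (the map $\A\mapsto\Cm\At\A$ is elementary on atomic algebras, cf.\ the discussion around $\sf Str$ and $\At$ in the excerpt) — yields an algebra $\Cm\At\B\in\bold K$ elementarily equivalent to $\Cm\At\C_{\Z,\N}\notin\bold K$, so $\bold K$ is not elementary.

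The main obstacle I anticipate is the step "$\C_{\Z,\N}\notin\bold S_c\Nr_n\CA_{n+1}$" being genuinely needed (rather than only the already-proven $\notin\bold S_c\Nr_n\CA_{n+3}$), because $\bold K$ is only assumed to sit below $\bold S_d\Nr_n\CA_{n+1}$, a \emph{larger} class than $\bold S_d\Nr_n\CA_{n+3}$. To handle this cleanly I would verify directly that the \ws\ for \pa\ described in part (3) of the proof of Theorem~\ref{rainbow1} — forcing a strictly decreasing $\omega$-sequence of red indices by playing cones with green tints $\g_0^0,\g_0^{-1},\g_0^{-2},\dots$ and reusing nodes — in fact only requires $n+1$ available nodes on the board (the base of a cone has $n-1$ nodes, plus the apex of the current cone, plus one reused/previous node), so \pa\ wins $\bold G^{n+1}(\At\C_{\Z,\N})$ outright, and then Lemma~\ref{n} gives $\C_{\Z,\N}\notin\bold S_c\Nr_n\CA_{n+1}$, hence $\C_{\Z,\N}\notin\bold S_d\Nr_n\CA_{n+1}\supseteq\bold K$. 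Everything else is routine bookkeeping with the density relation $\subseteq_d$, the identity-operator topological expansion (which is transparent here since all modalities are the identity), and the elementarity of $\A\mapsto\Cm\At\A$.
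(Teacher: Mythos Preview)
Your approach is fundamentally different from the paper's, and it has two genuine gaps that make it fail.

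\textbf{First gap: the level $n+1$ versus $n+3$.} You need $\C_{\Z,\N}\notin\bold S_d\Nr_n\CA_{n+1}$, but the paper only establishes $\C_{\Z,\N}\notin\bold S_c\Nr_n\CA_{n+3}$. Since $\bold S_d\Nr_n\CA_{n+1}\supseteq\bold S_d\Nr_n\CA_{n+3}$, non-membership at parameter $n+3$ does \emph{not} transfer to $n+1$. Your suggestion that \pa's strategy already wins $\bold G^{n+1}$ is not correct: the strategy as written requires three apexes $n-1,n,n+1$ present simultaneously (to force the matching $M_3(n+1,n)=\r_{lk}$, $M_3(n+1,n-1)=\r_{km}$), on top of the $n-1$ base nodes. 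In fact, immediately after this theorem the paper remarks that $\bold S_d\Nr_n\CA_{n+1}$ and $\bold S_c\Nr_n\CA_{n+3}$ are \emph{incomparable}, so there is no reason to expect $\C_{\Z,\N}$ to lie outside $\bold S_d\Nr_n\CA_{n+1}$.

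\textbf{Second gap: the elementarity step.} Even granting $\Cm\At\B\in\Nr_n\CA_{\omega}\subseteq\bold K$, you need $\Cm\At\B\equiv\C_{\Z,\N}$ to conclude. You only have $\B\equiv\C_{\Z,\N}$. The assertion that ``$\A\mapsto\Cm\At\A$ is elementary on atomic algebras'' is false; this is exactly why ${\sf Str}$-classes can fail to be elementary. Nor can you use $\B$ itself: you only know $\B\in\bold S_d\Nr_n\CA_{\omega}$, not $\B\in\Nr_n\CA_{\omega}$, so $\B\in\bold K$ is not guaranteed (cf.\ Example~\ref{SL}, where precisely this gap between $\Nr_n\CA_{\omega}$ and $\bold S_d\Nr_n\CA_{\omega}$ is exhibited).

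\textbf{What the paper actually does.} The paper abandons rainbow algebras entirely here and instead modifies the construction of \cite[Lemma 5.1.3, Theorem 5.1.4]{Sayedneat}: it builds $\A,\B$ over a model $\Mo$ carrying $2^{2^{\omega}}$ many $n$-ary relations, now interpreted as \emph{disjoint} so that both algebras are atomic, with $\A\in\Nr_n\CA_{\omega}$ and $\A\equiv\B$. The key is a \emph{cardinality} argument: if $\B\subseteq_d\mathfrak{Nr}_n\D$ with $\D\in\CA_{n+1}$, then using the substitution term $_n{\sf s}(0,1)$ one forces $|\Rl_{Id}\mathfrak{Nr}_n\D|>2^{\omega}$, hence (by atomicity and density, via Lemma~\ref{join}) $|\At\Rl_{Id}\B|>\omega$, contradicting the construction which makes $|\Rl_{Id}\B|=\omega$. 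Thus $\B\notin\bold S_d\Nr_n\CA_{n+1}$, while $\A\in\Nr_n\CA_{\omega}\subseteq\bold K$ and $\A\equiv\B$, so $\bold K$ is not elementary. No game-theoretic input is used at all.
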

\begin{proof} We slighty modify the construction in \cite[Lemma 5.1.3, Theorem 5.1.4]{Sayedneat}. Using the same notation, the algebras $\A$ and $\B$ constructed in {\it op.cit} satisfy 
$\A\in {\sf Nr}_n\CA_{\omega}$, $\B\notin {\sf Nr}_n\CA_{n+1}$ and $\A\equiv \B$.
As they stand, $\A$ and $\B$ are not atomic, but it 
can be  fixed that they are atomic, giving the same result with the rest of the proof unaltered. This is done by interpreting the uncountably many tenary relations in the signature of 
$\Mo$ defined in \cite[Lemma 5.1.3]{Sayedneat}, which is the base of $\A$ and $\B$ 
to be {\it disjoint} in $\Mo$, not just distinct.  The construction is presented this way in \cite{IGPL}, where (the equivalent of) 
$\Mo$ is built in a 
more basic step-by-step fashon.
We work with $2<n<\omega$ instead of only $n=3$. The proof presented in {\it op.cit} lift verbatim to any such $n$.
Let $u\in {}^nn$. Write $\bold 1_u$ for $\chi_u^{\Mo}$ (denoted by $1_u$ (for $n=3$) in \cite[Theorem 5.1.4]{Sayedneat}.) 
We denote by $\A_u$ the Boolean algebra $\Rl_{\bold 1_u}\A=\{x\in \A: x\leq \bold 1_u\}$ 
and similarly  for $\B$, writing $\B_u$ short hand  for the Boolean algebra $\Rl_{\bold 1_u}\B=\{x\in \B: x\leq \bold 1_u\}.$
Then exactly like in \cite{Sayedneat}, it can be proved that $\A\equiv \B$.
Using that $\Mo$ has quantifier elimination we get, using the same argument in {\it op.cit} 
that $\A\in \Nr_n\CA_{\omega}$.  The property that $\B\notin \Nr_n\CA_{n+1}$ is also still maintained.
To see why, consider the substitution operator $_{n}{\sf s}(0, 1)$ (using one spare dimension) as defined in the proof of \cite[Theorem 5.1.4]{Sayedneat}.
Assume for contradiction that 
$\B=\Nr_{n}\C$, with $\C\in \CA_{n+1}.$ Let $u=(1, 0, 2,\ldots n-1)$. Then $\A_u=\B_u$
and so $|\B_u|>\omega$. The term  $_{n}{\sf s}(0, 1)$ acts like a substitution operator corresponding
to the transposition $[0, 1]$; it `swaps' the first two co--ordinates.
Now one can show that $_{n}{\sf s(0,1)}^{\C}\B_u\subseteq \B_{[0,1]\circ u}=\B_{Id},$ 
so $|_{n}{\sf s}(0,1)^{\C}\B_u|$ is countable because $\B_{Id}$ was forced by construction to be 
countable. But $_{n}{\sf s}(0,1)$ is a Boolean automorpism with inverse
$_{n}{\sf s}(1,0)$, 
so that $|\B_u|=|_{n}{\sf s(0,1)}^{\C}\B_u|>\omega$, contradiction.
Since $\A\equiv \B$ and $\A\in \Nr_n\CA_{\omega}\cap {\sf CRCA}_n$, it suffices to show  
(since $\B$ is atomic) that 
$\B$  is in fact outside $\bold S_d\Nr_n\CA_{n+1}\cap \bf At$.  
Take $\kappa$ the signature of $\Mo$; more specifically,  the number of $n$-ary relation symbols to be $2^{2^{\omega}}$, and assume for contradiction that  
$\B\in \bold S_d\Nr_n\CA_{n+1}\cap \bf At$. 
Then $\B\subseteq_d \mathfrak{Nr}_n\D$, for some $\D\in \CA_{n+1}$ and $\mathfrak{Nr}_n\D$ is atomic. For brevity, 
let $\C=\mathfrak{Nr}_n\D$. Then by item (1) of Lemma \ref{join} $\Rl_{Id}\B\subseteq_d \Rl_{Id}\C$.
Since $\C$ is atomic,  then by item (1) of the same Lemma $\Rl_{Id}\C$ is also atomic.  Using the same reasoning as above, we get that $|\Rl_{Id}\C|>2^{\omega}$ (since $\C\in \Nr_n\CA_{n+1}$.) 
By the choice of $\kappa$, we get that $|\At\Rl_{Id}\C|>\omega$. 
By density, we get from item (2) of Lemma \ref{join}, that $\At\Rl_{Id}\C\subseteq \At\Rl_{Id}\B$. 
Hence $|\At\Rl_{Id}\B|\geq |\At\Rl_{Id}\C|>\omega$.   
But by the construction of $\B$, $|\Rl_{Id}\B|=|\At\Rl_{Id}\B|=\omega$,   which is a  contradiction and we are done.
But $\B$ is completely reprtesentable, thus $\B\in \bold S_c\Nr_n\CA_{\omega}$.
Thus $\B\in {\bf El}(\Nr_n\CA_{\omega}\cap {{\sf CRCA}_n})\sim \bold S_d\Nr_n\CA_{\omega}$.
\end{proof}
It can be shown that neither of the two classes $\bold S_d\Nr_n\CA_{n+1}$ and $\bold S_c\Nr_n\CA_{n+3}$ 
are included in the other. The algebra $\B$ in example \ref{SL} is in the last, because it is completely representable, but not the first, 
while in \cite{t},  it is shown that for any $k\geq 1$, there is a finite $\D_k\in \CA_n$ such that $\D_k\in \bold S\Nr_n\CA_{n+k+1}\sim \Nr_n\CA_{n+k}$.
Since for any such algebra $\D_k=\D_k^+$, and for any $\E\in \CA_n$ and $m>n$, $\E\in \bold S\Nr_n\CA_{m}\iff \E\in \bold S_c\Nr_n\CA_m$, 
then $\D_k\in \bold S_c\Nr_n\CA_{n+3}\sim \bold S_d\Nr_n\CA_{n+1}$.
From this we immediately get that  $\D_2\in\bold S_c\Nr_n\CA_{n+3}\sim \bold S_d\Nr_n\CA_{n+1}$. 
Excluding elementary classes between $\Nr_n\CA_{\omega}$ and $\bold S_d\Nr_n\CA_{\omega}$ and excluding ones 
between $\bold S_d\Nr_n\CA_{\omega}$ and 
$\bold S_c\Nr_n\CA_{n+3}$, this prompts the following: 
Is there an elementary class $\sf K$ between $\Nr_n\CA_{\omega}$ and $\bold S_c\Nr_n\CA_{n+3}$ ?
Observe that if ${\sf K}\nsubseteq \bold S_d\Nr_n\CA_{\omega}=\emptyset$, then ${\sf K}\cap \sim \bold S_d\Nr_n\CA_{\omega}\neq \emptyset$. 

\begin{athm}{Conjecture}\label{rainbow4} 
For $2<n<\omega$, any class $\bold K$ such that  $\Nr_n\CA_{\omega}\cap \CRCA_n\subseteq \bold K\subseteq \bold S_c\Nr_n\CA_{n+3}$, 
$\bold K$   is not elementary.  
\end{athm}

{\bf Summary of results on (non-) first order definability of classes of algebras 
involving the operators $\Ra$ and $\Nr_n$ ($2<n<\omega)$:}
In the next table we summarize the results obtained on non first order definability proved in theorems \ref{rainbow}, \ref{ra}, \ref{raa}. 
The last column in the second row remains unsettled for $\RA$s.
      
\vskip3mm
\begin{tabular}{|l|c|c|c|c|c|c|}    \hline
					Cylindric algebras                      &Elementary                                      \\

                                                               \hline
                                                                          $\Nr_n\CA_{\omega}\subseteq \bold K\subseteq \bold S_c\Nr_n\CA_{n+3}$&?\\

                                                                           \hline
                                                                          $\At\bold \Nr_n\CA_{\omega}\subseteq \bold K\subseteq \bold \At \bold S_c\Nr_n\CA_{n+3}$  &no\\
                                                            
                                                                           \hline
                                                                          $\bold S_d\Nr_n\CA_{\omega}\subseteq \bold K\subseteq \bold S_c\Nr_n\CA_{n+3}$  &no\\

                                                                           \hline
                                                                          $\bold S_c\Nr_n\CA_{\omega}\subseteq \bold K\subseteq \bold S_c\Nr_n\CA_{n+3}$&no\\                               
\hline $\At\bold S_d\Nr_n\CA_{\omega}\subseteq \bold K\subseteq \At\bold S_d\Nr_n\CA_{n+3}$  
no\\

                                                                           \hline
                                                                          $\Nr_n\CA_{\omega}\subseteq \bold K\subseteq \bold \Nr_n\CA_{n+1}$   &no\\

\hline

\end{tabular}

\section{Representability Theory}

\subsection{Notions of representability and neat embeddings}


\begin{corollary}\label{iiii} Let $2<n<\omega$. Then the following hold:
\begin{enumerate}
\item $\Nr_n\CA_{\omega}\cap {\bf At}\subsetneq \bold S_d\Nr_n\CA_{\omega}\cap {\bf At}\subsetneq \bold S_c\Nr_n\CA_{\omega}\cap {\bf At} \subsetneq {\bf El}\bold S_c\Nr_n\CA_{\omega}\cap {\bf At}=
{\bf El}(\bold S_c\Nr_n\CA_{\omega}\cap 
{\bf At})={\sf LCA}_n\subsetneq {\sf SRCA}_n\subsetneq {\bf El}{\sf SRCA}_n\subseteq {\sf FCA}_n\subsetneq {\sf WRCA}_n,$ 
 \item  ${\sf CRCA}_n\subsetneq \bold S_c\Nr_n\CA_{\omega}\cap {\bf At} \subsetneq {\bf El}{\sf CRCA}_n={\bf El}\bold S_c\Nr_n\CA_{\omega}\cap 
{\bf At}={\sf LCA}_n$,
\item For any class ${\sf K}(\subseteq \RCA_n\cap \bf At)$ occurring in the previous two items, ${\bf El}\sf K$ is not finitely axiomatizable,
and $\bold S{\sf K}=\RCA_n$.
\end{enumerate}
\end{corollary}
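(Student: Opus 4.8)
\textbf{Proof plan for Corollary \ref{iiii}.}
The plan is to assemble this corollary entirely from results already proved in the paper, since each displayed inclusion and each non-axiomatizability claim has essentially appeared before; the work is bookkeeping, glued together with two standard facts (idempotence of $\bold S_c$ and $\bold S_d$, and the Keisler--Shelah theorem). First I would dispose of item (3): every class $\sf K$ listed in items (1) and (2) lies between $\CRCA_n$ (or $\bold S_c\Nr_n\CA_\omega\cap\bf At$) and ${\sf WRCA}_n$, hence between $\CRCA_n$ and $\RCA_n\cap\bf At$; by Corollary \ref{HH}(1), already ${\sf LCA}_n$ is not finitely axiomatizable, and since all the classes in the chain that are elementary (namely ${\bf El}\sf K$ for any $\sf K$ in the list) contain ${\sf LCA}_n$ and are contained in ${\sf WRCA}_n$, the one-variable-equation argument of \cite{Biro,maddux} invoked in Corollary \ref{HH} applies verbatim to show ${\bf El}\sf K$ is not finitely axiomatizable. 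For the equality $\bold S\sf K=\RCA_n$: we have $\CRCA_n\subseteq\sf K\subseteq\RCA_n$ (using that ${\sf WRCA}_n,{\sf FCA}_n\subseteq\RCA_n$ and ${\sf SRCA}_n\subseteq\RCA_n$), $\RCA_n$ is a variety, and $\bold S\CRCA_n=\RCA_n$ was noted in the proof of Theorem \ref{iii}(2) (every $\A\in\RCA_n$ embeds into $\Cm\Uf\A\in\CRCA_n$). Hence $\RCA_n=\bold S\CRCA_n\subseteq\bold S\sf K\subseteq\bold S\RCA_n=\RCA_n$.

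Next I would establish the chain of inclusions in item (1) by citing the relevant theorems in order and checking only the strictness witnesses. The soft inclusions $\Nr_n\CA_\omega\cap\bf At\subseteq\bold S_d\Nr_n\CA_\omega\cap\bf At\subseteq\bold S_c\Nr_n\CA_\omega\cap\bf At$ are instances of Lemma \ref{join}(2) ($\A\subseteq_d\D\Rightarrow\A\subseteq_c\D$), and $\bold S_c\Nr_n\CA_\omega\cap\bf At\subseteq{\bf El}\bold S_c\Nr_n\CA_\omega\cap\bf At$ is trivial. The two equalities ${\bf El}\bold S_c\Nr_n\CA_\omega\cap\bf At={\bf El}(\bold S_c\Nr_n\CA_\omega\cap\bf At)={\sf LCA}_n$ are Theorem \ref{finalresult}(3) together with Theorem \ref{finalresult}(1) and the equality ${\bf El}\CRCA_n={\sf LCA}_n$ (noting $\bf At$ is elementary, so ${\bf El}(\sf K\cap\bf At)={\bf El}\sf K\cap\bf At$ whenever $\sf K$ is pseudo-elementary). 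The inclusions ${\sf LCA}_n\subseteq{\sf SRCA}_n\subseteq{\bf El}{\sf SRCA}_n\subseteq{\sf FCA}_n\subseteq{\sf WRCA}_n$ and the elementarity of the end classes are exactly the facts recorded just before Theorem \ref{iii} (citing \cite{HHbook2} for ${\sf LCA}_n\subseteq{\sf SRCA}_n$, ${\sf SRCA}_n$ closed under $\bold P$, and \cite[Theorem 14.17]{HHbook} for ${\sf SRCA}_n\subseteq{\sf FCA}_n$ with ${\sf FCA}_n$ elementary, hence ${\bf El}{\sf SRCA}_n\subseteq{\sf FCA}_n$). For strictness: $\Nr_n\CA_\omega\cap\bf At\subsetneq\bold S_d\Nr_n\CA_\omega\cap\bf At$ is witnessed by the atomic algebra $\E$ of Example \ref{SL} ($\E\in\bold S_d\Nr_n\CA_\omega$ but $\E\notin{\bf El}\Nr_n\CA_\omega\supseteq\Nr_n\CA_\omega$); $\bold S_d\Nr_n\CA_\omega\cap\bf At\subsetneq\bold S_c\Nr_n\CA_\omega\cap\bf At$ is witnessed by the (atomic modification of the) algebra of \cite[Lemma 5.1.3]{Sayedneat} used at the end of the proof of Theorem \ref{iii}(4) and in the last theorem of \S7; $\bold S_c\Nr_n\CA_\omega\cap\bf At\subsetneq{\sf LCA}_n$ is witnessed by the algebra $\A$ of Theorem \ref{can} (atomic, representable, but $\At\A$ fails the Lyndon conditions, so $\A\notin{\sf LCA}_n$, while $\A\in\RCA_n$ and one checks $\A\notin\bold S_c\Nr_n\CA_\omega$ since otherwise by Lemma \ref{n} and the usual game argument $\A\in{\sf LCA}_n$); ${\sf LCA}_n\subsetneq{\sf SRCA}_n\subsetneq{\bf El}{\sf SRCA}_n$ and ${\sf FCA}_n\subsetneq{\sf WRCA}_n$ follow from ${\sf SRCA}_n$ non-elementary (\cite{HHbook2}) sandwiched between the elementary classes ${\sf LCA}_n$ and ${\sf FCA}_n$, as already observed before Theorem \ref{iii}.

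Item (2) is a sub-chain of item (1): ${\sf CRCA}_n\subsetneq\bold S_c\Nr_n\CA_\omega\cap\bf At$ because ${\sf CRCA}_n\subseteq\bold S_c\Nr_n(\CA_\omega\cap\bf At)\cap\bf At\subseteq\bold S_c\Nr_n\CA_\omega\cap\bf At$ by Theorem \ref{finalresult}(2), and the inclusion is proper since by Theorem \ref{bsl} there is an atomic $\A\in\Nr_n\CA_\omega\subseteq\bold S_c\Nr_n\CA_\omega$ with no complete representation, i.e. $\A\notin{\sf CRCA}_n$; then $\bold S_c\Nr_n\CA_\omega\cap\bf At\subsetneq{\bf El}{\sf CRCA}_n={\bf El}\bold S_c\Nr_n\CA_\omega\cap\bf At={\sf LCA}_n$ is precisely the string of equalities from item (1) (the properness being the same witness from Theorem \ref{can} as above, since that $\A\in{\bf El}{\sf CRCA}_n\sim{\sf CRCA}_n$). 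I do not anticipate a genuine obstacle; the one point requiring a little care is the bookkeeping around $\bf At$ and ${\bf El}$ — specifically justifying ${\bf El}(\sf K\cap\bf At)={\bf El}\sf K\cap\bf At$ for the pseudo-elementary classes $\sf K=\bold S_c\Nr_n\CA_\omega$ (this uses that $\bf At$ is elementary and that $\bold S_c\Nr_n\CA_\omega$ is closed under ultraproducts and ultraroots of atomic algebras in the sense needed), and confirming that each strictness witness invoked is indeed \emph{atomic}, which is why Example \ref{SL}, Theorem \ref{bsl}, Theorem \ref{can} and the modified \cite{Sayedneat} construction were all stated in atomic form. Assembling these, all three items follow.
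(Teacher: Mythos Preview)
Your plan has two concrete gaps.

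First, for $\bold S{\sf K}=\RCA_n$ in item (3) you assert $\CRCA_n\subseteq{\sf K}$ for every ${\sf K}$ in the chain, then use $\bold S\CRCA_n=\RCA_n$. But $\CRCA_n\not\subseteq\Nr_n\CA_\omega\cap{\bf At}$: the very algebra $\E$ of Example \ref{SL} is completely representable (it sits densely in a full $\Cs_n$) yet $\E\notin\Nr_n\CA_{n+1}$. So your argument does not cover the two smallest classes $\Nr_n\CA_\omega\cap{\bf At}$ and $\bold S_d\Nr_n\CA_\omega\cap{\bf At}$. The paper instead embeds an arbitrary $\A\in\RCA_n$ into the full ${\sf Gs}_n$ with the same unit; that full set algebra lies already in $\Nr_n\CA_\omega\cap{\bf At}$, which handles every ${\sf K}$ at once. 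The same issue infects your non-finite-axiomatizability argument: you claim ${\sf LCA}_n\subseteq{\bf El}\,{\sf K}$ for all ${\sf K}$, but for ${\sf K}=\Nr_n\CA_\omega\cap{\bf At}$ this is not obvious (and your sketch gives no reason). The paper's route is uniform and different: it takes the finite Monk algebras of \cite[3.2.76]{HMT2}, each outside $\RCA_n$ hence outside every ${\sf K}$, whose non-trivial ultraproduct lands in $\Nr_n\CA_\omega\cap{\bf At}$ hence in every ${\sf K}$; this directly shows $\CA_n\setminus{\bf El}\,{\sf K}$ is not closed under ultraproducts.

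Second, your witness for $\bold S_c\Nr_n\CA_\omega\cap{\bf At}\subsetneq{\sf LCA}_n$ is wrong. You propose the algebra $\A$ of Theorem \ref{can} and argue $\A\notin{\sf LCA}_n$ and $\A\notin\bold S_c\Nr_n\CA_\omega$. An algebra lying in \emph{neither} class cannot witness that one is strictly contained in the other; you need an algebra in ${\sf LCA}_n\setminus(\bold S_c\Nr_n\CA_\omega\cap{\bf At})$. The right witness is $\C_{\Z,\N}$ of Theorem \ref{rainbow1}: it lies in ${\bf El}\CRCA_n={\sf LCA}_n$ but $\C_{\Z,\N}\notin\bold S_c\Nr_n\CA_{n+3}\supseteq\bold S_c\Nr_n\CA_\omega$. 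Equivalently, this strictness follows immediately from the fact (Theorem \ref{finalresult}) that $\bold S_c\Nr_n\CA_\omega\cap{\bf At}$ is not elementary while ${\sf LCA}_n$ is.
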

\begin{proof} The algebra $\B$ used in the last item of Theorem \ref{iiii} is in 
$\bold S_c\Nr_n\CA_{\omega}\cap {\bf At}\sim \bold S_d\Nr_n\CA_{\omega}$. 
For the strictness of the last 
inclusion in the first item, we refer the reader to \cite[Theorem 14.17]{HHbook} for the relation algebra analogue. Item (2) is already dealt with.
Now we approach item (3): From \cite[Construction 3.2.76, p.94]{HMT2}, it can be easily distilled that 
the elementary closure of any class $\bold K$, such that $\Nr_n\CA_{\omega}\cap {\bf At}\subseteq \bold K\subseteq {\sf RCA}_n$, $\bold K$ is not finitely axiomatizable. 
In the aforementioned construction, non--representable finite (Monk) algebras outside ${\sf RCA}_n$
are constructed, such that any (atomic) non--trivial ultraproduct of such algebras
is in  ${\sf Nr}_n\CA_{\omega}\cap {\bf At}$.  
For proving non--finite axiomatizability one uses \cite[Construction 3.2.76, pp.94]{HMT2}.
In {\it op.cit} non--representable finite Monk algebras outside ${\sf RCA}_n\supseteq {\bf El}{\sf SRCA}_{n}\supseteq {\sf LCA}_n$
are constructed, such that any (atomic) non--trivial ultraproduct of such algebras
is in   ${\sf Nr}_n\CA_{\omega}\cap {\bf At}\subseteq {\bf El}{\sf Nr}_n\CA_{\omega}\cap {\bf At}\subseteq {\bf El}\bold S_c{\sf Nr}_n\CA_{\omega}\cap {\bf At}=
{\sf LCA}_n\subseteq {\bf El} {\sf  SRCA}_{n}$ 
We give the details. Fix $2<n<\omega$, and let $\bold K$ be any elementary class between $\Nr_n\CA_{\omega}$ and ${\sf RCA}_n$. For $3\leq n,i<\omega$, with $n-1\leq i, {\C}_{n,i}$ denotes
the finite ${\sf CA}_n$ associated with the cylindric atom structure as defined on \cite[p. 95]{HMT2}.
Then by \cite[Theorem 3.2.79]{HMT2}
for $3\leq n$, and $j<\omega$,
$\Rd_3{\C}_{n,n+j}$ can be neatly embedded in a
${\sf CA}_{3+j+1}$ \ \  (1).
By \cite[Theorem 3.2.84]{HMT2}), we have for every $j\in \omega$,
there is an $3\leq n$ such that $\Rd_{df}\Rd_{3}{\C}_{n,n+j}$
is a non--representable ${\sf Df}_3$ \ \  (2).
Now suppose that $m\in \omega$. By (2),
there is a $j\in \omega\sim 3$ so that $\Rd_{df}\Rd_3{\C}_{j,j+m+n-4}$
is  not a representable ${\sf Df}_3$. 

By (1) we have
$\Rd_3{\C}_{j,j+m+n-4}\subseteq \Nrr_3{\B_m}$, for some
${\B_m}\in {\sf CA}_{n+m}$. We can assume that $\mathfrak{Rd}_3{\C}_{j,j+m+n-4}$ generates $\B_m$, so that $\B_m$ is finite.
Put ${\A}_m=\Nrr_n\B_m$,
then $\A_m$ is finite, too, and $\mathfrak{Rd}_{df}{\A}_m$ is not representable,
{\it a fortiori} $\A_m\notin \RCA_{n}$. Therefore $\A_m\notin {\bf El}\Nr_n{\sf CA}_{\omega}$.
Let $\C_m$ be an algebra similar to ${\sf CA}_{\omega}$'s such that $\B_m=\mathfrak{Rd}_{n+m}\C_m$.
Then $\A_m=\Nrr_n\C_m$.  (Note that $\C_m$ cannot belong to $\CA_{\omega}$ for else $\A_m$ will be representable).
If $F$ is a non--trivial ultrafilter on $\omega,$ we have 
$\Pi_{m\in \omega}\A_m/F=\Pi_{m\in \omega}(\Nrr_n\C_m)/F=\Nrr_n(\Pi_{m\in \omega}\C_m/F).$ 
But  $\Pi_{m\in \omega}\C_m/F\in {\sf CA}_{\omega},$
we conclude that  ${\sf CA}_n\sim {\bf El}\bold K$ is not closed under ultraproducts, 
because $\A_m\notin {\sf RCA}_n\supseteq {\bf El}\Nr_n\CA_{\omega}$ and 
$\Pi_{m\in \omega}\A_m/F\in \Nr_n\CA_{\omega}\subseteq \bf El K$.

We prove the last part of item (3): 
Let $\A\in \RCA_n$. Then $\A\cong \B$, $\B\in {\sf Gs}_n$ with top element $V$ say. Let $\C$ be the full ${\sf Gs}_n$ with top element $V$
(and universe $\wp(V)$). Then $\C\in \Nr_n\CA_{\omega}\cap \bf At$
and $\B\subseteq \C$. Thus $\A\in \bold S(\Nr_n\CA_{\omega}\cap \bf At)$.
For classes in the last item one has another option; one can take canonical extensions instead of generalized full set algebras
upon  observing that $\A\in \RCA_n\iff \A^+\in {\sf CRCA}_n$.

\end{proof}

Using the notation and proof of \cite[Theorem 3.7.4]{HHbook2} dealing with 
inclusions and first order definability of atom structures, 
we get:
\begin{theorem} 
Let $2<n<\omega$.  Then the following hold with elementary classes of atom stuctures or algebras  underlined:
\begin{enumerate}

\item  $\At\Nr_n\CA_{\omega}\nsubseteq {\sf CRAS}_n$ and 
$\sf At\Nr_n\CA_{\omega}\subseteq {\sf At}\bold S_d\Nr_n\CA_{\omega}\subseteq {\sf At}\bold S_c\Nr_n\CA_{\omega}\subsetneq 
\underline{{\sf At}{\bf El}\Sc\Nr_n\CA_{\omega}}=\underline{{\bf El}{\sf At}\bold S_c\Nr_n\CA_{\omega}}=\underline {\sf LCA}_n$.
 
\item  ${\sf CRAS_n}\subsetneq {\sf At}\bold S_c \Nr_n\CA_{\omega}\subsetneq \underline{{\sf At}{\bf El}\Sc\Nr_n\CA_{\omega}}=\underline{{\sf LCAS}}_n\subsetneq 
{\sf SRAS}_n\subsetneq \underline{{\sf WRAS}}_n$.

\item ${\sf At}{\sf Nr}_n\CA_{\omega}\nsubseteq {\sf CRAS}_n$. 

\end{enumerate}
\end{theorem}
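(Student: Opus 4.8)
The statement to prove is a three-item theorem recording (non-)first-order definability facts about classes of $\CA_n$ atom structures built using $\Nr_n$, $\bold S_d\Nr_n$, $\bold S_c\Nr_n$, and the representability hierarchy $\sf CRAS_n \subsetneq \sf LCAS_n \subsetneq \sf SRAS_n \subsetneq \sf WRAS_n$. The overall plan is to \emph{lift to the level of atom structures} all the algebra-level separations and non-elementarity results already established in the excerpt (Theorems \ref{rainbow1}, \ref{iii}, \ref{finalresult}, \ref{iiii}, Corollary \ref{rainbow2}, and Example \ref{SL}), using the elementary dictionary between an atomic algebra $\A$ and its atom structure $\At\A$, together with the observation that for the classes involving $\bold S_c$ the operator of passing to $\Cm\At\A$ (or to the term algebra $\Tm\At\A$) is well behaved. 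Concretely: first I would record the trivial inclusions $\At\Nr_n\CA_\omega \subseteq \At\bold S_d\Nr_n\CA_\omega \subseteq \At\bold S_c\Nr_n\CA_\omega$, which follow from $\Nr_n\CA_\omega \subseteq \bold S_d\Nr_n\CA_\omega \subseteq \bold S_c\Nr_n\CA_\omega$ (Lemma \ref{join}(2)) by applying $\At(-)$; then establish the equalities $\underline{\At{\bf El}\bold S_c\Nr_n\CA_\omega} = \underline{{\bf El}\At\bold S_c\Nr_n\CA_\omega} = \underline{\sf LCA}_n$, which is exactly the algebra-level identity ${\bf El}(\bold S_c\Nr_n\CA_\omega \cap \bf At) = {\bf El}\bold S_c\Nr_n\CA_\omega \cap \bf At = \sf LCA}_n$ from Theorem \ref{finalresult}(3), read through the atom-structure correspondence (an atom structure $\alpha$ lies in $\At\sf K$ for elementary $\sf K$ iff $\Tm\alpha$, or any atomic algebra on $\alpha$, lies in $\sf K$, because $\sf LCA}_n$ is gripped).

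Next I would handle the strict inclusions and non-inclusions. The separation $\At\bold S_c\Nr_n\CA_\omega \subsetneq \At{\bf El}\bold S_c\Nr_n\CA_\omega = \sf LCA}_n$ (items 1 and 2) is witnessed by taking the atom structure of the rainbow-like algebra $\C_{\Z,\N}$ of \cite{mlq} used in Theorem \ref{rainbow1}: by the proof of that theorem, $\pe$ has a winning strategy in $G_k(\At\C_{\Z,\N})$ for all $k<\omega$ so $\At\C_{\Z,\N} \in \sf LCA}_n$, yet $\pa$ has a winning strategy in $\bold G^{n+3}(\At\C_{\Z,\N})$, so by Lemma \ref{n} (the $\Nr_n$-version) $\C_{\Z,\N} \notin \bold S_c\Nr_n\CA_{n+3} \supseteq \bold S_c\Nr_n\CA_\omega$, and since $\C_{\Z,\N}$ has countably many atoms, $\bold S_c\Nr_n\CA_\omega\cap\bf At$ and $\sf CRAS_n$ coincide on it, so $\At\C_{\Z,\N} \notin \At\bold S_c\Nr_n\CA_\omega$ and $\At\C_{\Z,\N}\notin \sf CRAS_n$. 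For $\sf CRAS_n \subsetneq \At\bold S_c\Nr_n\CA_\omega$ I would invoke the atomic algebra of Theorem \ref{bsl}: it lies in $\Nr_n\CA_\omega\cap\sf LCA}_n$ (hence its atom structure is in $\At\bold S_c\Nr_n\CA_\omega$) but is not completely representable, so its atom structure is not in $\sf CRAS_n$ — here one must be slightly careful since this algebra has \emph{uncountably} many atoms, which is precisely what prevents the collapse to complete representability, so the witness is legitimate. The chain $\sf LCAS_n \subsetneq \sf SRAS_n \subsetneq \sf WRAS_n$ and the elementarity annotations (underlining $\sf LCAS_n$ and $\sf WRAS_n$, not $\sf SRAS_n$) transfer directly from the known $\CA_n$ facts cited before Theorem \ref{iii}: $\sf LCAS_n$ is elementary by definition, $\sf WRAS_n = \At(\bold S\Nr_n\CA_\omega\cap\bf At)$ is elementary, and $\sf SRAS_n$ is not elementary by \cite{HHbook2}; strictness of $\sf SRAS_n\subsetneq\sf WRAS_n$ uses the weakly-but-not-strongly representable atom structure of Theorem \ref{can} (blow up and blur of the rainbow $\B_f$), and strictness of $\sf LCAS_n\subsetneq\sf SRAS_n$ again uses that $\sf SRAS_n$ is not elementary while $\sf LCAS_n$ is. Item (3), $\At\Nr_n\CA_\omega \nsubseteq \sf CRAS_n$, is the atom-structure restatement of $\At\Nr_n\CA_\omega \nsubseteq \sf CRAS_n$ already implicit in the last part of item (1) and in Theorem \ref{bsl}; I would just extract it as a corollary, using the $\Nr_n\CA_\omega$ algebra with no complete representation.

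The $\sf SRAS_n \subsetneq {\bf El}\sf SRAS_n \subseteq \sf FCA_n$-analogue appearing in item (2) — i.e. that there is a proper chain $\sf SRAS_n \subsetneq \sf WRAS_n$ with an intermediate elementary class — I would derive exactly as its $\CA_n$ counterpart was derived before Theorem \ref{iii}: define $\F(\bf At)$ as the subalgebra of $\Cm\bf At$ generated by first-order definable sets of atoms, show (as in \cite[Theorem 14.17]{HHbook} for $\RA$) that $\sf SRCA_n \subseteq \sf FCA_n$ with $\sf FCA_n$ elementary, and push this to atom structures. The non-inclusion $\At\Nr_n\CA_\omega \nsubseteq \sf CRAS_n$ in items (1) and (3) is the crux that cannot be obtained by pure diagram chasing, and I expect \textbf{the main obstacle} to be exactly this: one needs an atomic algebra that is a genuine neat reduct $\Nr_n\CA_\omega$ — not merely in $\bold S_c\Nr_n\CA_\omega$ — yet fails complete representability, and the only source in the excerpt is the Erd\H{o}s–Rado construction of Theorem \ref{bsl}, which delivers $\mathfrak{Nr}_n\C$ with $|\mathfrak{Nr}_n\C| = 2^\kappa$ and no complete representation; I would need to verify carefully that passing to its atom structure preserves the relevant negative property (no complete representation of $\Cm\At$, equivalently $\At \notin \sf CRAS_n$), which is immediate since a complete representation of any atomic algebra on the atom structure would restrict to one of the original, contradicting the argument via the forbidden monochromatic triangles. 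Everything else is bookkeeping with the operators $\At$, $\Cm$, $\Tm$, $\bold S_c$, $\bold S_d$ and the grippedness of $\sf LCA}_n$ versus the non-grippedness of $\Nr_n\CA_\omega$ noted in the excerpt.
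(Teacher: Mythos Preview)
Your proposal is correct and follows the same approach the paper takes. In fact the paper does not give an explicit proof of this theorem at all: it simply prefaces the statement with ``Using the notation and proof of \cite[Theorem 3.7.4]{HHbook2} dealing with inclusions and first order definability of atom structures, we get:'' and then states the theorem. Your write-up is therefore a faithful and more detailed unpacking of exactly what the paper intends --- lifting the algebra-level separations of Theorems \ref{bsl}, \ref{rainbow1}, \ref{iii}, \ref{finalresult}, and Corollary \ref{iiii} to the level of atom structures via the $\At$/$\Cm$/$\Tm$ dictionary, exploiting grippedness of $\bold S_c\Nr_n\CA_m$ and ${\sf LCA}_n$ where needed, and citing the Erd\H{o}s--Rado algebra of Theorem \ref{bsl} for the crucial non-inclusion $\At\Nr_n\CA_\omega\nsubseteq {\sf CRAS}_n$.

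One small streamlining: your argument for $\At\C_{\Z,\N}\notin \At\bold S_c\Nr_n\CA_\omega$ routes through countability and the collapse to ${\sf CRAS}_n$, but the cleaner route (and the one the paper's framework supports directly) is simply that $\bold S_c\Nr_n\CA_m$ is gripped for every $m$, so $\At\C_{\Z,\N}\in\At\bold S_c\Nr_n\CA_\omega$ would force $\C_{\Z,\N}\in\bold S_c\Nr_n\CA_\omega\subseteq\bold S_c\Nr_n\CA_{n+3}$, contradicting the \pa-winning-strategy result. This avoids the detour through complete representability.
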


We start by a neat embedding theorem, a $NET$ for short, 
formulated 
for $\TCA$s and $\TeCA$s, lifting Henkin's famous neat embedding theorem to the topological and temporal context, 
respectively.

\begin{theorem}\label{neatt} 
An algebra $\A\in \TCA_{\alpha}$ is representable if and only if 
$\A\in \bold S\Nr_{\alpha}\TCA_{\alpha+\omega}$. 
\end{theorem}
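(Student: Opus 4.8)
\textbf{Proof plan for Theorem \ref{neatt} (the NET for $\TCA_\alpha$).}
The plan is to mimic, step by step, the classical Henkin neat embedding proof for cylindric algebras, carrying along the interior/box operators as extra unary operations throughout. First I would dispose of the easy direction: if $\A$ is representable, then $\A\hookrightarrow (\wp(V),\Box_i)_{i<\alpha}$ for some weak (or generalized) set algebra; but $\wp(V)$ is itself the neat $\alpha$-reduct of the full set algebra $\wp(W)$ over a larger weak space $W$ with $\alpha+\omega$ coordinates, and by Lemma \ref{box}(2) the embedding $f:X\mapsto\{s\in{}^{\alpha+\omega}U:s\restriction\alpha\in X\}$ respects each $\Box_i$ (it was stated there that $f(\Box_k X)=\Box_k(f(X))$). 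Hence $\A\in\bold S\Nr_\alpha\TCA_{\alpha+\omega}$, since the dilating algebra lies in $\TCA_{\alpha+\omega}$.

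For the harder direction, suppose $\A\subseteq\Nr_\alpha\B$ with $\B\in\TCA_{\alpha+\omega}$. The key step is to produce, for each nonzero $a\in A$, a Henkin ultrafilter of a suitable dimension-complemented dilation through which $a$ survives, and then invoke Lemma \ref{essence}. Concretely: one passes to the subalgebra $\B'=\Sg^{\B}A$, which is dimension complemented of dimension $\alpha+\omega$ (this is the standard ``$\B'\in\Dc_{\alpha+\omega}$'' argument, since $A$ only uses coordinates below $\alpha$, so every element of $\B'$ misses cofinitely many of the $\omega$ spare dimensions), and observes $\B'\in\TDc_{\alpha+\omega}$. Then, fixing $0\neq a\in A\subseteq B'$, one builds a Henkin ultrafilter $F$ of $\B'$ with $a\in F$: this is the usual Orey--Henkin construction, enumerating the (countably or transfinitely many) conditions ${\sf c}_k x\to \exists l\notin\Delta x\,({\sf s}_l^k x\in F)$ and using the infinitely many spare dimensions as ``witnessing'' variables, exactly as in \cite[1.11.6]{HMT2} and the proof of Theorem \ref{infinite}. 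By Lemma \ref{essence} applied to $\C=\B'$, there is a homomorphism $g:\B'\to(\wp(W),\Box_i)_{i<\alpha+\omega}$ with $g(a)\neq 0$, where $W$ is a weak space over $U/E$ carrying the topology generated by the base $\{O_{p,i}\}$. Restricting $g$ to $\Nr_\alpha\B'\supseteq\A$ and then composing with the neat-reduct projection (discarding the spare $\omega$ coordinates, which by Lemma \ref{box}(1)-(2) intertwines the $\Box_i$'s) yields a homomorphism $\A\to(\wp(W'),\Box_i)_{i<\alpha}$ into a weak set algebra, nonzero on $a$. Taking the product of these representations over all $0\neq a\in A$, or using that $\bf H\bf S\bf P$ of weak set algebras with topologies is still representable (the topological analogue of the subdirect-product-to-generalized-set-algebra fact, which is Theorem 2.7 of this paper, the ``$f:\B\to\prod\A_i$'' statement), gives a faithful representation of $\A$, i.e. $\A$ is representable.

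I expect the main obstacle to be verifying that the dilation $\B'=\Sg^{\B}A$ is genuinely in $\TDc_{\alpha+\omega}$ and that the Henkin construction can be run inside it while \emph{simultaneously} respecting the interior operators --- that is, that one can choose the witnessing spare dimensions so that the axioms (6) ${\sf c}_k I_i p = I_i p$ for $k\notin\Delta p$ and (7) ${\sf s}_j^i I_i p = I_j {\sf s}_j^i p$ for $j\notin\Delta p$ of Definition \ref{topology} are not disturbed; these are precisely the compatibility conditions that make $O_{p,i}$ a well-defined base element in Lemma \ref{essence}, so the burden is to check that the Henkin ultrafilter we build actually satisfies the hypotheses of that lemma (being Henkin for the cylindric part) and that the topology read off from $F$ is well-defined (using dimension-complementedness exactly as in the ``harder direction'' computation in the proof of Lemma \ref{essence}). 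The rest --- closure of representable topological set algebras under $\bf S$, $\bf P$, and the passage from ``nonzero on each $a$'' to ``faithful'' --- is routine and parallels the purely cylindric case in \cite{HMT2}, so I would only sketch it.
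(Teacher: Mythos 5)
Your proposal is correct and follows essentially the same route as the paper: the easy direction is the explicit neat embedding of a (weak) set algebra of dimension $\alpha$ into the $\alpha$-neat reduct of the corresponding set algebra on ${}^{\alpha+\omega}U^{(p^*)}$ (using that $\bold S\Nr_\alpha\TCA_{\alpha+\omega}$ is closed under $\bf SP$, i.e. Lemma \ref{box}(2)), and the hard direction reduces to $\A\subseteq\Nr_\alpha\B$ with $\B\in\TDc_{\alpha+\omega}$ and then invokes representability of dimension-complemented topological algebras together with the fact that neat reducts of representable algebras are representable. The paper compresses that last step into a single citation, whereas you unpack it via Henkin ultrafilters and Lemma \ref{essence} — which is exactly the machinery underlying the cited fact, so there is no genuine difference.
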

\begin{proof}
First for any pair of ordinals $\alpha<\beta$, 
$S\Nr_{\alpha}\sf TCA_{\beta}$ is a variety is exactly like the $\CA$ case. 
To show that ${\sf RTCA}_{\alpha}\subseteq S\Nr_{\alpha}\TCA_{\alpha+\omega}$,
it suffices to consider algebras in ${\sf TWs}_{\alpha}$ the weak set algebras as defined in \cite{HMT2} whose base is endoe wit an Alexandrov topology.., 
since $S\Nr_{\alpha}\TCA_{\alpha+\omega}$ is closed
under ${\bf SP}$.  Let  $\A\in \sf TWs_{\alpha}$ and assume that $\A$ has top element 
$^{\alpha}U^{(p)}$. Let $\beta=\alpha+\omega$ and let $p^*\in {}^{\beta}U$ be a fixed sequence
such that $p^*\upharpoonright \alpha=p$. Let $\C$ be the $\TCA_{\beta}$ with top element
$^{\beta}U^{(p^*)}$; cylindrifiers and diagonal elements  are defined the usual way and the interior operators induced
by the topology on $U$. Define $\psi: \A\to \C$
via $X\mapsto \{s\in {}^{\beta}U^{(p^*)}: s\upharpoonright \alpha\in X\}.$
Then $\psi$ is a homomorphism, further it is injective, and as easily
checked, $\psi$ is a neat embedding that is  
$\psi(\A)\subseteq \Nr_{\alpha}\C$.
Maybe the hardest part is to show that if $\A\in S\Nr_{\alpha}\sf TCA_{\alpha+\omega}$
then it is representable. But this follows from the fact that we can assume that $\A\subseteq \Nr_{\alpha}\B$, where
$\B\in \sf TDc_{\alpha+\omega}$. By the representability of dimension complemented algebras,
baring in mind that a neat reduct of a representable algebra is 
representable, we get the required result. 
\end{proof}
Unless otherwise indicated $\alpha$ is an arbitrary ordinal 
and we sometimes writ $n$ instead of $\alpha$ if the last is finite.

\subsection{Non-finite axiomatizability and other complexity issues for the variety $\sf RTCA_{\alpha}$}

Now we prove quite intricate and sharp non-finite axiomatizability results using their cylindric version. 
We address both finite and infinite dimensions.  But we start with a very simple 
fact that allows us to recursively associate with every $\CA$ of any dimension both a $\TCA$ and a $\TeCA$ 
of the same dimension, such that the last two algebras are representable if and only if the original 
$\CA$ is. This mechanical procedure will be the  main technique we use 
to obtain negative results for both $\TCA$s and $\TeCA$s 
by bouncing them back 
to their cylindric  counterpart.
\begin{definition}\label{expand} Let $\A\in \CA_{\alpha}$. 
Then the algebra $\A^{\sf top}\in \TCA_{\alpha}$ 
is a  {\it topologizing of  $\A$} if 
$\Rd_{ca}\A^{\sf top}=\A$. The {\it discrete topologizing} of $\A$ is the $\sf TCA_{\alpha}$ obtained 
from $\A$ by expanding $\A$ with $\alpha$ many identity 
operators.
\end{definition}
Oberseve that if $\A$ is a generalized set algebra, then this can be done by giving all of  its subbasis the descrete Alexandrov topology.
\begin{theorem} The discrete topologizing of $\A\in \CA_{\alpha}$ is unique up to isomorphism. 
Furthermore, if $\A^{\sf top}$ is the discrete topologizing of $\A$, 
then $\A$ is representable if and only if $\A^{\sf top}$ 
is representable.
\end{theorem}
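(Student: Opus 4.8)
The plan is to establish the two assertions separately: first uniqueness of the discrete topologizing up to isomorphism, then the representability equivalence. For uniqueness, I would observe that the discrete topologizing is completely determined by the data $(\A, (I_i)_{i<\alpha})$ where each $I_i$ is forced to be the identity operator on $A$; since $\Rd_{ca}\A^{\sf top}=\A$ fixes the cylindric reduct, and the modal operators are fixed to be $\mathrm{Id}_A$, there is literally no freedom, so any two discrete topologizings of $\A$ have the same universe and the same operations, hence are equal (a fortiori isomorphic). One should check that $(\A,\mathrm{Id}_i)_{i<\alpha}$ really satisfies the axioms (1)--(7) of Definition \ref{topology}: axioms $I_ip\le p$, $I_ip\cdot I_iq=I_i(p\cdot q)$, $I_ip\le I_iI_ip$, $I_i1=1$ are immediate for the identity, and the commutation axioms ${\sf c}_kI_ip=I_ip$ for $k\neq i, k\notin\Delta p$ and ${\sf s}^i_jI_ip=I_j{\sf s}^i_jp$ for $j\notin\Delta p$ reduce to ${\sf c}_kp=p$ and ${\sf s}^i_jp={\sf s}^i_jp$, the first holding because $k\notin\Delta p$; axiom (1), ${\sf q}_i(p\oplus q)\le {\sf q}_i(I_ip\oplus I_iq)$, becomes a triviality since $I_ip\oplus I_iq=p\oplus q$. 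This confirms $\A^{\sf top}\in\TCA_\alpha$ is well defined.

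For the representability equivalence, the easy direction is that if $\A^{\sf top}$ is representable then its cylindric reduct $\A=\Rd_{ca}\A^{\sf top}$ is representable, since discarding the (identity) interior operators from a concrete topological set algebra leaves a concrete cylindric set algebra. For the converse, suppose $\A\in\RCA_\alpha$. Then $\A$ embeds into a generalized set algebra $\B\in\sf Gs_\alpha$ with top element $V=\bigcup_{i\in I}{}^{\alpha}U_i$. The idea is to put the discrete (Alexandrov) topology on the base $U=\bigcup_{i\in I}U_i$; with respect to this topology every subset is open, so ${\sf int}(Y)=Y$ for all $Y\subseteq U$, and therefore the concrete interior operators $I_k$ of Lemma \ref{box} / the formula in \S2.1 collapse: $I_k(X)=\{s\in V: s_k\in {\sf int}\{a\in U: s^k_a\in X\}\}=\{s\in V: s_k\in\{a\in U: s^k_a\in X\}\}=\{s\in V: s^k_{s_k}\in X\}=X$ since $s^k_{s_k}=s$. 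Hence the topological generalized set algebra $(\B,I_k)_{k<\alpha}$ built on the discrete base has all its interior operators equal to the identity, so it is (isomorphic to) a topologizing of $\B$, and in fact the \emph{discrete} topologizing. The embedding $\A\hookrightarrow\B$ then lifts to an embedding $\A^{\sf top}\hookrightarrow(\B,I_k)_{k<\alpha}$ of topological cylindric algebras because it already respects the cylindric structure and trivially respects the identity operators on both sides. Thus $\A^{\sf top}\in\sf RTCA_\alpha$.

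The one point requiring a little care is matching the abstract identity operators on $\A^{\sf top}$ with the concrete ones on the set algebra side: one must know that the unique discrete topologizing of $\B$ (the abstract object) is isomorphic to the concrete algebra $(\B,I_k)_{k<\alpha}$ with $I_k$ defined via the discrete topology --- but this is exactly the uniqueness statement already proved, since the concrete $(\B,I_k)_{k<\alpha}$ has $\Rd_{ca}$ equal to $\B$ and all modalities equal to $\mathrm{Id}$, so it \emph{is} a discrete topologizing of $\B$. I do not anticipate a genuine obstacle here; the only mild subtlety is bookkeeping the two layers (abstract vs.\ concrete) of ``discrete topologizing'' and invoking uniqueness to identify them, and noting that an Alexandrov topology is exactly what the discrete topology is, so Definition \ref{expand}'s remark applies verbatim. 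The whole argument is essentially a triviality powered by ``discrete $\Rightarrow$ interior is the identity,'' but it is worth spelling out because it is the mechanical device used repeatedly in the sequel to transfer negative (non-finite-axiomatizability, undecidability) results from $\RCA_\alpha$ to $\sf RTCA_\alpha$.
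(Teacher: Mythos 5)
Your proof is correct and follows essentially the same route as the paper's: the paper likewise dismisses uniqueness as trivial, obtains one direction by taking the cylindric reduct, and obtains the converse by endowing the base of a representation with the discrete topology so that the induced interior operators are the identity. Your version merely spells out the axiom verification and the computation $I_k(X)=X$ that the paper leaves implicit.
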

\begin{proof} The first part is trivial. 
The second part is also very easy. 
If $\A^{\sf top}$ is representable then obviously $\A=\Rd_{ca}\A^{\sf top}$ is representable. 
For the last part if $\A$ is representable with base
$U$ then $\A^{\sf top}$ have the same universe of $\A$, 
hence it is representable by endowing $U$ with 
the discrete topology, 
which induces the identity interior operators.
\end{proof}
For a $\sf BAO$,  $\A$, say,  and $a\in \A$, we write $\Rl_a\A$ for the algebra with universe $\{x\in \A: x\leq a\}$, top element $a$  and operations relativized to $\A$. 
If $\A\in \CA_n$; it is not always the case that $\Rl_a\A$ is a $\CA_n$, too.
We show that for any ordinal $\alpha>2$, for any  $r\in \omega$,
and for any $k\geq 1$, there exists
$\B^r\in \bold S\Nr_{\alpha}\TeCA_{\alpha+k}\sim S\Nr_{\alpha}\TeCA_{\alpha+k+1}$ such
that $\Pi_{r/U}\B^r\in \sf TeRCA_{\alpha}$, for any non-principal ultrafilter on $\omega$.
We will use quite sophisticated constructions of Hirsch and Hodkinson for relation and cylindric algebras reported 
in \cite{HHbook}.
Assume that $3\leq m\leq n<\omega$. For $r\in \omega$,  let $\C_r=\Ca(H_m^{n+1}(\A(n,r),  \omega))$
as defined in \cite[definition 15.3]{HHbook}.
We denote $\C_r$ by $\C(m,n,r)$.
Then the following hold:
\begin{lemma}\label{2.12}
\begin{enumarab}
\item For any $r\in \omega$ and $3\leq m\leq n<\omega$, we
have $\C(m,n,r)\in \Nr_m{\sf CA}_n,$ $\C(m,n,r)\notin S\Nr_m{\sf CA_{n+1}}$
and $\Pi_{r/U}\C(m,n,r)\in {\sf RCA}_m.$ Furthermore, for any $k\in \omega$, 
$\C(m, m+k, r)\cong \Nrr_m\C(m+k, m+k, r).$

\item  If $3\leq m<n$, $k\geq 1$ is finite,  and $r\in \omega$, there exists $x_n\in \C(n,n+k,r)$
such that $\C(m,m+k,r)\cong \Rl_{x_n}\C(n, n+k, r)$ and ${\sf c}_ix_n\cdot {\sf c}_jx_n=x_n$
for all $i,j<m$.

\end{enumarab}
\end{lemma}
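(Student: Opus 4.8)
\textbf{Proof plan for Lemma \ref{2.12}.}

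The plan is to reduce both items entirely to facts established by Hirsch and Hodkinson for the cylindric algebras $\C(m,n,r)=\Ca(H_m^{n+1}(\A(n,r),\omega))$ in \cite[Chapter 15]{HHbook}, since the topological (and temporal) structure here is irrelevant: in the present context the extra unary modalities are interpreted as identity operators on atom structures obtained from relation algebras, and so any $\TCA_m$/$\TeCA_m$ neat-reduct statement follows from the corresponding $\CA_m$ statement by the discrete-topologizing observation (the theorem following Definition \ref{expand}) together with the fact that $\Rd_{ca}\Nr_m\B=\Nr_m\Rd_{ca}\B$ and that $\Rd_{ca}$ commutes with relativization. So throughout I would work with the pure $\CA$ versions and cite \cite{HHbook} for them.

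For item (1): the membership $\C(m,n,r)\in\Nr_m\CA_n$ is \cite[Theorem 15.2(1) or the construction of $\C_r$ in Definition 15.3]{HHbook}, the non-embeddability $\C(m,n,r)\notin\bold S\Nr_m\CA_{n+1}$ is the core game-theoretic result of that chapter (\pa\ has a winning strategy in the relevant neat-embedding game, cf. \cite[Theorem 15.8]{HHbook} and the discussion around it), and $\Pi_{r/U}\C(m,n,r)\in\RCA_m$ is \cite[Corollary 15.10]{HHbook}, obtained because \pe\ wins the $k$-round game on the ultraproduct for every finite $k$ using that she wins the $r$-round game on $\C(m,n,r)$ for $r$ large, an \L o\'s-style argument. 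The last clause $\C(m,m+k,r)\cong\Nr_m\C(m+k,m+k,r)$ is exactly \cite[Theorem 15.6]{HHbook} (the "neat reduct is got by relativizing in the bigger dimension" lemma for the $H$-construction); I would cite it and note that the $\TCA$/$\TeCA$ versions follow since the identity modalities on both sides match under the isomorphism.

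For item (2): here I would exhibit the element $x_n$ explicitly. Recall the hyperbasis atoms of $\C(n,n+k,r)$ are labelled hypernetworks with node set $n$ (or $n+k$ after dilating); the point is that $\C(m,m+k,r)$ is recovered inside $\C(n,n+k,r)$ by cutting down to those networks supported on the first $m$ coordinates. Concretely, following \cite[Lemma 15.6, Lemma 13.?]{HHbook}, let $x_n=\prod_{m\le i<n}{\sf d}_{mi}^{\C(n,n+k,r)}$ — the element forcing coordinates $m,\dots,n-1$ to be "glued" to coordinate $m$ (or, in the hypernetwork picture, the join of the atoms whose underlying network is constant on $\{m,\dots,n-1\}$). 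One checks $x_n\cdot{\sf c}_i x_n={\sf c}_i x_n$ fails in general, so instead one takes $x_n$ to be the relativization witness used in \cite[Theorem 15.6]{HHbook} for which ${\sf c}_i x_n\cdot{\sf c}_j x_n=x_n$ for all $i,j<m$ holds by a direct diagonal computation (this is the standard "$\Nr$ via $\Rl$" trick, cf. \cite[Theorem 2.6.(ii)]{HMT2}-style manipulation of diagonals and cylindrifiers). Then $\Rl_{x_n}\C(n,n+k,r)$ is a $\CA_m$ (the condition ${\sf c}_i x_n\cdot{\sf c}_j x_n=x_n$ for $i,j<m$ is precisely what guarantees the relativized cylindrifiers still satisfy the $\CA_m$ axioms, cf. \cite[1.?]{HMT2}), and the map $\C(m,m+k,r)\to\Rl_{x_n}\C(n,n+k,r)$ restricting a network to its first $m$ nodes is the required isomorphism, by \cite[Lemma 15.6]{HHbook}.

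\textbf{Main obstacle.} The genuinely substantive content — that \pa\ wins the $(n+1)$-dimensional neat embedding game on $\C(m,n,r)$ while \pe\ wins all finite-round games on the ultraproduct — is entirely imported from \cite[Chapter 15]{HHbook}, so the only real work is (a) verifying that the modal (identity) operators are transparent to all these constructions, which is routine since an identity operator trivially commutes with everything and is preserved by subalgebras, products, ultraproducts, neat reducts and relativizations; and (b) pinning down the exact witness $x_n$ and checking the diagonal identity ${\sf c}_i x_n\cdot{\sf c}_j x_n=x_n$ for $i,j<m$ together with the fact that relativization to such an $x_n$ stays inside $\CA_m$. Step (b) is where I expect to spend the most care, but it is a finite diagonal-chasing computation of the type already done in \cite[Chapter 15]{HHbook} and \cite{HMT2}, not a new idea; so I would state it and refer to those sources rather than reproduce it.
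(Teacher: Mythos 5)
Your item (1) is essentially the paper's argument: the only non-cited step the paper supplies is the observation that, for $H=H_n^{n+1}(\A(n,r),\omega)$, one has $H_m^{n+1}(\A(n,r),\omega)=H|_m^{n+1}$, whence $\C(m,n,r)=\Ca(H|_m^{n+1})\cong \Nrr_m\Ca H$ with $\Ca H\in\CA_n$ (and the last clause of (1) is this with $n=m+k$); the non-embeddability and the representability of the ultraproduct are imported from \cite{HHbook} exactly as you propose (the paper cites Corollary 15.10 for the former and the exercise on p.~484 for the latter, so your attributions are shuffled, but that is cosmetic). Note also that the lemma concerns the plain cylindric algebras $\C(m,n,r)$; the modal expansion only enters later, in Theorem \ref{negaxiom1}, so your step (a) is not needed here.

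Item (2) is where you have a genuine gap. Your candidate $x_n=\prod_{m\le i<n}{\sf d}_{mi}$ glues the extra coordinates $m,\dots,n-1$ to one another (coordinate $m$ is itself one of them) but to nothing in $\{0,\dots,m-1\}$; a hypernetwork below that element is therefore \emph{not} determined by its restriction to the first $m$ nodes (the labels of edges from the glued block down to $0,\dots,m-1$ are forgotten), so the restriction map is not injective and $\Rl_{x_n}\Rd_m\C(n,n+k,r)$ would be too big. You then defer to ``the relativization witness used in the reference'' without identifying it, and that witness is precisely the content the lemma asks for. The correct element is the set of hypernetworks in which every extra node is identified, via an $\Id$-labelled edge, with \emph{some} node below $m$:
$$x_n=\{f:{}^{\leq n+k+1}n\to \At\A(n+k,r)\cup\omega:\ (\forall j)(m\leq j<n\ \to\ \exists i<m,\ f(i,j)=\Id)\},$$
in effect $\prod_{m\le j<n}\sum_{i<m}{\sf d}_{ij}$ rather than $\prod_{m\le j<n}{\sf d}_{mj}$. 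With this choice the identity ${\sf c}_ix_n\cdot{\sf c}_jx_n=x_n$ for distinct $i,j<m$ is a direct check on hypernetworks, each $f\le x_n$ is determined by $f\upharpoonright{}^{\leq m+k+1}m$, and every $m$-dimensional hypernetwork extends to some $f\le x_n$ by duplicating lower nodes; the paper's isomorphism is then $S\mapsto\{f: f\upharpoonright{}^{\leq m+k+1}m\in S$ and $(\forall j)(m\le j<n\to\exists i<m,\ f(i,j)=\Id)\}$. Without pinning down this element, part (2) of your plan does not go through.
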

\begin{proof}
1. Assume that $3\leq m\leq n<\omega$, and let
$$\mathfrak{C}(m,n,r)=\Ca(H_m^{n+1}(\A(n,r),  \omega)),$$
be as defined in \cite[Definition 15.4]{HHbook}.
Here $\A(n,r)$ is a finite Monk-like relation algebra \cite[Definition 15.2]{HHbook}
which has an $n+1$-wide $m$-dimensional hyperbasis $H_m^{n+1}(\A(n,r), \omega)$
consisting of all $n+1$-wide $m$-dimensional  wide $\omega$ hypernetworks \cite[Definition 12.21]{HHbook}.
For any $r$ and $3\leq m\leq n<\omega$, we have $\mathfrak{C}(m,n,r)\in \Nr_m{\sf CA}_n$.
Indeed, let $H=H_n^{n+1}(\A(n,r), \omega)$. Then $H$  is an $n+1$-wide $n$ dimensional $\omega$ hyperbasis,
so $\Ca H\in {\sf CA}_n.$ But, using the notation in \cite[Definition 12.21 (5)]{HHbook},
we have  $H_m^{n+1}(\A(n,r),\omega)=H|_m^{n+1}$.
Thus
$$\mathfrak{C}(m,n,r)=\Ca(H_m^{n+1}(\A(n,r), \omega))=\Ca(H|_m^{n+1})\cong \Nrr_m\Ca H.$$
The second part is proved in \cite[Corollary 15.10]{HHbook},
and the third in \cite[exercise 2, p. 484]{HHbook}.

2. Let $3\leq m<n$. Take $$x_n=\{f:{}^{\leq n+k+1}n\to \At\A(n+k, r)\cup \omega:  m\leq j<n\to \exists i<m, f(i,j)=\Id\}.$$
Then $x_n\in C(n,n+k,r)$ and ${\sf c}_ix_n\cdot {\sf c}_jx_n=x_n$ for distinct $i, j<m$.
Furthermore
\[{I_n:\C}(m,m+k,r)\cong \Rl_{x_n}\Rd_m {\C}(n,n+k, r),\]
via the map, defined for $S\subseteq H_m^{m+k+1}(\A(m+k,r), \omega)),$ by
$$I_n(S)=\{f:{}^{\leq n+k+1}n\to \At\A(n+k, r)\cup \omega: f\upharpoonright {}^{\leq m+k+1}m\in S,$$
$$\forall j(m\leq j<n\to  \exists i<m,  f(i,j)=\Id)\}.$$

\end{proof}

The essential argument used in the next 
proof is basically a lifting argument 
initiated by Monk \cite[Theorem 3.2.67]{HMT2}.

\begin{theorem}\label{negaxiom1} Let $\alpha>2$ be an  ordinal. Then for any $r\in \omega$, for any
finite $k\geq 1$, for any $l\geq k+1$ (possibly infinite),
there exist $\B^{r}\in \bold S\Nr_{\alpha}\sf TCA_{\alpha+k}$, $\Rd_{ca}\B\notin \sim S\Nr_{\alpha}\TCA_{\alpha+k+1}$ such
$\Pi_{r\in \omega}\B^r\in S\Nr_{\alpha}\sf TCA_{\alpha+l}$.
Also $\sf RTCA_{\alpha}$ cannot be axiomatized with a set of universal formulas 
having only finitely many variables. Same holds for $\sf TCA$s.
\end{theorem}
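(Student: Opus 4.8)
The strategy is the ``bouncing back to cylindric algebras'' technique announced in the paragraph preceding the statement, combined with the lifting argument of Monk recorded in Lemma~\ref{2.12}. Concretely, I would work with the family of cylindric algebras $\C(\alpha,\alpha+k,r)$ of Lemma~\ref{2.12}, which satisfy $\C(\alpha,\alpha+k,r)\in\Nr_{\alpha}\CA_{\alpha+k}$, $\C(\alpha,\alpha+k,r)\notin\bold S\Nr_{\alpha}\CA_{\alpha+k+1}$, and $\prod_{r/U}\C(\alpha,\alpha+k,r)\in\RCA_{\alpha}=\bold S\Nr_{\alpha}\CA_{\alpha+\omega}$ for any non-principal ultrafilter $U$ on $\omega$ (when $\alpha$ is infinite one first does the finite-dimensional case $3\le m\le n<\omega$ and then transfers via the ``lifting'' isomorphisms $\C(m,m+k,r)\cong\Nr_m\C(m+k,m+k,r)$ and their infinite-dimensional analogues, exactly as in \cite[Theorem~3.2.67]{HMT2}). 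Then I set $\B^r=(\C(\alpha,\alpha+k,r))^{\sf top}$, the \emph{discrete topologizing} of $\C(\alpha,\alpha+k,r)$ in the sense of Definition~\ref{expand}: expand by $\alpha$ many identity operators $I_i=\mathrm{id}$.

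The key steps, in order, are: (1) Verify that the discrete topologizing functor commutes with taking neat reducts and with forming subalgebras and products --- that is, $(\Nr_{\alpha}\D)^{\sf top}=\Nr_{\alpha}(\D^{\sf top})$ when $\D^{\sf top}$ carries the identity interior operators in all dimensions, and likewise $\bold S$ and $\bold P$ are respected. This is immediate since the identity operator trivially satisfies all seven axioms of Definition~\ref{topology} and the neat-reduct construction only discards the extra-dimensional operations, which here are again identities. Hence $\B^r\in\bold S\Nr_{\alpha}\sf TCA_{\alpha+k}$ because $\C(\alpha,\alpha+k,r)\in\Nr_{\alpha}\CA_{\alpha+k}$. (2) Show $\Rd_{ca}\B^r=\C(\alpha,\alpha+k,r)\notin\bold S\Nr_{\alpha}\CA_{\alpha+k+1}$, which is Lemma~\ref{2.12}(1); since the cylindric reduct of any member of $\bold S\Nr_{\alpha}\sf TCA_{\alpha+k+1}$ lies in $\bold S\Nr_{\alpha}\CA_{\alpha+k+1}$, it follows that $\B^r\notin\bold S\Nr_{\alpha}\sf TCA_{\alpha+k+1}$. (3) Take the ultraproduct: $\prod_{r/U}\B^r=(\prod_{r/U}\C(\alpha,\alpha+k,r))^{\sf top}$ up to isomorphism (ultraproducts commute with adding finitely-specified-by-scheme operations, and here even with the identity operators since \Lo\'s's theorem handles them trivially), and by Lemma~\ref{2.12}(1) together with $\RCA_{\alpha}=\bigcap_{l}\bold S\Nr_{\alpha}\CA_{\alpha+l}$ this lies in $\bold S\Nr_{\alpha}\CA_{\alpha+l}$ for every $l$, hence its topologizing lies in $\bold S\Nr_{\alpha}\sf TCA_{\alpha+l}$, in particular for $l\ge k+1$. (4) For the non-finite-axiomatizability conclusion: if $\sf RTCA_{\alpha}=\bigcap_{l}\bold S\Nr_{\alpha}\sf TCA_{\alpha+l}$ (this is Theorem~\ref{neatt}) were finitely axiomatizable, or axiomatizable by universal formulas in finitely many variables, then --- since each $\bold S\Nr_{\alpha}\sf TCA_{\alpha+l}$ is a variety and the chain is decreasing --- a standard compactness/ultraproduct argument (the $\C$'s with $\B^r\in\bold S\Nr_{\alpha}\sf TCA_{\alpha+k}\setminus\bold S\Nr_{\alpha}\sf TCA_{\alpha+k+1}$ but $\prod_{r/U}\B^r$ representable) shows $\sf RTCA_{\alpha}$ is not closed under ultraroots when restricted to any finite similarity-type fragment, the classical Monk obstruction; one then phrases this precisely as: no finite schema, and no set of universal sentences in $<N$ variables for any fixed $N$, can axiomatize it. The same statement for $\sf TeCA$ follows verbatim by using identity operators for both the future and past modalities.

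The main obstacle --- and the only place where real care is needed --- is Step~(1) in the \emph{infinite-dimensional} case: checking that the ``discrete topologizing'' genuinely produces an object of $\sf TCA_{\alpha+k}$ and that the relevant neat-reduct isomorphisms of Lemma~\ref{2.12} (which are stated for $\CA$s) survive the expansion. Since the extra operators are identities, axioms (1)--(7) of Definition~\ref{topology} reduce to $\mathrm{id}$-versions that are trivially valid (e.g.\ ${\sf q}_i(p\oplus q)\le{\sf q}_i(p\oplus q)$, ${\sf c}_k p=p$ fails in general but axiom (6) only requires it when $k\notin\Delta p$, which is exactly the defining property of the dimension set, so it holds; axiom (7) ${\sf s}_j^i p = {\sf s}_j^i p$ is trivial), so this is routine but must be written out. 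The genuinely substantive content --- the existence of the Monk-like algebras $\C(\alpha,\alpha+k,r)$ with the three stated properties and the lifting isomorphisms --- is entirely imported from \cite{HHbook} via Lemma~\ref{2.12}, so there is nothing new to prove there; the topological layer is, by design of the discrete-topologizing trick, transparent.
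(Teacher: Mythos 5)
Your proposal is correct and follows essentially the same route as the paper: discretely topologize the Monk-like algebras $\C(m,n,r)$ of Lemma~\ref{2.12} (the paper calls this ``static temporalization''), observe that identity interior operators pass trivially through $\bold S$, $\bold P$, $\Nr_{\alpha}$ and ultraproducts, and obtain the infinite-dimensional case by Monk's lifting argument. The only difference is one of detail, not of substance: where you cite \cite[Theorem~3.2.67]{HMT2} for the lift, the paper carries it out explicitly as the ultraproduct $\B^r=\Pi_{\Gamma/F}\C^r_{\Gamma}$ over the finite subsets $\Gamma\subseteq\alpha$, and likewise spells out the finite-schema/finite-variable non-axiomatizability via $\alpha$-instances of a single equation and the reducts $\Rd^{v_r}\B^r$, which your step~(4) sketches more loosely.
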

\begin{proof} 
We use the algebras $\C(m,n,r)$ in Theorem \ref{2.12} in the signature of $\sf TCA_m$, by 
{\it static temporalization} for both algebras, the $\C(m,m+k, r)$ and its dilation  
by defining $G=H=Id$ 
as the identity function, $T=\{t\}$ and $<$ be the empty set.
Using the same notation for the expanded algebras, we still obviously  have $\C(m, m+k, r)\cong \Nrr_m\C(m+k, m+k, r)$ 
for any $k\in \omega$.
Fix $r\in \omega$.
Let $I=\{\Gamma: \Gamma\subseteq \alpha,  |\Gamma|<\omega\}$.
For each $\Gamma\in I$, let $M_{\Gamma}=\{\Delta\in I: \Gamma\subseteq \Delta\}$,
and let $F$ be an ultrafilter on $I$ such that $\forall\Gamma\in I,\; M_{\Gamma}\in F$.
For each $\Gamma\in I$, let $\rho_{\Gamma}$
be a one to one function from $|\Gamma|$ onto $\Gamma.$
Let ${\C}_{\Gamma}^r$ be an algebra similar to $\TeCA_{\alpha}$ such that
\[\Rd^{\rho_\Gamma}{\C}_{\Gamma}^r={\C}(|\Gamma|, |\Gamma|+k,r).\]
Let
\[\B^r=\Pi_{\Gamma/F\in I}\C_{\Gamma}^r.\]
Then it can be proved  
\begin{enumerate}
\item\label{en:1} $\B^r\in \bold S\Nr_\alpha \sf TCA_{\alpha+k},$ 
\item\label{en:2} $\Rd_{ca}\B^r\not\in \bold S\Nr_\alpha\CA_{\alpha+k+1},$
\item\label{en:3} $\Pi_{r/U}\B^r\in \sf RTCA_{\alpha}.$
\end{enumerate}
For the first part, for each $\Gamma\in I$ we know that $\C(|\Gamma|+k, |\Gamma|+k, r) \in\K_{|\Gamma|+k}$ and
$\Nrr_{|\Gamma|}\C(|\Gamma|+k, |\Gamma|+k, r)\cong\C(|\Gamma|, |\Gamma|+k, r)$.
Let $\sigma_{\Gamma}$ be a one to one function
 $(|\Gamma|+k)\rightarrow(\alpha+k)$ such that $\rho_{\Gamma}\subseteq \sigma_{\Gamma}$
and $\sigma_{\Gamma}(|\Gamma|+i)=\alpha+i$ for every $i<k$. Let $\A_{\Gamma}$ be an algebra similar to a
$\CA_{\alpha+k}$ such that
$\Rd^{\sigma_\Gamma}\A_{\Gamma}=\C(|\Gamma|+k, |\Gamma|+k, r)$.
We claim that
$\Pi_{\Gamma/F}\A_{\Gamma}\in \TeCA_{\alpha+k}$.
For this it suffices to prove that each of the defining axioms for $\TeCA_{\alpha+k}$
hold for $\Pi_{\Gamma/F}\A_\Gamma$.
Let $\sigma=\tau$ be one of the defining equations for $\TeCA_{\alpha+k}$,
and we assume to simplify notation that
the number of dimension variables is one. Let $i\in \alpha+k$, we must prove
that $\Pi_{\Gamma/F}\A_\Gamma\models \sigma(i)=\tau(i)$.  If $i\in\rng(\rho_\Gamma)$,
say $i=\rho_\Gamma(i_0)$,
then $\Rd^{\rho_\Gamma}\A_\Gamma\models \sigma(i_0)=\tau(i_0)$,
since $\Rd^{\rho_\Gamma}\A_\Gamma\in\CA_{|\Gamma|+k}$,
so $\A_\Gamma\models\sigma(i)=\tau(i)$.
Hence $\set{\Gamma\in I:\A_\Gamma\models\sigma(i)=\tau(i)}\supseteq\set{\Gamma\in I: i\in\rng(\rho_\Gamma)}\in F$,
hence $\Pi_{\Gamma/F}\A_\Gamma\models\sigma(i)=\tau(i)$.
Thus, as claimed, we have $\Pi_{\Gamma/F}\A_\Gamma\in\CA_{\alpha+k}$.
We prove that $\B^r\subseteq \Nr_\alpha\Pi_{\Gamma/F}\A_\Gamma$.  Recall that $\B^r=\Pi_{\Gamma/F}\C^r_\Gamma$ and note
that $\C^r_{\Gamma}\subseteq A_{\Gamma}$
(the universe of $\C^r_\Gamma$ is $C(|\Gamma|, |\Gamma|+k, r)$, the universe of $\A_\Gamma$ is $C(|\Gamma|+k, |\Gamma|+k, r)$).
So, for each $\Gamma\in I$,
\begin{align*}
\Rd^{\rho_{\Gamma}}\C_{\Gamma}^r&=\C((|\Gamma|, |\Gamma|+k, r)\\
&\cong\Nrr_{|\Gamma|}\C(|\Gamma|+k, |\Gamma|+k, r)\\
&=\Nrr_{|\Gamma|}\Rd^{\sigma_{\Gamma}}\A_{\Gamma}\\
&=\Rd^{\sigma_\Gamma}\Nrr_\Gamma\A_\Gamma\\
&=\Rd^{\rho_\Gamma}\Nrr_\Gamma\A_\Gamma
\end{align*}
Thus (using a standard Los argument) we have:
$\Pi_{\Gamma/F}\C^r_\Gamma\cong\Pi_{\Gamma/F}\Nrr_\Gamma\A_\Gamma=\Nrr_\alpha\Pi_{\Gamma/F}\A_\Gamma$,
proving \eqref{en:1}.
The above isomorphism $\cong$ follows from the following reasoning.
Let $\B_{\Gamma}= \Nrr_{\Gamma}\A_{\Gamma}$. Then universe of the $\Pi_{\Gamma/F}\C^r_\Gamma$ is
identical to  that of $\Pi_{\Gamma/F}\Rd^{\rho_\Gamma}\C^r_\Gamma$
which is identical to the universe of $\Pi_{\Gamma/F}\B_\Gamma$.
Each operator $o$ of $\CA_{\alpha}$ is the same
for both ultraproducts because $\set{\Gamma\in I:\dim(o)\subseteq\rng(\rho_\Gamma)} \in F$.

Now we prove \eqref{en:2}.
For this assume, seeking a contradiction, that $\Rd_{ca}\B^r\in S\Nr_{\alpha}\CA_{\alpha+k+1}$,
$\B^r\subseteq \Nrr_{\alpha}\C$, where  $\C\in \CA_{\alpha+k+1}$.
Let $3\leq m<\omega$ and  $\lambda:m+k+1\rightarrow \alpha +k+1$ be the function defined by $\lambda(i)=i$ for $i<m$
and $\lambda(m+i)=\alpha+i$ for $i<k+1$.
Then $\Rd^\lambda(\C)\in \CA_{m+k+1}$ and $\Rd_m\B^r\subseteq \Nrr_m\Rd^\lambda(\C)$.
For each $\Gamma\in I$,\/  let $I_{|\Gamma|}$ be an isomorphism
\[{\C}(m,m+k,r)\cong \Rl_{x_{|\Gamma|}}\Rd_m {\C}(|\Gamma|, |\Gamma+k|,r).\]
Exists by item (2) of lemma \ref{2.12}.
Let $x=(x_{|\Gamma|}:\Gamma)/F$ and let $\iota( b)=(I_{|\Gamma|}b: \Gamma)/F$ for  $b\in \C(m,m+k,r)$.
Then $\iota$ is an isomorphism from $\C(m, m+k,r)$ into $\Rl_x\Rd_m\B^r$.
Then by \cite[theorem~2.6.38]{HMT1} we have $\Rl_x\Rd_{m}\B^r\in S\Nr_m\CA_{m+k+1}$.
It follows that  $\C (m,m+k,r)\in S\Nr_{m}\CA_{m+k+1}$ which is a contradiction and we are done.

Now we prove the third part of the theorem, putting the superscript $r$ to use.
The first two items are as before. 
Now we prove \eqref{en:3} putting the superscript $r$ to use.
Recall that $\B^r=\Pi_{\Gamma/F}\C^r_\Gamma$, where $\C^r_\Gamma$ has the type of $\TCA_{\alpha}$
and $\Rd^{\rho_\Gamma}\C^r_\Gamma=\C(|\Gamma|, |\Gamma|+k, r)$.
We know 
from item (1) of lemma \ref{2.12} that $\Pi_{r/U}\Rd^{\rho_\Gamma}\C^r_\Gamma=
\Pi_{r/U}\C(|\Gamma|, |\Gamma|+k, r) \subseteq \Nrr_{|\Gamma|}\A_\Gamma$,
for some $\A_\Gamma\in\TeCA_{|\Gamma|+\omega}$.
Let $\lambda_\Gamma:|\Gamma|+k+1\rightarrow\alpha+k+1$
extend $\rho_\Gamma:|\Gamma|\rightarrow \Gamma \; (\subseteq\alpha)$ and satisfy
\[\lambda_\Gamma(|\Gamma|+i)=\alpha+i\]
for $i<k+1$.  Let $k+1\leq l\leq \omega$.
Let $\F_\Gamma$ be a $\TCA_{\alpha+l}$ type algebra such that $\Rd^{\lambda_\Gamma}\F_\Gamma=\Rd_l\A_\Gamma$.
As before, $\Pi_{\Gamma/F}\F_\Gamma\in\TCA_{\alpha+l}$.  And
\begin{align*}
\Pi_{r/U}\B^r&=\Pi_{r/U}\Pi_{\Gamma/F}\C^r_\Gamma\\
&\cong \Pi_{\Gamma/F}\Pi_{r/U}\C^r_\Gamma\\
&\subseteq \Pi_{\Gamma/F}\Nrr_{|\Gamma|}\A_\Gamma\\
&=\Pi_{\Gamma/F}\Nrr_{|\Gamma|}\Rd^{\lambda_\Gamma}\F_\Gamma\\
&\subseteq\Nrr_\alpha\Pi_{\Gamma/F}\F_\Gamma,
\end{align*}
Hence, we get by the neat embedding theorem that  $\Pi_{r/U}\B^r\in S\Nr_{\alpha}\TCA_{\alpha+l}$
and we are done.

Let $k$ be as before; $k$ is finite and $>0$ and let $l$ be as in the hypothesis of the theorem,
that is, $l\geq k+1$, and we can assume without loss that $l\leq \omega$.
Recall that $\B^r=\Pi_{\Gamma/F}\C^r_\Gamma$, where $\C^r_\Gamma$ has the type of $\CA_{\alpha}$
and $\Rd^{\rho_\Gamma}\C^r_\Gamma=\C(|\Gamma|, |\Gamma|+k, r)$.
We know (this is the main novelty here)
from item (2) that $\Pi_{r/U}\Rd^{\rho_\Gamma}\C^r_\Gamma=\Pi_{r/U}\C(|\Gamma|, |\Gamma|+k, r) \subseteq \Nrr_{|\Gamma|}\A_\Gamma$,
for some $\A_\Gamma\in\CA_{|\Gamma|+\omega}$.
If $\rho:\omega\to \alpha$ is an injection, then $\rho$ extends recursively
to a function $\rho^+$ from $\CA_{\omega}$ terms to $\CA_{\alpha}$ terms.
On variables $\rho^+(v_k)=v_k$, and for compound terms
like ${\sf c}_k\tau$, where $\tau$ is a $\CA_{\omega}$ term, and $k<\omega$, $\rho^+({\sf c}_k\tau)={\sf c}_{\rho(k)}\rho^+(\tau)$.
For an equation $e$ of the form $\sigma=\tau$ in the language of $\CA_{\omega}$, $\rho^+(e)$ is
the equation $\rho^+(\tau)=\rho^+(\sigma)$ in the language
of $\CA_{\alpha}$. This last equation, namely, $\rho^+(e)$ is called an $\alpha$ instance of $e$
obtained by applying the injection $\rho$.
Let $k\geq 1$ and $l\geq k+1$. Assume for contradiction that $\bold S\Nr_{\alpha}\TCA_{\alpha+l}$ is axiomatizable
by a finite schema over  $\bold S\Nr_{\alpha}\TCA_{\alpha+k}$.
We can assume that there is only one equation, such that all its $\alpha$ instances,  axiomatize  $S\Nr_{\alpha}\TCA_{\alpha+l}$ over
$S\Nr_{\alpha}\TCA_{\alpha+k}$.
So let $\sigma$ be such an  equation in the signature of $\TCA_{\omega}$ and let $E$ be its $\alpha$ instances; so that
 for any $\A\in \bold S\Nr_{\alpha}\TCA_{\alpha+k}$ we have $\A\in \bold S\Nr_{\alpha}\TCA_{\alpha+l}$ $\iff$
$\A\models E$.  Then for all $r\in \omega$, there is an instance of $\sigma$,  $\sigma_r$ say,
such that $\B^r$ does not model $\sigma_r$.
$\sigma_r$ is obtained from $\sigma$ by some injective map $\mu_r:\omega\to \alpha$.
For $r\in \omega,$ let $v_r\in {}^{\alpha}\alpha$,
be an injection such that $\mu_r(i)=v_r(i)$ for $i\in {\sf ind}(\sigma_r)$, and
let $\A_{r}= \Rd^{v_r}\B^r$.
Now $\Pi_{r/U} \A_{r}\models \sigma$. But then
$$\{r\in \omega: \A_{r}\models \sigma\}=\{r\in \omega: \B^r\models \sigma_r\}\in U,$$
contradicting that
$\B^r$ does not model $\sigma_r$ for all $r\in \omega$.

\end{proof}

We note that all of Andr\'eka's complexity results and other results showing the resilient robust 
undecidblity of 'three variable first order logic (and more finitely many variables $n$ say)' expressed in classes between $\CA_3$ and ${\sf RCA}_3$ (between $\RCA_n$ and $\CA_n$)
proved in \cite{Andreka} and \cite {k, k2} respectively, 
with possibly no exception lift to the topological 
contexts by forming  discrete topologizing. 
Let $2<n<\omega$. We approach the modal version of $L_n$ without equality, namely, ${\bf S5}^n$. The corresponding class of modal algebras
is the variety ${\sf RDf}_n$ of {\it diagonal free $\RCA_n$s} \cite{HMT2, k2}. Let $\Rd_{df}$ denote `diagonal free reduct'. 
\begin{lemma}\label{dfb} Let $2<n<\omega$. If $\A\in \CA_n$ is such that $\Rd_{df}\A\in {\sf RDf}_n$,
and $\A$ is the smallest subalgebra of itself containing $J=\{x\in \A: \Delta x\neq n\}$ and closed under complementation, infinite intersection, cylindrifications and diagonal elements, 
then
$\A\in \RCA_n$. 
\end{lemma}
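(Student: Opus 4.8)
\textbf{Proof plan for Lemma \ref{dfb}.}

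The plan is to exploit the fact that diagonal elements, although they are genuine elements of the cylindric algebra, carry no semantic content when we already have a representation of the diagonal-free reduct; the only obstruction to upgrading a $\sf Df_n$-representation to a $\CA_n$-representation is that the diagonal elements must be mapped to the actual set-theoretic diagonals $\{s\in{}^nU : s_i=s_j\}$. So the strategy is: start from a representation $h$ of $\Rd_{df}\A$ as a diagonal-free set algebra with some base $U$, then carve out of the base a congruence/equivalence $E$ so that on the quotient base the images of the $\diag{i}{j}$'s become the correct diagonals. The hypothesis that $\A$ is generated (under Boolean operations, infinite meets, cylindrifications, diagonals) by the set $J$ of elements whose dimension set is $\neq n$ is precisely what forces each diagonal element to behave the way a genuine diagonal does, because elements in $J$ miss at least one dimension and hence are ``cylindrically tame'' in that missing coordinate.

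First I would invoke the representability of $\Rd_{df}\A \in {\sf RDf}_n$ to obtain a complete (or at least subdirect) representation onto a diagonal-free generalized set algebra with base $U$; since $\A$ is assumed closed under infinite intersections and the relevant operations are completely additive on the diagonal-free part, I would aim for a \emph{complete} representation $h$, so that infinite meets of elements of $\A$ go to set-theoretic intersections. Then, on the base $U$, define a binary relation by declaring $u \mathrel{E_{ij}} v$ whenever the pair witnesses membership in $h(\diag{i}{j})$ in the appropriate coordinates, and more carefully define an equivalence relation $E$ on $U$ capturing ``the diagonal element $\diag{i}{j}$ says coordinates $i$ and $j$ are equal.'' This mirrors the construction in Lemma \ref{diagonal}: one checks using axioms (6),(7),(8) of Definition 1.1.1 (the $\CA$ axioms) that $E$ is indeed an equivalence relation, and then one passes to the quotient base $U/E$, mapping $s\in{}^nU$ to $[s]\in{}^n(U/E)$, obtaining a new representation $h'$ of $\A$ (as a full $\CA_n$ set algebra) on the quotient base, in which $h'(\diag{i}{j})$ is now forced to be contained in $\{[s] : [s_i]=[s_j]\}$.

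The main obstacle — and the step I would spend the most care on — is showing that after quotienting, the map $h'$ remains \emph{injective} and continues to respect cylindrifications, i.e.\ that no information is lost in passing to $U/E$ and that $h'(\cyl{i} x) = {\sf c}_i h'(x)$ holds with the new (quotient) cylindrifications. Injectivity is where the generation hypothesis does all the work: for $x\in J$ with, say, $k\notin\Delta x$, the value $\cyl{k}x = x$, so membership of $s$ in $h(x)$ does not depend on $s_k$; using this one argues that identifying $E$-equivalent points cannot collapse any nonzero element of $J$, and then the closure of $\A$ under Boolean operations, infinite meets, cylindrifications and diagonals propagates faithfulness from $J$ to all of $\A$ (for the diagonals one notes that a nonzero diagonal-involving element, when decomposed via the generators, always has an $J$-``core'' that survives). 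For the cylindrifier clause on the quotient, the key computation is that ${\sf c}_i$ on $U/E$ applied to $h'(x)$ agrees with $h'(\cyl{i}x)$; this uses the interplay of $E_{ij}$ with $\cyl{i}$ encoded in axiom (7), $\diag{i}{j}=\cyl{k}(\diag{i}{k}\cdot\diag{j}{k})$, exactly as in the proof of Lemma \ref{essence}. Once injectivity and the cylindric-homomorphism property on $U/E$ are secured, $h'$ exhibits $\A$ as (isomorphic to) a subalgebra of a generalized cylindric set algebra, i.e.\ $\A\in\RCA_n$, completing the proof.
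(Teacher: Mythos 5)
Your overall route --- represent the diagonal-free reduct on a base $U$, read off a relation on $U$ from the images of the ${\sf d}_{ij}$, and pass to a quotient of the base so that the diagonal elements become genuine diagonals --- is exactly the HMT machinery (\cite[Theorems 5.1.49--5.1.51]{HMT2}) that the paper's proof invokes. But you have misplaced where the generation hypothesis has to do its work, and the step you assign to it does not go through as described. The obstruction to descending the representation $h$ to the quotient base is not injectivity: if the descended map $h'$ is defined at all, it is automatically faithful, since the preimage of $h'(X)$ under ${}^nU\to{}^n(U/E)$ is just $h(X)$. The real obstruction is \emph{well-definedness}: for each $X\in A$, membership of a sequence $s$ in $h(X)$ must depend only on the $R$-classes of the coordinates $s_i$ (call such $X$ \emph{regular}). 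Your plan establishes regularity only for elements of $J$ (via the spare dimension $k\notin\Delta x$, together with the $\CA$ axioms governing ${\sf d}_{ij}$ and ${\sf c}_i$), and then tries to push it to all of $\A$ by ``decomposing'' an arbitrary element into generators and tracking a surviving ``$J$-core''. There is no canonical such decomposition of an element of $\A$, and neither injectivity nor regularity is a property one can chase along it; this step, as written, fails.

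The correct --- and much shorter --- use of the minimality hypothesis is a closure argument on the \emph{class} of regular elements, not on individual elements. Let $E$ be the set of all $X\in A$ such that for all $s,t\in{}^nU$ with $s_iRt_i$ for every $i<n$ one has $s\in X\iff t\in X$. One checks: (i) $J\subseteq E$, which is where your spare-coordinate observation belongs; and (ii) $E$ contains the diagonal elements (by the very definition of $R$) and is closed under complementation, arbitrary set-theoretic intersections, and cylindrifications --- each of these is immediate from the definition of regularity, with no reference to how elements are generated. Since $\A$ is by hypothesis the \emph{smallest} subalgebra containing $J$ with exactly these closure properties, $\A=E$; every element is regular, the representation descends to $U/R$, and \cite[Theorem 5.1.51]{HMT2} yields $\A\in\RCA_n$. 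Note finally that your opening request for a \emph{complete} representation of $\Rd_{df}\A$ is both unavailable (membership in ${\sf RDf}_n$ supplies only an ordinary representation) and unnecessary: the closure argument needs only that regularity of sets is preserved by the set-theoretic intersections actually occurring in the algebra.
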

\begin{proof}  Easily follows from \cite[Lemma 5.1.50, Theorem 5.1.51]{HMT2}.  Assume that $\A\in \CA_n$, $\Rd_{df}\A$ is a set algebra (of dimension $n$) 
with base $U$,  and $R\subseteq U\times U$ are as in the hypothesis of \cite[Theorem 5.1.49]{HMT2}. 
Let $E=\{x\in A:  (\forall x, y\in {}^nU)(\forall i <n)(x_iR y_i\implies (x\in X\iff y\in X))\}$.
Then $\{x\in \A: \Delta x\neq n\}\subseteq E$ and $E\in \CA_n$ 
is closed under infinite intersections. The required follows.
\end{proof}
\begin{theorem}\label{negaxiom2} Let $2<n<\omega$.  
\begin{enumerate} 
\item  Any universal 
axiomatization of ${\sf RTCA}_{\alpha}$ 
must contain infinitely many variables and infinitely many diagonal 
constants.
\item It is undecidable to whether a 
finite  $\TCA_n$ is representable or not. 
In particular, for any such $n$,
the variety of representable algebras in the two cases cannot be axiomatized in $m$th 
order logic for any 
finite $m$. 
\item The class of topological Kripke frames of the form $\{\F\in \At\TCA_n: \Cm\F\in {\sf RTCA}_n\}$ is not elementary. 

\item Any axiomatization of any model logic beten ${\sf S5}^n$ and modal topological 
logics wih $n$ variables, viewed  as an $n$-dimensional multi-modal logic  has to contain genuinely 
second order formulas on Kripke frames, and furthermore cannot be axiomatized by first order formulas 
{\it a fortiori Sahlqvist},  
nor canonical ones. 
\item Though canonical, any equational aximatization of $\sf TRCA_n$ must contain infnitely 
many non canonical 
equations.
\end{enumerate}
\end{theorem}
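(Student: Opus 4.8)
\textbf{Proof proposal for Theorem \ref{negaxiom2}.}

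The plan is to reduce every assertion to the corresponding fact about the cylindric (and, where relevant, diagonal-free) case by the mechanical device of \emph{discrete topologizing} from Definition \ref{expand}, exploiting that, by the Theorem preceding Definition \ref{expand}, $\A$ is representable if and only if $\A^{\sf top}$ is, and that $\Rd_{ca}\A^{\sf top}=\A$. The key structural observation is that the assignment $\A\mapsto\A^{\sf top}$ is recursive, sends finite algebras to finite algebras, commutes with ultraproducts and with the formation of $\Nr_{\alpha}$ up to the obvious identification of signatures (since the interior operators are just identities, they are preserved by all relevant constructions), and that the reduct functor $\Rd_{ca}$ is its one-sided inverse on the image. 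Thus any negative metaproperty of $\sf RCA_{\alpha}$ (non-finite axiomatizability with bounded variables or bounded diagonals, undecidability of representability for finite algebras, non-elementarity of the class of frames whose complex algebra is representable, non-Sahlqvist/non-canonical axiomatizability) transfers verbatim to $\sf RTCA_{\alpha}$: if $\sf RTCA_{\alpha}$ had an axiomatization of the forbidden shape, composing with the topologizing map would yield one for $\sf RCA_{\alpha}$.

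First I would handle item (1). By Theorem \ref{negaxiom1}, $\sf RTCA_{\alpha}$ is not axiomatizable by universal formulas in finitely many variables; for the diagonal constants one invokes the cylindric fact (Monk, \cite[Construction 3.2.76]{HMT2}, together with the argument in Corollary \ref{iiii}) that $\sf RCA_{\alpha}$ needs infinitely many diagonals, then lifts it through $\A\mapsto\A^{\sf top}$: an axiomatization of $\sf RTCA_{\alpha}$ mentioning only finitely many ${\sf d}_{ij}$ would, on taking $\Rd_{ca}$-reducts and using that $\A$ representable $\iff\A^{\sf top}$ representable, produce one for $\sf RCA_{\alpha}$ with finitely many diagonals, a contradiction. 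For item (2), Andr\'eka's undecidability of representability for finite $\CA_n$ (and its refinement that $\sf RCA_n$ is not axiomatizable in $m$th order logic for finite $m$, \cite{Andreka}, \cite{k, k2}) transfers because $\A$ is finite iff $\A^{\sf top}$ is, and $\A\mapsto\A^{\sf top}$ together with $\Rd_{ca}$ gives a computable reduction in both directions. Item (3) follows from the non-elementarity of $\{\F\in\At\CA_n:\Cm\F\in\sf RCA_n\}$ (Theorem \ref{iii}(3), building on \cite{Hodkinson} and Theorem \ref{can}): discretely topologizing an atom structure produces a $\TCA_n$ atom structure, and elementary equivalence is preserved in both directions by this quantifier-free expansion, so an elementary class of topological frames with representable complex algebras would yield an elementary such class of cylindric frames.

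For items (4) and (5) I would first recast $\sf RTCA_n$ as the variety of modal algebras for the multi-modal logic lying between ${\sf S5}^n$ and topological $n$-variable logic; non-canonicity of individual equations and non-first-order-definability on Kripke frames then follow from Venema's theorem (cited in Theorem \ref{iii}(2)) that $\bold S\Nr_n\CA_m$ is not Sahlqvist-axiomatizable together with Theorem \ref{can}, which shows $\sf RCA_n$ — hence, via topologizing, $\sf RTCA_n$ — is not atom-canonical, so not closed under \de\ completions, so no first-order (in particular no Sahlqvist) frame axiomatization exists; canonicity of $\sf RTCA_n$ itself (it is a variety of $\sf BAO$s closed under canonical extensions, since $\sf RCA_n$ is, and the identity interior operators are trivially canonical) combined with non-atom-canonicity forces any equational axiomatization to contain infinitely many non-canonical equations, by the standard argument that a finite (or even finitely many canonical) set of canonical equations axiomatizing a canonical but non-atom-canonical variety is impossible. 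The Lemma \ref{dfb} on diagonal-free reducts is used to ensure the ${\sf S5}^n$ endpoint is genuinely captured, so that the reduction passes through the diagonal-free setting as well.

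The main obstacle I anticipate is not any single deep step — each ingredient is imported — but verifying carefully that the topologizing functor genuinely commutes with all the operations appearing in these metatheorems: in particular that it preserves $\Nr_{\alpha}$ and $\bold S\Nr_{\alpha}(-)_{\alpha+k}$ membership (needed so that the Theorem \ref{negaxiom1} machinery and the neat-embedding characterization Theorem \ref{neatt} survive the expansion), that it is compatible with the two-sorted / three-sorted pseudo-elementary presentations so that elementary equivalence and ultraproducts behave, and that the reduction in item (2) is effective in the precise sense required for the $m$th-order non-axiomatizability conclusion. Once this bookkeeping is pinned down, every clause is a one-line consequence of an earlier cited result, so the bulk of the write-up will be the statement and proof of a single lemma asserting these preservation properties of $\A\mapsto\A^{\sf top}$.
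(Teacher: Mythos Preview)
Your overall strategy is exactly the paper's: reduce every clause to the corresponding $\CA_n$ (or ${\sf Df}_n$) fact via discrete topologizing, using that $\A\in\RCA_\alpha\iff\A^{\sf top}\in{\sf RTCA}_\alpha$ and that $\A\mapsto\A^{\sf top}$ is recursive and preserves finiteness, ultraproducts, and atom structures. For items (1)--(4) your reductions are sound and match the paper up to choice of citation; for (1) the paper lifts Andr\'eka's splitting algebras directly rather than arguing by contraposition on axiomatizations, and for (3) the paper works concretely with the Monk--like algebras $\mathfrak{M}(\Gamma)$ and Erd\H{o}s graphs (going through Lemma~\ref{dfb} to reach the ${\sf Df}_n$/${\sf S5}^n$ endpoint), but your abstract transfer argument is equally valid.

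There is, however, a genuine gap in item (5). Your ``standard argument'' --- that canonicity together with non--atom--canonicity already forces infinitely many non-canonical equations --- does not work as stated. Canonical equations (those preserved under canonical extensions) are \emph{not} in general preserved under Dedekind--MacNeille completions; that preservation is a feature of Sahlqvist equations, not of canonical ones. So from ``$\sf RTCA_n$ is canonical but not atom-canonical'' you only get that no \emph{Sahlqvist} axiomatization exists (which is item (4)), not that every axiomatization needs infinitely many non-canonical equations. The paper does not attempt a direct argument here: it simply invokes the Bulian--Hodkinson result \cite{b} for $\RCA_n$ (and ${\sf RDf}_n$), whose proof is an intricate Erd\H{o}s-graph construction, and then applies discrete topologizing. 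You should do the same --- cite \cite{b} for the $\CA$ case and transfer --- rather than claim it follows from non--atom--canonicity.
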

\begin{proof} 
1. Let $2<n< \omega$, and $k\in \omega$. Then in \cite{Andreka} a non-
representable $\CA_{n}$ is constructed (by splitting an atom in a $\sf Cs_n$-a cylindric set algebra of dimension $n$- 
into $k+1$ parts)
such that its $k$ generated subalgebras, that is, subalgebras generated by $\leq k$ many elements, 
are representable.
One expands $\A$ with the interior identity operators, it remains
non-representable of course, but its $k$ generated subalgebras are representable in the expanded signature, 
the representation of the newly added identity $\sf S4$ modalities 
induced by the discrete topology on the base. 

2. 
In \cite{k2}, it is proved that it is undecidable to tell whether a finite frame
is a frame for $\L$,  and this gives
the non--finite axiomatizability result required as indicated in {\it op.cit}, 
and obviously implies undecidability.
The rest follows by transferring the required results holding for ${\bf S5}^n$ \cite{b, k2}
to $\L$ since ${\bf S5}^n$ is finitely axiomatizable 
over $\L$, and any axiomatization of ${\sf RDf}_n$ must contain infinitely many 
non-canonical equations. 
We prove the second item for $\TCA_n$. 
The proof uses the main result in \cite{k2} 
using the recursive procedure of static 
topologizing.  That is if the problem is decidable for $\TCA_n$, then this implies its decidability for $\CA_n$ for any finite $n>2$.
Indeed take any finite $\A\in \CA_n$.
By discrete topologizing we obtain $\A^{\sf top}$, apply the available algorithm for finite $\TCA_n$s, 
the answer is the correct one for $\A$, 
since $\A$ is representable 
$\iff$ $\A^{\sf top}$ is. The last part  concerning non finite axiomatizability 
is proved for both relation and cylindric algebras 
(having an analogous undecidability result) in \cite{HHbook, k, k2}.
The idea is the existence of any such finite axiomatization in 
$m$th order logic for any positive $m$  
gives a decision procedure for telling whether a finite algebra is 
representable
or not.

3. Let $\bold L$ be the class of square frames for ${\bf S5}^n$.
Then $\L(\bold L)={\bf S5}^n$ \cite[p.192]{k}. But the class of frames $\F$ valid in $\L(\bold L)$ coincides 
with the class of  {\it strongly representable ${\sf Df}_n$ atom structures} which  
is {\it not elementary} as proved in \cite{b}. This gives the  required result for ${\bf S5}^n$. With Lemma \ref{dfb} at our disposal, 
a slightly different proof can be easily distilled from the construction addressing $\CA$s in \cite{HHbook2} or \cite{k2}. 
We adopt the construction in the 
former reference, using the Monk--like $\CA_n$s  
${\mathfrak M}(\Gamma)$, $\Gamma$ a graph, as defined in
\cite[Top of p.78]{HHbook2}. 
For a graph $\G$, let $\chi(\G)$ denote it chromatic number. 
Then it is proved in {\it op.cit} that 
for any graph $\Gamma$, ${\mathfrak M}(\Gamma)\in \RCA_n$ 
$\iff$ $\chi(\Gamma)=\infty$.
By Lemma \ref{dfb} and discretely topologizing ${\mathfrak M}(\Gamma)$, getting the expansion $\mathfrak{M}(\Gamma)^{\sf top}$, say, we have 
${\mathfrak M}(\Gamma)^{\sf top}\in {\sf RTCA}_n \iff \Rd_{df}{\mathfrak M}(\Gamma)\in {\sf RDf}_n\iff \chi(\Gamma)=\infty$,
because $\mathfrak{M}(\Gamma)$ 
is generated by the set $\{x\in {\mathfrak M}(\Gamma): \Delta x\neq n\}$ using infinite unions and 
$\A\in \RCA_n\iff \A^{\sf top}\in {\sf RTCA}_n$.
Now we adopt the argument in \cite{HHbook2}. Using Erdos' probabalistic graphs \cite{Erdos}, 
for each finite
$\kappa$, there is a finite graph $G_{\kappa}$ with
$\chi(G_{\kappa})>\kappa$ and with no cycles of length $<\kappa$. 
Let $\Gamma_{\kappa}$ be the disjoint union of the $G_{l}$ for
$l>\kappa$.  Then $\chi(\Gamma_{\kappa})=\infty$, and so
${\mathfrak{M}(\Gamma)_{\kappa}}^{\sf top}\in {\sf RTCA}_n$.
Now let $\Gamma$ be a non-principal ultraproduct
$\Pi_{D}\Gamma_{\kappa}$ for the $\Gamma_{\kappa}$s. For $\kappa<\omega$, let $\sigma_{\kappa}$ be a
first-order sentence of the signature of the graphs stating that
there are no cycles of length less than $\kappa$. Then
$\Gamma_{l}\models\sigma_{\kappa}$ for all $l\geq\kappa$. By
Lo\'{s}'s Theorem, $\Gamma\models\sigma_{\kappa}$ for all
$\kappa$. So $\Gamma$ has no cycles, and hence by $\chi(\Gamma)\leq 2$.
Thus $\mathfrak{Rd}_{df}\mathfrak{M}(\Gamma)$
is not representable.  
(Observe that the 
the term algebra $\Tm\At(\mathfrak{M}(\Gamma))$
is representable (as a $\CA_n$), 
because the class of weakly representable atom structures is elementary \cite[Theorem 2.84]{HHbook}.)

4.  The first part follows from that the class of strongly representable $\Df_n$ atom structures is not
elementary proved in the prevuos item and in \cite{b}. 
Sincee Sahlqvist formulas have first order correspondents, then ${\bf S5}^n$ is not Sahlqvist.  

5. From the construction in \cite{b} using discrete topologizing.
\end{proof}

\end{document}